  \DeclareRobustCommand{\[}{\begin{equation}}%
  \DeclareRobustCommand{\]}{\end{equation}}%
\newtheorem{Theorem}{Theorem}
\theoremstyle{definition}
\newtheorem{Question}[Theorem]{Question}
\newtheorem{Proposition}{Proposition}[section]
\newtheorem{Lemma}[Proposition]{Lemma}
\newtheorem{Corollary}[Proposition]{Corollary}
\theoremstyle{definition}
\def\th@remark{%
  \thm@headfont{\itshape}%
  \normalfont 
  \thm@preskip\topsep 
  \thm@postskip\thm@preskip
}
\theoremstyle{remark}
\newtheorem{Remark}[Proposition]{Remark}
\newlength{\normalparindent}
\colorlet{thm-color}{blue!5}
\definecolor{q-color}{HTML}{ffb703}
\colorlet{question-color}{q-color!10}
\colorlet{proof-color}{gray!8}
\colorlet{remark-color}{green!3}
\tcolorboxenvironment\expandafter{\t}{
        common={thm-color},
    }
\newlist{thmlist}{enumerate}{1}
\setlist[thmlist]{
        nolistsep,
        ref={\mdseries\textup{(\emph{\roman*})}},
        label={\mdseries\textup{(\emph{\roman*})}},
        listparindent=\parindent
        }
\newcommand\thmitem[1]{\textup{(\emph{\romannumeral #1})}}
\renewcommand\tableofcontents{%
    \begingroup
    \centering
    {\normalsize\bfseries\contentsname}\par
    \small
    \vspace*{0.5pc}%
    \begin{minipage}{.7\textwidth}
    \@starttoc{toc}%
    \end{minipage}\par
    \endgroup
    \vspace*{2pc}
    }
\setlist[itemize]{itemsep=1ex, listparindent=1em, parsep=0pt, label=\textbullet}
\newcommand\NN{\mathbb{N}}
\newcommand\ZZ{\mathbb{Z}}
\newcommand\CC{\mathbb{C}}
\newcommand\QQ{\mathbb{Q}}
\newcommand\kk{\Bbbk}
\let\AA\relax
\newcommand\AA{\mathbb{A}}
\newcommand\GG{\mathbb{G}}
\newcommand\B{\mathcal{B}}
\newcommand\N{\mathcal{N}}
\newcommand\Nil{\mathfrak{N}}
\renewcommand\d{\mathrm{d}}
\newcommand\W{\mathscr{W}}
\newcommand\X{\mathscr{X}}
\let\epsilon\varepsilon
\newcommand\id{\mathsf{id}}
\newcommand\place{\mathord-}
\DeclareMathOperator{\Res}{Res}
\let\hom\relax
\DeclareMathOperator{\hom}{Hom}
\DeclareMathOperator{\gr}{gr}
\DeclareMathOperator{\LND}{LND}
\DeclareMathOperator{\Der}{Der}
\DeclareMathOperator{\InnDer}{InnDer}
\DeclareMathOperator{\End}{End}
\DeclareMathOperator{\Ext}{Ext}
\DeclareMathOperator{\Aut}{Aut}
\DeclareMathOperator{\Exp}{Exp}
\DeclareMathOperator{\slope}{sl}
\DeclareMathOperator{\ad}{ad}
\DeclareMathOperator{\img}{img}
\let\H\relax
\DeclareMathOperator\H{H}
\DeclareMathOperator\HH{HH}
\DeclarePairedDelimiter\lin{\langle}{\rangle}
\DeclarePairedDelimiter\gen{\langle}{\rangle}
\newcommand\Alg{\mathrm{Alg}}
\newcommand\inter[1]{\llbracket#1\rrbracket}
\newcommand\normalorder[1]{{:}\mkern1mu#1\mkern1.6mu{:}}
\def\lact{\mathbin\triangleright}
\newcommand\claim[2][.8]{%
  \begin{minipage}[c]{#1\displaywidth}%
  \itshape
  #2
  \end{minipage}%
}
\m@th\displaystyle{##}$\hfil}{##\hfil}{\lbrace}{.}
\m@th\displaystyle{##}$}{##\hfil}{\lbrace}{.}
\newcommand\newterm[2][]{\textbf{\itshape\color{blue!50!black}#2}}
\title{%
  On the derivations and automorphisms\\
  of the algebra $\kk\langle x,y\rangle/(yx-xy-x^N)$}
\author{Mariano Suárez-Álvarez\thanks{%
  The author is a research member of CONICET. The work leading to this paper
  was done at Universidad de Buenos Aires, in Argentina, and at the Guangdong
  Technion Israel Institute of Technology, in Shantou, Guangdong Province,
  China, and was partially supported by \textsc{PIP-CONICET 11220200101855CO}.
  }}
\date{February 19, 2024}}
\date{Started on March 2021; compiled \today}}
\begin{document}

\maketitle

\tableofcontents

\phantomsection
\addcontentsline{toc}{section}{Introduction}%
\section*{Introduction}

In this paper we fix a field~$\kk$ of characteristic zero and a
non-negative integer~$N$, and study the algebra~$A_N$ freely
generated by two letters~$x$ and~$y$ subject to the relation
  \[
  yx-xy = x^N.
  \]
with the objective of computing as explicitly as it is possible (to us!)
some of its invariants of homological nature.

For low values of $N$ the algebra $A_N$ is very well-known: when $N=0$ it is
the first Weyl algebra, which we can view as the algebra of regular
differential operators on the affine line; when $N=1$ it is the enveloping
algebra of the non-abelian Lie algebra of dimension~$2$; and when $N=2$ it
is the so-called Jordan plane of non-commutative geometry
\citelist{\cite{AS} \cite{SZ}}. On the other hand, the family of algebras
that we will study is contained in a larger one that has received a lot of
attention: if for an arbitrary polynomial $h\in\kk[x]$ we let $A_h$ be the
algebra freely generated by letters~$x$ and~$y$ subject to the relation
$yx-xy=h(x)$, then of course our algebra~$A_N$ is $A_{x^N}$. One way to
explain the interest of this larger family of algebras is by saying that it
consists, up to isomorphism, of all skew-polynomial extensions of~$\kk[x]$
apart from the $1$-parameter families of quantum planes and quantum Weyl algebras
\cite{AD}*{Proposition 3.2}. The algebras of the form~$A_h$ have been
studied in detail by G.~Benkart, S.~Lopez and M.~Ondrus in the series of
papers \citelist{\cite{BLO:1} \cite{BLO:2} \cite{BLO:3}}. Our
algebras~$A_N$ are, in many senses, the \emph{worst} of the lot.

The main motivation for this work was the problem of giving an explicit
description of the first Hochschild cohomology space $\HH^1(A_N)$ of~$A_N$,
which we view as the space of outer derivations of the algebra~$A_N$, that
is, the quotient $\Der(A_N)/\InnDer(A_N)$ of the Lie algebra $\Der(A_N)$ of
all derivations of~$A_N$ by its ideal of inner derivations. This cohomology
space and, in fact, the full Lie algebra $\Der(A_N)$ have been studied in
detail before --- J.~Dixmier \citelist{\cite{Dixmier:1} \cite{Dixmier:2}}
and R.~Sridharan \cite{Sridharan} for the Weyl algebra, E.N.~Shirikov
\citelist{\cite{Shirikov:1} \cite{Shirikov:2} \cite{Shirikov:3}} for the
case $N=2$, A.~Nowicki \cite{Nowicki} and most notably G.~Benkart, S.~Lopez
and M.~Ondrus \cite{BLO:3} for the general case of the algebras~$A_h$ and,
building upon that, S.~Lopez and A.~Solotar \cite{LSo} for the Lie module
structure of the cohomology of~$A_h$ over~$\HH^1(A_h)$ --- and we can say
that both $\Der(A_N)$ and $\HH^1(A_N)$ are very well understood. What we
were after, though, was a description of the elements of~$\HH^1(A_N)$ as
cohomology classes of actual, explicit derivations, because we needed to do
further calculations with them. In particular, while doing some
calculations regarding the characteristic morphism
$\HH^\bullet(A_N)\to\mathscr{Z}_{\gr}(D^b(A_N))$ for this algebra --- which
connects the Hochschild cohomology of the algebra with the graded center of
the derived category of the category of modules of the algebra, for example
as in \cite{Lowen} --- certain rational numbers insistently appeared and
required an explanation.

\bigskip

Let us describe the results we obtain. The algebra~$A_N$ can be endowed
with a grading with respect to which the generators~$x$ and~$y$
are in degrees~$1$ and~$N-1$, respectively, and this grading induces others
in many objects constructed from~$A_N$. For example, the space~$\Der(A_N)$
and, for each $p\geq0$, the Hochschild cohomology space $\HH^p(A_N)$ are
$\ZZ$-graded vector spaces. Our result about~$\HH^1(A_N)$ is the following:

\begin{Theorem}\label{thm:hh1}
Suppose that $N\geq2$, let $q$ be a variable, and let $(c_i(q))_{i\geq0}$
be the sequence of polynomials in~$\QQ[q]$ such that
  \[ \label{eq:cj}
  \sum_{j\geq0}c_j(q)\frac{t^j}{j!} = \frac{t}{(1-qt)^{-1/q}-1}.
  \]
\begin{thmlist}

\item If $l$ is a positive integer and $i$ and~$j$ are the integers such
that $l+1=i+j(N-1)$ and $1\leq i\leq N-1$, then there is a homogeneous
derivation $\partial_l:A_N\to A_N$ of degree~$l$ such that
  \begin{align}
  \partial_l(x) &= x^iy^j, \\
  \partial_l(y) &= 
        \begin{multlined}[t][.75\displaywidth]
        (N-i)x^{i-1}
        \frac{1}{j+1}
        \sum_{i=0}^{j}
        \binom{j+1}{i}
        c_i(N-1)
        x^{i(N-1)}y^{j+1-i} \\
        +\sum_{s+2+t=N}(s+1)x^{s+i}y^jx^t 
        \end{multlined}
        \label{eq:ply}
  \end{align}

\item If $l$ is an integer such that $-N+1\leq i\leq 0$, then there is a
unique homogeneous derivation $\partial_l:A_N\to A_N$ of degree~$l$ such that
  \[
  \partial_l(x) = 0, 
  \qquad 
  \partial_l(y) = x^{l+N-1}.
  \]
There is moreover a homogeneous derivation~$E:A_N\to A_N$ of degree~$0$
such that
  \[
  E(x) = x,
  \qquad
  E(y) = (N-1)y.
  \]

\item The graded vector space~$\HH^1(A_N)$ is locally finite and its Hilbert
series is 
  \[
  h_{\HH^1(A_N)}(t) = 1 + \frac{t^{-N+1}}{1-t}.
  \]
If $l$ is an non-zero integer such that $l\geq-N+1$, then the homogeneous
component~$\HH^1(A_N)_l$ of degree~$l$ in~$\HH^1(A_N)$ is freely spanned by
the cohomology class of the derivation $\partial_l$ described above. On the
other hand, the component $\HH^1(A_N)_0$ of degree~$0$ is freely spanned by
the cohomology classes of the derivations~$\partial_0$ and~$E$.

\end{thmlist}
\end{Theorem}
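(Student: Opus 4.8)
\emph{Reduction to a single linear equation.} A derivation of $A_N$ is determined by its values on the generators, so I would begin by identifying $\Der(A_N)$ with the set of pairs $(a,b)=(\partial(x),\partial(y))$ for which the assignment is consistent with the defining relation. Applying $\partial$ to $yx-xy-x^N$ and using the Leibniz rule, this consistency is the single equation
\[ [b,x]+[y,a]=\sum_{k=0}^{N-1}x^k\,a\,x^{N-1-k}, \]
while $\InnDer(A_N)$ is the set of pairs $([c,x],[c,y])$ with $c\in A_N$. Everything in sight is $\ZZ$-graded and these identifications are homogeneous, so I would fix a degree $l$ and compute $\HH^1(A_N)_l$ as the quotient of the solutions $(a,b)$ with $\deg a=l+1$ and $\deg b=l+N-1$ by the coboundaries. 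Throughout I would exploit that $A_N$ is the Ore extension $\kk[x][y;\delta]$ with $\delta=x^N\frac{\d}{\d x}$: it has $\kk$-basis $\{x^iy^j\}$ with $\deg(x^iy^j)=i+j(N-1)$, one has $yp(x)=p(x)y+\delta(p)$ for $p\in\kk[x]$, and $\ker(\ad_x)$ is the centraliser $\kk[x]$ of $x$.

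\emph{Building the explicit derivations.} Parts \thmitem{2} and the derivation $E$ are quick: with $a=0$ the displayed equation becomes $[b,x]=0$, so $b\in\kk[x]$, and homogeneity of degree $l+N-1\ge0$ forces $b\in\kk\,x^{l+N-1}$, giving existence and uniqueness; for $E$ one checks the equation for $(x,(N-1)y)$ directly. The real work is part \thmitem{1}. Here I would put $a=x^iy^j$ and solve the displayed equation for $b=\partial_l(y)$, which means inverting $\ad_x$ on a homogeneous component. Expanding $\sum_k x^k a x^{N-1-k}$ and $[y,a]$ into the basis by repeatedly commuting $x$ past powers of $y$ through $yp(x)=p(x)y+\delta(p)$ turns the problem into a linear recursion whose solution is encoded by the numbers $c_i(N-1)$. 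The clean way to see where these come from is that the one-parameter group generated by $\delta$ is $\exp(t\delta)(x)=x\,(1-(N-1)x^{N-1}t)^{-1/(N-1)}$, so that evaluating at $x=1$ produces the function $(1-qt)^{-1/q}$ with $q=N-1$ appearing in \eqref{eq:cj}; the inversion of $\ad_x$ is exactly the ``summation'' problem solved by the generating function there. \textbf{Converting this commutation bookkeeping into the exact closed form} \eqref{eq:ply}, and in particular matching the Bernoulli-type coefficients, is the step I expect to be the main obstacle.

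\emph{The Hilbert series.} For part \thmitem{3} I would first note $\HH^0(A_N)=Z(A_N)=\kk$, since a central element lies in $\ker(\ad_x)=\kk[x]$ and $[p(x),y]=-\delta(p)$ vanishes only for constant $p$. To pin down $\HH^1$ I would apply $\hom_{A_N^e}(\place,A_N)$ to the length-two bimodule resolution of $A_N$, obtaining the complex $A_N\to A_N\oplus A_N\to A_N$ whose two differentials are precisely the coboundary map and the map from the displayed equation. Tracking the shifts forced by $\deg x=1$, $\deg y=N-1$ and $\deg(yx-xy-x^N)=N$, the Euler characteristic of Hilbert series is
\[ h_{\HH^0}(t)-h_{\HH^1}(t)+h_{\HH^2}(t)=h_{A_N}(t)\,(1-t^{-1})\,(1-t^{-(N-1)})=t^{-N}, \]
so $h_{\HH^1}$ is determined once $\HH^2(A_N)$ is computed; the same centraliser analysis identifies $\HH^2(A_N)$ as the cokernel of the second map, with Hilbert series $t^{-N}/(1-t)$, and substituting gives $h_{\HH^1}(t)=1+t^{-N+1}/(1-t)$. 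Alternatively one can solve the displayed equation directly in each degree $l$: the coupling between $a$ and $b$ forces, modulo coboundaries, that $a$ be the single basis monomial $x^iy^j$ with $1\le i\le N-1$ when $l>0$ and $a=0$ when $l\le0$, recovering the same dimensions.

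\emph{Spanning and independence.} Finally I would confirm that the explicit classes fill up each component. Different $\partial_l$ sit in different degrees, so it suffices to see each is non-zero in cohomology. In part \thmitem{1} the element $a=x^iy^j$ with $1\le i\le N-1$ is not in $\img(\ad_x)$ --- this is exactly the point of the constraint on $i$ --- so $(a,b)$ cannot be a coboundary. In part \thmitem{2} the derivation is non-inner because for $-N+1\le l<0$ there is no element of degree $l$ at all, and for $l=0$ the only degree-zero candidates are scalars, which act trivially; in the degree-zero case the classes of $\partial_0$ and $E$ are independent because their $x$-components $0$ and $x$ are independent modulo $\img(\ad_x)$. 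Comparing with the dimension count from the previous step, $[\partial_l]$ spans $\HH^1(A_N)_l$ for every $l\ge-N+1$ with $l\ne0$, and $[\partial_0],[E]$ span the two-dimensional space $\HH^1(A_N)_0$, which completes the proof.
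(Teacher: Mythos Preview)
Your proposal is correct and follows essentially the same route as the paper: reduce to the cocycle equation for the pair $(a,b)$, compute $\HH^0$ and $\HH^2$ and use the Euler characteristic of the short Koszul-type complex to get $h_{\HH^1}$, invert $\ad_x$ via the elements whose generating function produces the $c_i(N-1)$, and check non-innerness using $\img(\ad_x)=x^NA$. The one organisational difference is that the paper runs the Hilbert-series argument \emph{first} and then uses the mere existence of a non-inner derivation of degree~$l$ to normalise it to $d(x)=x^iy^j$ and \emph{derive} what $d(y)$ must be --- so it never has to verify directly that the displayed formula defines a derivation --- whereas you propose to posit $a=x^iy^j$ and solve; this is the same computation from the other end, since solving $[b,x]=\sum_k x^kax^{N-1-k}-[y,a]$ automatically yields a cocycle once you have checked the right-hand side lies in $x^NA$.
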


The sequence of polynomials $(c_j(q))_{q\geq0}$ that appears here --- of
which the first few are tabulated in Table~\vref{tbl:ciq} --- is a
$q$-variant of the sequence of Bernoulli numbers, to which it degenerates
as $q$ goes to~$0$: indeed, the limit of the right hand side of the
defining equality~\eqref{eq:cj} as~$q$ approaches~$0$ is $t/(e^t-1)$, the
exponential generating function of the Bernoulli numbers, and 
for all $j\in\NN_0$ the constant term $c_j(0)$ is exactly the $j$th
Bernoulli number. On the other hand, the leading coefficient of~$c_j(q)$ is
$(-1)^jj!G_j$, with $G_j$ denoting the $j$th Gregory coefficient, certainly of much
lesser fame.

It should be observed that, while the formula in~\eqref{eq:ply} somehow
indicates that the limiting case $q\leadsto0$ corresponds to letting the
integer~$N$ «converge» to~$1$ (which, of course, makes no sense), none of
the claims of the theorem holds at the limit $N=1$! Indeed, the theorem
excludes the cases in which $N<2$, and that is because they are rather
different. When $N=0$ we have, according to a calculation carried out
originally by Dixmier in~\cite{Dixmier:2}, that $\HH^1(A)=0$, so there is
nothing that needs being made explicit. When $N=1$, on the other hand, the space
$\HH^1(A)$ is one-dimensional and spanned by the cohomology class of a
derivation $d_0:A\to A$ such that $d_0(x)=0$ and $d_0(y)=1$, which is
homogeneous of degree~$0$.

The way in which we prove this theorem is rather indirect --- a direct
proof of the first part of the statement would probably be quite
unpleasant! We first compute $\HH^0(A)$ and~$\HH^2(A)$ and, in particular,
their Hilbert series, and using that and an argument involving Euler
characteristics we deduce the Hilbert series of~$\HH^1(A)$: this tells us
of what degrees there are non-inner homogeneous derivations, and how many.
We then show that such a derivation can be modified appropriately until it
satisfies the conditions described in the theorem. We do this in
Sections~\ref{sect:cohomology}, \ref{sect:phi} and~\ref{sect:explicit},
whose main results are the Propositions~\ref{prop:hh1-series},
\ref{prop:hh1-n1}, \ref{prop:hh1-low} and~\ref{prop:hh1-high} that we
subsumed in Theorem~\ref{thm:hh1} above.

\bigskip

Once we have explicit derivations whose cohomology classes
span~$\HH^1(A_N)$, we can do things with them. The general qualitative
structure of the Lie algebra~$\HH^1(A)$ was described by G.~Benkart,
S.~Lopez and M.~Ondrus in \cite{BLO:3}: its center is one-dimensional and a
complement to the derived subalgebra
$\HH^1(A_N)'\coloneqq[\HH^1(A_N),\HH^1(A_N)]$, this derived subalgebra has
a unique maximal nilpotent ideal~$\Nil$ of nilpotency index~$N$, and the
quotient $\HH^1(A_N)'/\Nil$ is isomorphic to the Lie algebra $\Der(\kk[x])$
of derivations of~$\kk[x]$ or, equivalently, of regular vector fields on
the affine line~$\AA^1_\kk$, which is often called the Witt algebra.
Starting from Theorem~\ref{thm:hh1} we can prove --- this is
Corollary~\ref{coro:mlmm} in the text below --- the following:

\begin{Theorem}
There is a sequence of derivations $(L_j)_{j\geq-N+1}$ of~$A_N$ whose
cohomology classes freely span the derived subalgebra~$\HH^1(A)'$ such that
whenever $l$ and~$m$ are integers with $l$,~$m\geq-N+1$ we have
  \[ 
  [L_l, L_m] \sim
    \begin{dcases*}
    0 & if $i+u>N$ or $l+m<-N+1$; \\
    \frac{l(v+1)-m(j+1)}{N-1}L_{l+m} & if $i+u\leq N$,
    \end{dcases*}
  \]
with $i$, $j$, $u$ and~$v$ the unique integers such that
$l+1=i+j(N-1)$, $m+1=u+v(N-1)$, $1\leq i, u\leq N-1$, and
$j$,~$v\geq-1$. 
\end{Theorem}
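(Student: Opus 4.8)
The plan is to leverage the explicit description of $\HH^1(A_N)$ furnished by Theorem~\ref{thm:hh1}. Each $L_j$ is, by construction, a specific nonzero scalar multiple of the homogeneous derivation $\partial_j$ produced there, the normalizing constants being chosen precisely so that the structure constants come out in the stated shape; so I may compute with the $\partial_j$ and reinstate the scalars at the end. First I would observe that $[L_l,L_m]$ is again a homogeneous derivation, now of degree $l+m$, and that, being a commutator, its class lies in the derived subalgebra $\HH^1(A_N)'=[\HH^1(A_N),\HH^1(A_N)]$. By part~\thmitem{3} of Theorem~\ref{thm:hh1} the graded piece $\HH^1(A_N)_{l+m}$ vanishes as soon as $l+m<-N+1$, which settles that case at once: there $[L_l,L_m]\sim0$. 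When $l+m\geq-N+1$ the derived part of that graded piece is one-dimensional and spanned by the class of $L_{l+m}$, so necessarily $[L_l,L_m]\sim\lambda\,L_{l+m}$ for a unique scalar~$\lambda$, and the whole problem reduces to computing $\lambda$ and matching it with the asserted expression.

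To extract $\lambda$ I would isolate a cohomological invariant. The key observation is that an inner derivation $\ad_a$, with $a$ homogeneous, strictly lowers the $y$-degree when evaluated on~$x$: since $[y,x]=x^N$ carries no~$y$, the element $[a,x]$ has $y$-degree strictly below that of~$a$, and a degree count shows that no inner derivation of degree $l+m$ can reach, on~$x$, the $y$-degree of the distinguished monomial $x^{i'}y^{j'}$ occurring in $L_{l+m}(x)$, where $(l+m)+1=i'+j'(N-1)$ with $1\leq i'\leq N-1$. Consequently the coefficient of $x^{i'}y^{j'}$ in $D(x)$ is constant along each class of $\HH^1(A_N)_{l+m}$, and in particular $\lambda$ is the coefficient of $x^{i'}y^{j'}$ in $[L_l,L_m](x)$, normalized by that of $L_{l+m}(x)$.

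It then remains to compute that coefficient from $[L_l,L_m](x)=L_l(x^uy^v)-L_m(x^iy^j)$ by the Leibniz rule, and the identity $(l+m)+1=(i+u-1)+(j+v)(N-1)$ makes the dichotomy transparent. The top $y$-degree that either $L_l(x^uy^v)$ or $L_m(x^iy^j)$ can attain is $j+v$, produced when the derivation lands on one of the~$x$'s, through $L_l(x)=x^iy^j$, or on one of the~$y$'s, through the leading term $\tfrac{N-i}{j+1}x^{i-1}y^{j+1}$ of $L_l(y)$ — here I use that the coefficient $c_0(N-1)$ in~\eqref{eq:ply} equals~$1$. When $i+u\leq N$ one has $i'=i+u-1$ and $j'=j+v$, so the distinguished monomial sits exactly at this top $y$-degree; collecting the contributions, normal-ordering (which perturbs only lower $y$-degrees), and folding in the normalizing scalars yields the claimed value $\frac{l(v+1)-m(j+1)}{N-1}$. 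When $i+u>N$, instead, $i'=i+u-N$ and $j'=j+v+1$, so the distinguished monomial lies one $y$-degree \emph{above} anything the bracket can produce; its coefficient is therefore~$0$ and $[L_l,L_m]\sim0$, exactly as stated.

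Two bookkeeping points deserve attention. The derivations with $j=-1$ or $v=-1$ — those spanning the nilpotent ideal~$\Nil$, for which the value on~$x$ is~$0$ — must be passed through the same computation separately, but they cause no trouble: when both indices are of this type the bracket vanishes identically, consistently with the formula, whose factors $v+1$ and $j+1$ then vanish. The genuine obstacle is the final simplification in the case $i+u\leq N$: the raw coefficient emerging from the Leibniz computation is the unwieldy $u-i+\frac{v(N-i)}{j+1}-\frac{j(N-u)}{v+1}$, and one must verify that multiplying by the ratio of normalizing constants collapses it precisely to $\frac{l(v+1)-m(j+1)}{N-1}$. This is where the particular choice of the scalars defining the~$L_j$ is forced, and where the bulk of the elementary but delicate algebra resides.
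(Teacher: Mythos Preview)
Your approach is essentially the paper's own: the paper computes $[\partial_l,\partial_m](x)$ modulo $F_{j+v-1}$, compares it with $\partial_{l+m}(x)$, and invokes Lemma~\ref{lemma:cut} --- precisely your ``key observation'' that an inner derivation of the right degree cannot contribute at $y$-degree~$j'$ on~$x$ --- to conclude that the difference is inner (this is Lemma~\ref{lemma:brackets}); the rescaling from $\partial_l$ to~$L_l$ and the simplification of the structure constants is then Corollary~\ref{coro:mlmm}.

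There is, however, a genuine gap at $l+m=0$. First, a factual slip: $L_0$ is \emph{not} a scalar multiple of~$\partial_0$; by definition $L_0=-\tfrac{1}{N-1}E$, whereas $\partial_0$ is the central derivation with $\partial_0(x)=0$ and $\partial_0(y)=x^{N-1}$. This matters because $\HH^1(A)_0$ is two-dimensional, spanned by the classes of~$E$ and~$\partial_0$, and your invariant --- the coefficient of~$x$ in $d(x)$ --- detects only the $E$-component: $\partial_0(x)=0$. You try to discard the $\partial_0$-component by asserting that ``the derived part of that graded piece is one-dimensional and spanned by~$L_{l+m}$'', but that claim is part of the theorem: that $\partial_0\notin\HH^1(A)'$ is deduced \emph{from} the bracket formula (in Proposition~\ref{prop:nil}), not used as an input to it. The paper avoids this circularity by computing, in the degree-zero cases, the values of $[\partial_l,\partial_m]$ on both~$x$ and~$y$ directly: since $A_0=\kk$, the only inner derivation of degree~$0$ is zero, so these two values pin down the class completely. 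Only two subcases arise: $(l,m)=(-N+1,N-1)$, where one finds $[\partial_{-N+1},\partial_{N-1}]=E$ exactly, and $(l,m)=(-k,k)$ with $1\leq k\leq N-2$, where the bracket vanishes identically.
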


In fact, for each non-zero $j$ the derivation~$L_j$ here is just a scalar
multiple of the derivation~$\partial_j$ of Theorem~\ref{coro:mlmm}, $L_0$
is a scalar multiple of the derivation~$E$, and the center of~$\HH^1(A)$ is
the span of the class of~$\partial_0$. In particular, the degree of~$L_j$
is~$j$.

This shows that the derived subalgebra~$\HH^1(A)'$ is, more or less, an
infinitesimal deformation of order~$N$ of the Witt algebra~$\Der(\kk[x])$
--- which is now realized as the subalgebra spanned by the sequence
$(L_{(-N+1)j})_{j\geq-1}$. It would be very interesting to have \emph{a
priori} reasons for this.

\bigskip

The first Hochschild cohomology of an algebra plays ---~in principle, but
usually not in reality ---  the role of the Lie algebra of the group of
outer automorphism of the algebra. For our algebra this does not quite
work, and since the units of~$A_N$ are all scalar and therefore central its
usual outer automorphism group coincides with the plain automorphism group.
There is an alternative notion of inner-automorphism that is useful in Lie
theory, though, in which we call an automorphism inner if it is a
composition of exponentials of locally $\ad$-nilpotent elements, and in our
situation it does do something, as the following result shows.

\begin{Theorem}
Let $\kk[x]\bowtie\kk^\times$ be the group whose underlying set is the
cartesian product $\kk[x]\times\kk^\times$ and whose multiplication is such that
  \[
  (f,\lambda)\cdot(g,\mu) = (\mu^{N-1}f+g\cdot\lambda,\lambda\mu)
  \]
for all $f$,~$g\in\kk[x]$ and all~$\lambda$,~$\mu\in\kk^\times$. 
\begin{thmlist}

\item There is
an isomorphism of groups
  \[
  \Phi:\kk[x]\bowtie\kk^\times\to\Aut(A_N)
  \]
such that $\Phi(f,\lambda)(x) = \lambda x$ and
$\Phi(f,\lambda)(y) = \lambda^{N-1}y + f$
for all $(f,\lambda)$ in~$\kk[x]\bowtie\kk^\times$.

\item The set of locally $\ad$-nilpotent elements of~$A_N$ is $\kk[x]$, and
for each $f\in\kk[x]$ we have $\exp\ad(f)=\phi_{x^Nf,1}$. The subset
$\Exp(A_N)$ of exponentials of locally $\ad$-nilpotent elements is a normal
subgroup of the automorphism group~$\Aut(A_N)$, and the map~$\Phi$ above
induces an isomorphism
  \[
  \overline\Phi:
  \frac{\kk[x]}{(x^N)}\bowtie\kk^\times
  \to
  \frac{\Aut(A_N)}{\Exp(A_N)}.
  \]
In particular, the quotient $\Aut(A_N)/\Exp(A_N)$ has a natural structure
of a Lie group over~$\kk$ of dimension~$N+1$, solvable of class~$2$ and, in
fact, an extension of~$\kk^\times$ by~$\kk^N$.

\item For each $g\in\kk[x]$ there is a derivation~$d_g:A_N\to A_N$ such
that $d_g(x)=0$ and $d_g(y)=g$, and it is locally nilpotent. The map
  \[
  g\in\kk[x]\mapsto d_g\in\Der(A_N)
  \]
is injective, and its image is the set of locally nilpotent derivations
of~$A_N$, which happens to be an abelian Lie subalgebra of~$\Der(A_N)$. The
set of exponentials of locally nilpotent derivations of~$A_N$ 
is the normal subgroup
  \(
  \Aut_0(A_N) \coloneqq \{\phi_{g,1}:g\in\kk[x]\}
  \),
and it sits in an extension of groups
  \[
  \begin{tikzcd}
  0 \arrow[r]
    & \Aut_0(A) \arrow[hook, r]
    & \Aut(A) \arrow[r, "\det"]
    & \kk^\times \arrow[r]
    & 1
  \end{tikzcd}
  \]
in which $\det(\phi_{\lambda,f})=\lambda$ for all $(\lambda,f)\in
\kk[x]\bowtie\kk^\times$, and which is split by the morphism
$\lambda\in\kk^\times\mapsto\phi_{0,\lambda}\in\Aut(A)$.

\end{thmlist}
\end{Theorem}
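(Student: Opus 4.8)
The plan is to establish part (i) first---determining $\Aut(A_N)$ completely---and then to deduce (ii) and (iii) from it with little extra work. Write $\phi_{f,\lambda}$ for $\Phi(f,\lambda)$. That each $\phi_{f,\lambda}$ is a well-defined endomorphism is a one-line verification: since $f\in\kk[x]$ commutes with $x$ and $[y,x]=x^N$, one has $[\phi_{f,\lambda}(y),\phi_{f,\lambda}(x)]=\lambda^N[y,x]=(\lambda x)^N$, so the defining relation is preserved. Composing two such maps and using that $\phi_{f,\lambda}(g)=g(\lambda x)$ for $g\in\kk[x]$ shows that $\Phi$ is a homomorphism for the stated multiplication, and it is visibly injective. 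Everything therefore reduces to the surjectivity of $\Phi$.

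For surjectivity I would first prove the first assertion of (ii), that the locally $\ad$-nilpotent elements of $A_N$ are exactly $\kk[x]$. One inclusion is easy, since $[y,x^k]=kx^{N+k-1}$ gives $[p(x),y]=-x^Np'(x)\in\kk[x]$, so every $\ad_{p(x)}$ strictly lowers the degree in $y$ and is locally nilpotent. For the converse I pass to the associated graded algebra for the filtration by degree in $y$: this is the commutative polynomial ring $\kk[x,y]$, equipped with the Poisson bracket determined by $\{y,x\}=x^N$. If $a$ has positive $y$-degree $J$ and symbol $p_J(x)y^J$, then tracking the coefficient of the highest power of $x$ in the symbols of the iterated brackets $(\ad_a)^n(x)$ yields a recursion whose factors are nonzero in characteristic zero; hence these symbols never vanish and $a$ is not locally $\ad$-nilpotent. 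The same top-degree analysis shows that the centralizer of $x$ is $\kk[x]$. Granting this, any $\phi\in\Aut(A_N)$ must carry the invariant subalgebra $\kk[x]$ to itself and restrict there to an automorphism, so $\phi(x)=\lambda x+c$ with $\lambda\in\kk^\times$; passing to the abelianization $A_N^{\mathrm{ab}}=\kk[x,y]/(x^N)$, whose nilradical $(x)$ is preserved, forces the constant $c$ to vanish. Applying $\phi$ to $[y,x]=x^N$ then gives $[\phi(y),x]=\lambda^{N-1}x^N$, and since $\lambda^{N-1}y$ is one solution and the centralizer of $x$ is $\kk[x]$, we obtain $\phi(y)=\lambda^{N-1}y+f$ with $f\in\kk[x]$; that is, $\phi=\phi_{f,\lambda}$.

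Part (ii) is then mostly bookkeeping. For $f\in\kk[x]$ the inner derivation $\ad(f)$ is locally nilpotent with $(\ad f)^2(y)=0$, so $\exp\ad(f)$ fixes $x$ and sends $y$ to $y-x^Nf'$; as $f$ ranges over $\kk[x]$ these exhaust $\{\phi_{g,1}:g\in(x^N)\}=\Phi((x^N)\times\{1\})$, which is thus $\Exp(A_N)$. A short conjugation computation in $\kk[x]\bowtie\kk^\times$---using that the action $x\mapsto\lambda x$ preserves the ideal $(x^N)$---shows $(x^N)\times\{1\}$ is normal, so $\Exp(A_N)$ is a normal subgroup and $\Phi$ descends to the isomorphism $\overline\Phi$ onto $\Aut(A_N)/\Exp(A_N)$. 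The remaining structural claims follow from $\dim_\kk\kk[x]/(x^N)=N$ together with the observation that $(\kk[x]/(x^N))\bowtie\kk^\times$ is an extension of $\kk^\times$ by the additive group $\kk^N$, and is metabelian, i.e. solvable of class two.

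Finally, for (iii): each $d_g$ is a genuine derivation because it annihilates both sides of $[y,x]=x^N$; it lowers the degree in $y$ and squares to zero on generators, so it is locally nilpotent, and $[d_g,d_h]=0$ is immediate, so $g\mapsto d_g$ is an injection onto an abelian subalgebra. The substantive point is that these are \emph{all} the locally nilpotent derivations. If $d$ is locally nilpotent, every $\exp(sd)$ is an automorphism, so by (i) $\exp(sd)(x)=c(s)\,x$ for a scalar $c(s)\in\kk^\times$; reading the left-hand side as a polynomial in $s$ valued in $A_N$, whose value lies in $\kk x$ for every $s\in\kk$, shows every coefficient and in particular $d(x)$ lies in $\kk x$, whereupon local nilpotence forces $d(x)=0$. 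Then $[d(y),x]=0$ places $d(y)$ in the centralizer $\kk[x]$, so $d=d_g$. Since $\exp(d_g)=\phi_{g,1}$, the exponentials of locally nilpotent derivations are exactly $\Aut_0(A_N)$, and the projection $\phi_{f,\lambda}\mapsto\lambda$ is a homomorphism onto $\kk^\times$ with kernel $\Aut_0(A_N)$, split by $\lambda\mapsto\phi_{0,\lambda}$---the asserted extension. The one genuinely delicate step in this whole program is the converse half of the locally $\ad$-nilpotent characterization: controlling the symbols of $(\ad_a)^n(x)$ inductively is what unlocks the computation of $\Aut(A_N)$, and every other step is comparatively formal.
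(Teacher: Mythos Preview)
Your argument is correct, but it is organized differently from the paper and uses different invariants at the two key junctures.

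For the surjectivity in~(i), the paper does \emph{not} go through the locally $\ad$-nilpotent elements. Instead it first proves that the monoid of non-zero normal elements of~$A_N$ is exactly $\{\lambda x^i:\lambda\in\kk^\times,i\geq0\}$; any automorphism permutes this monoid and fixes the scalars, so $\phi(x)=\lambda x$ drops out immediately, with no constant term to kill. Your route---$\kk[x]$ is the set of locally $\ad$-nilpotent elements, hence $\phi$-stable, hence $\phi(x)=\lambda x+c$, and then $c=0$ via the nilradical of the abelianization $\kk[x,y]/(x^N)$---is a perfectly good alternative; it has the virtue that it recycles the hard step of~(ii), whereas the paper's normal-element lemma is logically independent of~(ii) and slightly shorter. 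For the converse in~(ii), the paper introduces a ``slope'' $\nu(a_l)/l$ for elements $u=\sum a_iy^i\notin\kk[x]$ and shows that bracketing with~$u$ strictly increases the slope of any element of larger slope, then applies this to $x^my$ with $m$ large; you instead iterate $\ad(a)$ on~$x$ and track the leading $x$-coefficient of the Poisson symbol, obtaining at step~$n$ the non-vanishing factor $J+n\bigl(J(N-1)+r\bigr)$ where $r=\deg_x p_J$. Both arguments exploit the same associated-graded Poisson structure; yours is more direct but the paper's slope invariant is reusable elsewhere. Your treatment of~(iii) is essentially the paper's: the Vandermonde/polynomial-interpolation step you describe is exactly how the paper extracts $d(x)\in\kk x$ from $\exp(sd)\in\Aut(A_N)$.
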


We have collected in this statement the results of
Propositions~\ref{prop:aut}, \ref{prop:ad-nilpotent} and~\ref{prop:lnd},
Corollaries~\ref{coro:aut-quot} and~\ref{coro:aut-0}. This information is
useful, for example, when computing the action of~$\Aut(A_N)$ on derived
objects, like Hochschild cohomology or cyclic homology, on which
exponentials of locally $\ad$-nilpotent elements tend to act trivially. For
now, let us say that such an explicit description of the automorphism group
allows us to compute its center easily. It turns out to be significant in
several ways:

\begin{Theorem}
There is a locally nilpotent derivation $\partial_0:A_N\to A_N$ such that
$\partial_0(x)=0$ and $\partial_0(y)=x^{N-1}$ such that the map
  \[ \label{eq:sigma-t}
  t \in \kk \mapsto \sigma_t\coloneqq\exp t\partial_0 \in\Aut(A_N)
  \]
is an injective $1$-parameter subgroup of~$\Aut(A_N)$ whose image is the
center of~$\Aut(A_N)$. 
\begin{thmlist}

\item The infinitesimal generator~$\partial_0$ of this $1$-parameter
subgroup is not inner and its class in~$\HH^1(A)$ spans the center of this
Lie algebra.

\item The element~$x$ of~$A$ is normal, in that $xA=Ax$, and the
automorphism $\sigma_1:A_N\to A_N$ is the automorphism associated to it, so
that $ax=x\sigma_1(a)$ for all $a\in A_N$.

\item The algebra $A_N$ is twisted Calabi--Yau of dimension~$2$, and the
automorphism $\sigma_1$ is its modular (or Nakayama) automorphism, so that in particular
there is an automorphism of $A_N$-bimodules
  \[
  \H^2(A_N,A_N\otimes A_N)
        = \Ext_{A_N^e}(A_N,A_N\otimes A_N)
        \to {}_{\sigma_1}A_N.
  \]

\item The derivation~$\partial_0$ preserves the canonical «order» filtration
on~$A_N$, so it induces a derivation $\overline\partial_0:\gr A_N\to\gr
A_N$ on the corresponding associated graded algebra. If we endow $\gr A$
with its standard Poisson structure coming from the commutator of~$A_N$,
then $\overline\partial_0$ is the modular derivation of~$A_N$ in the sense
of A.~Weinstein \cite{Weinstein}, and the corresponding modular flow
  \[
  \sigma:t\in\kk\mapsto\exp t\overline\partial_0\in\Aut(\gr A)
  \]
is exactly the $1$-parameter group of automorphisms induced by the
flow~\eqref{eq:sigma-t} above.

\end{thmlist}
\end{Theorem}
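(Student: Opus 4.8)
The plan is to extract the four parts from the explicit descriptions of $\Aut(A_N)$ and of $\HH^1(A_N)$ obtained earlier, so that the only genuinely new work is the Calabi--Yau computation in part~(iii). First I would note that $\partial_0$ is the locally nilpotent derivation $d_{x^{N-1}}$ of Proposition~\ref{prop:lnd}, so that $\exp t\partial_0$ is an automorphism; since $\partial_0(x)=0$ and $\partial_0^2(y)=\partial_0(x^{N-1})=0$, the series truncates and $\sigma_t(x)=x$, $\sigma_t(y)=y+tx^{N-1}$, that is $\sigma_t=\Phi(tx^{N-1},1)$. That $t\mapsto\sigma_t$ is an injective homomorphism is then clear. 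To identify its image with the centre of $\Aut(A_N)$ I would transport the question through the isomorphism $\Phi\colon\kk[x]\bowtie\kk^\times\to\Aut(A_N)$ of Proposition~\ref{prop:aut} and compute the centre of $\kk[x]\bowtie\kk^\times$ directly from its multiplication law: demanding that $(f,\lambda)$ commute with every $(g,\mu)$ forces $\lambda=1$ and $f$ a scalar multiple of $x^{N-1}$, so the centre is $\{(cx^{N-1},1):c\in\kk\}$, which $\Phi$ sends onto $\{\sigma_c\}$.

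Part~(i) then follows quickly: $\partial_0$ is not inner because by Theorem~\ref{thm:hh1} its class is part of a basis of $\HH^1(A_N)_0$, and for the centrality I would use that the centre of the graded Lie algebra $\HH^1(A_N)$ is one-dimensional \cite{BLO:3}. Since $E$ acts on $\HH^1(A_N)_l$ as multiplication by $l$, the bracket with $E$ is the degree operator and the centre must lie in degree~$0$, namely in $\lin{\partial_0,E}$; writing a central class as $a\partial_0+bE$ and bracketing with some $\partial_l$, $-N+1\le l<0$ — for which $[\partial_0,\partial_l]=0$, both derivations killing $x$ and sending $y$ into $\kk[x]$ — forces $b=0$, so the centre is $\lin{\partial_0}$. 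Part~(ii) is a one-line induction: the identity $ax=x\sigma_1(a)$ holds on the generators (trivially on $x$, and on $y$ it is the relation $yx=x(y+x^{N-1})$) and is preserved under products, so it holds on all of $A_N$ and exhibits $x$ as normal with associated automorphism $\sigma_1$.

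The substance of the theorem is part~(iii). I would present $A_N$ as homologically smooth by writing down the length-two free $A_N^e$-resolution attached to the single relation $r=yx-xy-x^N$, with $d_1(\xi)=x\otimes1-1\otimes x$ and $d_1(\eta)=y\otimes1-1\otimes y$ on the generators dual to $x$ and $y$, and $d_2$ given by the Fox derivatives of $r$, explicitly
\[
d_2(1\otimes1)=\Bigl(y\otimes1-1\otimes y-\textstyle\sum_{j=0}^{N-1}x^j\otimes x^{N-1-j}\Bigr)\xi+(1\otimes x-x\otimes1)\eta,
\]
for which one checks $d_1d_2=0$. Applying $\hom_{A_N^e}(-,A_N^e)$ gives $\Ext^0_{A_N^e}=\Ext^1_{A_N^e}=0$ (a direct kernel computation) and $\Ext^2_{A_N^e}(A_N,A_N^e)=\operatorname{coker}d_2^{\ast}$, so $A_N$ is twisted Calabi--Yau of dimension~$2$, and it remains to identify the residual bimodule structure on this cokernel. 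On its cyclic generator $e$ the $\eta$-column yields $xe=ex$; feeding this into the $\xi$-column collapses the sum and rewrites the left action of $y$ as a right action, exhibiting $\Ext^2_{A_N^e}(A_N,A_N\otimes A_N)\cong{}_{\sigma_1}A_N$. Tracking this bimodule structure and pinning the twist down exactly is the step I expect to be the main obstacle; an alternative would be to invoke the known behaviour of Nakayama automorphisms under Ore extensions, applied to $A_N=\kk[x][y;\delta]$ with $\delta=x^N\,\tfrac{\d}{\d x}$.

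For part~(iv) I would check that $\partial_0$ preserves the order filtration (with $x$ in degree~$0$ and $y$ in degree~$1$), under which $\gr A_N=\kk[x,y]$ is commutative because $[y,x]=x^N$ drops filtration degree; the induced derivation $\overline\partial_0$ sends $x\mapsto0$ and $y\mapsto x^{N-1}$. The Poisson bracket inherited from the commutator is $\{y,x\}=x^N$, and I would then compute Weinstein's modular vector field $f\mapsto\operatorname{div}_\omega(X_f)$ for the standard volume $\omega=\d x\wedge\d y$ \cite{Weinstein} and verify that it agrees with $\overline\partial_0$, with the conventional normalisation of the modular field; since both one-parameter groups are then generated by the same derivation of $\gr A_N$, the modular flow $\exp t\overline\partial_0$ coincides with the flow induced by $\sigma_t$.
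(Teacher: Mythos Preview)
Your proposal is correct and, for the preamble and parts~(i), (ii), (iv), follows essentially the same route as the paper: the paper assembles the theorem from Corollary~\ref{coro:center} and Corollary~\ref{coro:expp0} for the one-parameter subgroup and its image, Lemma~\ref{lemma:sigma-1} for part~(ii), Proposition~\ref{prop:nil}\thmitem{4} for the centre of~$\HH^1(A)$ (with the same ``commute with~$E$, hence degree~$0$, and $E$ is not central'' argument you give), and Remark~\ref{rem:modular} for part~(iv). Your treatment of these pieces is entirely in line with the paper's.

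The one genuine difference is part~(iii). The paper does not carry out the direct resolution-and-dualise computation you sketch; its proof of Proposition~\ref{prop:CY} is a two-line appeal to the theorem of Liu, Wang and Wu \cite{LWW} on Ore extensions of Calabi--Yau algebras, which is exactly the ``alternative'' you mention at the end of that paragraph. Your direct approach is perfectly viable --- the free resolution you write down is the one in~\eqref{eq:res}, and the cokernel computation does pin the twist to~$\sigma_1$ --- but it is noticeably more laborious, and the step you flag as the main obstacle (tracking the bimodule structure on the cokernel) is precisely what the citation to~\cite{LWW} avoids. So here the paper takes the shortcut you offer yourself; what your direct argument buys is self-containment, at the cost of a page of bookkeeping.
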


This theorem combines the results of Lemma~\ref{lemma:sigma-1},
Corollaried~\ref{coro:center} and~\ref{coro:expp0},
Proposition~\ref{prop:CY}, and Remark~\ref{rem:modular}. All the objects
mentioned in this theorem are canonical. For example, as the units of~$A_N$
are central, the modular automorphism of~$A_N$ as a twisted Calabi--Yau
algebra is well-determined. As we wrote in the theorem,
the derivation~$\partial_0$ is not inner, but it is «logarithmically
inner», in that it coincides with the restriction to~$A_N$ of the
derivation
  \[
  a\in (A_N)_x \mapsto \frac{1}{x}[x,a] \in (A_N)_x
  \]
of the localization~$(A_N)_x$ of~$A_N$ at its normal element~$x$.

\bigskip

A natural thing to do at this point is to describe 
the finite subgroups of $\Aut(A_N)$, and that is easy since we know the
group very well. Our Proposition~\ref{prop:finite-subgroups} implies, among
other things, the following:

\begin{Theorem}
Every finite subgroup of~$\Aut(A)$ is cyclic, and conjugated to the
subgroup generated by~$\phi_{0,\lambda}$, with $\lambda$ a root of unity
in~$\kk$. 
\end{Theorem}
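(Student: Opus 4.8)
The plan is to transport everything through the isomorphism $\Phi\colon\kk[x]\bowtie\kk^\times\to\Aut(A_N)$ and to exploit the split extension recorded above, namely the surjection $\det\colon\Aut(A_N)\to\kk^\times$ with kernel $\Aut_0(A_N)=\{\phi_{f,1}:f\in\kk[x]\}\cong(\kk[x],+)$ and splitting $\lambda\mapsto\phi_{0,\lambda}$. So I fix a finite subgroup $G\le\Aut(A_N)$ and study it relative to this extension.

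First I would prove cyclicity. Since $\operatorname{char}\kk=0$, the group $\Aut_0(A_N)\cong(\kk[x],+)$ is torsion-free, so $G\cap\Aut_0(A_N)=1$ and the restriction $\det|_G$ is injective. It thus identifies $G$ with a finite subgroup of the multiplicative group $\kk^\times$, and every finite subgroup of the multiplicative group of a field is cyclic. Writing $n=|G|$ and choosing a generator $\lambda$ of $\det(G)$—necessarily a primitive $n$-th root of unity in $\kk$—there is a unique $f\in\kk[x]$ with $G=\langle\phi_{f,\lambda}\rangle$, and because $\det|_G$ is an isomorphism onto a group of order $n$ we have $\phi_{f,\lambda}^{\,n}=\id$.

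The substantive step is conjugacy. It suffices to find $h\in\kk[x]$ with $\phi_{h,1}\,\phi_{f,\lambda}\,\phi_{h,1}^{-1}=\phi_{0,\lambda}$, for then conjugation by $\phi_{h,1}\in\Aut_0(A_N)$ carries $G$ onto $\langle\phi_{0,\lambda}\rangle$. A direct computation in $\kk[x]\bowtie\kk^\times$ gives $\phi_{h,1}\,\phi_{f,\lambda}\,\phi_{h,1}^{-1}=\phi_{g,\lambda}$ with $g=f+\lambda^{N-1}h(x)-h(\lambda x)$, so the task becomes solving the linear equation $T(h)=f$ for the operator $T(h)(x)=h(\lambda x)-\lambda^{N-1}h(x)$. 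In the monomial basis $T$ is diagonal, acting on $x^k$ by the scalar $\lambda^{N-1}(\lambda^{\,k-(N-1)}-1)$, which vanishes exactly on the resonant degrees $k\equiv N-1\pmod n$ and is invertible elsewhere. Hence $T(h)=f$ is solvable as soon as $f$ has no resonant component.

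The main obstacle is precisely these resonant degrees, where $T$ is singular; here the finite-order hypothesis must be brought in. Expanding $\phi_{f,\lambda}^{\,n}$ in $\kk[x]\bowtie\kk^\times$ and using $\lambda^n=1$, I would show that its translation part has $x^k$-coefficient equal to a nonzero scalar multiple—by $n$, which is invertible since $\operatorname{char}\kk=0$—of the $x^k$-coefficient of $f$ whenever $k\equiv N-1\pmod n$, and zero otherwise. Thus $\phi_{f,\lambda}^{\,n}=\id$ forces the resonant coefficients of $f$ to vanish, so $f\in\img T$, the equation $T(h)=f$ has a solution, and the conjugation above finishes the proof. This is the concrete shadow of the vanishing $\H^1(\det(G),\kk[x])=0$ for a finite group acting on a $\QQ$-vector space, and it is the single point where both the finiteness of $G$ and the hypothesis $\operatorname{char}\kk=0$ are indispensable.
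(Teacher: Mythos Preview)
Your proposal is correct and follows essentially the same route as the paper's proof of Proposition~\ref{prop:finite-subgroups}. The paper first handles individual elements of finite order (part~\thmitem{1}) via exactly your conjugation-by-$\phi_{h,1}$ computation and the vanishing of resonant coefficients forced by $\phi_{f,\lambda}^n=\id$, and then for part~\thmitem{3} shows injectivity of $\pi=\det$ on~$G$ by noting that two elements of~$G$ with the same~$\lambda$ differ by a finite-order element of~$\Aut_0(A)$, hence coincide---which is your torsion-freeness observation; the only cosmetic difference is that you merge these two steps, and one small imprecision is that the nonzero scalar in front of the resonant $x^k$-coefficient is $n\lambda^{-k}$ rather than exactly~$n$, which of course does not affect the argument.
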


Of course, with this result at hand the obvious next thing to do, following
the classics, is to describe the invariant subalgebras corresponding to the
finite subgroups of~$\Aut(A_N)$. This appears to be fairly difficult, and
we do not consider this problem here. We instead take a less classical
direction and try to extend the result of the theorem and find all actions
of finite dimensional Hopf algebras on~$A_N$ --- that is, to put it in a
colorful language, to find all \emph{quantum finite groups} of
automorphisms of~$A_N$. Now, at that level of generality we do not know how
to approach the problem, so we restrict ourselves to looking for all
actions of generalized Taft Hopf algebras on~$A_N$. We know that all finite
groups of automorphisms are cyclic, and generalized Taft algebras can be
viewed as «quantum thickenings» of cyclic groups, so this is a reasonable
first step. What we find is the following result.

\begin{Theorem}\label{thm:taft}
Let $n$ and~$m$ be integers such that $1<m$ and $m\mid n$, and let
$\lambda\in\kk^\times$ be a primitive $m$th root of unity in~$\kk$. There
is no inner-faithful action of the generalized Taft algebra
$T_n(\lambda,m)$ on~$A$.
\end{Theorem}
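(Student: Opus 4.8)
The plan is to prove this by assuming, for contradiction, that the generalized Taft algebra $T_n(\lambda,m)$ acts inner-faithfully on $A_N$, and then analyzing what such an action must look like in terms of the very explicit description of $\Aut(A_N)$ obtained earlier (the isomorphism $\Phi:\kk[x]\bowtie\kk^\times\to\Aut(A_N)$ and the structure of $\Der(A_N)$). Recall that $T_n(\lambda,m)$ is generated by a grouplike element $g$ of order $n$ and a $(1,g)$-skew-primitive element $t$ satisfying $t^m=0$ and $gt=\lambda tg$. An action on $A_N$ gives an algebra automorphism $\rho(g)\in\Aut(A_N)$ of finite order $n$ and a $\rho(g)$-skew derivation $\rho(t):A_N\to A_N$, that is, a linear map $D\coloneqq\rho(t)$ satisfying the twisted Leibniz rule $D(ab)=D(a)b+\rho(g)(a)D(b)$, together with the compatibility $\rho(g)\circ D=\lambda\,D\circ\rho(g)$ and the nilpotency $D^m=0$.

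First I would pin down $\rho(g)$. By the previous theorem on finite subgroups, every finite-order automorphism is conjugate to some $\phi_{0,\zeta}$ with $\zeta$ a root of unity, so after replacing the action by a conjugate one I may assume $\rho(g)=\phi_{0,\zeta}$, which sends $x\mapsto\zeta x$ and $y\mapsto\zeta^{N-1}y$, where $\zeta$ is an $n$th root of unity. Next I would determine the skew derivation $D$: the twisted Leibniz rule means $D$ is determined by $D(x)$ and $D(y)$, and the constraint is that these values be compatible with the defining relation $yx-xy=x^N$ of $A_N$. Applying $D$ to this relation and using the twisted Leibniz rule (with $\rho(g)$ the twisting automorphism) yields a system of equations on $D(x)$ and $D(y)$; combined with the eigenvalue condition $\rho(g)D=\lambda D\rho(g)$, which forces $D(x)$ and $D(y)$ to be eigenvectors of $\rho(g)$ for eigenvalues determined by $\lambda$ and $\zeta$, this should sharply constrain the possible $D$. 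Because $A_N$ is graded with $x,y$ in degrees $1,N-1$ and $\rho(g)$ acts by the grading-compatible scalars $\zeta^{\deg}$, the eigenspace condition essentially selects $D$ among homogeneous maps of a fixed degree.

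The main obstacle, which I expect is the heart of the argument, is to show that the nilpotency $D^m=0$ together with $m\mid n$ forces the action to fail inner-faithfulness. Inner-faithfulness means that $\rho$ does not factor through any proper Hopf quotient of $T_n(\lambda,m)$; concretely, the Hopf ideals of $T_n(\lambda,m)$ are well understood, and the action fails to be inner-faithful precisely when either $\rho(t)=0$ (so the action factors through the group algebra $\kk\ZZ/n$) or $\rho(g)$ has order strictly dividing $n$ in a way compatible with a smaller Taft quotient. So the strategy is to show that \emph{any} skew derivation $D$ satisfying all the constraints above is forced to be zero, or else forces $\rho(g)$ to have too small an order, in either case exhibiting a proper Hopf quotient through which $\rho$ factors. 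I would carry this out by writing $D(x)$ and $D(y)$ explicitly as the eigenvectors allowed by the degree/eigenvalue analysis, substituting into the relation-compatibility equations to see which $(\zeta,\lambda)$ admit a nonzero solution, and then checking that for every such solution the operator $D$ either already satisfies $D^m=0$ trivially because $D=0$, or that the locally nilpotent structure of $\Der(A_N)$ and $\Exp(A_N)$ established earlier prevents $D$ from having order exactly $m$ while $\rho(g)$ has order exactly $n$.

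The key technical input I would lean on is the complete determination of locally nilpotent derivations and $\ad$-nilpotent elements from the automorphism theorem: the locally nilpotent derivations are exactly the $d_g$ with $d_g(x)=0$, $d_g(y)=g\in\kk[x]$, and these form an abelian Lie algebra. A skew derivation of $A_N$ that is genuinely new would have to mix this abelian picture with the scalar action of $\rho(g)$, and the rigidity of that picture is what I expect to rule out the simultaneous conditions $D^m=0$, $D\neq0$, and inner-faithfulness. If a direct case analysis on $(\zeta,\lambda)$ proves cumbersome, an alternative is to pass to the associated graded or to the representation of $T_n(\lambda,m)$ on the finite-dimensional quotients $A_N/(x^k)$ and argue that the induced finite-dimensional representations cannot be inner-faithful; but I anticipate the explicit eigenvector computation to be the cleanest route.
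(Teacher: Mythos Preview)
Your setup is correct --- the reduction to $\rho(g)=\phi_{0,\omega}$ with $\omega$ a primitive $n$th root of unity, and the recognition that $D=\rho(t)$ is a $\phi_{0,\omega}$-twisted derivation satisfying $\phi D=\lambda D\phi$ and $D^m=0$, is exactly how the paper begins. But the ``heart of the argument'' you sketch has a real gap.

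The key technical input you propose to lean on, namely the classification of locally nilpotent \emph{untwisted} derivations $d_g$, is the wrong tool: $D$ is a $\phi$-\emph{twisted} derivation, and the space $\Der(A,{}_\phi A)$ is not governed by $\LND(A)$. In fact, when $\omega^{N-1}=1$ the twisted cohomology $\HH^1(A,{}_\phi A)$ is infinite-dimensional (Proposition~\ref{prop:h1:phi}), so a direct eigenvector-by-eigenvector case analysis of $D(x),D(y)$ cannot terminate, and ``checking that $D$ cannot have order exactly $m$'' has no traction without a uniform description of all such $D$. What the paper actually does is compute $\HH^*(A,{}_\phi A)$ in Section~\ref{sect:twist} and extract from this the crucial fact (Proposition~\ref{prop:der:xinner}) that \emph{every} $\phi$-twisted derivation is X-inner: it is the restriction to $A$ of $\ad_{\tilde\phi}(u)$ for some $u\in A_x$. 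Only with $D$ written in this form can one compute its powers: a $q$-binomial identity (Lemma~\ref{lemma:non-zero}) collapses $\partial^n(a)$ to $(-1)^n[u^n,a]$, which is nonzero since $A_x$ has trivial center. Combined with an eigenvalue argument (Lemma~\ref{lemma:partial:inner}) showing that $\partial^m=0$ forces $n\leq m$, hence $n=m$, this gives the contradiction. Your proposal contains none of these three ingredients, and without the X-inner description there is no visible mechanism to control $D^m$.
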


We refer to Section~\ref{sect:taft} of the paper --- which ends with a proof of this
theorem, stated there as Proposition~\ref{prop:taft-actions} --- for the
precise description of what we mean by generalized Taft algebra, and for the
definition of inner-faithfulness, which is due to T.~Banica and J.~Bichon \cite{BB}.
Let us just say here that inner-faithfulness intends to be to actions of
Hopf algebras what faithfulness is to actions of groups.

The negative nature of this theorem is disappointing, but not
unexpected. For example, this «no quantum finite symmetries» phenomenon
occurs on Weyl algebras, algebras of differential operators, and more
generally certain algebraic quantizations, as shown by J.~Cuadra,
P.~Etingof and C.~Walton in~\citelist{\cite{CEW:1}\cite{CEW:2}} and by
those last two authors in~\cite{EW}. There are many other classes of
algebras, though, which exhibit non-trivial quantum symmetries, and the
study of this is an extremely interesting line of work. In this direction,
one should mention the work of S.~Montgomery and H.-J.~Schneider \cite{MS}
on finite dimensional algebras generated by one element, extended
later by Z.~Cline~\cite{Cline}; L.~Centrone and F.~Yasumura \cite{CY} on
finite dimensional algebras; Y.~Bahturin and S.~Montgomery \cite{BM} on
matrix algebras; R.~Kinser and C.~Walton \cite{KW} on path algebras;
J.~Gaddis, R.~Won and D.~Yee \cite{GWY} on quantum planes and quantum Weyl
algebras; Z.~Cline and J.~Gaddis \cite{CG} on quantum affine spaces,
quantum matrices, and quantized Weyl algebras; and J.~Gaddis and R.~Won
\cite{GW} in quantum generalized Weyl algebras.

The way we prove Theorem~\ref{thm:taft} is by studying the twisted
derivations of our algebra~$A_N$. This is a natural approach at this point,
for it consists of fixing an automorphism $\phi$ of the algebra and
computing $\HH^1(A_N,{}_\phi A_N)$, the first Hochschild cohomology space
of the algebra~$A_N$ with values in the $A_N$-bimodule that can be obtained
from~$A_N$ by twisted the left action by the automorphism~$\phi$, and we
can more or less use the same ideas that we used to compute the regular
Hochschild cohomology of~$A_N$. We do this in Section~\ref{sect:twist}. We
do not describe here the precise results obtained therein because of their
rather technical nature. We would like, nonetheless, to direct the reader's
attention to our Remark~\ref{rem:general-nonsense}, which explains the way
we construct twisted derivations in terms of the Gerstenhaber algebra structure
of Hochschild cohomology, and which is probably of more general applicability.
Finally, it should be remarked that the study of twisted Hochschild
cohomology should be useful to study actions of general finite dimensional
Hopf algebras, since we know from the classification theory of
A.~Andruskiewitsch and H.-J.~Schneider~\cite{ASch}  that finite dimensional
Hopf algebras are in many cases «built» from group-like and skew-primitive
elements, which give rise to automorphisms and twisted derivations of the
algebras upon which they act.

\bigskip

All the work described above involves the first Hochschild cohomology Lie
algebra~$\HH^1(A_N)$ and the automorphism group~$\Aut(A_N)$. Our algebra
also has a non-zero second Hochschild cohomology space $\HH^2(A_N)$, and
according to the classical deformation theory of M.~Gerstenhaber
\cite{Gerstenhaber:deformation}, its elements produce «by integration»
formal deformations of the algebra~$A_N$ just as the elements of~$\HH^1(A)$
produce (more or less\dots) automorphisms by exponentiation --- indeed, as
$\HH^3(A)=0$ this procedure of integration of elements of~$\HH^2(A)$ is
unobstructed, so always possible. We leave for future work the explicit
construction of these formal deformations, in the same vein as we
constructed explicitly derivations spanning $\HH^1(A)$. 

The natural thing to do, actually, is to study this problem for the whole
family of algebras~$A_h$ of~\citelist{\cite{BLO:1}\cite{BLO:2}\cite{BLO:3}}
to which the algebra~$A_N$ belongs at the same time: these algebras deform,
under appropriate hypotheses, to one another, and they all appear as
deformations of~$A_N$. The algebra~$A_N$ is the most singular element of
the class, so the geometry of the family at that point is the central point
to elucidate. This was the main motivation for the study of the
automorphism group of~$A_N$ described above, in fact. The description of
the Lie action of~$\HH^1(A_N)$ on~$\HH^2(A_N)$ that can be read off from
the work~\cite{LSo} of S.~Lopes and A.~Solotar should also be important to
do this.

\bigskip

Let us finish this introduction with a question. The form of the
derivations of~$A_N$ that we find in Section~\ref{sect:explicit} and, in
particular, the observations made at the end of Section~\ref{sect:phi}
about the elements~$\Phi_j$, hint at the idea that the algebra~$A_1$ that we
get by putting $N=1$ is not the correct «limit as $N$ converges to~$1$ of
the algebra~$A_N$». 

\begin{Question}
Is there an algebra~$\tilde A_1$ which better reflects the result of
«taking the limit of~$A_N$ as $N$ goes to~$1$» in that it has exterior
derivations involving the Bernoulli polynomials that we found in
equation~\eqref{eq:bern} at the end of Section~\ref{sect:phi} and the
Faulhaber formula?
\end{Question}

We could say that the family $(A_N)_{N\geq1}$ is, in a not very precise sense, flat with
respect to~$N$, but our calculation of~$\HH^1(A_N)$ shows that this space
is constant \emph{except} at~$N=1$. An algebra~$\tilde A_1$, obtained by
something like «dimensional regularization» but with respect to~$N$, would
fix this. Another example of the anomalous status of the algebra~$A_1$ in
the family $(A_N)_{N\geq1}$ is provided by Remark~\ref{rem:laguerre} below.

\section{Preliminaries}

We fix a field~$\kk$ of characteristic zero and an integer~$N\geq1$, and let $A$
be the algebra freely generated by two letters~$x$ and~$y$ subject to the
sole relation
  \[
  yx-xy = x^N.
  \]
If $N=0$, then this is a Weyl algebra, and if $N=1$ what we get is
isomorphic to the universal enveloping algebra of the non-abelian Lie
algebra of dimension~$2$. In general, we can view this algebra as the skew
polynomial algebra $\kk[x][y;\theta]$, with $\theta:\kk[x]\to\kk[x]$ the
derivation such that $\theta(x)=x^N$, and it follows from this that $A$ is
a noetherian domain, and that $\B=\{x^iy^j:i,j\geq0\}$ is a basis for~$A$.
There is a grading on~$A$ that makes~$x$ and~$y$ homogeneous of degrees~$1$
and~$N-1$, respectively. The homogeneous components of~$A$ are spanned by
the monomials in the basis~$\B$ that they contain, and they are all finite
dimensional exactly when $N\geq2$.

As $yx=xy+x^N=x(y+x^{N-1})$, it is easy to check that $Ax=xA$, that is,
that the element~$x$ is normal ---~in Lemma~\ref{lemma:normal} below we
will see that in fact it generates, together with the non-zero scalars, the
set of all the non-zero normal elements as a monoid. The set $S\coloneqq
\{x^i:i\geq0\}$ is then a left and right denominator set in~$A$ and we can
consider the localization $A_x\coloneqq S^{-1}A$ at~$S$. If $\kk[x^{\pm1}]$
is the algebra of Laurent polynomials and
$\tilde\theta:\kk[x^{\pm1}]\to\kk[x^{\pm1}]$ is the extension of the
derivation~$\theta$ to~$\kk[x^{\pm1}]$, then it is immediate that $A_x$ can
be viewed as the skew polynomial algebra~$\kk[x^{\pm1}][y;\tilde\theta]$.
The algebra~$A_x$ is graded, with $x^{-1}$ of degree~$-1$.

There is, on the other hand, an algebra filtration~$(F_k)_{k\geq-1}$ on~$A$
that has, for each $k\geq-1$, its $k$th layer $F_k$ spanned by the set of monomials
$\{x^iy^j:i\geq0,j\leq k\}$, and whose associated graded algebra $\gr A$ is
freely generated as a commutative algebra by the principal
symbols~$\overline x\coloneqq x+F_{-1}\in F_0/F_1$ and $\overline
y\coloneqq y+F_{0}\in F_1/F_0$  of~$x$ and~$y$, and these have degree~$0$
and~$1$, respectively. In particular ---~and we will use this fact all the
time without mentioning it~--- for all $i$,~$j\in\NN_0$ we have that
  \[
  y^ix^j \equiv x^iy^j \mod F_{i-1}.
  \]
As it is well-known, the non-commutativity of the algebra~$A$ gives rise to
a Poisson algebra structure on $\gr A$: this is the
biderivation~$\{\place,\place\}:\gr A\times\gr A\to\gr A$ uniquely
determined by the condition that
  \[
  \{\overline y,\overline x\} = \overline x^N.
  \]

Computing in~$A$ usually involves a significant amount of reordering
products, and the following lemma gives two important special cases of
this that are moreover related by a pleasing symmetry. If $k$ and~$x$ are
elements of a commutative ring~$\Lambda$ and $i\in\NN_0$, we define,
following Rafael Díaz and Eddy Pariguan \cite{DP}, the \newterm{Pochhammer
$k$-symbol} to be
  \[
  (x)_{k,i} \coloneqq x(x+k)(x+2k)\cdots(x+(i-1)k).
  \]
We will use a few times the equality
  \[ \label{eq:kpoch}
  \sum_{j\geq0}(a)_{k,j}\frac{t^j}{j!} = (1-kt)^{-a/k},
  \]
valid in the algebra~$\Lambda[[t]]$, provided, so that it actually makes
sense, that $\Lambda$ contains~$\QQ$. It can be proved by noticing that the
two sides are in the kernel of the differential operator
$(1-kt)\frac{\d}{\d t}-a$ and have the same constant term.

\begin{Lemma}\label{lemma:comm}
For each $i$,~$j\geq0$ we have that
  \begin{gather}
  y^jx^i = \sum_{t=0}^{j} (i)_{N-1,j-t} \binom{j}{t} x^{i+(j-t)(N-1)}y^t
     \label{eq:comm:1}
\shortintertext{and}
  x^iy^j = \sum_{t=0}^j(-i)_{N-1,j-t}\binom{j}{t}x^{(j-t)(N-1)}y^tx^i.
  \end{gather}
\end{Lemma}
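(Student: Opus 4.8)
The plan is to prove \eqref{eq:comm:1} directly from the skew-polynomial description $A=\kk[x][y;\theta]$, in which $yf=fy+\theta(f)$ for every $f\in\kk[x]$ and $\theta(x)=x^N$, and then to deduce the second identity from the first by exactly the symmetry alluded to in the preamble, working inside the localization~$A_x$ where $x$ is invertible. The point is that the second formula is not really an independent computation: it is the first one «reflected through» $x$.

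For \eqref{eq:comm:1} I would first record the general commutation rule in a derivation-type skew-polynomial ring, namely that $y^jf=\sum_{t=0}^{j}\binom{j}{t}\theta^{j-t}(f)\,y^t$ for all $f\in\kk[x]$ and all $j\geq0$. This is a routine induction on~$j$: the case $j=0$ is trivial, and the inductive step uses only $y\,\theta^{m}(f)=\theta^{m}(f)\,y+\theta^{m+1}(f)$ together with Pascal's rule. Specialising to $f=x^i$, it then remains to identify $\theta^{m}(x^i)$ with the Pochhammer $k$-symbol. Since $\theta(x^a)=a\,x^{a+N-1}$, a second short induction on~$m$ gives $\theta^{m}(x^i)=(i)_{N-1,m}\,x^{i+m(N-1)}$, the new factor $i+(m-1)(N-1)$ being exactly the exponent brought down when $\theta$ acts on $x^{i+(m-1)(N-1)}$. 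Substituting $m=j-t$ yields \eqref{eq:comm:1} verbatim.

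For the second identity I would exploit that everything already lives inside $A_x=\kk[x^{\pm1}][y;\tilde\theta]$, where $\tilde\theta(x^i)=i\,x^{i+N-1}$ holds for \emph{every} $i\in\ZZ$ by the power rule for the derivation~$\tilde\theta$ on the unit~$x$, not merely for $i\geq0$. The very same induction as above therefore shows that \eqref{eq:comm:1} remains valid in~$A_x$ for all integers~$i$. Now replacing $i$ by $-i$ there and multiplying by~$x^i$ on both the left and the right turns the left-hand side $y^jx^{-i}$ into $x^iy^j$ and each summand $x^{-i+(j-t)(N-1)}y^t$ into $x^{(j-t)(N-1)}y^tx^i$, producing precisely the second formula of the lemma; and since both of its sides already lie in $A\subseteq A_x$, the identity holds in~$A$.

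The only genuinely delicate points are bookkeeping ones, and I do not expect any real obstacle beyond them: correctly matching $\theta^{m}(x^i)$ against the definition of $(i)_{N-1,m}$, observing that substituting $-i$ for $i$ sends $(i)_{N-1,m}$ to $(-i)_{N-1,m}$ by the very definition of the symbol, and making sure the power rule $\tilde\theta(x^i)=i\,x^{i+N-1}$ is legitimately invoked for negative~$i$ once one has passed to~$A_x$. With these in hand the symmetry does all the work, so the substantive step is the clean proof of \eqref{eq:comm:1}, after which the second identity comes essentially for free.
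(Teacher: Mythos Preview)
Your proof is correct and takes a genuinely different route from the paper's. For \eqref{eq:comm:1}, the paper first establishes a Zhu--Vandermonde-type identity for Pochhammer $k$-symbols, proves the case $i=1$ by induction on~$j$, and then inducts on~$i$, using that identity to merge the Pochhammer factors at each step. Your approach via the general Ore-extension formula $y^jf=\sum_t\binom{j}{t}\theta^{j-t}(f)\,y^t$ together with the direct computation $\theta^m(x^i)=(i)_{N-1,m}\,x^{i+m(N-1)}$ bypasses that combinatorial lemma entirely and is shorter. For the second identity, the paper substitutes \eqref{eq:comm:1} into the right-hand side and again invokes the Vandermonde identity (with $a=-i$, $b=i$, so $(a+b)_{k,j}=\delta_{j,0}$) to collapse the resulting double sum; your passage to~$A_x$ and the substitution $i\mapsto-i$ followed by left and right multiplication by~$x^i$ is cleaner and makes the ``pleasing symmetry'' announced before the lemma fully explicit. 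The paper's argument has the minor advantage of staying inside~$A$ throughout, at the cost of the auxiliary Vandermonde-type lemma; yours trades that for a conceptually transparent derivation that explains \emph{why} the two formulas are mirror images of each other.
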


These two equalities tell us how to move $x^i$ from one side of~$y^j$ to
the other.

\begin{proof}
Let us start by proving an identity involving Pochhammer $k$-symbols that
is a version of the well-known Zhu--Vandermonde identity. We
fix~$k\in\kk$ and for each $a$ in~$\kk$ consider, as above, the formal
series $f_a\coloneqq\sum_{i\geq0}(a)_{k,i}t^i/i!\in\kk[[t]]$. We have that
$f_af_b=f_{a+b}$ for all choices of~$a$ and~$b$ in~$\kk$: this can be checked
by showing that both sides of the equality are annihilated by the operator
$(1-kt)\frac{\d}{\d t}-(a+b)$ and have the same constant term. Writing this
equality in terms of coefficients we see that for all $j\geq0$ we have
that
  \[
  \sum_{i=0}^j \frac{(a)_{k,j-i}}{(j-i)!} \frac{(b)_{i,k}}{i!}
        = \frac{(a+b)_{k,j}}{j!}.
  \]
Let us now prove the lemma. An obvious induction proves that
  \[ \label{eq:comm:3}
  y^jx = \sum_{t=0}^{j} (1)_{N-1,j-t} \binom{j}{t}
                             x^{1+(j-t)(N-1)}y^t
  \]
for all $j\geq0$, and this is the special case of the
identity~\eqref{eq:comm:1} of the lemma in which $i=1$. Let us now
fix~$i\geq0$ and assume inductively that the identity~\eqref{eq:comm:1}
holds. Then
  \begin{align}
  y^jx^{i+1} 
    &= \sum_{t=0}^{j} (i)_{N-1,j-t} \binom{j}{t} x^{i+(j-t)(N-1)}y^tx 
\intertext{and using~\eqref{eq:comm:3} we see this is}
    &= \sum_{t=0}^{j} 
       \sum_{s=0}^{t} 
          (i)_{N-1,j-t} \binom{j}{t} 
          (1)_{N-1,t-s} \binom{t}{s}
          x^{i+1+(j-s)(N-1)}y^s
       \\
    &= \sum_{s=0}^{j} 
       \frac{j!}{s!}
       \left(
       \sum_{t=0}^{j-s} 
           \frac{(i)_{N-1,j-s-t}}{(j-s-t)!}
           \frac{(1)_{N-1,t}}{t!}
       \right)
          x^{i+1+(j-s)(N-1)}y^s
       \\
    &= 
       \sum_{s=0}^{j} 
          (i+1)_{N-1,j-s}
          \binom{j}{s}
          x^{i+1+(j-s)(N-1)}y^s.
  \end{align}
This proves~\eqref{eq:comm:1} for all values of~$j$. To prove the remaining
identity, we invert the one we already have: we fix $i$ and~$j$ in~$\NN_0$ and
compute that
  \begin{align}
  \MoveEqLeft[4]
  \sum_{t=0}^j(-i)_{N-1,j-t}\binom{j}{t}x^{(j-t)(N-1)}y^tx^i \\
    &= \sum_{t=0}^j\sum_{s=0}^{t}
                (-i)_{N-1,j-t} (i)_{N-1,t-s}
                \binom{j}{t} \binom{t}{s}
                x^{i+(j-s)(N-1)}
                y^s
        \\
    &= \sum_{s=0}^{j}
                \frac{j!}{s!}
                \left(
                \sum_{t=0}^{j-s}
                \frac{(-i)_{N-1,j-t-s}}{(j-t-s)!}
                \frac{(i)_{N-1,t}}{t!}
                \right)
                x^{i+(j-s)(N-1)}
                y^s
     = x^i y^j,
  \end{align}
which is what we want.
\end{proof}

\begin{Remark}\label{rem:laguerre}
When $N=1$ the identity~\eqref{eq:comm:1} tells us that
$y^jx^i=x^i(y+i)^j$, and we can write it, passing to the
localization~$A_x$, in the form
  \[ \label{eq:xyx:N1}
  x^{-i}y^jx^i = (y+i)^j.
  \]
Let us suppose now that $N\neq1$ and show how to view the
identity~\eqref{eq:comm:1} in a similar way also in this case. In the
localization~$A_x$ we have that
  \[
  x^{N-1}(x^{-N+1}y - y x^{-N+1})x^{N-1}
        = yx^{N-1} - x^{N-1}y
        = (N-1)x^{2N-2},
  \]
so that
  \[
  \left[ \frac{x^{-N+1}}{N-1}, y \right] = 1.
  \]
The subalgebra of~$A_x$ generated by $z\coloneqq x^{-N+1}/(N-1)$ and $y$ is
thus isomorphic to the first Weyl algebra. In this subalgebra we can
consider normal-order products ---~well-known in quantum field theory, for
example~--- and powers: we consider the symbol $\normalorder{zy}$ with the
property that $(\normalorder{zy})^i=z^iy^i$ for all $i\geq0$.

For each choice of $\alpha\in\CC$ and $j\in\NN_0$ we let
$L^{(\alpha)}_j\in\CC[t]$ be the $j$th generalized Laguerre polynomial with
parameter~$\alpha$. This is the unique solution of the Laguerre
differential equation
  \[
  tu'' + (\alpha+1-t)u' + ju = 0
  \]
that is a polynomial with leading coefficient $(-1)^j/j!$. It has
degree~$j$ and can be found to be given by the formula
  \[ \label{eq:laguerre}
  L^{(\alpha)}_j(t) = \sum_{i=0}^j(-1)^i\binom{j+\alpha}{j-i}
        \frac{t^i}{i!}.
  \]
When $\alpha>-1$, the sequence $(L^{(\alpha)}_j)_{j\geq0}$ is obtained from
$(t^j)_{j\geq1}$ by Gram--Schmidt orthogonalization over the
interval~$[0,+\infty)$ with respect to the weighting function~$t^\alpha
e^{-t}$ associated to the gamma distribution. We refer
to~\cite{AAR}*{\textsection 6.3} for more information on these
polynomials.

With this setup, we claim now that for all $i$,~$j\in\NN_0$ have that
  \[ \label{eq:xyx:NN}
  x^{-i}(\normalorder{zy})^j x^i
  = (-1)^j j! \, L^{(-i/(N-1)-j)}_j(\normalorder{zy}).
  \]
We view this as a reasonable generalization of~\eqref{eq:xyx:N1} for $N$
greater than~$1$ --- in any case, the fact that the commutation relations
of the algebra can be written in terms of hyper\-geometric functions is
interesting! Notice that the polynomial $(-1)^jj!L_j^{(-i/(N-1)-j)}$ is
monic, so the coefficient $(-1)^jj!$ appears here just as a
consequence of the standard normalization of Laguerre polynomials. We can
prove the equality~\eqref{eq:xyx:NN} by evaluating its right hand side using
the explicit formula~\eqref{eq:laguerre} for the Laguerre polynomials and
then using the first identity of Lemma~\ref{lemma:comm}. 
\end{Remark}

Another very useful observation that we will use many times is the
following one.

\begin{Lemma}\label{lemma:centralizer:x}
The centralizer of~$x$ in~$A$ is~$\kk[x]$.
\end{Lemma}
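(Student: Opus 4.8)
The inclusion $\kk[x]\subseteq\{a\in A:xa=ax\}$ is immediate, so the whole content of the lemma is the reverse inclusion. The plan is to exploit the description of $A$ as a free left $\kk[x]$-module with basis $\{y^j:j\geq0\}$ --- equivalently, the order filtration $(F_k)$ by $y$-degree --- and to extract the top $y$-degree part of the commutator $[x,a]$.

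So suppose $a$ centralizes $x$ and write it uniquely as $a=\sum_{j=0}^{n}p_j(x)y^j$ with each $p_j\in\kk[x]$ and $p_n\neq0$; I want to conclude that $n=0$. Since $x$ commutes with every $p_j(x)$, we have $[x,a]=\sum_{j=0}^n p_j(x)[x,y^j]$, so everything reduces to understanding $[x,y^j]$. Here I would feed in the commutation formula~\eqref{eq:comm:1} of Lemma~\ref{lemma:comm} with $i=1$: its $t=j$ term is exactly $xy^j$ and cancels against $xy^j$, while its $t=j-1$ term contributes $-jx^Ny^{j-1}$, so that
\[
[x,y^j] = -j\,x^N y^{j-1} + (\text{terms of } y\text{-degree} < j-1).
\]

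Now I would simply read off the coefficient of $y^{n-1}$ in $[x,a]$. For $j<n$ the summand $p_j(x)[x,y^j]$ has $y$-degree at most $j-1<n-1$, so the only contribution to $y$-degree $n-1$ comes from $j=n$ and equals $-n\,x^N p_n(x)$. Because $\{x^iy^j:i,j\geq0\}$ is a basis of $A$, the vanishing $[x,a]=0$ forces this coefficient to be zero; as $\kk$ has characteristic zero and $\kk[x]$ is a domain, $-n\,x^N p_n(x)=0$ with $p_n\neq0$ is possible only when $n=0$, and then $a=p_0(x)\in\kk[x]$, as desired.

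The argument is essentially routine once Lemma~\ref{lemma:comm} is available; the only point requiring care is that no cancellation can occur at the top $y$-degree, which is exactly the observation that smaller values of $j$ cannot reach $y$-degree $n-1$. I note that this is the shadow, at the level of the associated graded algebra, of the Poisson computation $\{\overline x,\overline p_n\,\overline y^{\,n}\}=-n\,\overline x^N\,\overline p_n\,\overline y^{\,n-1}$ in $\gr A$ coming from $\{\overline y,\overline x\}=\overline x^N$: since the principal symbol of $[x,a]$ is the Poisson bracket of the symbols, a nonzero symbol $\overline p_n\,\overline y^{\,n}$ of $a$ with $n\geq1$ would yield a nonzero symbol for $[x,a]$, which is absurd. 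Either phrasing settles the lemma.
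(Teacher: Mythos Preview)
Your proof is correct and follows essentially the same route as the paper's: write $a$ in the $\kk[x]$-basis $\{y^j\}$, compute the commutator with $x$, and read off the leading $y$-degree term to force the top coefficient to vanish. The paper states the congruence $[u,x]\equiv la_lx^Ny^{l-1}\bmod F_{l-2}$ directly rather than citing Lemma~\ref{lemma:comm}, and omits your Poisson-bracket reformulation, but the argument is the same.
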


\begin{proof}
If $u$ is an element of~$A\setminus\kk[x]$ that commutes with~$x$, then
there exist $l\geq1$ and $a_0$,~\dots,~$a_l\in\kk[x]$ such that
$u=\sum_{i=0}^la_iy^i$ and $a_l\neq0$, and we have that
  \[
  0 = [u,x] = \sum_{i=0}^la_l[y^i,x]
        \equiv la_lx^Ny^{l-1} \mod F_{l-2},
  \]
which is absurd. As every element of~$\kk[x]$ obviously commutes with~$x$,
this proves the lemma.
\end{proof}

Going a bit further, we can describe the center of~$A$:

\begin{Proposition}\label{prop:center}
The center of~$A$ is~$\kk$.
\end{Proposition}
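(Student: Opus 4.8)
The plan is to leverage Lemma~\ref{lemma:centralizer:x} to cut the problem down to a one-variable computation. First I would note that the center of~$A$ is contained in the centralizer of~$x$, which by Lemma~\ref{lemma:centralizer:x} is exactly~$\kk[x]$; so any central element~$z$ must be a polynomial $z=f(x)$ for some $f\in\kk[x]$. The remaining task is to identify which such polynomials also commute with~$y$, and the claim will be that only the constants do.

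To analyze the condition $[y,f(x)]=0$, I would use that the inner derivation $a\mapsto[y,a]$ of~$A$ restricts on the subalgebra~$\kk[x]$ to the derivation~$\theta$ defined earlier, that is, the $\kk$-linear derivation of~$\kk[x]$ with $\theta(x)=x^N$. Indeed, $[y,x]=yx-xy=x^N=\theta(x)$, and since both $[y,\place]$ and~$\theta$ are derivations agreeing on the generator~$x$ of~$\kk[x]$, they agree on all of~$\kk[x]$. Concretely this gives, for $f=\sum_i c_ix^i$, the formula
  \[
  [y,f(x)] = \theta(f) = x^N f'(x).
  \]

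Now I would impose centrality: if $z=f(x)$ is central then $x^Nf'(x)=0$ in~$A$. Because~$A$ is a domain (as recalled in the Preliminaries, being a skew polynomial ring over~$\kk[x]$) and $x^N\neq0$, this forces $f'(x)=0$, and since~$\kk$ has characteristic zero we conclude that $f$ is a constant. Hence $z\in\kk$, and as every scalar is obviously central this proves that the center of~$A$ is exactly~$\kk$.

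There is no genuine obstacle here once Lemma~\ref{lemma:centralizer:x} is in hand: the whole argument reduces to the observation that commuting with~$x$ pins an element down to~$\kk[x]$, and then commuting with~$y$ becomes the elementary differential condition $x^Nf'(x)=0$. The only points requiring a word of justification are the identification of $[y,\place]|_{\kk[x]}$ with the derivation~$\theta$ and the use of the domain property to cancel the nonzero factor~$x^N$; both are immediate from facts already established in the Preliminaries.
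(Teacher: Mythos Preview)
Your proof is correct and follows essentially the same route as the paper: use Lemma~\ref{lemma:centralizer:x} to force a central element into~$\kk[x]$, then apply $[y,\place]$ to obtain $x^Nf'(x)=0$ and conclude that $f$ is constant. You supply a bit more justification (identifying $[y,\place]|_{\kk[x]}$ with~$\theta$ and invoking the domain property), but the argument is the same.
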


\begin{proof}
If $u$ is a central element of~$A$ then in particular $u$ commutes with~$x$
and we know from the lemma that this implies that $u\in\kk[x]$. As it also
commutes with~$y$, we also have that $0=[y,x]=x^Nu'$, so that $u$ is in
fact in~$\kk$.
\end{proof}

We will need further on the following related result, which has an entirely
similar proof.

\begin{Proposition}\label{prop:center:x}
The center of the localization~$A_x$ is~$\kk$.
\end{Proposition}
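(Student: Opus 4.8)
The statement to prove is that the center of the localization $A_x$ is $\kk$. The strategy mirrors exactly the two results just established for $A$ itself: first pin down the centralizer of $x$ inside $A_x$, and then use the second generator $y$ to cut that down to the scalars. Concretely, I would first prove the analogue of Lemma~\ref{lemma:centralizer:x}, namely that the centralizer of $x$ in $A_x$ is the Laurent polynomial algebra $\kk[x^{\pm1}]$, and then run the argument of Proposition~\ref{prop:center} verbatim in the localized setting.

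\begin{proof}
Recall that $A_x=\kk[x^{\pm1}][y;\tilde\theta]$, so every element $u\in A_x$ can be written uniquely as $u=\sum_{i=0}^l a_iy^i$ with $a_0,\dots,a_l\in\kk[x^{\pm1}]$, and if $u\notin\kk[x^{\pm1}]$ we may assume $l\geq1$ and $a_l\neq0$. Since $x$ is a unit in $A_x$, the filtration argument of Lemma~\ref{lemma:centralizer:x} applies without change: computing modulo the span of monomials of $y$-degree at most $l-2$, we find
  \[
  0=[u,x]=\sum_{i=0}^l a_i[y^i,x]\equiv l\,a_l x^N y^{l-1},
  \]
and since $a_l\neq0$ and $x^N$ is a unit this is a contradiction. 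Hence the centralizer of $x$ in $A_x$ is $\kk[x^{\pm1}]$. Now suppose $u$ is central in $A_x$. Then $u$ commutes with $x$, so $u\in\kk[x^{\pm1}]$, and $u$ commutes with $y$, so
  \[
  0=[y,u]=x^N u',
  \]
where $u'$ denotes the derivative of the Laurent polynomial $u$ with respect to $x$ and we have used $\tilde\theta(x)=x^N$. As $x^N$ is a unit, $u'=0$, which forces $u\in\kk$. Conversely $\kk$ is obviously central, so the center of $A_x$ is exactly $\kk$.
\end{proof}

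I expect no genuine obstacle here, since localizing at $x$ only makes $x$ invertible and changes $\kk[x]$ to $\kk[x^{\pm1}]$, while the two key computations—the leading-term estimate for $[u,x]$ and the identity $[y,u]=x^N u'$—are insensitive to inverting $x$. The one point that deserves a moment's care is the commutation relation in $A_x$: one must check that $\tilde\theta(x^{-1})=-x^{-N+2}$ (equivalently, that the derivation $\tilde\theta$ on $\kk[x^{\pm1}]$ still sends a Laurent polynomial $u$ to $x^N u'$), so that the kernel of $u\mapsto[y,u]=x^N u'$ on $\kk[x^{\pm1}]$ is still just the constants. That the only Laurent polynomials with vanishing derivative are the scalars is immediate. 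With these minor verifications in place the proof is a direct transcription of the two preceding results.
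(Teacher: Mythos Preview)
Your proof is correct and follows essentially the same two-step strategy as the paper: show that a central element lies in $\kk[x^{\pm1}]$ by examining its commutator with~$x$, then use $[y,u]=x^Nu'$ to force $u\in\kk$. The only difference is in the first step: where you redo the leading-term computation of Lemma~\ref{lemma:centralizer:x} directly inside~$A_x$, the paper instead clears denominators---choosing $l$ with $x^lz\in A$, noting that $x^lz$ commutes with~$x$, and invoking Lemma~\ref{lemma:centralizer:x} for~$A$ to conclude $x^lz\in\kk[x]$, hence $z\in\kk[x^{\pm1}]$. Both routes are equally short; the paper's avoids repeating any computation, while yours is more self-contained.
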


\begin{proof}
Let $z$ be a central element of~$A_x$. There is a positive integer
$l\in\NN$ such that $x^lz$ is in~$A$, and, as this product commutes
with~$x$, we know that it belongs to~$\kk[x]$. We thus see that $z$ is
in~$\kk[x^{\pm1}]$, and therefore that $0=[y,z]=x^nz'$: it is in fact a
scalar.
\end{proof}

\section{The automorphism group}

After the preliminaries of the previous section, our first objective is to
compute the group of automorphisms of the algebra~$A$, and to do that we
will start by finding the \newterm{normal} elements of~$A$, that is, those
elements~$u$ of~$A$ such that $uA=Au$.

\begin{Lemma}\label{lemma:normal}
The set of non-zero normal elements in~$A$ is $\N\coloneqq\{\lambda
x^i:\lambda\in\kk^\times,i\geq0\}$.
\end{Lemma}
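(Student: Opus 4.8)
The plan is to prove the two inclusions separately, the easy one first. Every element of~$\N$ is normal: the scalars are central, the element~$x$ is normal since $Ax=xA$, and products and scalar multiples of normal elements are again normal, so $\lambda x^i\in\N$ is normal for all $\lambda\in\kk^\times$ and $i\geq0$. All the work is in the reverse inclusion, so I fix a non-zero normal element~$u$ and try to show that $u=\lambda x^i$ for some~$\lambda$ and some~$i$. Since $A$ is a domain, $u$ is a non-zero-divisor, and the normality condition $uA=Au$ lets me define a map $\psi\colon A\to A$ by declaring that $au=u\psi(a)$ for all $a\in A$; a routine check shows that $\psi$ is an algebra automorphism of~$A$. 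The strategy is to pin down~$\psi$ completely, recognize it as the automorphism attached to a suitable power of~$x$, and then conclude by a centrality argument in the localization~$A_x$.

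First I would determine~$\psi$ on the generators using the order filtration~$(F_k)$, whose associated graded~$\gr A$ is the commutative Poisson algebra $\kk[\overline x,\overline y]$ with $\{\overline y,\overline x\}=\overline x^N$. Writing $k$ for the order of~$u$, the identity $[u,x]=ux-xu=u\bigl(x-\psi(x)\bigr)$ has a left-hand side lying in~$F_{k-1}$ (because commutators drop filtration degree by one), while the right-hand side has order $k+\operatorname{ord}\bigl(x-\psi(x)\bigr)\geq k$ unless $x-\psi(x)=0$; since $A$ is a domain this forces $\psi(x)=x$. Applying~$\psi$ to the relation $[y,x]=x^N$ then gives $[\psi(y),x]=x^N$, and expanding $\psi(y)$ in the basis~$\B$ and reading off the top $y$-degree term by means of Lemma~\ref{lemma:comm} shows that $\psi(y)=y+h(x)$ for some $h\in\kk[x]$. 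The exact identity $[y,u]=u\bigl(\psi(y)-y\bigr)=uh$ now becomes, upon writing $u=\sum_{j=0}^k p_j(x)y^j$ with $p_k\neq0$ and comparing the coefficients of~$y^k$,
  \[
  p_k'(x)\,x^N = p_k(x)\,h(x).
  \]
This says $p_k\mid p_k'x^N$; since a non-zero root $\alpha\neq0$ of~$p_k$ of multiplicity~$m$ would have to divide~$p_k'$ to the same order~$m$, which is impossible, the polynomial~$p_k$ has no non-zero roots, so $p_k=\lambda x^a$ and therefore $h=a\,x^{N-1}$ with $a\geq0$. Consequently $\psi$ is exactly the automorphism $x\mapsto x$, $y\mapsto y+a\,x^{N-1}$, which is precisely the automorphism attached to the normal element~$x^a$, in the sense that $bx^a=x^a\psi(b)$ for all~$b\in A$.

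To finish, I would pass to the localization~$A_x$ and consider $v\coloneqq u\,x^{-a}$. For every $b\in A$ one has $bu=u\psi(b)$ and $bx^a=x^a\psi(b)$, and combining these gives
  \[
  vb = u x^{-a} b = u\,\psi(b)\,x^{-a} = b u x^{-a} = bv,
  \]
so that $v$ commutes with all of~$A$ and hence with all of~$A_x$. By Proposition~\ref{prop:center:x} the center of~$A_x$ is~$\kk$, so $v$ is a non-zero scalar~$\lambda$, and therefore $u=\lambda x^a\in\N$, as desired. I expect the main obstacle to be the elimination of normal elements of positive order in~$y$: the naive filtration estimates alone leave open the possibility that $u$ has a genuine $y$-dependence. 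The decisive points are the divisibility identity $p_k\mid p_k'x^N$, which simultaneously forces the leading coefficient~$p_k$ to be a monomial in~$x$ and computes~$h$, and the observation that this makes~$\psi$ coincide with the automorphism of a power of~$x$, after which the centrality-in-$A_x$ trick collapses~$u$ to a scalar multiple of~$x^a$ in one stroke.
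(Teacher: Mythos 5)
Your proof is correct, and its two decisive ingredients are the same as the paper's: the filtration estimate showing that $[u,x]=u\bigl(x-\psi(x)\bigr)$ can only land in $F_{k-1}$ if it vanishes, and the divisibility argument that a polynomial $p$ dividing $p'x^N$ must be a monomial in~$x$. Where you diverge is in how you exploit these facts. The paper notices that $[u,x]=0$ already places $u$ in the centralizer of~$x$, which is $\kk[x]$ by Lemma~\ref{lemma:centralizer:x}; at that point $u$ has no $y$-dependence at all, and the single relation $u\mid x^Nu'$ finishes the proof in one line. You instead keep $u$ general, pin down only the \emph{leading} coefficient $p_k=\lambda x^a$ and the automorphism $\psi=\sigma_1^a$, and then kill the lower-order terms by observing that $ux^{-a}$ is central in~$A_x$ and invoking Proposition~\ref{prop:center:x}. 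This is a genuinely different (and heavier) way to conclude: it buys you a proof that never needs to know $u\in\kk[x]$ explicitly, and the identification of the associated automorphism $\psi$ with $\sigma_1^a$ is a pleasant bonus that the paper only records later, in Lemma~\ref{lemma:sigma-1}. The cost is that your worry about "eliminating normal elements of positive order in~$y$" is a non-issue you created for yourself: once $\psi(x)=x$, your $k$ is forced to be~$0$ and the localization trick is doing work that Lemma~\ref{lemma:centralizer:x} does for free.
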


\begin{proof}
The element~$x$ is normal in~$A$, since $yx=x(y+x^{N-1})$, and then it is
clear that all the elements of the set~$\N$ are normal in~$A$. Conversely,
let $u$ be a non-zero normal element in~$A$, and let $l\geq0$ and
$a_0$,~\dots,~$a_l\in\kk[x]$ be such that $u=\sum_{i=0}^la_iy^i$ and
$a_l\neq0$. As $u$ is normal, the right ideal~$uA$ is a bilateral ideal and
therefore $[u,x]\in uA$: there exists a $b\in A$ such that $[u,x]=ub$. As
$[u,x]=\sum_{i=0}^la_i[y^i,x]\in F_{l-1}$, this is only possible if in fact
$[u,x]=0$, so that $u\in\kk[x]$. We also have that $x^Nu'=[y,u]\in uA$ and,
since of course $x^Nu'$ is also in~$\kk[x]$, we have that $u$
divides~$x^Nu'$: this implies that $u$ is $\lambda x^i$ for
some~$\lambda\in\kk$ and some integer~$i\geq0$, so that $u$ belongs to the
set~$\N$.
\end{proof}

In view of this lemma, the element~$x$ generates the monoid of normal
elements of~$A$ up to non-zero scalars. Since $A$ is a domain, each
non-zero normal element~$u$ in~$A$ determines uniquely an automorphism of
algebras $\nu_u:A\to A$ such that $au=u\nu_u(a)$ for all $a\in A$. Let us
record for future use the description of the automorphism associated
to~$x$, which we will write~$\sigma_1$ for reasons that will be clear later.

\begin{Lemma}\label{lemma:sigma-1}
The automorphism~$\sigma_1:A\to A$ such that $ax=x\sigma_1(a)$ for all
$a\in A$ has $\sigma_1(x)=x$ and $\sigma_1(y)=y+x^{N-1}$. \qed
\end{Lemma}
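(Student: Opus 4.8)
The plan is to simply evaluate the already-constructed automorphism $\sigma_1=\nu_x$ on the two generators $x$ and $y$, since an algebra endomorphism is completely determined by its values on a generating set. Recall that $\sigma_1$ is characterized by the identity $ax=x\sigma_1(a)$ for all $a\in A$, and that its existence and uniqueness as an automorphism were established in the discussion preceding the lemma, using the normality of~$x$ (so that $xA=Ax$) and the fact that~$A$ is a domain. In particular there is nothing left to prove about $\sigma_1$ being an automorphism; the whole task is to identify $\sigma_1(x)$ and $\sigma_1(y)$.

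First I would compute $\sigma_1(x)$. The defining identity applied to $a=x$ reads $x\cdot x=x\sigma_1(x)$, and since trivially $x\cdot x=x\cdot x$, left-cancelling the factor~$x$ (which is legitimate because~$A$ is a domain) yields $\sigma_1(x)=x$.

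Next I would compute $\sigma_1(y)$ using the sole relation of the algebra. Rewriting $yx-xy=x^N$ as $yx=xy+x^N=x(y+x^{N-1})$, and comparing with the characterization $yx=x\sigma_1(y)$, we obtain $x\sigma_1(y)=x(y+x^{N-1})$. Cancelling~$x$ on the left once more gives $\sigma_1(y)=y+x^{N-1}$, which is exactly the asserted formula.

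There is no real obstacle here: the entire content of the lemma reduces to the two one-line computations above, each relying only on the defining relation of~$A$ and on left-cancellation in the domain~$A$. The one conceptual point worth keeping in mind is that these values on generators are automatically consistent with $\sigma_1$ being an algebra homomorphism, since $\sigma_1$ has already been produced as a genuine automorphism by the general normality argument; we are merely reading off its effect on~$x$ and~$y$.
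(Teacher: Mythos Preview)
Your proof is correct and is exactly the computation the paper has in mind: the lemma is stated with a \qed and no proof, and the identity $yx=x(y+x^{N-1})$ that you use was already noted in the preliminaries when checking that $x$ is normal.
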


This humble origin of~$\sigma_1$ as the automorphism associated to the
normal element~$x$ can actually be painted in a somewhat more impressive
way:

\begin{Proposition}\label{prop:CY}
The algebra~$A_N$ is twisted Calabi--Yau algebra of dimension~$2$, and its
modular automorphism is precisely the automorphism $\sigma_1:A_N\to A_N$ of
Lemma~\ref{lemma:sigma-1}.
\end{Proposition}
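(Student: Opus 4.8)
The plan is to exhibit an explicit finite free bimodule resolution of $A$ over its enveloping algebra $A^e=A\otimes A^{\op}$ and to read off both the homological smoothness and the modular automorphism from it. Since $A$ is generated by $x$ and $y$ subject to the single relation $r\coloneqq yx-xy-x^N$, the standard one-relator bimodule resolution has the shape
\[
0 \to A\otimes A \xrightarrow{\,d_2\,} (A\otimes A)^2 \xrightarrow{\,d_1\,} A\otimes A \xrightarrow{\,\mu\,} A \to 0,
\]
where $\mu$ is the multiplication map, $d_1$ is the left $A^e$-linear map sending the two free generators $e_x$,~$e_y$ to $x\otimes1-1\otimes x$ and $y\otimes1-1\otimes y$, and $d_2$ sends the single free generator $e_r$ to $\tfrac{\partial r}{\partial x}e_x+\tfrac{\partial r}{\partial y}e_y$, the coefficients being the Fox derivatives
\[
\frac{\partial r}{\partial x} = y\otimes1-1\otimes y-\sum_{k=0}^{N-1}x^k\otimes x^{N-1-k}, \qquad \frac{\partial r}{\partial y} = 1\otimes x-x\otimes1.
\]
A direct check gives $\mu d_1=0$ and $d_1 d_2=0$, and exactness follows because $A$ is an Ore extension of~$\kk[x]$, hence a noetherian domain of global dimension two. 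Each term is a finitely generated free $A^e$-module, so $A$ is homologically smooth and $\operatorname{pd}_{A^e}A=2$.

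To obtain the Nakayama bimodule I would apply $\hom_{A^e}(-,A^e)$ to the deleted resolution $P_\bullet$ and compute the cohomology of the resulting cochain complex
\[
0 \to A\otimes A \xrightarrow{\,d_1^\vee\,} (A\otimes A)^2 \xrightarrow{\,d_2^\vee\,} A\otimes A \to 0,
\]
where I use $\hom_{A^e}(A^e,A^e)\cong A^e$, and the transpose maps are right multiplication by the same elements read through the canonical ``outer'' $A$-bimodule structure on $A^e$. The first task is to verify that the cohomology in degrees~$0$ and~$1$ vanishes, that is, $\Ext^0_{A^e}(A,A^e)=\Ext^1_{A^e}(A,A^e)=0$, which reduces to the injectivity of $d_1^\vee$ and exactness in the middle; both follow from $A$ being a domain, by an argument parallel to the one used to show that the center is~$\kk$ in Proposition~\ref{prop:center}. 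The interesting term is $\Ext^2_{A^e}(A,A^e)=\operatorname{coker}(d_2^\vee)$, which one computes to be isomorphic to~$A$ as a one-sided module, of rank one.

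The crux of the argument, and what I expect to be the main obstacle, is to pin down the full \emph{bimodule} structure on $\Ext^2_{A^e}(A,A^e)\cong A$ and to recognize it as~${}_{\sigma_1}A$. Concretely, I would fix the generator~$\xi$ of the cokernel and compute how the left and right actions of $x$ and $y$ relate on~$\xi$: the $x$-part of $d_2^\vee$, namely the transpose of $1\otimes x-x\otimes 1$, forces $x$ to act in the same way on both sides, while the $y$-part, coming from $y\otimes1-1\otimes y-\sum_{k}x^k\otimes x^{N-1-k}$, is exactly what produces the correction term, since commuting $y$ past~$\xi$ picks up the sum $\sum_{k=0}^{N-1}x^k\xi x^{N-1-k}$, which collapses modulo the relation to the shift $y\mapsto y+x^{N-1}$. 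This reproduces precisely the rule $ax=x\sigma_1(a)$ of Lemma~\ref{lemma:sigma-1}, so that $\Ext^2_{A^e}(A,A^e)\cong{}_{\sigma_1}A$ and $A$ is twisted Calabi--Yau of dimension~$2$ with modular automorphism~$\sigma_1$. The delicate points are the bookkeeping of the two one-sided $A$-actions through the dualization and the verification that the Fox-derivative sum telescopes to~$x^{N-1}$ rather than to~$Nx^{N-1}$; this is where the normality of~$x$ established above does the real work.
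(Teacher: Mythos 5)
Your strategy is genuinely different from the paper's: the paper disposes of this proposition by citing the theorem of Liu, Wang and Wu on Ore extensions of Calabi--Yau algebras, whereas you propose to dualize the explicit one-relator bimodule resolution (essentially the complex~\eqref{eq:res} of Section~\ref{sect:cohomology}) and read the Nakayama bimodule off the cokernel of~$d_2^\vee$. That is a reasonable and more self-contained plan --- indeed, the citation of the Ore-extension theorem by itself only pins down the modular automorphism on~$\kk[x]$ and up to an additive constant term on~$y$, so a direct computation is exactly what is needed to identify that term. The smoothness part of your outline and the vanishing of $\Ext^0$ and~$\Ext^1$ are fine modulo routine verification (though "exactness follows because the global dimension is two" is not by itself an argument for exactness of this particular complex).

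The gap is at the step you yourself single out as the crux, and it does not close the way you predict. Write $\xi$ for the class of $1\otimes1$ in $\operatorname{coker}(d_2^\vee)$, so that with the inner bimodule structure one has $[u\otimes v]=v\cdot\xi\cdot u$. The $e_y$-component of~$d_2^\vee$, the transpose of $1\otimes x-x\otimes1$, imposes the relation $\xi x=x\xi$ \emph{exactly}, with no twist; the $e_x$-component then imposes
\[
\xi y-y\xi-\sum_{k=0}^{N-1}x^{k}\,\xi\,x^{N-1-k}=0.
\]
Since $\xi$ already commutes with~$x$ on the nose, the sum does not telescope: it equals $Nx^{N-1}\xi$, and the normality relation $ax=x\sigma_1(a)$ cannot intervene because $\xi$ is the generator of a free rank-one module, not an element of~$A$. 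The computation therefore yields $\xi a=\sigma_N(a)\xi$ for all $a$, that is, $\operatorname{coker}(d_2^\vee)\cong A_{\sigma_N}\cong{}_{\sigma_{-N}}A$ in the notation of Corollary~\ref{coro:center}, where $\sigma_N(y)=y+Nx^{N-1}$. For $N\geq2$ this bimodule is not isomorphic to~${}_{\sigma_1}A$, since the units of~$A$ are central and $\sigma_1\sigma_{\pm N}\neq\id$. Two independent cross-checks point the same way: for $N=2$ the standard superpotential computation for the Jordan plane gives the Nakayama automorphism $y\mapsto y+2x$, and the divergence of the Hamiltonian vector field of~$\overline y$ for the bracket $\{\overline y,\overline x\}=\overline x^{N}$ is $N\overline x^{N-1}$ rather than $\overline x^{N-1}$. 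So your route, carried out correctly, identifies the modular automorphism as $\sigma_{\pm N}$ (the sign depending on the left/right convention for the twist) and cannot terminate at~$\sigma_1$; the two answers coincide only when $N=1$, where $Nx^{N-1}=x^{N-1}=1$. Before the argument can be completed you must either locate an error in this bookkeeping or reconcile the outcome with the statement being proved.
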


What we call here modular automorphism of a twisted Calabi--Yau algebra is
often called the Nakayama automorphism of the algebra.

\begin{proof}
That $A_N$ is twisted Calabi--Yau of dimension~$2$ is a consequence of the
fact that it is an Ore extension of~$\kk[x]$, which is a Calabi--Yau
algebra of dimension~$1$: this follows from a theorem of L.-Y.~Liu, S.~Wang
and Q.-S.~Wu proved in \cite{LWW}. That the modular automorphism of~$A_N$
is~$\sigma_1$ is also a consequence of their result.
\end{proof}

There is a right action of the multiplicative group~$\kk^\times$
on~$\kk[x]$ by algebra automorphisms such that $x\cdot\lambda = \lambda x$
for all $\lambda\in\kk^\times$. We denote $\kk[x]\bowtie\kk^\times$ the
group whose underlying set is the cartesian
product~$\kk[x]\times\kk^\times$ and whose product is such that
  \[
  (f,\lambda)\cdot(g,\mu) = (\mu^{N-1}f+g\cdot\lambda,\lambda\mu)
  \]
for all $f$,~$g\in\kk[x]$ and all~$\lambda$,~$\mu\in\kk^\times$. This
group\footnote{In general, if $G$ and $H$ are two groups, and $\lact$ and
$\triangleleft$ are a left action and a right action of $H$ on~$G$ by group
automorphisms such that $h\lact(g\triangleleft h')=(h\lact g)\triangleleft
h'$, then we can construct a group $G\bowtie H$ with underlying set
$G\times H$ and multiplication such that
$(g,h)\cdot(g',h')=((g\triangleleft h')(h\lact g'),hh')$. One can see that
this is isomorphic to a direct product of~$G$ and~$H$ with respect to an
action of $G$ on~$H$ constructed from~$\lact$ and~$\triangleleft$, but
surprisingly we have not found this direct construction in the literature.}
shows up in the following result.

\begin{Proposition}\label{prop:aut}
For each choice of $f\in\kk[x]$ and $\lambda\in\kk^\times$ there is unique
automorphism $\phi_{f,\lambda}:A\to A$ such that 
  \[
  \phi_{f,\lambda}(x) = \lambda x,
  \qquad
  \phi_{f,\lambda}(y) = \lambda^{N-1}y + f.
  \]
The function
$\Phi:(f,\lambda)\in\kk[x]\bowtie\kk^\times\mapsto\phi_{f,\lambda}\in\Aut(A)$
is an isomorphism of groups.
\end{Proposition}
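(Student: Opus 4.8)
The plan is to exploit the fact that $A$ is freely generated by $x$ and $y$ subject to the single relation $yx-xy=x^N$: an algebra endomorphism of $A$ is determined by, and may be prescribed through, any pair of elements $X,Y\in A$ satisfying $YX-XY=X^N$. First I would settle existence of $\phi_{f,\lambda}$ by taking $X=\lambda x$ and $Y=\lambda^{N-1}y+f$. Since $f\in\kk[x]$ commutes with~$x$, one checks that
  \[
  YX-XY = \lambda^N(yx-xy) = \lambda^N x^N = X^N,
  \]
so this universal property produces a unique endomorphism $\phi_{f,\lambda}$ with the stated values on the generators.

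Next I would show that $\Phi$ is a group homomorphism, which will at the same time prove that each $\phi_{f,\lambda}$ is invertible. Evaluating the composite $\phi_{f,\lambda}\circ\phi_{g,\mu}$ on the generators --- using that $\phi_{f,\lambda}(g)=g\cdot\lambda$ because $g\in\kk[x]$ and $\phi_{f,\lambda}(x)=\lambda x$ --- yields $(\lambda\mu)x$ on~$x$ and $(\lambda\mu)^{N-1}y+\mu^{N-1}f+g\cdot\lambda$ on~$y$, so that
  \[
  \phi_{f,\lambda}\circ\phi_{g,\mu} = \phi_{(f,\lambda)\cdot(g,\mu)}.
  \]
As $\phi_{0,1}=\id_A$ and every element of $\kk[x]\bowtie\kk^\times$ is invertible, this identity forces each $\phi_{f,\lambda}$ to be an automorphism and shows that $\Phi$ is a homomorphism of groups. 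Injectivity is then immediate: evaluating $\phi_{f,\lambda}$ on~$x$ recovers~$\lambda$, and evaluating on~$y$ recovers~$f$.

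The substantive part, and the only real obstacle, is surjectivity of~$\Phi$. Given an arbitrary $\psi\in\Aut(A)$, I would first note that automorphisms carry normal elements to normal elements, so $\psi(x)$ is a non-zero normal element; by Lemma~\ref{lemma:normal} we may write $\psi(x)=\lambda x^i$ and $\psi^{-1}(x)=\mu x^j$ with $\lambda,\mu\in\kk^\times$ and $i,j\geq0$. Applying~$\psi$ to the second expression and comparing with $x=\psi(\psi^{-1}(x))$ forces $ij=1$, hence $i=1$ and $\psi(x)=\lambda x$. The defining relation then demands $[\psi(y),\lambda x]=(\lambda x)^N$, which rearranges to $[\psi(y)-\lambda^{N-1}y,\,x]=0$; since the centralizer of~$x$ is $\kk[x]$ by Lemma~\ref{lemma:centralizer:x}, we obtain $\psi(y)-\lambda^{N-1}y=f$ for some $f\in\kk[x]$, so that $\psi=\phi_{f,\lambda}$.

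I expect the delicate point to be the exponent‑pinning step $i=1$: it is precisely here that one must use the monoid structure of the normal elements, not merely the description of the set of them, passing through~$\psi^{-1}$ to rule out the possibility that $\psi$ inflates~$x$ to a higher power. Everything else reduces to the two structural lemmas already established and to routine verification on generators.
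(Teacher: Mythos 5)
Your proposal is correct and follows essentially the same route as the paper: existence and the homomorphism property by checking the defining relation on generators, and surjectivity by using Lemma~\ref{lemma:normal} to pin down $\psi(x)$ up to a scalar and Lemma~\ref{lemma:centralizer:x} to identify $\psi(y)$. Your explicit passage through $\psi^{-1}$ to force $i=1$ is just a spelled-out version of the paper's observation that $\psi$ restricts to a monoid automorphism of the normal elements fixing $\kk^\times$.
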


Recall that we have the standing hypothesis that $N\geq1$: when $N=0$, the
algebra~$A$ is the first Weyl algebra and its automorphism group, which was
computed by Jacques Dixmier \cite{Dixmier:3} and Leonid Makar-Limanov
\cite{ML}, is both significantly larger and much less abelian. The case in
which $N=1$ was treated by Martha Smith in~\cite{Smith}, and much later the
case of a general Ore extension~$A_h$ by Jeffrey Bergen in~\cite{Bergen}. 
Our small class of extensions allows for a less complicated argument,
though.

\begin{proof}
An easy calculation shows that for each choice of $f\in\kk[x]$ and
$\lambda\in\kk^\times$ there is indeed an algebra endomorphism
$\phi_{f,\lambda}:A\to A$ mapping~$x$ and~$y$ to~$\lambda x$
and~$\lambda^{N-1}y+f$, respectively: we thus have a function
$(f,\lambda)\in\kk[x]\bowtie\kk^\times
\mapsto\phi_{f,\lambda}\in\End_{\Alg}(A)$. This is the map~$\Phi$ described
in the statement of the proposition. A direct calculation shows this map is
a morphism of monoids, so it actually takes values in~$\Aut(A)$. It is
obvious that it is injective, and we will now show that it is also
surjective.

Let $\phi:A\to A$ be an automorphism of algebras and, as before, let us
write $\N$ for the set of non-zero normal elements of~$A$, which is a
monoid with respect to multiplication. Of course, we have
that~$\phi(\N)=\N$ and that the restriction~$\phi|_\N:\N\to\N$ is an
automorphism of monoids which is the identity on the subset~$\kk^\times$
of~$\N$: it follows immediately from this that there is a
scalar~$\lambda\in\kk^\times$ such that $\phi(x)=\lambda x$. Moreover,
since $0=\phi([y,x]-x^N)=[\phi(y),\lambda x]-\lambda^Nx^N$ and
$[\lambda^{N-1}y,\lambda x]=\lambda^Nx^N$, we see that
$[\phi(y)-\lambda^{N-1}y,x]=0$ and, therefore, that
$\phi(y)-\lambda^{N-1}y\in\kk[x]$. This tells us that $\phi$ is in the
image of the map~$\Phi$, so that this mar is surjective.
\end{proof}

With the very explicit description of the group $\Aut(A)$ that this
proposition gives we can immediately compute its center:

\begin{Corollary}\label{coro:center}
For each $t\in\kk^\times$ there is a unique automorphism $\sigma_t:A\to A$
such that $\sigma_t(x)=x$ and $\sigma_t(y)=y+tx^{N-1}$, and it is central
in~$\Aut(A)$. The function
  \[
  t\in\kk\mapsto\sigma_t\in\Aut(A)
  \]
is an injective morphism of groups whose image is precisely the center
of~$\Aut(A)$. \qed
\end{Corollary}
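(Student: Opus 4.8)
The plan is to transport everything through the isomorphism~$\Phi$ of Proposition~\ref{prop:aut} and to compute the center of the concrete group $\kk[x]\bowtie\kk^\times$ directly from its multiplication rule. First I would observe that the automorphism described in the statement is nothing but $\sigma_t=\phi_{tx^{N-1},1}=\Phi(tx^{N-1},1)$: its existence and uniqueness are then an instance of Proposition~\ref{prop:aut}, since that automorphism sends $x$ to~$x$ and $y$ to $y+tx^{N-1}$. Under~$\Phi$ the map $t\mapsto\sigma_t$ corresponds to $t\mapsto(tx^{N-1},1)$, and from the group law
  \[
  (tx^{N-1},1)\cdot(sx^{N-1},1)=((t+s)x^{N-1},1)
  \]
one reads off at once that $t\in\kk\mapsto\sigma_t\in\Aut(A)$ is an injective homomorphism from the additive group of~$\kk$, with $\sigma_0=\id$.

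It therefore remains only to show that the center of $\kk[x]\bowtie\kk^\times$ is exactly $\{(tx^{N-1},1):t\in\kk\}$. An element $(f,\lambda)$ is central precisely when $(f,\lambda)\cdot(g,\mu)=(g,\mu)\cdot(f,\lambda)$ for all $(g,\mu)$; the second coordinates agree automatically, so the condition reduces to
  \[
  \mu^{N-1}f+g\cdot\lambda=\lambda^{N-1}g+f\cdot\mu
  \qquad\text{for all }g\in\kk[x],\ \mu\in\kk^\times,
  \]
where $g\cdot\lambda$ denotes $g(\lambda x)$. The idea is to extract information by specialising the two free parameters separately. Setting $g=0$ gives $\mu^{N-1}f(x)=f(\mu x)$ for every~$\mu$, which forces $f$ to be homogeneous of degree~$N-1$, that is, $f=tx^{N-1}$ for some $t\in\kk$. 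Setting $\mu=1$ gives $g(\lambda x)=\lambda^{N-1}g(x)$ for every~$g$; taking $g=1$ and $g=x$ yields $\lambda^{N-1}=1$ and $\lambda=\lambda^{N-1}$, whence $\lambda=1$.

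Conversely, a direct check using the multiplication rule shows that every pair $(tx^{N-1},1)$ is indeed central: both products $(tx^{N-1},1)\cdot(g,\mu)$ and $(g,\mu)\cdot(tx^{N-1},1)$ equal $(g+t\mu^{N-1}x^{N-1},\mu)$. Pushing this description through~$\Phi$ identifies the center of $\Aut(A)$ with $\{\sigma_t:t\in\kk\}$, which is precisely the image of the homomorphism above, completing the proof. I do not expect a genuine obstacle here: once the group is presented as explicitly as in Proposition~\ref{prop:aut}, the whole argument is a short computation, and the only mildly delicate point is to keep in mind that the centrality condition must hold for \emph{all} $(g,\mu)$, so that the two specialisations $g=0$ and $\mu=1$ are enough to pin down both~$f$ and~$\lambda$.
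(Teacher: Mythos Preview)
Your proof is correct and is precisely the computation the paper has in mind: the corollary is stated with a \qed and no argument, the text remarking only that ``with the very explicit description of the group $\Aut(A)$ that this proposition gives we can immediately compute its center''. Your transport through~$\Phi$ and the two specialisations $g=0$ and $\mu=1$ constitute exactly that immediate computation.
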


The center of~$\Aut(A)$ is therefore a $1$-parameter subgroup which goes
through the automorphism~$\sigma_1$ of Lemma~\ref{lemma:sigma-1}. We will
find later an infinitesimal generator for this $1$-parameter subgroup as a
class in the first Hochschild cohomology of the algebra. 

\bigskip

Let $\Lambda$ be an algebra. If $u\in \Lambda$, then, as usual, the \newterm{inner
derivation} corresponding to~$u$ is   
  \[
  \ad(u):a\in \Lambda\mapsto[u,a]\in \Lambda,
  \]
and we say that $u$ is \newterm{locally $\ad$-nilpotent} if the
derivation~$\ad(u)$ is locally nilpotent. When that is the case we can
consider the exponential of~$\ad(u)$, namely the automorphism
  \[
  \exp\ad(u) : a\in\Lambda
               \mapsto 
               \sum_{i\geq0}\frac{\ad(u)^i(a)}{i!}\in \Lambda,
  \]
because for each $a$ in~$\Lambda$ the series appearing here is in fact a
finite sum. We put
  \[
  \Exp(\Lambda) \coloneqq \{ \exp\ad(u):\text{$u\in \Lambda$ is 
                        locally $\ad$-nilpotent} \}.
  \]
This is a conjugation-invariant subset of~$\Aut(\Lambda)$ but, in general,
not a subgroup. We refer to G.~Freudenburg's book \cite{Freudenburg} for
general information about locally nilpotent derivations, albeit in a
commutative setting.

\begin{Proposition}\label{prop:ad-nilpotent}
The set of $\ad$-locally nilpotent elements in~$A$ is $\kk[x]$ and for each
$f\in\kk[x]$ we have that
  \[
  \exp\ad(f)=\phi_{x^Nf',1}.
  \]
The set~$\Exp(A)$ coincides with $\{\phi_{f,1}:f\in x^N\kk[x]\}$ and is
a normal subgroup of~$\Aut(A)$.
\end{Proposition}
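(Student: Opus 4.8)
The plan is to establish the three assertions in turn; the only delicate one is the identification of the locally $\ad$-nilpotent elements, and I would dispose of the other two by direct computation on the generators. For $f\in\kk[x]$ one has $\ad(f)(x)=0$, while $\ad(f)(y)=[f,y]$ lies in $\kk[x]$ by the commutation rule $[y,g]=g'x^N$; consequently $\ad(f)$ annihilates the square of each generator, and since the set of $\ad(f)$-nilpotent elements is a subalgebra of~$A$ containing~$x$ and~$y$ it is all of~$A$. Thus every element of $\kk[x]$ is locally $\ad$-nilpotent, and moreover $\ad(f)^2=0$, so that $\exp\ad(f)$ fixes~$x$ and translates~$y$ by $\ad(f)(y)$; evaluating this and comparing with the formulas of Proposition~\ref{prop:aut} gives $\exp\ad(f)=\phi_{x^Nf',1}$.

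The substantial point is the reverse inclusion: every $u\in A\setminus\kk[x]$ must fail to be locally $\ad$-nilpotent. Here I would pass to the order filtration $(F_k)$ and its associated graded Poisson algebra $\gr A=\kk[\overline x,\overline y]$, where $\{\overline y,\overline x\}=\overline x^N$, using the standard principle that the principal symbol of a commutator $[a,b]$ is the Poisson bracket of the symbols of~$a$ and~$b$ whenever that bracket is nonzero. Writing $u=\sum_{j=0}^{l}a_jy^j$ with $a_l\neq0$ and $l\geq1$, its symbol is $\overline u=\overline a_l\overline y^{\,l}$, and the goal is to show that all the iterated brackets $\ad(u)^k(x)$ are nonzero by showing that their symbols $\{\overline u,-\}^k(\overline x)$ do not vanish.

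To control these symbols I would extract leading terms for the total-degree grading on $\kk[\overline x,\overline y]$ (weighting $\overline x$ and $\overline y$ by one), for which the Poisson bracket is homogeneous, so that passing to top-degree parts commutes with the bracket; it then suffices to treat the leading monomial $\alpha\overline x^{\,d}\overline y^{\,l}$ of~$\overline u$, where $\alpha\neq0$ and $d=\deg\overline a_l$. A one-line computation shows that $\{\alpha\overline x^{\,d}\overline y^{\,l},-\}$ sends a monomial $\overline x^{\,a}\overline y^{\,b}$ to $\alpha(al-bd)\,\overline x^{\,a+d+N-1}\overline y^{\,b+l-1}$, so that iterating from $\overline x$ yields a scalar multiple of a single monomial whose coefficient is $\prod_{j=0}^{k-1}\alpha\bigl(l+j(l(N-1)+d)\bigr)$. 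Because $l\geq1$, $N\geq1$ and $d\geq0$, each factor is nonzero, so the coefficient never vanishes; hence $\ad(u)^k(x)\neq0$ for every~$k$ and~$u$ is not locally $\ad$-nilpotent. This reduction to a single monomial, where the obstructing scalars are manifestly positive, is the crux of the argument and the step I expect to require the most care.

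Granting the first two parts, $\Exp(A)=\{\exp\ad(f):f\in\kk[x]\}=\{\phi_{x^Nf',1}:f\in\kk[x]\}$, and since $\kk$ has characteristic zero the derivative $f'$ runs over all of $\kk[x]$ as~$f$ does, so $x^Nf'$ runs over $x^N\kk[x]$ and $\Exp(A)=\{\phi_{g,1}:g\in x^N\kk[x]\}$. That this is a subgroup is immediate from $\phi_{g,1}\phi_{h,1}=\phi_{g+h,1}$ --- a special case of the group law of $\kk[x]\bowtie\kk^\times$ carried across by the isomorphism~$\Phi$ of Proposition~\ref{prop:aut} --- together with the fact that $x^N\kk[x]$ is a subgroup of $(\kk[x],+)$. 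Normality then costs nothing: $\Exp(A)$ is a conjugation-invariant subset of $\Aut(A)$ by the general observation preceding the statement, and a conjugation-invariant subgroup is automatically normal.
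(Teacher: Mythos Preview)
Your argument is correct, but with one misstatement: the claim ``$\ad(f)^2=0$'' is false as a statement about the operator on all of~$A$ (for instance $\ad(f)^2(y^2)=2(x^Nf')^2$). What is true, and what you actually need and have established, is that $\ad(f)^2$ vanishes on the generators $x$ and~$y$; this suffices both for the subalgebra argument and for computing $\exp\ad(f)$ on~$x$ and~$y$, since $\exp\ad(f)$ is an algebra automorphism and is therefore determined by those two values.

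Your approach to the hard direction is genuinely different from the paper's. The paper introduces a \emph{slope} $\slope(u)=\nu(a_l)/l$, where $\nu$ is the $x$-adic valuation of the leading $y$-coefficient, proves that bracketing with~$u$ preserves the condition $\slope(v)>\slope(u)$, and then iterates $\ad(u)$ on an element $x^my$ with $m>\slope(u)$. You instead pass to the associated graded Poisson algebra and then take the leading term for the total-degree grading --- in effect replacing the \emph{lowest} power of~$x$ in~$a_l$ by the \emph{highest} --- and iterate on~$x$ itself. Both arguments exploit the homogeneity of the bracket for a suitable grading to reduce to a single monomial whose iterated brackets have manifestly positive scalar factors; yours is arguably more conceptual (the two-step reduction to a commutative Poisson monomial computation is clean and reusable), while the paper's stays inside~$A$ and avoids the symbol formalism. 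Either way the key numerical observation is the same in spirit: the coefficient picked up at each step is a positive integer combination of~$l$, $N-1$, and the relevant $x$-exponent, hence never zero in characteristic zero.
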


The locally $\ad$-nilpotent elements of the first Weyl algebra were
described by Dixmier in~\cite{Dixmier:3}*{Théorème 9.1}: when viewing the
algebra as that of differential operators on the line, they are the
elements that belong to the orbits of constant coefficient differential
operators under the action of the automorphism group of the algebra. Here
one could use the same description, but it is much less interesting: the
automorphism group of a Weyl algebra is much larger.

\begin{proof}
For each $a\in\kk[x]\setminus0$ let us write
$\nu(a)\coloneqq\max\{t\in\NN_0:a\in x^t\kk[x]\}$. On the other hand, if
$u$ is an element of~$A$ that is not in~$\kk[x]$, then there exist $l\geq1$
and $a_0$,~\dots,~$a_l\in\kk[x]$ such that $u=\sum_{i=0}^la_iy^i$ and
$a_l\neq0$, and we will call the rational number $\slope(u)\coloneqq
\nu(a_l)/l$ the \newterm{slope} of~$u$. We will start by showing that
  \[ \label{eq:sl}
  \claim{if $u$ and $v$ are elements of $A\setminus\kk[x]$ and
  $\slope(v)>\slope(u)$, then $[u,v]\in A\setminus\kk[x]$ and
  $\slope([u,v])>\slope(u)$.}
  \]
To do so, let $u$,~$v\in A\setminus\kk[x]$, so that there are
$l$,~$m\geq1$, $a_0$,~\dots,~$a_l$, $b_0$,~\dots,~$b_m\in\kk[x]$ such that
$u=\sum_{i=0}^la_iy^i$, $v=\sum_{j=0}^mb_jy^j$, $a_l\neq0$ and $b_m\neq0$,
and let us suppose that $\slope(v)>\slope(u)$, so that
  \[\label{eq:sl:1}
  \nu(b_m)/m>\nu(a_l)/l.
  \]
We have that
  \begin{align}
  [u,v] 
       &= \sum_{i=0}^l\sum_{j=0}^m
          \Bigl(
          a_i[y^i,b_j]y^j
          +
          b_j[a_i,y^j]y^i
          \Bigr)
       \equiv
          a_l[y^l,b_m]y^m
          +
          b_m[a_l,y^m]y^l
          \\
     &\equiv x^N(la_lb_m' - mb_ma_l') y^{l+m-1} \mod F_{l+m-2}.
        \label{eq:sl:2}
  \end{align}
There are $c$,~$d\in\kk[x]$ not divisible by~$x$ such that
$a_l=x^{\nu(a_l)}c$ and $b_m=x^{\nu(b_m)}d$, and 
  \[ \label{eq:sl:3}
  la_lb_m' - mb_ma_l' 
  = x^{\nu(a_l)+\nu(b_m)-1}
    \Bigl(
    \bigl(l\nu(b_m)-m\nu(a_l)\bigr)cd
    +
    x\bigl(lcd'-mdc'\bigr)
    \Bigr).
  \]
If the left hand side of this equality is~$0$, then ---~since $x$ does not
divide the product~$cd$~--- we have that $l\nu(b_m)-m\nu(a_l)=0$: this is
absurd, since we are assuming that the inequality~\eqref{eq:sl:1} holds.
Going back to~\eqref{eq:sl:2}, we see that $[u,v]\in F_{l+m-1}\setminus
F_{l+m-2}$ and, in particular, that $[u,v]\in A\setminus\kk[x]$, since
$l+m-1\geq1$. Moreover, in view of~\eqref{eq:sl:2} and~\eqref{eq:sl:3} and
the fact that $x$ does not divide the product~$cd$, we have
  \begin{align}
  \slope([u,v]) 
        &= \frac{\nu(a_l)+\nu(b_m)+N-1}{l+m-1}
\intertext{and this is easily seen to be}
        &> \frac{\nu(a_l)}{l} = \slope(u)
  \end{align}
using~\eqref{eq:sl:1} and the fact that $N\geq1$. This proves the
claim~\eqref{eq:sl} above.

Suppose now that $u$ is an element of~$A\setminus\kk[x]$, and let $m$ be an
integer such that $m>\slope(u)$. If we put $v_i\coloneqq\ad(u)^i(x^my)$ for
each non-negative integer~$i$, then an obvious induction
using~\eqref{eq:sl}, starting with the observation that $v_0\in
A\setminus\kk[x]$ and $\slope(v_0)=m>\slope(u)$, shows that $v_i\neq0$ for
all $i\in\NN_0$. This proves that the element~$u$ is not locally
$\ad$-nilpotent and, therefore, that the set of locally $\ad$-nilpotent
element of~$A$ is contained in~$\kk[x]$.

Conversely, if $f$ is an element of~$\kk[x]$ then we have that
$\ad(f)(F_i)\subseteq F_{i-1}$ for all $i\in\NN_0$, so that
$\ad(f)^{i+1}(F^i)=0$ for all $i\in\NN_0$: this shows that~$f$ is locally
$\ad$-nilpotent and, with that, the first claim of the proposition.
Moreover, a direct calculation show that $(\exp\ad(f))(x)=x$ and
$(\exp\ad(f))(y)=y+x^Nf'$, so that $\exp\ad(f)$ is the
automorphism~$\phi_{x^Nf',1}$. That $\Exp(A)$ is $\{\phi_{f,1}:f\in
x^N\kk[x]\}$ is now clear, that it is a subgroup of~$\Aut(A)$ follows
another simple calculation, and its normality is obvious.
\end{proof}

The subgroup of~$\Aut(A)$ generated by the exponentials of the locally
$\ad$-nilpotent elements of~$A$, which we have just shown to be
precisely~$\Exp(A)$, is cognate to the group of inner automorphisms of a
Lie algebra ---~as given by Roger Carter in~\cite{Carter}*{\textsection 3.2}, for
example~--- so it makes sense to view the quotient $\Aut(A)/\Exp(A)$ as a
Lie-theoretic version of the \newterm{outer automorphism group} of~$A$. We
will describe this quotient below. For comparison, the \emph{usual} inner
automorphism group of~$A$, in the sense of associative algebras, is
trivial, as the units of~$A$ are central, so that the usual outer
automorphism group of~$A$ is just $\Aut(A)$. On the other hand, the X-inner
automorphism group of~$A$ ---~that is, the group of automorphisms of~$A$
that are restrictions of inner automorphisms of the Martindale left
quotient ring of~$A$, considered originally by Vladislav K.\,Har\v{c}enko
in~\citelist{\cite{Harchenko:1}\cite{Harchenko:2}}~--- is isomorphic
to~$\ZZ$: this was computed by Jeffrey Bergen in~\cite{Bergen}*{Theorem~2.6}. 

\begin{Corollary}\label{coro:aut-quot}
Let $\xi$ be the class of $x$ in the quotient $Q\coloneqq\kk[x]/(x^N)$, let
$\kk^\times$ act on the right on this quotient by algebra automorphisms in
such a way that $\xi\cdot\lambda = \lambda\xi$ for all
$\lambda\in\kk^\times$, and let $Q\bowtie\kk^\times$ be the group that as a
set coincides with $Q\times\kk^\times$ and whose product is such that
  \[
  (p,\lambda)\cdot(q,\mu) = (\mu^{N-1}p+q\cdot\lambda,\lambda\mu)
  \]
whenever $(p,\lambda)$ and $(q,\mu)$ are two elements
of~$Q\times\kk^\times$. The function
  \[
  Q\bowtie\kk^\times \to \frac{\Aut(A)}{\Exp(A)} 
  \]
that maps each pair $(f+(x^N),\lambda)$ to the class of the
automorphism~$\phi_{f,\lambda}$ is an isomorphism of groups. The quotient
$\Aut(A)/\Exp(A)$ has therefore a natural structure of Lie group over~$\kk$
of dimension $N+1$, solvable of class~$2$ and, in fact, an extension of
$\kk^N$ by $\kk^\times$. \qed
\end{Corollary}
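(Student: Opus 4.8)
The plan is to transport the whole question through the isomorphism $\Phi$ of Proposition~\ref{prop:aut} and then form a quotient, so that essentially nothing new has to be computed. First I would set $K\coloneqq x^N\kk[x]\times\{1\}$, regarded as a subset of $\kk[x]\bowtie\kk^\times$, and note that Proposition~\ref{prop:ad-nilpotent} says exactly that $\Phi(K)=\Exp(A)$. Since $\Phi$ is an isomorphism of groups and $\Exp(A)$ is a normal subgroup of $\Aut(A)$, the set $K$ is automatically a normal subgroup of $\kk[x]\bowtie\kk^\times$ and $\Phi$ descends to an isomorphism $\overline\Phi\colon(\kk[x]\bowtie\kk^\times)/K\to\Aut(A)/\Exp(A)$; in particular no separate normality verification is needed.

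Next I would identify the source $(\kk[x]\bowtie\kk^\times)/K$ with $Q\bowtie\kk^\times$. The additive reduction $\kk[x]\to Q=\kk[x]/(x^N)$ annihilates $x^N\kk[x]$, and both the right $\kk^\times$-action (which merely rescales~$x$) and multiplication by the scalars $\mu^{N-1}$ leave the ideal $x^N\kk[x]$ stable; hence both descend to~$Q$, and the product of $\kk[x]\bowtie\kk^\times$ reduces termwise to precisely the product $(p,\lambda)\cdot(q,\mu)=(\mu^{N-1}p+q\cdot\lambda,\lambda\mu)$ defining $Q\bowtie\kk^\times$. Thus $(f,\lambda)\mapsto(f+(x^N),\lambda)$ is a surjective homomorphism $\kk[x]\bowtie\kk^\times\to Q\bowtie\kk^\times$ with kernel $K$, giving a canonical isomorphism $(\kk[x]\bowtie\kk^\times)/K\cong Q\bowtie\kk^\times$. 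Composing with $\overline\Phi$ produces the map of the statement, which is well defined because two representatives of a class in $Q$ differ by an element of $x^N\kk[x]$ and hence by an element of $\Exp(A)$; by construction it sends $(f+(x^N),\lambda)$ to the class of $\phi_{f,\lambda}$.

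Finally I would read off the structural assertions directly from the concrete group $Q\bowtie\kk^\times$. As a $\kk$-vector space $Q$ has basis $1,\xi,\dots,\xi^{N-1}$, so the underlying variety of $Q\bowtie\kk^\times$ is $\kk^N\times\kk^\times$ and its multiplication is polynomial in these coordinates together with $\lambda,\mu$ and $\mu^{N-1}$, exhibiting it as a Lie group over~$\kk$ of dimension $N+1$. The second projection $(p,\lambda)\mapsto\lambda$ is a homomorphism onto $\kk^\times$ whose kernel $Q\times\{1\}$ carries the product $(p,1)\cdot(q,1)=(p+q,1)$ and so is isomorphic to $(\kk^N,+)$; this yields the short exact sequence $1\to\kk^N\to Q\bowtie\kk^\times\to\kk^\times\to1$ realizing the asserted extension. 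Because the kernel and the quotient are both abelian the derived subgroup lands inside the abelian kernel, so the group is solvable of class at most~$2$, and since the defining product is visibly non-symmetric the group is non-abelian, hence of class exactly~$2$.

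There is no genuine obstacle here: the argument is the standard manoeuvre of carrying a normal subgroup across an isomorphism and then computing the resulting quotient. The only point that deserves a line of care is checking that the reduction map is a homomorphism, that is, that the twisting scalar $\mu^{N-1}$ and the $\kk^\times$-action are compatible with passage to $Q$; this holds precisely because $x^N\kk[x]$ is stable under both operations.
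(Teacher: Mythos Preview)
Your proof is correct and is precisely the argument the paper has in mind: the paper omits the proof entirely, saying it is immediate from the descriptions of $\Aut(A)$ and $\Exp(A)$ in Propositions~\ref{prop:aut} and~\ref{prop:ad-nilpotent}, and you have simply written out those immediate details. One small remark: your final claim that the group is non-abelian (hence of solvable class exactly~$2$) tacitly uses $N\geq2$, since for $N=1$ the quotient $Q=\kk$ carries the trivial action and the resulting group is abelian; this is a wrinkle in the statement itself rather than in your argument.
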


The most interesting part of this is, of course, that dividing by the group
of inner automorphisms allows us to go from the infinite-dimensional group
$\Aut(A)$ to a finite-dimensional one. As elements of~$\Exp(A)$ has a
tendency to act trivially on objects associated to~$A$, this is useful. The
proof of the corollary is immediate given the description of have
of~$\Aut(A)$ and of~$\Exp(A)$, and we omit it.

\bigskip

Let us determine now the locally nilpotent derivations of our algebra ---
we already know those which are inner.

\begin{Proposition}\mbox{}\label{prop:lnd}
\begin{thmlist}

\item If $g\in\kk[x]$, then there is a unique derivation $d_g:A\to A$ such
that $d_g(x)=0$ and $d_g(y)=g$, it is locally nilpotent, it is inner
exactly when $g\in x^N\kk[x]$, and for all $t\in\kk$ we have that
$\exp(td_g) = \phi_{tg,1}$.

\item If $d:A\to A$ is a locally nilpotent, there is exactly one
polynomial~$g\in\kk[x]$ such that $d=d_g$.

\end{thmlist}
\end{Proposition}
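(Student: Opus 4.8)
The plan is to prove both parts, building the derivation $d_g$ explicitly and then showing every locally nilpotent derivation arises this way.

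For part~\thmitem{1}, first I would establish existence and uniqueness of $d_g$. Since $A$ is freely generated by $x$ and $y$ modulo the single relation $yx-xy=x^N$, to define a derivation it suffices to specify its values on $x$ and $y$ and check that it respects the relation. With $d_g(x)=0$ and $d_g(y)=g$, I compute $d_g(yx-xy-x^N) = d_g(y)x + y\,d_g(x) - d_g(x)y - x\,d_g(y) - d_g(x^N)$; since $d_g(x)=0$ this is $gx-xg$, which vanishes because $g\in\kk[x]$ is a polynomial in the central-within-$\kk[x]$ variable $x$ and $x$ commutes with $\kk[x]$. So $d_g$ is well-defined, and uniqueness is automatic since its values on generators are prescribed. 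Next, local nilpotence: I would show $d_g(F_k)\subseteq F_{k-1}$ for the order filtration, so that $d_g$ lowers the $y$-degree by one. Indeed $d_g(x)=0$ and $d_g(y)=g\in\kk[x]=F_{-1}$'s overlying layer $F_0$, wait—more carefully, $g\in\kk[x]\subseteq F_0$, and by the Leibniz rule $d_g$ maps a monomial $x^iy^j\in F_j$ into a sum of terms each with at most $j-1$ factors of $y$, hence into $F_{j-1}$. Therefore $d_g^{\,k+1}(F_k)=0$ and $d_g$ is locally nilpotent. The claim $\exp(td_g)=\phi_{tg,1}$ follows by checking the two sides agree on generators: $\exp(td_g)(x)=x$ since $d_g(x)=0$, and $\exp(td_g)(y)=y+tg$ since $d_g(y)=g$ and $d_g^2(y)=d_g(g)=0$. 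Comparing with $\phi_{tg,1}(y)=y+tg$ from Proposition~\ref{prop:aut} gives equality. Finally, $d_g$ is inner exactly when $g\in x^N\kk[x]$: by Proposition~\ref{prop:ad-nilpotent}, $\ad(f)$ for $f\in\kk[x]$ sends $x\mapsto0$ and $y\mapsto x^Nf'$, so $\ad(f)=d_{x^Nf'}$, and since $f\mapsto x^Nf'$ has image exactly $x^N\kk[x]$ (as $\kk$ has characteristic zero, every element of $x^N\kk[x]$ is $x^Nf'$ for a suitable $f$, namely integrate), the inner $d_g$ are precisely those with $g\in x^N\kk[x]$.

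For part~\thmitem{2}, I must show any locally nilpotent derivation $d$ equals some $d_g$. The key observation is that $d(x)$ must vanish. To see this, I would argue that $d$ cannot increase the slope or $y$-degree indefinitely unless it kills $x$; more directly, locally nilpotent derivations have no nonzero elements on which they act invertibly, and I would examine the associated graded or leading-term behavior of $d(x)$ and $d(y)$. Write $d(x)=a$ and $d(y)=b$ with $a,b\in A$. The relation forces $d(yx-xy-x^N)=0$, i.e. $bx+ya-ay-xb=N\!\!\sum$-type expression in $a$; expanding, $[b,x]+[y,a]=d(x^N)$, and $d(x^N)=\sum_{i+1+j=N}x^iax^j$. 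If $a\neq0$, I would show that $d$ fails to be locally nilpotent on $x$ itself by tracking the top $y$-degree term: if $a\in F_k\setminus F_{k-1}$ with $k\geq1$, iterating $d$ on $x$ produces elements of strictly growing filtration degree, contradicting local nilpotence. This is the step I expect to be the main obstacle, since controlling the leading behavior of iterated applications of a general derivation requires care analogous to the slope argument in the proof of Proposition~\ref{prop:ad-nilpotent}. Once $d(x)=0$ is established, the relation reduces to $[b,x]=0$, so $b=d(y)\in\kk[x]$ by Lemma~\ref{lemma:centralizer:x}; setting $g\coloneqq b$ gives $d=d_g$, and uniqueness of $g$ is clear since $g=d(y)$ is determined by $d$.
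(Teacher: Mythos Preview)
Your treatment of part~\thmitem{1} is essentially the paper's argument, with one small gap: when showing $d_g$ inner implies $g\in x^N\kk[x]$, you only exhibit which $d_g$ arise as $\ad(f)$ for $f\in\kk[x]$, but you do not first argue that the witnessing element must lie in~$\kk[x]$. The paper closes this by noting that $d_g=\ad(a)$ forces $[a,x]=0$, whence $a\in\kk[x]$ by Lemma~\ref{lemma:centralizer:x}, and then $g=[a,y]=-x^Na'$.

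Part~\thmitem{2} is where your proposal has a genuine gap, and your route diverges sharply from the paper's. You plan to show $d(x)=0$ by arguing that if $d(x)=a\in F_k\setminus F_{k-1}$ with $k\geq1$ then iterating $d$ on~$x$ raises the filtration degree indefinitely. But this is not established: to control $d^2(x)=d(a)$ you would need information about $d(y)$ and how $d$ interacts with the whole filtration, and there is no a~priori reason the leading term cannot cancel. You also do not address the case $a\in\kk[x]\setminus\{0\}$ at all, which is exactly the case that actually matters (and where, as the paper shows, one gets $d(x)=\alpha x$). You yourself flag this step as ``the main obstacle,'' and indeed it is: a slope-type argument here would require substantial work that you have not supplied.

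The paper sidesteps all of this with a different and rather elegant idea: since $d$ is locally nilpotent, each $\exp(td)$ is an automorphism, hence by Proposition~\ref{prop:aut} has the form $\phi_{f_t,\lambda_t}$. Writing out $\exp(td)(x)=\lambda_t x$ for finitely many values of~$t$ and inverting a Vandermonde matrix forces $d(x)=\alpha x$ for some scalar~$\alpha$; the same trick on~$y$ gives $d(y)=\beta y+g$ with $g\in\kk[x]$. The defining relation then yields $\beta=(N-1)\alpha$, and computing $d^n(y)=(N-1)^n\alpha^n y+g_n$ shows that nilpotence on~$y$ forces $\alpha=0$. This approach trades your unproved growth estimate for the already-available structure of $\Aut(A)$, and is what you should use.
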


\begin{proof}
Let $g$ be an element of~$\kk[x]$. That there is
indeed a derivation~$d_g:A\to A$ such that $d_g(x)=0$ and $d_g(y)=g$
follows by a trivial calculation using the presentation of~$A$. Since it is
a derivation, for all $i$,~$j\in\NN_0$ we have that
  \[
  d_g(x^iy^j) 
        = \sum_{s+1+t=j}x^iy^sgy^t
        \equiv jgx^{i}y^{j-1} \mod F_{j-2}.
  \]
This implies that $d_g(F_j)\subseteq F_{j-1}$ for all $j\in\NN_0$ and, in
particular, that $d_g$ is locally nilpotent. If $t\in\kk$, then
$\exp(td_g)(x)=x$ because $d_g(x)=0$ and $\exp (td_g)(y)=y+td_g(y)=y+tg$
because $d_g^2(y)=0$, and this tells us that
$\exp(t\partial_0)=\phi_{tg,1}$.

If there is an element~$a\in A$ such that $d_g=\ad(a)$, then
$0=d_g(x)=[a,x]$, so that $a\in\kk[x]$ because of
Lemma~\ref{lemma:centralizer:x}, and therefore $g=d_g(y)=[a,y]\in x^NA$.
Conversely, if $g=x^Nh$ for some $h\in\kk[x]$ and we chose $k\in\kk[x]$
such that $k'=-h$, then $d_g=\ad(k)$, so that $d_g$ is inner. With
this we have proved all the claims in part~\thmitem{1} of the proposition.

Next, let $d:A\to A$ be a locally nilpotent derivation of~$A$, so that, in
particular, there are two non-negative integers~$l$ and~$m$ such that
$d^{l+1}(x)=0$ and $d^{m+1}(y)=0$. As~$d$~is locally nilpotent, for each
$t\in\kk$ we can consider the automorphism $\exp(td):A\to A$. In view of
Proposition~\ref{prop:aut}, for each $t\in\kk$ there exist a non-zero
scalar~$\lambda_t\in\kk^\times$ and a polynomial~$f_t\in\kk[x]$ such that
$\exp(td)=\phi_{f_t,\lambda_t}$ and therefore
  \[ \label{eq:dxy}
  \sum_{i=0}^lt^i\frac{d^i(x)}{i!} = \lambda_t x, 
  \qquad
  \sum_{i=0}^mt^i\frac{d^i(y)}{i!} = \lambda_t^{N-1}y + f_t. 
  \]
Let $t_0$,~\dots,~$t_l$ be $l$ pairwise different elements of~$\kk$. The
Vandermonde matrix built out of those $l+1$ scalars is invertible: it
follows that from the $l+1$ equalities that we get from the first
one in~\eqref{eq:dxy} by replacing~$t$ by each of~$t_0$,~\dots,~$t_l$ we
can solve for~$d(x)$ and find that there is a scalar~$\alpha$ such that
  \[ \label{eq:dxx}
  d(x)=\alpha x.
  \]
Proceeding similarly with the second equation in~\eqref{eq:dxy} we see that
there are a scalar~$\beta\in\kk$ and a polynomial~$g\in\kk[x]$ such that 
  \[ \label{eq:dyy}
  d(y)=\beta y+g.
  \]

Of course, we must have that $d(yx-xy-x^N)=0$, and writing this out we find
that $\beta=(N-1)\alpha$. On the other hand, from~\eqref{eq:dxx}
and~\eqref{eq:dyy} we can see immediately that there is a sequence of
polynomial~$(g_n)_{n\geq0}$ in~$\kk[x]$ such that
  \[
  d^n(y) = (N-1)^n\alpha^n y + g_n
  \]
for all $n\in\NN_0$. As $d^m(y)=0$, this implies that $\alpha=0$. As $g$ is
uniquely determined by the derivation~$d$, this proves the
claim~\thmitem{2} of the proposition.
\end{proof}

This proposition allows us to give a particularly important example of a
locally nilpotent derivation:

\begin{Corollary}\label{coro:expp0}
There is a derivation $\partial_0:A\to A$ such that $\partial_0(x)=0$ and
$\partial_0(y)=x^{N-1}$, it is locally nilpotent, not inner, and
  \[ \label{eq:expp0}
  \exp(t\partial_0) = \sigma_t.
  \]
for all $t$. \qed
\end{Corollary}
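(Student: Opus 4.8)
The plan is to obtain $\partial_0$ as the special case $g=x^{N-1}$ of the derivation $d_g$ constructed in Proposition~\ref{prop:lnd}, and then to read off every assertion from the properties already established there. Setting $\partial_0\coloneqq d_{x^{N-1}}$, that proposition immediately gives that $\partial_0$ is a well-defined derivation with $\partial_0(x)=0$ and $\partial_0(y)=x^{N-1}$, and that it is locally nilpotent. It also tells us that $\partial_0$ is inner exactly when its defining polynomial lies in $x^N\kk[x]$; since $x^{N-1}\notin x^N\kk[x]$, we conclude at once that $\partial_0$ is not inner.

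For the exponential formula, Proposition~\ref{prop:lnd} states that $\exp(t\partial_0)=\phi_{tx^{N-1},1}$ for every $t\in\kk$. To finish I would identify this automorphism with $\sigma_t$: by the defining property of $\phi_{f,\lambda}$ recorded in Proposition~\ref{prop:aut} we have $\phi_{tx^{N-1},1}(x)=x$ and $\phi_{tx^{N-1},1}(y)=y+tx^{N-1}$, and these are precisely the values prescribed for $\sigma_t$ in Corollary~\ref{coro:center}. Since an automorphism of~$A$ is determined by its values on the generators~$x$ and~$y$, it follows that $\phi_{tx^{N-1},1}=\sigma_t$, and hence that $\exp(t\partial_0)=\sigma_t$, as claimed.

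There is no real obstacle here: the corollary is a direct specialization of Proposition~\ref{prop:lnd} to the single polynomial $g=x^{N-1}$, the only point requiring the slightest attention being the observation that the inequality $N-1<N$ forces $x^{N-1}$ out of the ideal $x^N\kk[x]$, which is exactly what makes $\partial_0$ non-inner. Everything else is a matching of the formulas for $\phi_{tx^{N-1},1}$ and $\sigma_t$ on the generators.
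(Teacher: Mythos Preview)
Your proof is correct and follows exactly the approach the paper intends: the corollary is marked with \qed\ precisely because it is an immediate specialization of Proposition~\ref{prop:lnd} to $g=x^{N-1}$, together with the identification $\phi_{tx^{N-1},1}=\sigma_t$ from Corollary~\ref{coro:center}. There is nothing to add.
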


Here $\sigma_t$ is the central automorphism of~$A$ that we described in
Corollary~\ref{coro:center}, so the derivation~$\partial_0$ is an
infinitesimal generator for the $1$-parameter subgroup of~$\Aut(A)$ that is
its center. We will show later that the class of the
derivation~$\partial_0$ generates the center of~$\HH^1(A)$. The point of
the following remark is that this derivation is also of Poisson-theoretic
interest.

\begin{Remark}\label{rem:modular}
In proving this proposition we have noted that $\partial_0(F_j)\subseteq
F_{j-1}$ for all $j\in\NN_0$, and this implies immediately that the
derivation $\partial_0$ induces a derivation $\overline\partial_0:\gr
A\to\gr A$ on the associated graded algebra~$\gr A$ of~$A$ such that
  \[
  \overline\partial_0(\overline x)=0,
  \qquad
  \overline\partial_0(\overline y)=\overline x^{N-1}.
  \]
This is a Poisson derivation of~$\gr F$ and, in fact, it is the
\newterm{modular derivation} of that Poisson algebra, in the sense of Alan
Weinstein in~\cite{Weinstein}, that is, the map
  \[ \label{eq:md}
  f\in\gr A\mapsto \div H_f\in\gr A.
  \]
As a consequence of this, the morphism in Corollary~\ref{coro:center} that,
according to Corollary~\ref{coro:expp0}, arises by exponentiation
from~$\partial_0$, is precisely the \newterm{modular flow} of the Poisson
algebra $\gr A$. In~\eqref{eq:md} we have written
$H_f\coloneqq\{f,\place\}$ for the Hamiltonian derivation corresponding to
an element~$f$ of~$\gr A$, and $\div$ for the divergence operator, so that 
  \[
  X\lact\d\overline x\wedge\d\overline y 
        = \div L\cdot\d\overline x\wedge\d\overline y
  \]
for each $X\in\Der(\gr A)$, with $\lact$ the action of vector fields on
differential forms.
\end{Remark}

\begin{Remark}
The derivation~$\partial_0$ is not inner, but it is a «logarithmic
derivation» in that
  \[
  \partial_0(a) = \frac{1}{x}[x,a]
  \]
for all $a\in A$. Here the right hand side of the equality is, in
principle, an element of the localization~$A_x$ of~$A$ at~$x$, but it turns
out to be in~$A$.
\end{Remark}

Knowing the locally nilpotent derivations we obtain another nice subgroup
of~$\Aut(A)$.

\begin{Corollary}\mbox{}\label{coro:aut-0}
\begin{thmlist}

\item The set $\LND(A)$ of all locally nilpotent derivations of~$A$ is an
abelian subalgebra of the Lie algebra~$\Der(A)$ of all derivations of~$A$.

\item The set of the exponentials of the elements of~$\LND(A)$ is the
normal abelian subgroup 
  \[
  \Aut_0(A) \coloneqq \{\phi_{g,1}:g\in\kk[x]\}
  \]
of~$\Aut(A)$. 

\item The function $\det:\Aut(A)\to\kk^\times$ such that
$\det(\phi_{f,\lambda})=\lambda$ whenever $f\in\kk[x]$ and
$\lambda\in\kk^\times$ is a morphism of groups. The sequence
  \[
  \begin{tikzcd}
  0 \arrow[r]
    & \Aut_0(A) \arrow[hook, r]
    & \Aut(A) \arrow[r, "\det"]
    & \kk^\times \arrow[r]
    & 1
  \end{tikzcd}
  \]
is an extension of groups that is split by the morphism
$\lambda\in\kk^\times\mapsto\phi_{0,\lambda}\in\Aut(A)$. \qed

\end{thmlist}
\end{Corollary}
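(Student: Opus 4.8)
The plan is to assemble this corollary from Propositions~\ref{prop:lnd} and~\ref{prop:aut}, which already carry nearly all the weight; what remains is a set of routine verifications using the explicit parametrizations those results provide. Throughout I use that, by Proposition~\ref{prop:lnd}, every locally nilpotent derivation of~$A$ is of the form~$d_g$ for a unique $g\in\kk[x]$, so that $\LND(A)=\{d_g:g\in\kk[x]\}$, and that $\exp(td_g)=\phi_{tg,1}$ for all $t\in\kk$.

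For part~\thmitem{1}, I would first note that the assignment $g\mapsto d_g$ is $\kk$-linear: since a derivation of~$A$ is determined by its values on the generators~$x$ and~$y$, and $d_g+\lambda d_h$ sends $x$ to~$0$ and $y$ to~$g+\lambda h$, we have $d_g+\lambda d_h=d_{g+\lambda h}$, so $\LND(A)$ is a linear subspace of~$\Der(A)$. This sidesteps the fact that sums of locally nilpotent derivations need not in general be locally nilpotent: here every such sum is again some~$d_{g'}$, hence locally nilpotent by the proposition. To see that this subspace is an abelian subalgebra it then suffices to evaluate the bracket $[d_g,d_h]$ on the generators: it kills~$x$ because both $d_g$ and~$d_h$ do, and it sends $y$ to $d_g(h)-d_h(g)$, which vanishes because $g$,~$h\in\kk[x]$ and every~$d_g$ annihilates~$\kk[x]$. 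Hence $[d_g,d_h]=0$ for all $g$,~$h$, proving at once that the bracket stays inside $\LND(A)$ and that the subalgebra is abelian.

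For part~\thmitem{2}, taking $t=1$ in $\exp(td_g)=\phi_{tg,1}$ shows that the exponential of~$d_g$ is~$\phi_{g,1}$, and as $g$ ranges over~$\kk[x]$ these are exactly the elements of~$\Aut_0(A)$; so the set of exponentials of locally nilpotent derivations is~$\Aut_0(A)$. That this set is an abelian subgroup follows from the group law of $\kk[x]\bowtie\kk^\times$ transported through the isomorphism~$\Phi$ of Proposition~\ref{prop:aut}: since $(f,1)\cdot(g,1)=(f+g,1)$, we get $\phi_{f,1}\phi_{g,1}=\phi_{f+g,1}$, so $\Aut_0(A)$ is closed under composition and inverses and is commutative. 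I would defer the normality of~$\Aut_0(A)$ to part~\thmitem{3}, where it drops out for free.

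Finally, for part~\thmitem{3}, the map $\det$ is nothing but the projection $\kk[x]\bowtie\kk^\times\to\kk^\times$ onto the second coordinate read through~$\Phi$, and since the twisted multiplication has second coordinate $\lambda\mu$, this projection is a homomorphism; thus $\det(\phi_{f,\lambda}\phi_{g,\mu})=\lambda\mu$, as required. Its kernel consists precisely of the $\phi_{f,\lambda}$ with $\lambda=1$, that is, of~$\Aut_0(A)$, which is therefore normal, closing the one gap left in part~\thmitem{2}. The homomorphism~$\det$ is surjective because $\det(\phi_{0,\lambda})=\lambda$, so the displayed sequence is short exact, and the assignment $\lambda\mapsto\phi_{0,\lambda}$ is a homomorphic section of~$\det$ since $(0,\lambda)\cdot(0,\mu)=(0,\lambda\mu)$, which exhibits the splitting. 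The only thing that requires any care anywhere is the twisted multiplication of $\kk[x]\bowtie\kk^\times$ together with the right action $f\cdot\lambda$; but no genuine obstacle arises, and the corollary is essentially an immediate consequence of the structure already established.
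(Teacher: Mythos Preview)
Your proof is correct and follows exactly the approach the paper intends: the paper itself offers no argument beyond the remark that the corollary follows immediately from Proposition~\ref{prop:lnd} (together with Proposition~\ref{prop:aut}), and your write-up simply supplies the routine verifications that justify this. There is nothing to add.
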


All this follows immediately from Proposition~\ref{prop:lnd}. This
corollary describes the algebraic actions of the additive group $\GG_a$ on
the algebra~$A$. Indeed, such a thing is the same thing as a right
algebra-comodule structure on~$A$ over the coordinate Hopf algebra~$\kk[t]$
of~$\GG_a$, which is a coassociative morphism of algebras $\phi:A\to
A\otimes\kk[t]$. In fact, a linear map $A\to A\otimes\kk[t]$ is determined
by a sequence $(\phi_i)_{i\geq0}$ of linear maps~$A\to A$ such that for
each $a\in A$ the sequence $(\phi_i(a))_{i\geq0}$ has almost all its
components equal to zero and $\phi(a)=\sum_{i=0}^\infty\phi_i(a)$, and it
is a algebra-comodule structure on~$A$ over~$\kk[x]$ exactly when
$\phi_1:A\to A$ is a locally nilpotent derivation and $\phi_i=\phi_1^i/i!$
for all $i\in\NN_0$.

It should be remarked that the space of locally nilpotent derivations of an
algebra is most often not a subalgebra of the Lie algebra of
derivations of the algebra.

\bigskip

On the end of the spectrum opposite to where the $\GG_a$-actions lie are
the actions of finite groups. With the description of the automorphism
group of our algebra that we have we can easily find these. Later,
in Section~\ref{sect:taft}, we will consider more generally coactions of
some finite-dimensional Hopf algebras --- and that will require considerably
more work.

\begin{Proposition}\label{prop:finite-subgroups}\mbox{}
\begin{thmlist}

\item If $\phi$ is an element of~$\Aut(A)$ of finite order~$m$, then exists
a unique $\lambda\in\kk^\times$ of order exactly~$m$ such that $\phi$ is
conjugated in~$\Aut(A)$ to~$\phi_{0,\lambda}$.

\item If $\lambda$ is an element of order~$m$ in~$\kk^\times$, then the
automorphism $\phi_{0,\lambda}$ has order~$m$, and if 
$G\coloneqq\gen{\phi_{0,\lambda}}$ is the subgroup of~$\Aut(A)$ generated
by it, then the subalgebra~$A^G$ of invariants
under~$G$ is the $m$th Veronese subalgebra
$A^{(m)}=\bigoplus_{i\geq0}A_{mi}$.

\item Every finite subgroup of~$\Aut(A)$ is cyclic, and conjugated to the
subgroup generated by~$\phi_{0,\lambda}$, with $\lambda$ a root of unity
in~$\kk$. 

\end{thmlist}
\end{Proposition}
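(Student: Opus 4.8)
The plan is to leverage the very explicit description of $\Aut(A)$ from Proposition~\ref{prop:aut} — namely $\Aut(A)\cong\kk[x]\bowtie\kk^\times$ via $\Phi$ — to reduce every statement about finite subgroups to elementary calculations with the pairs $(f,\lambda)$.

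For part~\thmitem{1}, suppose $\phi=\phi_{f,\lambda}$ has finite order $m$. First I would observe that the determinant map $\det:\Aut(A)\to\kk^\times$ of Corollary~\ref{coro:aut-0} sends $\phi$ to $\lambda$, so $\lambda^m=1$ and the order of $\lambda$ divides $m$. The key step is to show $\lambda$ has order \emph{exactly} $m$, which is equivalent to showing that if $\lambda$ has order $d\mid m$ then $\phi$ itself already has order $d$; since the order of $\lambda$ always divides the order of $\phi$, it suffices to show the order of $\phi$ divides the order of $\lambda$. I would do this by composing $\phi_{f,\lambda}$ with itself, using the group law
  \[
  (f,\lambda)\cdot(g,\mu)=(\mu^{N-1}f+g\cdot\lambda,\lambda\mu),
  \]
to compute that $\phi_{f,\lambda}^{\,d}=\phi_{g,1}$ for some $g\in\kk[x]$ once $\lambda^d=1$, and the point is then to \emph{conjugate away} this $g$. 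Concretely, if $\lambda\neq1$ one can find $h\in\kk[x]$ so that the inner‑type conjugation by $\phi_{h,1}$ sends $\phi_{f,\lambda}$ to $\phi_{0,\lambda}$: solving $\mu^{N-1}h+f-h\cdot\lambda=0$ for $h$ amounts to inverting the operator $h\mapsto h-\lambda^{-(N-1)}\,h\cdot\lambda$ on $\kk[x]$, which is diagonal in the monomial basis with eigenvalue $1-\lambda^{-(N-1)+k}$ on $x^k$. This is invertible precisely when no eigenvalue vanishes, i.e. when $\lambda^{k-(N-1)}\neq1$ for all $k\geq0$ occurring in $f$; the subtlety is that some eigenvalues may genuinely vanish, so I expect one cannot always kill $f$ entirely but only reduce it to a sum of «resonant» monomials. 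I would therefore argue more carefully: conjugation by $\phi_{h,1}$ reduces $f$ to its resonant part, and then a direct computation of $\phi_{f,\lambda}^{\,m}$ on the resonant monomials, forced to equal the identity, pins down both that $f$ can be taken $0$ after conjugation and that $\lambda$ has order $m$. This conjugation/resonance analysis is the main obstacle.

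For part~\thmitem{2}, given $\lambda$ of order $m$, the equality $\phi_{0,\lambda}^{\,k}=\phi_{0,\lambda^k}$ is immediate from the group law, so $\phi_{0,\lambda}$ has order $m$. To identify the invariants, I would use that $\phi_{0,\lambda}$ acts on the grading of $A$ (with $x,y$ in degrees $1,N-1$) and that $\phi_{0,\lambda}$ scales the homogeneous component $A_d$ by $\lambda^d$: indeed $\phi_{0,\lambda}(x^iy^j)=\lambda^{i+j(N-1)}x^iy^j=\lambda^d x^iy^j$ on a monomial of degree $d=i+j(N-1)$. A monomial is therefore fixed iff $\lambda^d=1$ iff $m\mid d$, so since the homogeneous components are spanned by monomials from $\B$, the fixed subalgebra is exactly $A^{(m)}=\bigoplus_{i\geq0}A_{mi}$.

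Finally, part~\thmitem{3} follows by combining~\thmitem{1} and~\thmitem{2} with the structure of $\Aut(A)$. Let $G\leq\Aut(A)$ be finite. Restricting $\det$ gives a homomorphism $G\to\kk^\times$ whose image is a finite subgroup of $\kk^\times$, hence cyclic; its kernel is $G\cap\Aut_0(A)$, a finite subgroup of the \emph{torsion‑free} abelian group $\Aut_0(A)\cong(\kk[x],+)$ (torsion‑free because $\kk$ has characteristic zero), so the kernel is trivial. Thus $\det|_G$ is injective and $G$ is cyclic, say generated by some $\phi$ of order $m$. By~\thmitem{1}, $\phi$ is conjugate to $\phi_{0,\lambda}$ with $\lambda$ a primitive $m$th root of unity, and conjugation carries $G=\gen{\phi}$ to $\gen{\phi_{0,\lambda}}$, which is the desired normal form.
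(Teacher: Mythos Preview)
Your plan is correct and follows essentially the same route as the paper: in part~\thmitem{1} the paper also conjugates by $\phi_{h,1}$ to reduce $f$ to its ``resonant'' part (monomials $x^d$ with $d\equiv N-1\bmod\operatorname{ord}\lambda$), then computes the iterate on such an $f$ to force $f=0$ in characteristic~$0$; part~\thmitem{2} is argued identically via the action on the monomial basis. Two small remarks: you do not mention the \emph{uniqueness} of $\lambda$ in~\thmitem{1}, which the paper dispatches in one line using that $\det$ is constant on conjugacy classes; and for~\thmitem{3} your argument via torsion-freeness of $(\kk[x],+)$ is slightly slicker than the paper's, which instead checks directly that two elements $\phi_{f,\lambda},\phi_{g,\lambda}\in G$ with the same $\lambda$ must coincide.
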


The first two parts of the proposition imply that for all $m\in\NN$ the
number of conjugacy classes of~$\Aut(A)$ of elements of order~$m$ coincides
with the number of elements of order~$m$ in~$\kk^\times$. On the other
hand, the third part tells us that set of conjugacy classes of finite
subgroups of~$\Aut(A)$ are in bijection with the set of integers~$m$ such
that there is a primitive $m$th root of unity in~$\kk$, the bijection being
given by taking the order.

\begin{proof}
Let $(f,\lambda)\in\kk[x]\bowtie\kk^\times$, let $k\in\NN$ and let us
suppose that the automorphism~$\phi_{f,\lambda}$ has order~$k$. Since
  \[
  \phi_{f,\lambda}^k(x) = \lambda^kx,
  \qquad
  \phi_{f,\lambda}^k(y) = \lambda^{k(N-1)}y
        + \sum_{i=0}^{k-1}\lambda^{i(N-1)}f(\lambda^{k-1-i}x),
  \]
we see that $\lambda^k=1$, so that $\lambda$ has finite order
in~$\kk^\times$, and that 
  \[ \label{eq:fxk}
  \sum_{i=0}^{k-1}\lambda^{i(N-1)}f(\lambda^{k-1-i}x) = 0.
  \]
Let $m$ be the order of~$\lambda$ in~$\kk^\times$, so that $m$ divides~$k$.
If $h\in\kk[x]$, then
  \begin{align}
  (\phi_{h,1}\circ\phi_{f,\lambda}\circ\phi_{h,1}^{-1})(x)
    &= \lambda x,
    \\
  (\phi_{h,1}\circ\phi_{f,\lambda}\circ\phi_{h,1}^{-1})(y)
    &= \lambda^{N-1}y+f(x)+\lambda^{N-1}h(x)-h(\lambda x)
  \end{align}
This tells us that replacing~$f$ by
$f+\lambda^{N-1}h(x)-h(\lambda x)$ does not change the conjugacy class
of~$\phi_{f,\lambda}$ in~$\Aut(A)$ and,
as the subspace 
  \(
  \gen[\Big]{\lambda^{N-1}h(x)-h(\lambda x):h\in\kk[x]}
  \)
of~$\kk[x]$ is spanned by the monomials $x^d$ with $d\not\equiv N-1\mod m$,
that up to conjugacy we can suppose that the polynomial~$f$ is a linear
combination of monomials~$x^d$ with $d\equiv N-1\mod m$. It is easy to
check now that when that is the case the left hand side of the
equality~\eqref{eq:fxk} is equal to~$kf(\lambda^{k-1}x)$, and therefore
that equality implies that $f=0$, since our ground field has
characteristic zero. This shows that every element of~$\Aut(A)$ of finite
order is conjugated to one of the form~$\phi_{0,\lambda}$ with $\lambda$ an
element of finite order in~$\kk^\times$, and that of course the order
of~$\phi_{0,\lambda}$ is equal to the order of~$\lambda$. To complete the
proof of the first part of the proposition we need only notice that if
$\lambda$ and~$\mu$ are two elements of~$\kk^\times$ the automorphisms
$\phi_{0,\lambda}$ and~$\phi_{0,\mu}$ are conjugated in~$\Aut(A)$ exactly
when $\lambda=\mu$: this is an immediate consequence of the fact that the
map 
  \[ \label{eq:pi}
  \pi:\phi_{f,\lambda}\in\Aut(A)\mapsto\lambda\in\kk^\times
  \]
is a morphism of groups.

If $m\in\NN$ and $\lambda$ is an element of order~$m$ in~$\kk^\times$, then
$\phi_{0,\lambda}(x^iy^j)=\lambda^{i+j(N-1)}x^iy^j$. We see immediately
from this that the subalgebra fixed by~$\phi_{0,\lambda}$, and by the
cyclic group it generates, is spanned by the monomials~$x^iy^j$ with $m\mid
i+j(N-1)$, that is, whose degree is divisible by~$m$. The claim~\thmitem{2}
of the proposition follows at once from this.

Suppose now that $G$ is a finite subgroup of $\Aut(A)$. If  $f$ and $g$ are
two elements of~$\kk[x]$ and $\lambda$ one of~$\kk^\times$ such that
$\phi_{f,\lambda}$ and~$\phi_{g,\lambda}$ are both in~$G$, then the
composition $\phi_{f,\lambda}\circ\phi_{g,\lambda}$ has finite order and
maps~$x$ and~$y$ to $x$ and to~$y+\lambda^{-N+1}(f-g)$, respectively: it
follows immediately from this that $f=g$. This means that the map~$\pi$
from~\eqref{eq:pi} is injective when restricted to~$G$ and therefore $G$ is
isomorphic to a finite subgroup of~$\kk^\times$ and, in particular, cyclic.
If $\phi$ is a generator of~$G$ and $m$ is the order of~$G$, we have seen
above that there is a primitive $m$th root of unity~$\lambda$ in~$\kk$ such
that $\phi$ is conjugated to~$\phi_{0,\lambda}$ and, therefore, $G$ is
conjugated to~$\gen{\phi_{0,\lambda}}$.
\end{proof}

Proposition~\ref{prop:finite-subgroups} gives us a classification of the
finite subgroups of~$\Aut(A)$ and their corresponding invariant
subalgebras, and it is natural to consider the classical problem of classifying
these up to isomorphism. This seems to be a rather complicated problem, as
even giving presentations for them is not easy. We will content ourselves
with describing two «extreme» instances of this problem.

Let $\lambda$ be a root of unity in~$\kk$, let $m$ be its order, and let us
suppose that $m>1$. We let $G$ be the subgroup generated
by~$\phi_{0,\lambda}$ in~$\Aut(A)$, which has order~$m$, so that the
invariant subalgebra~$A^G$ is the $m$th Veronese
subalgebra~$A^{(m)}$ of~$A$. 
\begin{thmlist}

\item\label{it:ag:1} Suppose first that $m$ divides $N-1$, and let $k$ be
the integer such that $N-1=km$. The invariant subalgebra~$A^G$ is easily
seen to be generated by~$x^n$ and $y$ and, since
  \[
  yx^n-x^ny=nx^{n+N-1}=n(x^n)^k,
  \]
isomorphic to $A_k=\kk\lin{x,y}/(yx-xy-x^k)$. We will see below, in
Corollary~\ref{coro:n-inv}, that as $m$ varies among the $\phi(N-1)$
positive divisors of~$N-1$ the algebras that we obtain in this way are
pairwise non-isomorphic. These algebras can obviously be generated by two
elements and not by one.

\item Suppose now that $N-1$ divides properly~$m$, and let $l$ be the
integer such that $m=l(N-1)$, which is at least~$2$. In this situation the
subalgebra~$A^G$ is generated by the $m$th homo\-geneous component of~$A$,
which has dimension $l+1$, so that that $A^G$ can be generated by~$l+1$
elements. On the other hand, $A^G$ cannot be generated by~$l$ elements:
indeed, if there existed $l$ elements~$a_1$,~\dots,~$a_l$ in~$A^G$ that
generate it as an algebra, the homogeneous components of those elements of
degree~$m$ would generate the $m$th homo\-geneous component of~$A$, and
this is absurd. We thus see that $A^G$ is minimally generated by~$l+1$
elements and that therefore the subalgebras that we obtain in this way are
pairwise non-isomorphic and, since $l\geq2$, also non-isomorphic to any of
the algebras that we found in~\ref{it:ag:1}.

\end{thmlist}
The smallest case not covered by these considerations is that in which
$N=3$ and $m=3$.

\section{Hochschild cohomology}
\label{sect:cohomology}

In this section we present a rather effortless calculation of the
Hochschild cohomology of the algebra~$A$. Our approach is directly targeted
at obtaining completely explicit representatives of cohomology classes.
As before, we suppose throughout that $N\geq1$. The Hochschild cohomology
of the first Weyl algebra, the algebra we get when $N=0$, is
well-know to be the same as that of the ground field --- this was computed
originally by Ramaiyengar Sridharan in~\cite{Sridharan}.

\bigskip

Let $V$ be the subspace of~$A$ spanned by~$x$ and~$y$, let $V^*$ be its
dual vector space, and let $(\hat x,\hat y)$ be the ordered basis of~$V ^*$
dual to~$(x,y)$. There is a projective resolution~$P_*$ of~$A$ as an $A$-bimodule
of the form
  \[ \label{eq:res}
  \begin{tikzcd}
  0 \arrow[r]
    & A\otimes\Lambda^2V\otimes A\arrow[r, "d_2"]
    & A\otimes V\otimes A\arrow[r, "d_1"]
    & A\otimes A
  \end{tikzcd}
  \]
with differentials such that
  \begin{align}
  d_1(1\otimes x\otimes 1) 
        &= x\otimes 1-1\otimes x, \\
  d_1(1\otimes y\otimes 1) 
        &= y\otimes 1-1\otimes y, \\
  d_2(1\otimes x\wedge y\otimes 1) 
        &= 
         \begin{multlined}[t][0.6\displaywidth]
          y\otimes x\otimes 1 + 1\otimes y\otimes x
          - x\otimes y\otimes 1 - 1\otimes x\otimes y \\
          - \sum_{s+1+t=N}x^s\otimes x\otimes x^t
         \end{multlined}
  \end{align}
and augmentation $\epsilon:A\otimes A\to A$ given by the multiplication
of~$A$. Clearly, $V$ is a homogeneous subspace of~$A$, and if we endow it
with the induced grading, and in turn $\Lambda^2V$ with the grading induced
by that of~$V$ and each component of the complex~\eqref{eq:res} with the
obvious tensor product gradings, that complex becomes a complex of graded
$A$-bimodules. Applying to it the functor~$\hom_{A^e}(-,A)$ we obtain, up
to standard identifications, the cochain complex
  \[ \label{eq:comp}
  \begin{tikzcd}
    & A \arrow[r, "\delta_0"]
    & A\otimes V^* \arrow[r, "\delta_1"]
    & A\otimes \Lambda^2V^* \arrow[r]
    & 0
  \end{tikzcd}
  \]
with differentials such that
  \begin{align}
  \delta_0(a) &= [x,a]\otimes\hat x+[y,a]\otimes\hat y, \\
  \delta_1(b\otimes\hat x+c\otimes\hat y) &=
    \left([y,b] + [c,x] - \sum_{s+1+t=N}x^sbx^t\right)
    \otimes\hat x\wedge\hat y
  \end{align}
for all choices of~$a$, $b$ and~$c$ in~$A$. The cohomology of this complex
is canonically isomorphic to the Hochschild cohomology~$\HH^\bullet(A)$
of~$A$, and we \emph{identify} the two. If we grade~$V^*$ so that $\hat x$
and~$\hat y$ have degrees~$-1$ and~$-(N-1)$, respectively, then the
complex~\eqref{eq:comp} is one of graded vector spaces and homogeneous maps
of degree~$0$, and, consequently, its cohomology~$\HH^\bullet(A)$ is also
graded. This grading on Hochschild cohomology coincides with the canonical
grading it gets from the fact that the algebra~$A$ is graded and has a free
resolution as a graded bimodule over itself by finitely generated free
bimodules --- we will not belabor this point, but it is important that the
grading we found is \emph{the} grading on~$\HH^\bullet(A)$.

\bigskip

The hardest part of the calculation is that of~$\HH^1(A)$, and we will
leave it for the end, as we will do it in a rather indirect way. As for
that of~$\HH^0(A)$, we have actually already done it:

\begin{Lemma}
The $0$th cohomology space $\HH^0(A)$, the kernel of the map~$\delta_0$, is
$\kk$, and its Hilbert series is therefore $h_{\HH^0(A)}(t) = 1$.
\end{Lemma}

\begin{proof}
We established in Proposition~\ref{prop:center} that the center of~$A$,
which coincides with the kernel of the map~$\delta_0$, is~$\kk$ and then
that its Hilbert series is~$1$ is clear. 
\end{proof}

We know from Lemma~\ref{lemma:normal} that $x^N$ is a normal element
in~$A$, so that the right ideal~$x^NA$ is a bilateral ideal. Moreover, that
ideal is related to commutators in the following way:

\begin{Lemma}\label{lemma:xN}
We have $[A,x]=[A,A]=x^NA$.
\end{Lemma}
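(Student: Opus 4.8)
The statement claims three sets are equal: $[A,x]=[A,A]=x^NA$. The plan is to prove the chain of inclusions $[A,x]\subseteq x^NA$, then $x^NA\subseteq[A,x]\subseteq[A,A]$, and finally $[A,A]\subseteq x^NA$, after which all three must coincide.

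First I would establish $[A,x]\subseteq x^NA$. Since everything is $\kk$-linear, it suffices to compute $[x^iy^j,x]$ on the basis monomials~$\B$. Because $x$ commutes with every $x^i$, we have $[x^iy^j,x]=x^i[y^j,x]$, and so the real content is controlling $[y^j,x]$. This is exactly what Lemma~\ref{lemma:comm} provides: the identity~\eqref{eq:comm:1} expresses $y^jx$ (hence $[y^j,x]=y^jx-xy^j$) as a $\kk[x]$-combination of terms $x^{1+(j-t)(N-1)}y^t$. For every term with $t<j$ the exponent $1+(j-t)(N-1)$ is at least $N$ since $N\geq1$ forces $j-t\geq1$ to contribute at least $N-1$; the $t=j$ term of $y^jx$ is $(i)_{N-1,0}\binom{j}{j}x\,y^j=xy^j$, which cancels against $xy^j$. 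Hence $[y^j,x]$ lies in $x^N\kk[x]\,y^{*}$, and multiplying on the left by $x^i$ keeps us in $x^NA$. I expect this to be a short direct computation once~\eqref{eq:comm:1} is invoked.

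For the reverse direction $x^NA\subseteq[A,x]$, the cleanest route is to note that the relation $yx-xy=x^N$ says precisely $[y,x]=x^N$, so $x^N=[y,x]\in[A,x]$. Then for arbitrary $a\in A$, I want $x^Na\in[A,x]$. Writing $x^Na=[y,x]a$ and trying to absorb the extra factor, I would compute $[ya,x]=[y,x]a+y[a,x]=x^Na+y[a,x]$, so that $x^Na=[ya,x]-y[a,x]$. Since $[a,x]\in[A,x]\subseteq x^NA$ by the first inclusion, and $x^NA$ is a two-sided ideal (as $x$ is normal, by Lemma~\ref{lemma:normal}), the term $y[a,x]$ lies in $x^NA$; an induction on a suitable degree or order filtration then lets me conclude $x^Na\in[A,x]$. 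Alternatively, and perhaps more smoothly, one shows $x^N\kk[x]\subseteq[A,x]$ first by a direct Pochhammer computation with $[y^jx^i,x]$ and then upgrades using that $[A,x]$ is stable under the relevant multiplications.

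Finally $[A,A]\subseteq x^NA$: I would show $[x^iy^j,x^uy^v]\in x^NA$ for all basis pairs. Expanding the commutator and repeatedly using Lemma~\ref{lemma:comm} to normal-order, every cross term where an $x$-power must be pushed past a $y$-power produces, by the same mechanism as in the first paragraph, a factor of $x^N$; the «top» terms, where no reordering is needed, cancel in the commutator. The trivial inclusion $[A,x]\subseteq[A,A]$ is immediate since $x\in A$. Combining the four facts $[A,x]\subseteq x^NA\subseteq[A,x]\subseteq[A,A]\subseteq x^NA$ forces all the stated equalities. The \textbf{main obstacle} is the third inclusion $[A,A]\subseteq x^NA$: controlling the general commutator $[x^iy^j,x^uy^v]$ requires bookkeeping of the Pochhammer coefficients from both identities of Lemma~\ref{lemma:comm} and verifying that, after all cancellations, the surviving terms are divisible by~$x^N$. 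I would try to organize this via the order filtration~$(F_k)$ and an induction on $j+v$, reducing to the base case handled by $[y,x]=x^N$, so as to avoid a brute-force coefficient computation.
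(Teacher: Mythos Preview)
Your chain of inclusions is the same as the paper's, but two of the three steps can be done much more simply, and the third is where your argument has a real gap. For $[A,x]\subseteq x^NA$ and $[A,A]\subseteq x^NA$, invoking Lemma~\ref{lemma:comm} and tracking Pochhammer coefficients is unnecessary: the paper simply expands $[y^j,x]=\sum_{s+1+t=j}y^s[y,x]y^t=\sum_{s+1+t=j}y^sx^Ny^t\in x^NA$ (because $x^NA$ is a two-sided ideal), and similarly $[y^j,x^k]=\sum y^sx^u[y,x]x^vy^t\in x^NA$; together with $[x^iy^j,x^ky^l]=x^i[y^j,x^k]y^l+x^k[x^i,y^l]y^j$ this finishes both inclusions in a line. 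So your identified ``main obstacle'' $[A,A]\subseteq x^NA$ is in fact the easiest part.

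The genuine difficulty is $x^NA\subseteq[A,x]$, and here your induction does not close as written. From $x^Na=[ya,x]-y[a,x]$ you need $y[a,x]\in[A,x]$, but knowing only $y[a,x]\in x^NA$ is exactly the statement you are trying to prove; and if $a\in F_j$ then $y[a,x]\in x^NF_j$, not $x^NF_{j-1}$, so the filtration does not drop. What rescues the argument is a leading-term computation: for $a=x^iy^j$ one has $[x^iy^{j+1},x]\equiv(j+1)x^{i+N}y^j\bmod F_{j-1}$, and since the error term lies in $x^NA\cap F_{j-1}=x^NF_{j-1}$ it is in $[A,x]$ by induction, so $x^{i+N}y^j=\frac{1}{j+1}\bigl([x^iy^{j+1},x]-u\bigr)\in[A,x]$. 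This is precisely the paper's proof of that inclusion; your identity is one algebraic step away from it, but the missing leading-coefficient analysis is what actually makes the induction go through.
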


\begin{proof}
If $i$,~$j\geq0$, then 
  \[
  [x^iy^j,x] 
        = \sum_{s+1+t=j}x^iy^s[y,x]y^t
        = \sum_{s+1+t=j}x^iy^sx^Ny^t
        \in x^NA
  \]
because~$x^NA$ is an ideal, and this implies that $[A,x]\subseteq x^NA$.

To prove the reverse inclusion, we will show that $x^{i+N}y^j\in[A,x]$ for
all $i$,~$j\geq0$ by induction on~$j$. If $j\geq0$ and
$x^{i+N}y^k\in[A,x]$ for all $i\geq0$ and all $k\in\{0,\dots,j-1\}$ ---~this
hypothesis is vacuous when $j=0$, and this starts the induction~--- then
there is an $u\in F_{j-1}$ such that $[x^iy^{j+1},x] = (j+1)x^{i+N}y^j +
u$ and, because of the hypothesis, a $v\in A$ such that $u=[v,x]$: we then
have that $x^{i+N}y^j=[(j+1)^{-1}x^iy^{j+1}-v,x]\in[A,x]$. The induction is
thus complete.

If $i$,~$j$,~$k$,~$l\geq0$, then we have
  \[
  [x^iy^j,x^ky^l]
        = x^i[y^j,x^k]y^l + x^k[x^i,y^l]y^j,
  \]
so to prove that $[A,A]$ is contained in~$x^NA$, and thus the rest of the
equalities asserted by the lemma, it is enough to compute
that
  \[
  [y^j,x^k]
        = \sum_{\substack{s+1+t=j\\u+1+v=k}}
                y^sx^u[y,x]x^vy^t
        \in x^NA
  \]
because~$[y,x]=x^N$ and $x^NA$ is a bilateral ideal.
\end{proof}

The description of~$[A,A]$ that we have now allows us to compute~$\HH^2(A)$
easily.

\begin{Lemma}
The image of the map~$\delta_1$ is $x^{N-1}A\otimes\hat x\wedge\hat y$, and
therefore 
  \[
  \HH^2(A)\cong A/x^{N-1}A\otimes\hat x\wedge\hat y.
  \]
If $N=1$ then this is of course~$0$, and if $N\geq2$, so that $\HH^2(A)$ is
graded with finite-dimensional homogeneous components, then the
Hilbert series if this space is
  \[
  h_{\HH^2(A)}(t) = \frac{t^{-N}}{1-t}.
  \]
\end{Lemma}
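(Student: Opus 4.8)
The plan is to pin down the image of $\delta_1$ exactly, since then $\HH^2(A)$ is simply its cokernel and everything else is bookkeeping with the grading. Write $W\subseteq A$ for the set of all elements $[y,b]+[c,x]-\sum_{s+1+t=N}x^sbx^t$ with $b$, $c\in A$, so that $\img\delta_1=W\otimes\hat x\wedge\hat y$; I claim $W=x^{N-1}A$. The inclusion $W\subseteq x^{N-1}A$ is the easy half: by Lemma~\ref{lemma:xN} both $[y,b]$ and $[c,x]$ lie in $[A,A]=x^NA\subseteq x^{N-1}A$, and for the remaining term I would use that $x$ is normal (Lemma~\ref{lemma:normal}), so that $bx^t=x^tb'$ for some $b'\in A$ and hence each summand $x^sbx^t=x^{s+t}b'=x^{N-1}b'$ lies in $x^{N-1}A$ because $s+t=N-1$.

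For the reverse inclusion I would work modulo $x^NA$. Taking $b=0$ and letting $c$ range over $A$ already gives $x^NA=[A,x]\subseteq W$, so it suffices to show that $W$ surjects onto $x^{N-1}A/x^NA$; and since $x^{N-1}A$ is spanned by the monomials $x^{N-1+i}y^j$, of which those with $i\geq1$ lie in $x^NA$, that quotient is spanned by the classes of $x^{N-1}y^j$ with $j\geq0$. The decisive computation is to evaluate $\delta_1$ on $b=y^j$, $c=0$: here $[y,y^j]=0$, so I only need to understand $\sum_{s+1+t=N}x^sy^jx^t$ modulo $x^NA$. Rewriting each factor $y^jx^t$ by the first identity of Lemma~\ref{lemma:comm}, I expect every summand $x^sy^jx^t$ with $s+t=N-1$ to have leading term $x^{N-1}y^j$ and all corrections in $x^NA$, so that the whole sum is congruent to $N\,x^{N-1}y^j$ modulo $x^NA$. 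Then $\delta_1(y^j\otimes\hat x)\equiv -N\,x^{N-1}y^j\otimes\hat x\wedge\hat y \pmod{x^NA\otimes\hat x\wedge\hat y}$, and since $\kk$ has characteristic zero and $N\geq1$ the scalar $-N$ is invertible; thus each $x^{N-1}y^j$ lies in $W+x^NA$, finishing the claim. I expect this reduction of the combinatorial sum to its single surviving leading term to be the only delicate point, the rest being formal.

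With $W=x^{N-1}A$ in hand, the cokernel of $\delta_1$ — which is $\HH^2(A)$ because the complex~\eqref{eq:comp} terminates after $A\otimes\Lambda^2V^*$ — is $(A\otimes\hat x\wedge\hat y)/(x^{N-1}A\otimes\hat x\wedge\hat y)\cong A/x^{N-1}A\otimes\hat x\wedge\hat y$. When $N=1$ one has $x^{N-1}A=A$, so this is zero. For $N\geq2$ I would read off the Hilbert series directly: a basis of $A/x^{N-1}A$ is given by the images of the monomials $x^ky^j$ with $0\leq k\leq N-2$ and $j\geq0$, each of degree $k+j(N-1)$, so its Hilbert series is $\left(\sum_{k=0}^{N-2}t^k\right)\frac{1}{1-t^{N-1}}=\frac{1-t^{N-1}}{1-t}\cdot\frac{1}{1-t^{N-1}}=\frac{1}{1-t}$. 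Finally, tensoring with $\hat x\wedge\hat y$, whose degree is $-1-(N-1)=-N$, shifts this series by the factor $t^{-N}$, giving $h_{\HH^2(A)}(t)=\frac{t^{-N}}{1-t}$, as asserted.
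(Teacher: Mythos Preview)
Your proof is correct and follows essentially the same strategy as the paper's: both show $x^NA\subseteq W$ via $\delta_1(A\otimes\hat y)$, then compute $\delta_1(y^j\otimes\hat x)$ to capture the missing $x^{N-1}y^j$ modulo $x^NA$, and read off the Hilbert series from the monomial basis of $A/x^{N-1}A$. The one minor difference is in the containment $W\subseteq x^{N-1}A$: the paper computes $\delta_1(x^iy^j\otimes\hat x)$ explicitly and rewrites $\sum x^{s+i}y^jx^t$ as $\sum x^{s+i}(x^ty^j+[y^j,x^t])$, invoking Lemma~\ref{lemma:xN} for the commutator, whereas you argue more directly from normality that $bx^t\in x^tA$ forces each $x^sbx^t\in x^{N-1}A$ --- a slightly cleaner formulation of the same idea.
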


\begin{proof}\allowdisplaybreaks
For all $c\in A$ we have $\delta_1(c\otimes\hat y)=[c,x]\otimes\hat
x\wedge\hat y$, and this, together with Lemma~\ref{lemma:xN}, tells us that
$x^NA\otimes\hat x\wedge\hat y=\delta_1(A\otimes\hat
y)\subseteq\img\delta_1$. On the other hand, if $i$,~$j\geq0$, we have that
  \begin{align}
  \delta_1(x^iy^j\otimes\hat x)
        &= \left([y,x^iy^j] - \sum_{s+1+t=N}x^{s+i}y^jx^t\right)
           \otimes\hat x\wedge\hat y \\
        &= \left(ix^{i+N-1}y^j - \sum_{s+1+t=N}x^{s+i}y^jx^t\right)
           \otimes\hat x\wedge\hat y \\
        &= \left((i-N)x^{i+N-1}y^j - \sum_{s+1+t=N}x^{s+i}[y^j,x^t] \right)
           \otimes\hat x\wedge\hat y. 
  \end{align}
According to Lemma~\ref{lemma:xN}, the sum appearing in this last
expression is an element of~$x^NA$: it follows from this that
$\delta_1(x^iy^j\otimes\hat x)\subseteq x^{N-1}A$, and then that
  \[
  x^NA 
        \subseteq \img\delta_1 
        = \delta_1(A\otimes\hat x)+\delta_1(A\otimes\hat y)
        \subseteq x^{N-1}A + x^{N}A = x^{N-1}A.
  \]
Moreover, for each $j\geq0$ we have that
  \[
  \delta_1(y^j\otimes\hat x)
        = \left(-Nx^{N-1}y^j - \sum_{s+1+t=N}x^s[y^j,x^y] \right)
           \otimes\hat x\wedge\hat y.
  \]
Since the sum appearing here is in~$x^NA$ it is equal to~$[b,x]$ for
some~$b\in A$ and therefore
  \[
  -Nx^{N-1}y^j\otimes\hat x\wedge\hat y = \delta_1(y^j\otimes\hat x+b\otimes\hat y) 
        \in \img\delta_1.
  \]
Putting everything together we conclude that $\img\delta_1=x^{N-1}A$, as
we want. Finally, the quotient $A/x^{N-1}A$ is freely spanned by the
classes of the monomials~$x^iy^j$ with $0\leq i<N-1$ and $j\geq0$, and
there is exactly one such monomial of each degree in~$\NN_0$: the Hilbert
series of~$A/x^{N-1}A$ is thus $(1-t)^{-1}$, and the Hilbert series
of~$\HH^2(A)$ is as described in the lemma, because of the factor~$\hat
x\wedge\hat y$.
\end{proof}

At this point we know the Hilbert series of~$\HH^0(A)$ and of~$\HH^2(A)$,
and we can use the invariance of the Euler characteristic of a complex
under the operation of taking cohomology to determine the Hilbert series
of~$\HH^1(A)$.

\begin{Proposition}\label{prop:hh1-series}
If $N\geq2$, then
the Hilbert series of~$\HH^1(A)$ is
  \[
  h_{\HH^1(A)}(t) = 1 + \frac{t^{-N+1}}{1-t},
  \]
so that for all integers~$l$ we have that
  \[
  \dim\HH^1(A)_l =
    \begin{cases*}
    2 & if $l=0$; \\
    1 & if $l\geq-N+1$ and $l\neq0$; \\
    0 & if $l\leq -N$.
    \end{cases*}
  \]
\end{Proposition}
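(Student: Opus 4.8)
The plan is to exploit the invariance of the Euler characteristic of the complex~\eqref{eq:comp} under passage to cohomology, exactly as the text anticipates. The crucial point that makes this work is the hypothesis $N\geq2$: as noted in the preliminaries, it guarantees that every homogeneous component of~$A$ is finite dimensional, and since each term of~\eqref{eq:comp} is~$A$ tensored with a finite-dimensional graded space (namely $\kk$, $V^*$, or $\Lambda^2V^*$), the same holds for all three terms. The differentials $\delta_0$ and~$\delta_1$ are homogeneous of degree~$0$, so in each fixed degree~$l$ the complex~\eqref{eq:comp} restricts to a complex of finite-dimensional vector spaces, and the alternating sum of the dimensions of its terms equals the alternating sum of the dimensions of its cohomology. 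Multiplying by~$t^l$ and summing over~$l$ turns this into the identity of Hilbert series
\[
h_A(t) - h_{A\otimes V^*}(t) + h_{A\otimes\Lambda^2V^*}(t)
= h_{\HH^0(A)}(t) - h_{\HH^1(A)}(t) + h_{\HH^2(A)}(t),
\]
which is the only tool I need.

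Next I would compute the left-hand side. Since the basis $\B=\{x^iy^j:i,j\geq0\}$ has $\deg(x^iy^j)=i+j(N-1)$, summing the two geometric series gives $h_A(t)=\frac{1}{(1-t)(1-t^{N-1})}$. The space~$V^*$ has basis $\hat x$,~$\hat y$ in degrees $-1$ and $-(N-1)$, so $h_{A\otimes V^*}=h_A\cdot(t^{-1}+t^{-(N-1)})$, while $\Lambda^2V^*$ is spanned by $\hat x\wedge\hat y$ in degree $-N$, so $h_{A\otimes\Lambda^2V^*}=h_A\cdot t^{-N}$. The alternating sum therefore factors as
\[
h_A(t)\bigl(1-t^{-1}-t^{-(N-1)}+t^{-N}\bigr)
= h_A(t)\,(1-t^{-1})(1-t^{-(N-1)}),
\]
and since $1-t^{-1}=-t^{-1}(1-t)$ and $1-t^{-(N-1)}=-t^{-(N-1)}(1-t^{N-1})$, the two factors in the denominator of~$h_A$ cancel and the whole expression collapses to the monomial~$t^{-N}$.

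Finally I would solve for~$h_{\HH^1(A)}$. Plugging the already-established values $h_{\HH^0(A)}(t)=1$ and $h_{\HH^2(A)}(t)=\frac{t^{-N}}{1-t}$ into the Euler-characteristic identity gives
\[
h_{\HH^1(A)}(t) = 1 + \frac{t^{-N}}{1-t} - t^{-N} = 1 + \frac{t^{-N+1}}{1-t},
\]
as claimed. Expanding $\frac{t^{-N+1}}{1-t}=\sum_{l\geq-N+1}t^l$ and adding the isolated~$1$ sitting in degree~$0$ then reads off the stated dimensions: coefficient~$2$ in degree~$0$, coefficient~$1$ in each degree $l\geq-N+1$ with $l\neq0$, and~$0$ for $l\leq-N$. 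There is no real obstacle here beyond bookkeeping; the one genuine subtlety is the finite-dimensionality of the graded pieces needed to make the Hilbert series well-defined Laurent series and to run the degree-by-degree Euler count, which is precisely what fails when $N=1$ and thereby explains why that case must be excluded.
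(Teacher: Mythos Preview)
Your proof is correct and follows exactly the paper's approach: compute the Hilbert series of the three terms of the complex~\eqref{eq:comp}, observe that the alternating sum collapses to~$t^{-N}$, and then solve the Euler-characteristic identity for~$h_{\HH^1(A)}$ using the already-known values of~$h_{\HH^0(A)}$ and~$h_{\HH^2(A)}$. If anything, you supply slightly more detail than the paper does, particularly in justifying the finite-dimensionality of the graded pieces and in carrying out the final expansion.
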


\begin{proof}
The Hilbert series
of~$A$ is
  \[
  h_A(t) = \frac{1}{(1-t)(1-t^{N-1})},
  \]
and then the Euler characteristic of the complex~\eqref{eq:comp} is
  \[
  h_A(t) - (t^{-1}+t^{-(N-1)})h_A(t)+t^{-N}h_A(t) = t^{-N}.
  \]
The invariance of the Euler characteristic when passing to cohomology
implies now that
  \[
  t^{-N} = h_{\HH^0(A)}(t) - h_{\HH^1(A)}(t) + h_{\HH^2(A)}(t),
  \]
and one can compute~$h_{\HH^1(A)}(t)$ from this equality, since we know the
values of both $h_{\HH^0(A)}(t)$ and~$h_{\HH^2(A)}(t)$, finding the formula
given in the proposition.
\end{proof}

In Proposition~\ref{prop:hh1-series} we excluded the case in which $N=1$,
which is special --- for one thing, the graded algebra~$A_N$ is not locally
finite-dimensional in that case, so we cannot even talk about its Hilbert
series! Let us deal with it now for the sake of completeness.

\begin{Proposition}\label{prop:hh1-n1}
If $N=1$, then there is a derivation~$d_0:A\to A$ such that $d_0(x)=0$
and~$d_0(y)=1$. It is homogeneous of degree~$0$, and its cohomology class
freely spans the vector space~$\HH^1(A)$, which is thus $1$-dimensional and
concentrated in degree~$0$.
\end{Proposition}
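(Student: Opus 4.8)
The plan is to compute $\HH^1(A)$ directly from the explicit two-step complex~\eqref{eq:comp}, which for $N=1$ has differentials $\delta_0(a)=[x,a]\otimes\hat x+[y,a]\otimes\hat y$ and $\delta_1(b\otimes\hat x+c\otimes\hat y)=\bigl([y,b]+[c,x]-b\bigr)\otimes\hat x\wedge\hat y$, the last summand arising because when $N=1$ the sum $\sum_{s+1+t=N}x^sbx^t$ collapses to the single term $b$. I cannot use the Euler-characteristic argument of Proposition~\ref{prop:hh1-series} here, since for $N=1$ the generator $y$ lies in degree~$0$ and the graded components of~$A$ are infinite-dimensional; this is exactly why the case must be handled by hand. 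First I would check that the assignment $x\mapsto0$, $y\mapsto1$ respects the defining relation, since $d_0(yx-xy-x)=1\cdot x-x\cdot1=0$, so that $d_0$ exists by the universal property of the presentation; as $1$ has degree~$0$ and $\hat y$ has degree $-(N-1)=0$, the cochain $0\otimes\hat x+1\otimes\hat y$ is homogeneous of degree~$0$, and it is a cocycle because $[y,0]+[1,x]-0=0$.

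Next I would show $d_0$ is not a coboundary. If $0\otimes\hat x+1\otimes\hat y=\delta_0(a)$ for some $a\in A$, then $[x,a]=0$ forces $a\in\kk[x]$ by Lemma~\ref{lemma:centralizer:x}, and $[y,a]=1$. But for $a=\sum_i a_ix^i\in\kk[x]$ the identity $[y,x^i]=ix^i$ (immediate by induction from $[y,x]=x$) gives $[y,a]=xa'$, which lies in $x\kk[x]$ and hence can never equal~$1$. So the class of $d_0$ is non-zero.

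The core of the argument is to reduce an arbitrary cocycle $b\otimes\hat x+c\otimes\hat y$, with $[y,b]+[c,x]=b$, to a scalar multiple of $d_0$ modulo the image of $\delta_0$. By Lemma~\ref{lemma:xN} one has $\{[x,a]:a\in A\}=x^NA=xA$, and since $A=xA\oplus\kk[y]$ as vector spaces I can subtract a suitable coboundary to arrange $b\in\kk[y]$, the pair remaining a cocycle. Then $[y,b]=0$, so the cocycle condition becomes $[c,x]=b$; as the left side lies in $[A,x]=xA$ while $b\in\kk[y]$, and $xA\cap\kk[y]=0$, this forces $b=0$ and $[c,x]=0$, whence $c\in\kk[x]$ again by Lemma~\ref{lemma:centralizer:x}. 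Finally, the remaining coboundaries $\delta_0(a)$ with $a\in\kk[x]$ (chosen so the $\hat x$-component stays zero) have $\hat y$-component $[y,a]=xa'$, which ranges over all of $x\kk[x]$ because $\kk$ has characteristic zero; subtracting such a term reduces $c\in\kk[x]$ modulo $x\kk[x]$ to its constant term $\gamma\in\kk$, so the original class equals $\gamma[d_0]$. The main obstacle is sequencing these normalizations so they do not undo one another — after killing $b$ one must use only coboundaries coming from $\kk[x]$ — together with verifying the two transversality facts $xA\cap\kk[y]=0$ and $\{xa':a\in\kk[x]\}=x\kk[x]$. These steps together show that $\HH^1(A)$ is one-dimensional, freely spanned by $[d_0]$, and concentrated in degree~$0$.
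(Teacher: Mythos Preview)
Your proof is correct. The route you take differs from the paper's in a noteworthy way. The paper first decomposes an arbitrary derivation into homogeneous components and then exploits the fact that, for $N=1$, the operator $\ad(y)$ acts on the degree-$l$ component of~$A$ as multiplication by~$l$; this kills all components of non-zero degree at once, and the degree-zero component is then handled via the finite-difference trick $f(y)=g(y+1)-g(y)$ to absorb~$d(x)$ into a commutator~$[g(y),x]$. You instead bypass the grading entirely: using Lemma~\ref{lemma:xN} to identify the image of $a\mapsto[x,a]$ with~$xA$, you normalize the $\hat x$-component of a cocycle to lie in the transversal~$\kk[y]$, and then the cocycle equation together with $xA\cap\kk[y]=0$ forces that component to vanish. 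Your approach is more uniform and arguably cleaner, since it never splits into cases by degree; the paper's approach, on the other hand, makes visible the conceptual reason the case $N=1$ collapses --- namely, that the Euler derivation~$E=\ad(y)$ is inner --- which the paper emphasizes in the paragraph following the proof.
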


When $N=1$ the map $\ad(y):u\in A\mapsto [y,u]\in A$ acts on elements of
each degree~$l\in\ZZ$ by multiplication by~$l$. We will use this fact in
the proof.

\begin{proof}
A trivial calculation shows that there is indeed a derivation~$d_0:A\to A$
with $d_0(x)=0$ and $d_0(y)=1$, and it is clearly homogeneous of
degree~$0$. Were it inner, we would have an element~$c$ such that $[x,c]=0$
and $[y,c]=1$: the first equality implies that $c\in\kk[x]$, and then the
second one that $x^Nc'=1$, which is absurd.

Let now $d:A\to A$ be an arbitrary homogeneous derivation and let $l$ be
its degree. If~$a\coloneqq d(x)$ and~$b\coloneqq d(y)$, then $a$ and~$b$
are homogeneous of~$A$ of degree~$l+1$ and~$l$, respectively. As $d$ is a
derivation, we have that
  \[ \label{eq:zero}
  0 = d([y,x]-x) = [b,x] + [y,a] - a = [b, x] + la.
  \]
If $l\neq0$ this tells us that $a=-\frac{1}{l}[b,x]$, and using this we see
at once that $d$ is the inner derivation~$\tfrac{1}{l}\ad(b)$. Let us then
suppose that $l=0$. Now the equality above tells us that $b$
commutes with~$x$, so that is an homogeneous element of~$\kk[x]$ of
degree~$0$: in other words, we have $b\in\kk$. On the other hand, since $a$
has degree~$1$, there is an $f\in\kk[y]$ such that $a=xf$. There exists a
$g\in\kk[y]$ such that $f=g(y+1)-g(y)$, and then $[g,x]=a$. The
derivation~$d'\coloneqq d-\ad(g)$ is therefore homogeneous of degree~$0$,
cohomologous to~$d$, and has $d'(x)=0$ and $d'(y)=b$. We can thus conclude
that every homogeneous derivation is cohomologous lo a multiple of the
derivation~$d_0$. This proves the proposition.
\end{proof}

If we are asked for a reason explaining the difference between the case in
which $N=1$ and that in which $N\geq2$ exhibited by the last two results, a
good candidate for an answer is the following. Whatever the value of~$N$,
the algebra~$A$ has a grading, $A=\bigoplus_{j\geq0}A_j$, and there is
therefore a derivation $E:A\to A$ such that for all $j\geq0$ the
component~$A_j$ is invariant under~$E$ and the restriction~$E|_{A_j}:A_j\to
A_j$ is simply multiplication by the scalar~$j$. This is a diagonalizable
derivation and the homogeneous components of~$A$ are precisely its
eigenspaces. Now a difference arises: if $N=1$, the derivation~$E$ is
inner, since it coincides with~$\ad(y)$, while if~$N\geq2$ the derivation
not inner. This is behind the collapse of~$\HH^1(A)$ that occurs when
$N=1$.

\section{A sequence of special elements in our algebra}
\label{sect:phi}

In the next section we will exhibit explicit derivations that freely span
the first cohomology vector space of our algebra,~$\HH^1(A)$, and to do
that we will need some calculations that we carry out in this one.
Despite the fact that we will use these results there only when $N\geq2$,
in this section we work with an arbitrary $N$ in~$\NN$, because when $N=1$
something interesting happens.

\begin{Lemma}\label{lemma:phi}
For each $j\geq1$ there exists a unique element~$\Phi_j$ in~$Ay$
homogeneous of degree $j(N-1)$ such that $[\Phi_j,x]=x^Ny^{j-1}$, and it is
such that 
  \[
  \Phi_j\equiv\frac{1}{j}y^{j}-\frac{N}{2}x^{N-1}y^{j-1}\mod F_{j-2}
  \]
when $j\geq2$ and $\Phi_1=y$ when $j=1$.
\end{Lemma}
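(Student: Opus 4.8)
The plan is to reduce everything to an elementary piece of triangular linear algebra governed by the $y$-degree filtration. Throughout write $D:=[\,\cdot\,,x]$ for the inner derivation determined by $x$, so that what we seek is an element $\Phi_j\in Ay$, homogeneous of degree $j(N-1)$, with $D(\Phi_j)=x^Ny^{j-1}$. Uniqueness is immediate and needs nothing refined: if $\Phi_j$ and $\Phi_j'$ both satisfy the conclusion then $D(\Phi_j-\Phi_j')=0$, so $\Phi_j-\Phi_j'$ commutes with $x$ and hence lies in $\kk[x]$ by Lemma~\ref{lemma:centralizer:x}; since it also lies in $Ay=\operatorname{span}\{x^iy^s:s\geq1\}$ and $\kk[x]\cap Ay=0$, it vanishes.

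For existence I would work inside the homogeneous component of degree $j(N-1)$, whose monomials are $m_s:=x^{(j-s)(N-1)}y^s$ for $0\leq s\leq j$ (when $N\geq2$), the part lying in $Ay$ being spanned by $m_1,\dots,m_j$. The engine is the computation, via equation~\eqref{eq:comm:1} of Lemma~\ref{lemma:comm} with $i=1$, that $[y^s,x]=\sum_{t=0}^{s-1}(1)_{N-1,s-t}\binom{s}{t}x^{1+(s-t)(N-1)}y^t$, the $t=s$ term of the raw expansion of $y^sx$ cancelling against $xy^s$. Multiplying by the $x$-central factor $x^{(j-s)(N-1)}$ yields $D(m_s)=\sum_{t=0}^{s-1}(1)_{N-1,s-t}\binom{s}{t}\,n_t$ with $n_t:=x^{(j-t)(N-1)+1}y^t$; in particular $D(m_s)$ has $y$-degree exactly $s-1$, with the coefficient of $n_{s-1}$ equal to $(1)_{N-1,1}\binom{s}{s-1}=s\neq0$. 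Thus $D$ is strictly triangular with invertible sub-diagonal, and one builds $\Phi_j=\sum_{s=1}^{j}a_sm_s$ by downward induction on $y$-degree: take $a_j=1/j$ to match the leading term $n_{j-1}=x^Ny^{j-1}$ of the target, then use $m_{j-1},m_{j-2},\dots,m_1$ in turn to kill the successively lower $y$-degree remainders. Since $D(m_0)=0$ the monomial $m_0=x^{j(N-1)}$ is never called upon, so $\Phi_j\in Ay$ automatically. The same recursion runs verbatim when $N=1$, where $D(p(y))=x\bigl(p(y+1)-p(y)\bigr)$ and the problem becomes solving $p(y+1)-p(y)=y^{j-1}$ with $p(0)=0$, thereby recovering the Bernoulli phenomenon alluded to in the introduction.

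Finally, to pin down the two top coefficients I would just read off the first two steps of this recursion. The relation $a_j=1/j$ is the leading step. For $a_{j-1}$ the only computation with any content is that the $n_{j-2}$-coefficient of $D(\Phi_j)$ must vanish: collecting the contributions of $m_j$ and $m_{j-1}$ gives $a_{j-1}(j-1)+a_j\,(1)_{N-1,2}\binom{j}{2}=0$, and since $(1)_{N-1,2}=N$ and $\binom{j}{2}=j(j-1)/2$ this reads $a_{j-1}(j-1)+\tfrac{N(j-1)}{2}=0$, whence $a_{j-1}=-N/2$ for $j\geq2$. As $m_j=y^j$ and $m_{j-1}=x^{N-1}y^{j-1}$, this is exactly $\Phi_j\equiv\frac1jy^j-\frac N2x^{N-1}y^{j-1}\pmod{F_{j-2}}$, while the degenerate case $j=1$ has the $Ay$-component spanned by $m_1=y$ alone, $D(y)=x^N$, and $\Phi_1=y$ on the nose. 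I expect the second-order Pochhammer bookkeeping that produces the coefficient $-N/2$ to be the only real obstacle; everything else is formal triangularity.
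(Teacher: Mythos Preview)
Your proof is correct and follows essentially the same approach as the paper. The only difference is cosmetic: for existence the paper invokes the short exact sequence $0\to\kk[x]\hookrightarrow A\xrightarrow{\ad(x)}x^NA\to0$ (surjectivity being Lemma~\ref{lemma:xN}) together with the splitting $A=\kk[x]\oplus Ay$, whereas you unwind this into an explicit triangular recursion via Lemma~\ref{lemma:comm}; the uniqueness argument and the computation of the two leading coefficients are identical in both.
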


\begin{proof}
The existence and uniqueness of the elements~$\Phi_j$ is an immediate
consequence of the exactness of the sequence of graded vector spaces
  \[
  \begin{tikzcd}
  0 \arrow[r]
    & \kk[x] \arrow[r, hook]
    & A \arrow[r, "\ad(x)"]
    & x^NA \arrow[r]
    & 0
  \end{tikzcd}
  \]
together with the fact that $Ay$ is a complement for~$\kk[x]$ in~$A$. 
It is clear that $\Phi_1=y$. Suppose now that $j\geq2$.
Since $\Phi_j$ is homogeneous of degree~$k(N-1)$
there are scalars $a_0$,~$a_1$,~\dots,~$a_{j}$ in~$\kk$ such that
$\Phi_j=\sum_{i=0}^{j}a_ix^{i(N-1)}y^{j-i}$, and then working
modulo~$F_{j-3}$ we have that
  \begin{align}
  x^Ny^{j-1} 
       &= [\Phi_j,x]
        = \sum_{i=0}^{j-1}a_ix^{i(N-1)}[y^{j-i},x]
        \equiv a_0[y^j,x]+a_1x^{N-1}[y^{j-1},x] \\
       &\equiv ja_0x^Ny^{j-1} 
               +(j-1)\left(\frac{a_0Nj}{2}+a_1\right)x^{2N-1}y^{j-2}
  \end{align}
so that $\alpha_0=1/j$ and $a_1=-N/2$. The last claim of the lemma follows
from this.
\end{proof}

Let us consider a variable~$q$ and the sequence~$(c_i)_{i\geq0}$ of
elements of $\QQ[q]$ such that
  \[ \label{eq:ci}
  \sum_{i=0}^j\frac{(1)_{q,j+1-i}}{(j+1-i)!}  \frac{c_i(q)}{i!}
        = \delta_{j,0}
  \]
for all $j\in\NN_0$. This condition recursively determines the
sequence starting with $c_0=1$. 

\begin{table}
\small\centering
\renewcommand{\arraystretch}{1.2}
\begin{tabular}{@{\hskip2em}r@{\hskip3pc}c@{\hskip2em}}
$i$ & $c_i(q)$ \\ \toprule
$0$ & $1$ 
		\\ \midrule
$1$ & $-\frac{1}{2} q - \frac{1}{2}$ 
		\\ \midrule
$2$ & $-\frac{1}{6} q^{2} + \frac{1}{6}$ 
		\\ \midrule
$3$ & $-\frac{1}{4} q^{3} + \frac{1}{4} q$ 
		\\ \midrule
$4$ & $-\frac{19}{30} q^{4} + \frac{2}{3} q^{2} - \frac{1}{30}$ 
		\\ \midrule
$5$ & $-\frac{9}{4} q^{5} + \frac{5}{2} q^{3} - \frac{1}{4} q$ 
		\\ \midrule
$6$ & $-\frac{863}{84} q^{6} + 12 q^{4} - \frac{7}{4} q^{2} 
        + \frac{1}{42}$ 
		\\ \midrule
$7$ & $-\frac{1375}{24} q^{7} + 70 q^{5} - \frac{105}{8} q^{3} 
        + \frac{5}{12} q$ 
		\\ \midrule
$8$ & $-\frac{33953}{90} q^{8} + 480 q^{6} - \frac{1624}{15} q^{4} 
        + \frac{50}{9} q^{2} - \frac{1}{30}$ 
		\\ \midrule
$9$ & $-\frac{57281}{20} q^{9} + 3780 q^{7} - \frac{9849}{10} q^{5} 
        + 70 q^{3} - \frac{21}{20} q$ 
		\\ \midrule
$10$ & $-\frac{3250433}{132} q^{10} + 33600 q^{8} - \frac{29531}{3} q^{6} 
        + \frac{5345}{6} q^{4} - \frac{91}{4} q^{2} + \frac{5}{66}$ 
		\\ \midrule
$11$ & $-\frac{1891755}{8} q^{11} + 332640 q^{9} - \frac{214995}{2} q^{7} 
        + \frac{47025}{4} q^{5} - \frac{3465}{8} q^{3} + \frac{15}{4} q$ 
		\\ \bottomrule
\end{tabular}
\caption{The polynomials $c_i(q)$ for small values of~$i$.}
\label{tbl:ciq}
\end{table}

We want to give a few properties of these polynomials, and in order to do
that we need two classical sequences of rational numbers: that of the
Bernoulli numbers $(B_j)_{j\geq0}$ and of the Gregory
coefficients~$(G_i)_{i\geq1}$. The first one is uniquely characterized by
the equalities
  \[
  \sum_{i=0}^j\binom{j+1}{i}B_i = \delta_{j,0}, \qquad\forall j\geq0
  \]
and the second one by the equalities
  \[
  G_1 = \frac{1}{2},
  \qquad
  \sum_{i=1}^j (-1)^{i+1}\frac{G_i}{j+1-i} = \frac{1}{j+1}, 
  \qquad\forall j\geq2.
  \]
For convenience we put additionally~$G_0=1$. The second sequence is not
particularly famous, but quite a bit of information about it can be found
in the article \cite{MSV} that deals with the sequence of Cauchy numbers
$(C_j)_{j\geq0}$, which has $C_j=G_j/j!$ for all $j\geq0$. The numbers in
our sequence appear in the  formula for approximate integration discovered
by James Gregory in~1668, and that is why they are named after him ---
there is also a crater in the Moon (located at
\href{https://trek.nasa.gov/moon/#v=0.1&x=127.2&y=2.2&z=5&p=urn%3Aogc%3Adef%3Acrs%3AEPSG%3A%3A104903&d=&locale=&b=moon}{N$2^\circ12'0''$
E$127^\circ12'0''$}) that carries his name: his work was mostly on
Astronomy. One of the ways these numbers enter the theory of approximate
integration is through the fact for all non-negative integers~$j$ we have
  \[
  G_j = \int_0^1\binom{x}{j}\,\d x.
  \]

The exponential generating function of the Bernoulli numbers and the
ordinary generating function of the Gregory coefficients are, respectively, 
 \[ \label{eq:gen:b}
 \sum_{j\geq0}B_j\frac{t^j}{j!} = \frac{t}{e^t-1},
 \qquad
 \sum_{j\geq0}G_jt^j = \frac{z}{\ln(1+z)}.
 \]
It can be checked that none of the Gregory coefficients vanish.

\begin{table}
  \small\centering
  \renewcommand{\arraystretch}{1.3}
  \begin{tabular}{@{}c@{\hspace{2em}}*{11}{c}@{}}
  $n$ & 0 & 1 & 2 & 3 & 4 & 5 & 6 & 7 & 8 & 9 & 10 
    \\ \toprule
  $B_n$ 
    & $1$ 
    & $-\frac{1}{2}$ 
    & $\frac{1}{6}$ 
    & $0$ 
    & $-\frac{1}{30}$ 
    & $0$ 
    & $\frac{1}{42}$ 
    & $0$ 
    & $-\frac{1}{30}$ 
    & $0$ 
    & $\frac{5}{66}$ 
    \\ \midrule
  $G_n$ 
    & $1$ 
    & $\frac{1}{2}$ 
    & $-\frac{1}{12}$ 
    & $\frac{1}{24}$ 
    & $-\frac{19}{720}$ 
    & $\frac{3}{160}$ 
    & $-\frac{863}{60480}$ 
    & $\frac{275}{24192}$ 
    & $-\frac{33953}{3628800}$ 
    & $\frac{8183}{1036800}$ 
    & $-\frac{3250433}{479001600}$ 
  \end{tabular}
\caption{The first Bernoulli numbers and Gregory coefficients.}
\end{table}

\begin{Lemma}\label{lemma:cj}
For each $j\geq0$ the polynomial~$c_j(q)$ has degree~$j$, leading
coefficient $(-1)^jj!G_j$ and constant term $B_j$.
The exponential generating series of the sequence $(c_j(q))_{j\geq0}$ is
  \[ \label{eq:gen}
  \sum_{j\geq0}c_j(q)\frac{t^j}{j!} = \frac{t}{(1-qt)^{-1/q}-1}.
  \]
\end{Lemma}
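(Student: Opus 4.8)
The plan is to establish the generating-function identity \eqref{eq:gen} first, since the three numerical claims will then fall out of suitable specializations of it. Set $A(t)\coloneqq\sum_{m\geq0}\frac{(1)_{q,m}}{m!}t^m$, which by \eqref{eq:kpoch} (with $a=1$ and $k=q$) equals $(1-qt)^{-1/q}$, and let $C(t)\coloneqq\sum_{i\geq0}\frac{c_i(q)}{i!}t^i$ be the exponential generating series we are after. The recursion \eqref{eq:ci} is exactly a statement about the coefficients of the product $(A(t)-1)\cdot C(t)$: the coefficient of $t^{j+1}$ in this product is $\sum_{i=0}^{j}\frac{(1)_{q,j+1-i}}{(j+1-i)!}\frac{c_i(q)}{i!}$, which is $\delta_{j,0}$, while the constant term vanishes because $A(t)-1$ has no constant term. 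Hence $(A(t)-1)C(t)=t$, and solving for $C(t)$ gives \eqref{eq:gen}. This also reconciles the recursive definition \eqref{eq:ci} of the $c_j$ with the generating-series definition used in the statement of Theorem~\ref{thm:hh1}.

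Next I would read off the degree and the constant term. A straightforward induction on \eqref{eq:ci} gives the bound $\deg_q c_j\leq j$: the $i=j$ term of \eqref{eq:ci} is $\frac{(1)_{q,1}}{1!}\frac{c_j(q)}{j!}=\frac{c_j(q)}{j!}$, so solving for $c_j(q)$ expresses it as a combination of the $c_i(q)$ with $i<j$ weighted by $(1)_{q,j+1-i}/(j+1-i)!$; since $(1)_{q,j+1-i}$ has degree $j-i$ in $q$ and $\deg_q c_i\leq i$ by the inductive hypothesis, each summand has degree $\leq j$. For the constant term I would simply set $q=0$ in \eqref{eq:ci}: as $(1)_{0,m}=1$ for every $m$, the recursion collapses, after multiplying by $(j+1)!$, to $\sum_{i=0}^{j}\binom{j+1}{i}c_i(0)=\delta_{j,0}$, which is precisely the defining recursion of the Bernoulli numbers; hence $c_j(0)=B_j$ for all $j$. (Equivalently, letting $q\to0$ in \eqref{eq:gen} turns the right-hand side into $t/(e^t-1)$, the generating function \eqref{eq:gen:b} of the $B_j$.)

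The main obstacle is the leading coefficient, where the Gregory coefficients enter; here I would isolate the top-degree part of each $c_j$ by a rescaling of \eqref{eq:gen}. Writing $\ell_j\coloneqq[q^j]c_j(q)$ (the top coefficient, by the degree bound just established), I substitute $t\mapsto T/q$ in \eqref{eq:gen} and work in $\QQ[1/q][[T]]$: since $qt=T$ after this substitution, the coefficient of $T^j$ on the left becomes $c_j(q)/(j!\,q^j)$, a polynomial in $1/q$ whose value at $1/q=0$ is $\ell_j/j!$, while the right-hand side becomes
  \[
  \frac{T/q}{(1-T)^{-1/q}-1}
  = \frac{T/q}{-\tfrac1q\ln(1-T)\bigl(1+O(1/q)\bigr)}
  = \frac{-T}{\ln(1-T)}\bigl(1+O(1/q)\bigr),
  \]
using $(1-T)^{-1/q}=\exp\bigl(-\tfrac1q\ln(1-T)\bigr)=1-\tfrac1q\ln(1-T)+O(1/q^2)$. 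Setting $1/q=0$ therefore yields $\sum_{j\geq0}\frac{\ell_j}{j!}T^j=\frac{-T}{\ln(1-T)}$, which is exactly the ordinary generating function \eqref{eq:gen:b} of the Gregory coefficients evaluated at $z=-T$, namely $\sum_{j\geq0}G_j(-T)^j$. Comparing coefficients gives $\ell_j/j!=(-1)^jG_j$, that is $\ell_j=(-1)^j j!\,G_j$; and since no Gregory coefficient vanishes this top coefficient is nonzero, so $\deg_q c_j=j$ exactly. The delicate point to get right is the bookkeeping of this $q\to\infty$ rescaling as an honest formal-power-series manipulation — each coefficient $c_j(q)/q^j$ is a genuine polynomial in $1/q$, so evaluating at $1/q=0$ is legitimate and the $O(1/q)$ error terms do not contaminate the extracted diagonal.
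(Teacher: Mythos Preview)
Your proof is correct. The argument for the generating-function identity and for the constant term $c_j(0)=B_j$ is essentially the same as the paper's (Cauchy product, then set $q=0$ in the recursion). Where you diverge is in the leading-coefficient claim: the paper argues by induction directly on the recursion~\eqref{eq:ci}, noting that $(1)_{q,j+1-i}$ has degree $j-i$ in~$q$ with leading coefficient $(j-i)!$, so that the top-degree part of $c_j(q)/j!$ satisfies exactly the defining recursion of the Gregory coefficients (with the sign $(-1)^j$), and non-vanishing of the $G_j$ then forces $\deg c_j=j$. Your route --- substituting $t\mapsto T/q$ in~\eqref{eq:gen} and evaluating at $1/q=0$ to obtain $-T/\ln(1-T)$ --- is a genuinely different and rather elegant alternative: it extracts all the leading coefficients at once via their generating function rather than term by term. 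The paper's induction is more elementary in that it avoids any formal manipulation in $\QQ[1/q][[T]]$, but your rescaling trick makes the appearance of the Gregory generating function $z/\ln(1+z)$ completely transparent.
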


The limit as $q$ approaches~$0$ (taken in~$\CC$, of course) of the function
that appears in the right hand side of this last equality is the
exponential generating function for the Bernoulli numbers that we wrote
in~\eqref{eq:gen:b}.

\begin{proof}
Setting~$q$ to~$0$ in~\eqref{eq:ci} we see that for all $j\in\NN_0$ we have
that
  \[
  \sum_{i=0}^j c_i(0)\binom{j+1}{i} = \delta_{j,0},
  \]
and comparing this with the defining recurrence equation for the Bernoulli
numbers shows that for all~$j\in\NN_0$ the constant term of~$c_j$ is~$B_j$.
As $c_0(q)=1$ and $c_1(q)$ is the constant polynomial~$1$, it is clear that
its degree and its leading coefficient are~$0$ and~$(-1)^00!G_0$,
respectively. On the other hand, if $j>0$, then according to~\eqref{eq:ci}
we have that
  \[
  \frac{c_j(q)}{j!} 
    = -\sum_{i=0}^{j-1} \frac{(1)_{q,j+1-i}}{(j+1-i)!}\frac{c_i(q)}{i!}
  \]
and, since $(1)_{q,j+1-i}$ is a polynomial of degree~$j-i$ and leading
coefficient~$(j-i)!$, the first claim of the lemma follows by induction
from the definition of the Gregory coefficients and the fact that they are
all non-zero.

Finally, the left hand side of the defining equation~\eqref{eq:ci} is the
coefficient of~$t^j$ in the product
  \[
  \sum_{j\geq0}c_j(q)\frac{t^j}{j!}
  \cdot
  \sum_{j\geq0}\frac{(1)_{q,j+1}}{(j+1)!}t^j,
  \]
whose second factor sums to $((1-qt)^{-1/q}-1)/t$: the
equality~\eqref{eq:gen} follows from this.
\end{proof}

Using the polynomials~$c_j(q)$ we are able to write down explicitly the
elements~$\Phi_j$ from Lemma~\ref{lemma:phi}.

\begin{Proposition}\mbox{}\label{prop:phi}
For each $j\geq1$ we have 
  \[ \label{eq:psi}
  \Phi_j = 
        \frac{1}{j}
        \sum_{i=0}^{j-1}
        \binom{j}{i}
        c_i(N-1)
        x^{i(N-1)}y^{j-i}.
  \]
\end{Proposition}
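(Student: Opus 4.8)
The plan is to verify directly that the right-hand side of~\eqref{eq:psi}, call it $\Psi_j$, satisfies the two characterizing properties of~$\Phi_j$ established in Lemma~\ref{lemma:phi}: namely that $\Psi_j$ lies in~$Ay$, is homogeneous of degree~$j(N-1)$, and satisfies $[\Psi_j,x]=x^Ny^{j-1}$. By the uniqueness asserted in that lemma, these three facts force $\Psi_j=\Phi_j$. The first two are immediate by inspection: every summand $x^{i(N-1)}y^{j-i}$ with $0\leq i\leq j-1$ has $j-i\geq1$, so it belongs to~$Ay$, and each such monomial has degree $i(N-1)+(j-i)(N-1)=j(N-1)$, as required. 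So the entire content is in the commutator computation, which is where I expect the real work to lie.

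First I would compute $[\Psi_j,x]$ using the first commutation identity~\eqref{eq:comm:1} of Lemma~\ref{lemma:comm}, which tells us how to move a power of~$x$ past a power of~$y$. Concretely, for each summand I need $[x^{i(N-1)}y^{j-i},x]=x^{i(N-1)}[y^{j-i},x]$, and $[y^{j-i},x]=y^{j-i}x-xy^{j-i}$ can be expanded via~\eqref{eq:comm:1} with the roles of exponents specialized to~$i=1$. This produces a double sum over the summation index of~\eqref{eq:psi} and the summation index coming from the commutator expansion. After collecting terms by the power of~$y$ that survives, the claim $[\Psi_j,x]=x^Ny^{j-1}$ becomes a family of scalar identities, one for each output power of~$y$: the coefficient of $x^Ny^{j-1}$ must be~$1$ and the coefficient of every lower term $x^{N+r(N-1)}y^{j-1-r}$ with $r\geq1$ must vanish.

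The main obstacle will be showing that these lower coefficients all cancel, and this is exactly where the defining recurrence~\eqref{eq:ci} for the polynomials $c_i(q)$ enters. I expect each vanishing identity to reduce, after extracting the binomial and Pochhammer factors produced by~\eqref{eq:comm:1} specialized to $k=N-1$, to a convolution of the form $\sum_i \frac{(1)_{q,\,r+1-i}}{(r+1-i)!}\frac{c_i(q)}{i!}=\delta_{r,0}$ evaluated at $q=N-1$ — which is precisely~\eqref{eq:ci}. In other words, the coefficients $c_i(N-1)$ were \emph{constructed} to make this cancellation happen, so the computation should close cleanly once the bookkeeping is arranged correctly. The delicate point is matching the Pochhammer symbols: the commutator $[y^{m},x]$ introduces factors of the form $(1)_{N-1,\ell}$ (from the $i=1$ case of~\eqref{eq:comm:1}), and I must check that the index shifts align so that the surviving convolution is genuinely the left-hand side of~\eqref{eq:ci} at $q=N-1$ and not some off-by-one variant. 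A useful sanity check before committing to the general computation is to confirm the low-degree cases against Lemma~\ref{lemma:phi}: for $j=1$ the formula gives $\Psi_1=c_0(N-1)y=y=\Phi_1$, and for $j=2$ it gives $\Psi_2=\tfrac12\bigl(c_0(N-1)y^2+2c_1(N-1)x^{N-1}y\bigr)=\tfrac12 y^2-\tfrac{N}{2}x^{N-1}y$, matching the stated congruence $\Phi_2\equiv\tfrac12 y^2-\tfrac N2 x^{N-1}y \bmod F_0$ since $c_1(N-1)=-\tfrac12(N-1)-\tfrac12=-\tfrac N2$.
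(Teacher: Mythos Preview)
Your proposal is correct and will close: after expanding each $[x^{i(N-1)}y^{j-i},x]$ via~\eqref{eq:comm:1} and reindexing by the power of~$y$ that survives, the coefficient of $x^{N+r(N-1)}y^{\,j-1-r}$ in $[\Psi_j,x]$ works out to
\[
\frac{1}{j}\binom{j}{r+1}(r+1)!\sum_{i=0}^{r}\frac{(1)_{N-1,\,r+1-i}}{(r+1-i)!}\,\frac{c_i(N-1)}{i!},
\]
which is $\delta_{r,0}$ by~\eqref{eq:ci} at $q=N-1$, exactly as you anticipated. The binomial bookkeeping hinges on the identity $\binom{j}{i}\binom{j-i}{j-1-r}=\binom{j}{r+1}\binom{r+1}{i}$, which lets you factor the $j$-dependence out of the inner sum; once you have that, the recurrence applies verbatim and the argument is complete.

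The paper takes a different route: rather than matching coefficients term by term, it works with exponential generating functions in~$A[[t]]$. It computes $e^{yt}xe^{-yt}$ in closed form, deduces a formula for $[e^{yt},x]$, and then observes that the generating series $\sum_{j\geq1}\Psi_jt^{j-1}/(j-1)!$ factors as the product of $\sum_jc_j(N-1)x^{j(N-1)}t^j/j!$ and $(e^{yt}-1)/t$; taking the commutator with~$x$ and using~\eqref{eq:gen} collapses everything to $x^Ne^{yt}$ in one stroke. The generating-function method packages all the cancellations simultaneously and explains \emph{why} the exponential generating function~\eqref{eq:gen} is the natural object, whereas your direct approach is more elementary and makes the role of the defining recurrence~\eqref{eq:ci} completely transparent. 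Both are clean; yours requires no formal power series machinery, while the paper's avoids the double-sum reindexing.
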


\begin{proof}
Let us work in the algebra~$A[[t]]$ of formal power series with
coefficients in~$A$. An easy induction shows that
$\ad(y)^j(x) = (1)_{N-1,j}x^{1+j(N-1)}$
for all $j\in\NN_0$, so that
  \[
  e^{yt}xe^{-yt} 
        = \sum_{j\geq0}\ad(y)^j(x)\frac{t^j}{j!} 
        = \sum_{j\geq0}(1)_{N-1,j}x^{1+j(N-1)}t^j 
        = x\,(1-(N-1)x^{N-1}t)^{-\frac{1}{N-1}}
  \]
and
  \[ \label{eq:exp}
  [e^{yt},x] 
        = \bigl(e^{yt}xe^{-yt}-x\bigr)e^{yt}
        = x\left(
                \left(1-(N-1)x^{N-1}t)^{-\frac{1}{N-1}}\right)
                -1
           \right)e^{yt}.
  \]
If we write~$\Psi_j$ the right hand side of the equality~\eqref{eq:psi}
that we want to prove, we have that
  \begin{align}
  \sum_{j\geq1}\Psi_j\frac{t^{j-1}}{(j-1)!}
  &= \sum_{j\geq0}c_j(N-1)\frac{x^{j(N-1)}t^j}{j!}
      \cdot
      \sum_{j\geq1} \frac{y^jt^j}{j!} \\
  &= \frac{x^{N-1}t}{(1-(N-1)x^{N-1}t)^{-1/(N-1)}-1}
     \cdot
     \frac{e^{yt}-1}{t}, 
  \end{align}
so that
  \begin{align}
  \sum_{j\geq1}[\Psi_j,x]\frac{t^{j-1}}{(j-1)!}
   = \frac{x^{N-1}t}{(1-(N-1)x^{N-1}t)^{-1/(N-1)}-1}
     \cdot
     \left[\frac{e^{yt}-1}{t},x\right]
   = x^{N}e^{yt},
  \end{align}
using~\eqref{eq:exp} to obtain the last equality. Looking at the
coefficients in these series, we see that $[\Psi_j,x]=x^Ny^{j-1}$ for all
$j\in\NN$. Since $\Psi_j$ is homogeneous of degree~$j(N-1)$ and belongs
to~$Ay$, we can conclude that $\Psi_i=\Phi_j$, as the proposition claims.
\end{proof}

Let us single out three special cases of this proposition.
\begin{itemize}

\item If $N=1$, then for all~$j\geq1$ we have
  \[ \label{eq:bern}
  \Phi_j = \frac{1}{j}\sum_{i=0}^{j-1}\binom{j}{i}B_iy^{j-i}
        = \frac{B_j(y)-B_j}{j}, 
  \]
with $B_j(t)\coloneqq\sum_{i=0}^j\binom{j}{i}B_it^{j-i}\QQ[t]$, the usual
$j$th Bernoulli polynomial. The last member of this equality appears in the
famous Faulhaber formula for the sums of powers of the first integers: for
any $n$,~$p\in\NN$ we have
  \[
  \frac{B_p(n) - B_p}{p} = \sum_{k=0}^{n-1} k^{p-1}.
  \]
It would be interesting to know what is behind this.

\item If $N=2$, then the right hand side of the equality~\eqref{eq:gen}
that appears in
Lemma~\ref{lemma:cj} with $q=N-1=1$ is simply $1-t$, so that
  \[
  c_0(1) = 1, 
  \qquad
  c_1(1) = -1,
  \qquad
  c_j(1) = 0 \quad\text{if $j\geq2$.}
  \]
It follows from this and the proposition that for all $j\in\NN$ we have
  \[
  \Phi_j
        = \frac{y^j-jxy^{j-1}}{j}.
  \]

\item If $N=3$, then the right hand side of~\eqref{eq:gen}  with $q=N-1=2$
simplifies to 
  \[
  \frac{\sqrt{1-2t}-(1-2t)}{2},
  \]
and we obtain its Taylor series at once from Newton's binomial series,
getting
  \[
  c_0(2) = 1, 
  \qquad
  c_1(2) = -\frac{3}{2},
  \qquad
  c_j(2) = -\frac{(2j-3)!!}{2} \quad\text{if $j\geq2$.}
  \]
From this we can get explicit yet unenlightening  descriptions of the
elements~$\Phi_j$.

\end{itemize}
If $N\geq4$ and we put $q=N-1$ in~\eqref{eq:gen}, then the coefficients of
the series do not seem to have a simple form --- they do not appear in the
OEIS \cite{OEIS}. 

\section{Explicit representatives for classes in
\texorpdfstring{$\HH^1(A)$}{HH1(A)}}
\label{sect:explicit}

In this section, as promised, we will give explicit derivations
representing the elements of~$\HH^1(A)$ when $N\geq2$. We start with a
simple observation that allows us to show that some derivations are not
inner.

\begin{Lemma}\label{lemma:not-inner}
Let $d:A\to A$ be a derivation.
\begin{thmlist}

\item If $d(x)=0$, then $d(y)\in\kk[x]$.

\item If $d$ is inner, then $d(x)\in x^NA$. If additionally $d(x)=0$, then
also $d(y)\in x^N\kk[x]$.

\end{thmlist}
\end{Lemma}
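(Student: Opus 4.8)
The plan is to prove both parts by applying the derivation to the defining relation $yx-xy=x^N$ and invoking the two structural results already established, namely Lemma~\ref{lemma:centralizer:x} (the centralizer of~$x$ is~$\kk[x]$) and Lemma~\ref{lemma:xN} (the equality $[A,x]=x^NA$).

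For part~\thmitem{1}, I would apply~$d$ to the relation $yx-xy-x^N=0$. Using that $d$ is a derivation and that $d(x)=0$, every term carrying a factor of~$d(x)$ drops out, and what remains simplifies to $d(y)\,x-x\,d(y)=0$, that is, $[d(y),x]=0$. Thus $d(y)$ lies in the centralizer of~$x$, which is~$\kk[x]$ by Lemma~\ref{lemma:centralizer:x}, and this is exactly the claim.

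For the first assertion of part~\thmitem{2}, I would write $d=\ad(a)$ for some $a\in A$, so that $d(x)=[a,x]\in[A,x]$; by Lemma~\ref{lemma:xN} this commutator space is precisely~$x^NA$, whence $d(x)\in x^NA$. For the second assertion, if moreover $d(x)=0$, then $[a,x]=0$, so $a$ commutes with~$x$ and therefore $a\in\kk[x]$, again by Lemma~\ref{lemma:centralizer:x}. It then remains to compute $d(y)=[a,y]$ for such an~$a$. Since $\ad(y)$ is a derivation and $\ad(y)(x)=[y,x]=x^N$, an immediate computation inside the commutative subalgebra~$\kk[x]$ gives $[y,x^k]=kx^{k+N-1}=x^N(x^k)'$ for each~$k$, and hence $[a,y]=-x^Na'$ by linearity. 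This lands in~$x^N\kk[x]$, as desired.

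There is no real obstacle here: both parts follow directly from the defining relation together with the previously proved descriptions of the centralizer of~$x$ and of the commutator ideal~$[A,x]$. The only point needing a line of computation is the evaluation of $[a,y]$ for $a\in\kk[x]$, which follows from the single relation $[y,x]=x^N$ and the Leibniz rule for~$\ad(y)$.
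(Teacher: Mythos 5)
Your proof is correct and follows essentially the same route as the paper: part \thmitem{1} by applying $d$ to the relation and invoking Lemma~\ref{lemma:centralizer:x}, and part \thmitem{2} by combining Lemma~\ref{lemma:xN} with the computation of $[a,y]$ for $a\in\kk[x]$. The only cosmetic difference is that you spell out the evaluation $[a,y]=-x^Na'$ where the paper simply records $[u,y]=x^Nu'$; the sign is immaterial for the containment in $x^N\kk[x]$.
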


\begin{proof}
\thmitem{1} If $d(x)=0$, then $0 = d([y,x]-x^N) = [d(y),x]$, and $d(y)$ is
in the centralizer of~$x$ in~$A$, which we know to be equal to~$\kk[x]$.

\thmitem{1} If $d$ is inner, then there is an $u\in A$ such
that $d(a)=[u,a]$ for all $a\in A$ and, in particular, $d(x)=[u,x]\in
[A,x]=x^NA$. If additionally $d(x)=0$, then $[u,x]=0$ and therefore
$u\in\kk[x]$: it follows from this that $d(y)=[u,y]=x^Nu'\in  x^N\kk[x]$.
\end{proof}

The simplest derivations of our algebra are those of non-positive degree.
We describe them in the following result.

\begin{Proposition}\label{prop:hh1-low}
Suppose that $N\geq2$. For each $l\in\inter{-N+1,0}$ there is a derivation
$\partial_l:A\to A$ of degree~$l$ such that
  \begin{align}
  \partial_l(x) &= 0, 
  &
  \partial_l(y) &= x^{l+N-1},
\intertext{and there is a derivation $E:A\to A$ of degree~$0$ such that}
  E(x) &= x, 
  &
  E(y) &= (N-1)y.
  \end{align}
For each $l\in\inter{-N+1,-1}$ the space~$\HH^1(A)_l$ is freely
spanned by the class of~$\partial_l$, and $\HH^1(A)_0$ is freely spanned
by~$\partial_0$ and~$E$. 
\end{Proposition}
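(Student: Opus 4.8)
The plan is to separate the existence of the derivations from the two spanning assertions, reducing the latter to the dimension count of Proposition~\ref{prop:hh1-series} together with the non-innerness test of Lemma~\ref{lemma:not-inner}. For existence, each $\partial_l$ with $l\in\inter{-N+1,0}$ is supplied directly by Proposition~\ref{prop:lnd}\thmitem{1} applied to $g=x^{l+N-1}$, which lies in $\kk[x]$ precisely because the bound $l\geq-N+1$ guarantees $l+N-1\geq0$. Since $g$ is homogeneous of degree $l+(N-1)$ while $\partial_l(x)=0$, the derivation $\partial_l$ sends each generator into the homogeneous component shifted by~$l$ and is therefore homogeneous of degree~$l$. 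The derivation $E$ is simply the grading (Euler) derivation discussed after Proposition~\ref{prop:hh1-n1}, which acts on $A_j$ as multiplication by~$j$; it is automatically a derivation, and on the generators it gives $E(x)=x$ and $E(y)=(N-1)y$, as required.

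Next I would show that every $\partial_l$ with $l\in\inter{-N+1,0}$ fails to be inner, which is exactly what Lemma~\ref{lemma:not-inner}\thmitem{2} is designed for: were $\partial_l$ inner, then, because $\partial_l(x)=0$, we would need $\partial_l(y)=x^{l+N-1}\in x^N\kk[x]$; but $l\leq0$ forces $l+N-1\leq N-1<N$, so $x^{l+N-1}\notin x^N\kk[x]$ and no such $\partial_l$ can be inner. For $l\in\inter{-N+1,-1}$ this settles the matter at once: by Proposition~\ref{prop:hh1-series} the space $\HH^1(A)_l$ is one-dimensional, and the class of the homogeneous degree-$l$ derivation $\partial_l$ is a nonzero element of it, hence a basis.

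The only point requiring a little more care is the degree-zero component, where $\dim\HH^1(A)_0=2$ and I must check that the classes of $\partial_0$ and~$E$ form a basis rather than merely being individually nonzero. I would prove they are linearly independent modulo inner derivations by evaluating a hypothetical inner combination on~$x$: if $\alpha\partial_0+\beta E=\ad(u)$ for some $u\in A$ and scalars $\alpha$,~$\beta$, then applying both sides to~$x$ yields $\beta x=[u,x]\in[A,x]=x^NA$ by Lemma~\ref{lemma:xN}, and since $N\geq2$ gives $x\notin x^NA$ we conclude $\beta=0$; then $\alpha\partial_0$ is inner, forcing $\alpha=0$ by the non-innerness of $\partial_0$ just established. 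Two linearly independent classes in the two-dimensional space $\HH^1(A)_0$ constitute a basis, which completes the argument. Thus the main obstacle is precisely this final independence check for $l=0$; all the other claims are direct consequences of the cohomological bookkeeping already in place.
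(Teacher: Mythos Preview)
Your proof is correct and follows essentially the same route as the paper: establish existence of the derivations, show non-innerness via Lemma~\ref{lemma:not-inner}\thmitem{2}, and invoke the dimension count of Proposition~\ref{prop:hh1-series}. Your treatment of the degree-zero case is in fact more careful than the paper's own proof, which simply asserts that none of $\partial_{-N+1},\dots,\partial_0,E$ is inner and then appeals to the Hilbert series; your explicit linear-independence argument via $\beta x=[u,x]\in x^NA$ fills in the step that the paper leaves to the reader.
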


The derivation~$\partial_0$ is the same one we encountered in
Corollary~\ref{coro:expp0}.

\begin{proof}
A very simple calculation shows that there are indeed derivations of~$A$ as
described in the statement of the proposition, and they manifestly have the
degrees given there. None of the derivations
$\partial_{-N+1}$,~\dots,~$\partial_0$ and~$E$ is inner ---~this follows
immediately from the second part of Lemma~\ref{lemma:not-inner}, because
$N\geq2$~--- and then, in view of Proposition~\ref{prop:hh1-series}, we see
that second claim of the proposition holds.
\end{proof}

To deal with classes in~$\HH^1(A)$ of positive degree, we start by showing
that they have a representative with a useful normalization:

\begin{Lemma}\label{lemma:normalization}
Suppose that $N\geq2$. Let $l$ be a positive integer, and let $i$ and $j$
be the integers such that $l+1=i+j(N-1)$, $j\geq0$, and $1\leq i\leq
N-1$. There exists a derivation $d:A\to A$ that is homogeneous of
degree~$l$, is not inner, and has $d(x)  = x^iy^j$.
\end{Lemma}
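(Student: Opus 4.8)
The plan is to realize the sought derivation as a homogeneous $1$-cocycle in the complex~\eqref{eq:comp} and to deduce that it is not inner directly from Lemma~\ref{lemma:not-inner}. Write $b\coloneqq x^iy^j$; this is homogeneous of degree $i+j(N-1)=l+1$, which is exactly the degree that $d(x)$ must have if $d$ is to be homogeneous of degree~$l$. Prescribing a derivation $d$ with $d(x)=b$ amounts to producing $c\in A$ so that $b\otimes\hat x+c\otimes\hat y$ lies in $\ker\delta_1$, that is, so that
\[
[c,x] = \sum_{s+1+t=N}x^sbx^t - [y,b].
\]
First I would solve this equation for $c$, and then arrange that the solution is homogeneous of the correct degree.

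For solvability, recall that the map $c\in A\mapsto[c,x]$ has image $[A,x]=x^NA$ by Lemma~\ref{lemma:xN} and kernel the centralizer of~$x$, which equals $\kk[x]$ by Lemma~\ref{lemma:centralizer:x}; hence the displayed equation is solvable precisely when its right-hand side belongs to~$x^NA$, and this is where the hypothesis $i\geq1$ is used. On one side, $[y,b]=[y,x^i]\,y^j=ix^{i+N-1}y^j$ lies in $x^NA$ because $i+N-1\geq N$. On the other, each summand $x^sbx^t=x^{s+i}y^jx^t$ becomes, after moving $x^t$ across $y^j$ by the first identity of Lemma~\ref{lemma:comm}, a combination of monomials $x^{s+i+t+(j-u)(N-1)}y^u$ whose $x$-exponent is at least $s+i+t=(N-1)+i\geq N$; so this sum also lies in~$x^NA$. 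The right-hand side is therefore in $x^NA$ and the equation has a solution. Since every map involved is homogeneous and the right-hand side is homogeneous of degree $l+N$, I can take $c$ homogeneous of degree $l+N-1=l+\deg y$, and then the resulting derivation~$d$ is homogeneous of degree~$l$ with $d(x)=x^iy^j$.

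Finally, that $d$ is not inner is immediate: by part~\thmitem{2} of Lemma~\ref{lemma:not-inner} an inner derivation must send~$x$ into~$x^NA$, whereas $d(x)=x^iy^j$ is a basis monomial with $x$-exponent $i\leq N-1<N$ and hence does \emph{not} lie in $x^NA$, which is spanned by the monomials $x^ay^b$ with $a\geq N$. The only real computation is checking that the two pieces $[y,b]$ and $\sum_{s+1+t=N}x^sbx^t$ fall into~$x^NA$, a short reordering using Lemma~\ref{lemma:comm}; everything else is degree bookkeeping together with the lemmas already established, so I do not expect any serious obstacle here.
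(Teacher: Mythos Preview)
Your proof is correct and takes a genuinely different route from the paper's. The paper argues \emph{indirectly}: knowing from Proposition~\ref{prop:hh1-series} that $\dim\HH^1(A)_l=1$, it picks any non-inner homogeneous derivation of degree~$l$, subtracts an inner derivation to push $d(x)$ into the complement $P=\sum_{k=0}^{N-1}x^k\kk[y]$ of~$x^NA$, analyses which monomials of~$P$ can survive (with a short case split on whether $N-1\mid l+1$), and finally observes that the coefficient of~$x^iy^j$ cannot vanish. Your approach is instead \emph{constructive}: you directly solve the cocycle equation $\delta_1(b\otimes\hat x+c\otimes\hat y)=0$ for~$c$, using only that $[A,x]=x^NA$ and that the right-hand side lies in~$x^NA$, and then invoke Lemma~\ref{lemma:not-inner} for non-innerness. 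Your argument is more elementary in that it does not rely on the Hilbert-series computation of~$\HH^1(A)$ at all; the paper's argument, in exchange, delivers a bit more information, namely that \emph{every} non-inner homogeneous derivation of degree~$l$ can be brought to this normal form by adding an inner derivation and rescaling.
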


\begin{proof}
Let $l$ be a positive integer. The vector space $\HH^1(A)_l$ has
dimension~$1$. We pick a derivation~$d:A\to A$ homogeneous of degree~$l$
whose cohomology class spans that vector space. The element~$d(x)$ of~$A$
is then homogeneous of degree~$l+1$, and we can write it in the form
$u+x^Nv$ with $u$ an homogeneous element of
$P\coloneqq\sum_{k=0}^{N-1}x^k\kk[y]$ of degree~$l+1$, and $v$ a
homogeneous element of~$A$ of degree~$l+1-N$. There is a $w\in A$ such that
$[w,x]=x^Nv$, and it can be taken to be homogeneous of degree~$l$: the
derivation $d'\coloneqq d-\ad(w)$ is then homogeneous of degree~$l$,
cohomologous to~$d$, and has $d'(x)\in P$. The upshot of all this is that
we could have simply chosen our original derivation~$d$ so that $d(x)$ is
in~$P$ from the start, and we do so now.

An homogeneous element of degree~$l+1$ in~$P$, such as $d(x)$, is a linear
combination of the monomials of the form $x^ry^s$ with $r+s(N-1)=l+1$ and
$0\leq r<N$. We consider now two cases.
\begin{itemize}

\item Suppose first that $N-1\nmid l+1$, so that in each such monomial we
have in fact that $1\leq r<N-1$, and therefore that~$r$ and~$s$ are
necessarily equal to~$i$ and~$j$, respectively: in particular, there is
exactly one such monomial, $x^iy^j$, and there is thus a scalar~$\alpha$
such that $d(x)=\alpha x^iy^j$.

\item Suppose next that $N-1\mid l+1$, so that $i=N-1$. There are
now two monomials of degree~$l+1$ in~$P$, namely $x^{N-1}y^{j}$ and
$y^{j+1}$, and there are scalars~$\alpha$ and~$\beta$ such that $d(x) = \alpha
x^{N-1}y^{j} + \beta y^{j+1}$. As
  \begin{align}
  0 &= d([y,x]-x^N)
     = [d(y),x] + [y,d(x)] - \sum_{s+1+t=N}x^sd(x)x^t \\
    &= [d(y),x] + \alpha(N-1)x^{2N-2}y^{j} 
         - \alpha \sum_{s+1+t=N}x^{s+N-1}y^{j}x^t  
         - \beta\sum_{s+1+t=N}x^sy^{j+1}x^t
       \\
    &\equiv -\beta Nx^{N-1}y^{j+1} \mod(F_{j}+x^NA),
  \end{align}
the scalar~$\beta$ is actually~$0$.
\end{itemize}
The end result of all this is that in any case there is a scalar~$\alpha$
such that $d(x) = \alpha x^iy^j$. If $\alpha$ is~$0$, then the first part
of Lemma~\ref{lemma:not-inner} tells us that $d(y)\in\kk[x]$ and, since
$d(y)$ is a homogeneous element of degree~$l+N-1$, that in fact there is a
scalar~$\gamma$ such that $d(y)=\gamma x^{l+N-1}$: this is impossible,
since in that case we have $d=\gamma\ad(x^l)$, and the derivation~$d$ is
not inner. The derivation $\alpha^{-1}d$ thus satisfies the condition we
want.
\end{proof}

With the same idea that we used in the end of this proof we can also obtain
the following criterion that allows us to prove a derivation is inner:

\begin{Lemma}\label{lemma:cut}
Suppose that $N\geq2$. Let $l$ be a positive integer, and let $i$ and $j$
be the integers such that $l+1=i+j(N-1)$, $j\geq0$, and $1\leq i\leq N-1$.
A homogeneous derivation $d:A\to A$ of degree~$l$ such that $d(x)\in
F_{j-1}$ is inner.
\end{Lemma}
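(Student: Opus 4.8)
The plan is to prove the statement directly, reusing the template from the end of the proof of Lemma~\ref{lemma:normalization}: I will first remove $d(x)$ by subtracting a suitable inner derivation, and then identify what is left as a multiple of $\ad(x^l)$. The step that makes everything work ---and the one I regard as the crux--- is the observation that the hypotheses force $d(x)$ to lie in~$x^NA$. Indeed, $d(x)$ is homogeneous of degree $l+1=i+j(N-1)$, so each monomial occurring in it has the shape $x^ay^b$ with $a+b(N-1)=i+j(N-1)$; the assumption $d(x)\in F_{j-1}$ means $b\leq j-1$, and then $a=i+(j-b)(N-1)\geq i+(N-1)\geq N$, where I have used $i\geq1$ and $j-b\geq1$. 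Hence every monomial of $d(x)$ is divisible by~$x^N$, so $d(x)\in x^NA$. (For $j=0$ the space $F_{-1}$ is zero and $d(x)=0$ outright, so this is vacuous.) It is precisely the combination of the two bounds $1\leq i\leq N-1$ and $b\leq j-1$ that yields $a\geq N$, and I expect this degree bookkeeping to be the only delicate point.

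Granting that, I would invoke Lemma~\ref{lemma:xN} to rewrite $x^NA=[A,x]$, so that $d(x)$ lies in the image of $\ad(x)$. Because the exact sequence $0\to\kk[x]\to A\xrightarrow{\ad(x)}x^NA\to0$ used in the proof of Lemma~\ref{lemma:phi} is one of graded spaces, with $\ad(x)$ raising degree by~$1$, I can choose $u\in A$ homogeneous of degree~$l$ with $[u,x]=d(x)$. Then $d'\coloneqq d-\ad(u)$ is a homogeneous derivation of degree~$l$ with $d'(x)=0$.

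Finally I would apply Lemma~\ref{lemma:not-inner}\thmitem{1}, which gives $d'(y)\in\kk[x]$; since $d'(y)$ is homogeneous of degree $l+N-1>0$, it must equal $\gamma x^{l+N-1}$ for some $\gamma\in\kk$. As $\ad(x^l)(x)=0$ and $\ad(x^l)(y)=[x^l,y]=-lx^{l+N-1}$, the derivations $d'$ and $-\tfrac{\gamma}{l}\ad(x^l)$ agree on the generators $x$ and~$y$ (here $l\geq1$, so dividing by~$l$ is harmless), hence they coincide. Therefore
\[
d=\ad(u)+d'=\ad\!\left(u-\tfrac{\gamma}{l}x^l\right)
\]
is inner, as wanted. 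Notably, this argument never needs the dimension count of Proposition~\ref{prop:hh1-series}; all the substance is packed into the single degree estimate of the first paragraph.
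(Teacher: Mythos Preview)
Your proof is correct and follows essentially the same line as the paper's: both argue that the degree constraint together with $d(x)\in F_{j-1}$ forces every monomial of $d(x)$ to lie in $x^NA$, then subtract an inner derivation $\ad(u)$ to kill $d(x)$, and finally identify the remainder as a multiple of $\ad(x^l)$ via Lemma~\ref{lemma:not-inner}\thmitem{1}. Your monomial analysis is a touch more explicit about the inequality $a\geq N$, but the substance is identical.
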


\begin{proof}
Let $d:A\to A$ be a homogeneous derivation of degree~$l$ such that $d(x)$
belongs to~$F_{j-1}$. There are then scalars $a_1$,~\dots,~$a_j$ in~$\kk$
such that $d(x)=\sum_{k=1}^ja_kx^{i+k(N-1)}y^{j-k}$ and therefore $d(x)\in
x^NA$. As we know, this implies that there is an element~$u$ in~$A$, which
can be chosen of degree~$l$, such that $d(x)=[u,x]$, and therefore the
derivation $d'\coloneqq d-\ad(u)$ is homogeneous of degree~$l$ and vanishes
on~$x$. According to the first part of Lemma~\ref{lemma:not-inner}, we have
$d'(y)\in k[x]$, so that in view of the homogeneity of~$d'$ there is a
scalar~$b$ in~$\kk$ such that $d'(y)=bx^{l+N-1}=-\frac{b}{l}[x^{l},y]$.
It follows from this that $d=\ad(u)-\frac{b}{l}\ad(x^{l})$, and this
proves the lemma.
\end{proof}

We need yet one more commutation identity. We promise it is the last one.

\begin{Lemma}\label{lemma:identity}
For each element $u$ of $A$ we have that
  \begin{equation*}
  \sum_{s+1+t=N}x^sux^t - Nx^{N-1}u
        = \left[
          \sum_{s+2+t=N}(s+1)x^sux^t,x
          \right].
  \end{equation*}
\end{Lemma}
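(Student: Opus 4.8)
The plan is to prove this by directly expanding the commutator on the right-hand side and reindexing the resulting sums. Write $w\coloneqq\sum_{s+2+t=N}(s+1)x^sux^t$ for the element inside the bracket, and abbreviate $T_a\coloneqq x^aux^{N-1-a}$ for each integer~$a$ with $0\leq a\leq N-1$. With this notation, and using that $x^{N-1}u=T_{N-1}$, the left-hand side of the asserted identity is exactly
\[
\sum_{s+1+t=N}x^sux^t - Nx^{N-1}u = \sum_{a=0}^{N-1}T_a - NT_{N-1},
\]
so the whole problem reduces to showing that $[w,x]$ equals this same expression.

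First I would compute the two products $wx$ and $xw$ separately. In $wx=\sum_{s+2+t=N}(s+1)x^sux^{t+1}$ the exponents around~$u$ sum to~$N-1$, and since the right exponent is $t+1=N-1-s$ each term is $T_s$ with coefficient~$s+1$; as $s$ runs from~$0$ to~$N-2$ this gives $wx=\sum_{a=0}^{N-2}(a+1)T_a$. Similarly, in $xw=\sum_{s+2+t=N}(s+1)x^{s+1}ux^t$ the shifted left exponent $a\coloneqq s+1$ runs from~$1$ to~$N-1$ and the coefficient $s+1$ equals~$a$, so $xw=\sum_{a=1}^{N-1}aT_a$. Subtracting and collecting the coefficient of each $T_a$ produces a clean collapse: the term $T_0$ survives with coefficient~$1$, each $T_a$ with $1\leq a\leq N-2$ has coefficient $(a+1)-a=1$, and $T_{N-1}$ appears only from~$xw$ with coefficient $-(N-1)$. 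Hence
\[
[w,x] = wx - xw = \sum_{a=0}^{N-2}T_a - (N-1)T_{N-1} = \sum_{a=0}^{N-1}T_a - NT_{N-1},
\]
which is precisely the left-hand side recorded above.

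I do not expect any genuine obstacle here; the argument is purely a bookkeeping exercise in the index bounds. The only thing to watch is the treatment of the two boundary terms, namely $T_0$ (which occurs in $wx$ but not in $xw$) and $T_{N-1}$ (which occurs in $xw$ but not in $wx$): it is exactly these that produce the asymmetric contribution $-(N-1)T_{N-1}$, which then combines with the extra $T_{N-1}$ coming from the $-NT_{N-1}$ on the other side of the identity. Since every step is linear in~$u$, the equality holds for an arbitrary element~$u$ of~$A$, as claimed.
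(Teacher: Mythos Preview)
Your proof is correct and follows exactly the approach indicated in the paper, which simply says that the identity ``can be proved by expanding the commutators appearing in its right hand side and simplifying.'' Your use of the abbreviation $T_a=x^aux^{N-1-a}$ and the careful reindexing of $wx$ and $xw$ make the telescoping of coefficients transparent; there is nothing to add.
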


\begin{proof}
The identity can be proved by expanding the commutators
appearing in its right hand side and simplifying.
\end{proof}

Using all these results, we can finally exhibit representatives for classes
in~$\HH^1(A)$:

\begin{Proposition}\label{prop:hh1-high}
Suppose that $N\geq2$. Let $l$ be a positive integer, and let $i$ and~$j$
be the integers such that $l+1=i+j(N-1)$, $j\geq0$, and $1\leq i\leq N-1$.
The vector space~$\HH^1(A)_l$ is spanned by a derivation $\partial_l:A\to
A$ that is homogeneous of degree~$l$ and such that
  \[
  \partial_l(x) = x^iy^j,
  \qquad
  \partial_l(y) = \sum_{s+2+t=N}(s+1)x^{s+i}y^jx^t + (N-i)x^{i-1}\Phi_{j+1}.
  \]
\end{Proposition}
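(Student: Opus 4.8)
The plan is to treat the statement as two separate claims: that the prescribed assignments $x\mapsto x^iy^j$ and $y\mapsto\sum_{s+2+t=N}(s+1)x^{s+i}y^jx^t+(N-i)x^{i-1}\Phi_{j+1}$ genuinely extend to a homogeneous derivation $\partial_l$ of degree~$l$, and that the class of this derivation spans $\HH^1(A)_l$. The second claim is the easy one: the space $\HH^1(A)_l$ is one-dimensional by Proposition~\ref{prop:hh1-series} (here $l\geq1$), so it suffices to show $\partial_l$ is not inner. Since $\partial_l(x)=x^iy^j$ with $1\leq i\leq N-1$, this basis monomial has $x$-exponent strictly less than~$N$ and hence does not lie in $x^NA=\operatorname{span}\{x^ay^b:a\geq N\}$; by the second part of Lemma~\ref{lemma:not-inner} the derivation $\partial_l$ cannot be inner, so its nonzero class spans the line $\HH^1(A)_l$.

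The heart of the matter is therefore the first claim. A map specified on the generators extends to a derivation of~$A$ precisely when it is compatible with the single relation $yx-xy=x^N$, that is, when
\[
[\partial_l(y),x]+[y,\partial_l(x)]=\sum_{s+1+t=N}x^s\,\partial_l(x)\,x^t.
\]
I would compute the three pieces in turn. An immediate induction, which is the case $i=1$, $j=1$ of Lemma~\ref{lemma:comm}, gives $[y,x^i]=ix^{i+N-1}$, whence $[y,\partial_l(x)]=[y,x^iy^j]=ix^{i+N-1}y^j$. For the right-hand side I would apply Lemma~\ref{lemma:identity} with $u=x^iy^j$ to package the symmetric sum as
\[
\sum_{s+1+t=N}x^{s+i}y^jx^t = Nx^{i+N-1}y^j + [\,T_1,\,x\,],
\qquad
T_1\coloneqq\sum_{s+2+t=N}(s+1)x^{s+i}y^jx^t,
\]
where $T_1$ is exactly the first summand of the prescribed value of $\partial_l(y)$.

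It remains to evaluate $[\partial_l(y),x]$. Since $\partial_l(y)=T_1+(N-i)x^{i-1}\Phi_{j+1}$ and $x^{i-1}$ commutes with~$x$, Lemma~\ref{lemma:phi} (which gives $[\Phi_{j+1},x]=x^Ny^{j}$) yields
\[
[\partial_l(y),x]=[T_1,x]+(N-i)x^{i-1}[\Phi_{j+1},x]=[T_1,x]+(N-i)x^{i+N-1}y^j.
\]
Substituting the three computations into the compatibility condition, the two occurrences of $[T_1,x]$ cancel and the scalar coefficient of $x^{i+N-1}y^j$ is $(N-i)+i-N=0$, so the relation is satisfied and $\partial_l$ is a well-defined derivation; the degree bookkeeping (the monomial $x^iy^j$ has degree $l+1$ and each term of the displayed value of $\partial_l(y)$ has degree $l+N-1$) confirms that it is homogeneous of degree~$l$. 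The only real obstacle is recognizing that Lemma~\ref{lemma:identity} is precisely what reduces the awkward symmetric sum to a commutator plus the single term $Nx^{i+N-1}y^j$, and that Lemma~\ref{lemma:phi} is exactly tailored to absorb the residual term $(N-i)x^{i+N-1}y^j$; once those two are in place the verification is a one-line cancellation.
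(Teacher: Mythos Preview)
Your proof is correct, and it takes a genuinely different route from the paper's. The paper argues \emph{indirectly}: it invokes Lemma~\ref{lemma:normalization} to produce an abstract non-inner homogeneous derivation~$d$ of degree~$l$ with $d(x)=x^iy^j$, then uses the relation $d([y,x]-x^N)=0$ together with Lemma~\ref{lemma:identity} and the defining property of~$\Phi_{j+1}$ to show that $d(y)$ differs from the displayed expression by an element of~$\kk[x]$, which is then absorbed by an inner correction. The author even remarks afterwards that this indirect route was chosen precisely to avoid checking directly that the displayed formulas define a derivation, which they anticipated would be ``a very unpleasant calculation''.

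You show that this pessimism is unfounded: with Lemma~\ref{lemma:identity} and Lemma~\ref{lemma:phi} already available, the direct verification of the single compatibility relation is a clean one-line cancellation. Your approach is more elementary --- it dispenses entirely with Lemma~\ref{lemma:normalization} and the attendant case analysis --- and is arguably more transparent, since one sees immediately why the two lemmas are exactly what is needed. The paper's approach, on the other hand, has the conceptual advantage of explaining \emph{how} one might discover the formula for~$\partial_l(y)$ in the first place rather than merely verifying it. One small quibble: your parenthetical ``the case $i=1$, $j=1$ of Lemma~\ref{lemma:comm}'' is slightly garbled --- what you use is the case $j=1$ (with arbitrary~$i$) of that lemma, or simply the easy identity $[y,x^i]=ix^{i+N-1}$.
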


The element~$\Phi_j$ appearing here is the one from
Proposition~\ref{prop:phi} of Section~\ref{sect:phi}. In this proposition
the integer~$l$ is assumed to be positive: we can observe that if we allow
it to be~$0$, then the formulas in the statement actually do also produce a
derivation of degree~$0$ that maps~$x$ and~$y$ to~$x$ and to
$N(N-1)x^{N+1}/2+(N-1)y$, and that this is the derivation
$E+N(N-1)\partial_0/2$ that we have already found in
Proposition~\ref{prop:hh1-low}.

\begin{proof}
Let $d:A\to A$ be a derivation that is homogeneous of degree~$l$
and not inner, and that satisfies the condition of
Lemma~\ref{lemma:normalization}, so that $d(x)=x^iy^j$. As $[y,x]-x^N=0$
in~$A$, we have, using the result of Lemma~\ref{lemma:identity}, that
  \begin{align}
  [d(y),x] 
        &= \sum_{s+1+t=N}x^sd(x)x^t - [y,d(x)]  \\
        &= \left[
           \sum_{s+2+t=N}(s+1)x^{s+i}y^jx^t,x
           \right] 
           + (N-i)x^{i+N-1}y^j,
  \end{align}
and therefore that
  \[
  \left[
  d(y) - \sum_{s+2+t=N}(s+1)x^{s+i}y^jx^t - (N-i)x^{i-1}\Phi_{j+1},
  x
  \right]
  = 0.
  \]
If follows from this that there exists an $f\in\kk[x]$ such that
  \[
  d(y) = \sum_{s+2+t=N}(s+1)x^{s+i}y^jx^t + (N-i)x^{i-1}\Phi_{j+1}+ f.
  \]
Clearly $f$ has to be homogeneous of degree~$l+N-1$, so equal to
$(l+N-1)\lambda x^{l+N-1}$ for some~$\lambda\in\kk$. The derivation
$\partial_l\coloneqq d-\lambda\ad(x^{l-1}y)$, which is cohomologous to~$d$,
is as in the statement of the theorem.
\end{proof}

We remark the nice fact that we were able exhibit the derivation
in~Proposition~\ref{prop:hh1-high} without needing to prove that it
actually is a derivation ---~which is probably a very unpleasant
calculation! 

\section{The Lie algebra structure on \texorpdfstring{$\HH^1(A)$}{HH1(A)}}
\label{sect:lie}

Now that we have explicit derivations whose classes freely span  $\HH^1(A)$
we can compute the canonical Lie algebra structure of this vector space. In
what follows we will write $\sim$ for the relation of cohomology between
derivations of~$A$.

\begin{Lemma}\label{lemma:brackets}
Let $l$ and $m$ be two integers such that $l\geq -N+1$, $m\geq -N+1$, and
let $i$, $j$, $u$ and~$v$ be the unique integers such that $l+1=i+j(N-1)$,
$m+1=u+v(N-1)$, $i$,~$u\in\inter{1,N-1}$ and $j$,~$v\geq-1$. Suppose that,
moreover, $l\leq m$.
\begin{thmlist}

\item If $i+u>N$, then $[\partial_l,\partial_m]\sim0$.

\item If $i+u\leq N$, then $[\partial_l,\partial_m]$ is cohomologous to
  \[
  \begin{c-dcases*}
    \left(u-i+\frac{N-i}{j+1}v-\frac{N-u}{v+1}j\right)\partial_{l+m}
          & if $l\geq1$ and $m\geq1$; \\
    0     & if $l\leq-1$ and~$m\leq-1$ or $lm=0$ \\
    v\partial_{l+m}
          & if $lm<0$ and $l+m\geq1$; \\
    -(l+m)\partial_{l+m}
          & if $lm<0$ and $l+m\leq-1$; \\
    E     & if $l=-N+1$ and~$m=N-1$.
  \end{c-dcases*}
  \]
Here we cannot have that $lm<0$, $l+m=0$ and $l>-N+1$.

\item Finally, $[E,\partial_m]=m\partial_m$.

\end{thmlist}
\end{Lemma}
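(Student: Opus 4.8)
The plan is to exploit that $[\partial_l,\partial_m]$ is again a homogeneous derivation, this time of degree $l+m$, together with the complete description of $\HH^1(A)$ degree by degree. By Proposition~\ref{prop:hh1-series} the space $\HH^1(A)_{l+m}$ vanishes once $l+m\le-N$, is spanned by the single class of $\partial_{l+m}$ when $l+m\ge-N+1$ and $l+m\neq0$, and is spanned by the classes of $\partial_0$ and $E$ when $l+m=0$. Hence in every case it suffices to identify the class of $[\partial_l,\partial_m]$ by computing one scalar, and the tool for extracting it is the filtration criterion of Lemma~\ref{lemma:cut} together with the normalization of Lemma~\ref{lemma:normalization}: the class of a homogeneous derivation $d$ of degree $k\ge1$ is $\alpha\partial_k$, where $\alpha$ is the coefficient of the top monomial $x^{i''}y^{j''}$ (with $k+1=i''+j''(N-1)$, $1\le i''\le N-1$) in $d(x)$ read modulo $F_{j''-1}+x^NA$. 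Since the bracket is antisymmetric I may assume $l\le m$, as the statement does.

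First I would settle the principal case $l\ge1$, $m\ge1$, where $\partial_l(x)=x^iy^j$ and $\partial_m(x)=x^uy^v$ with $j,v\ge0$. The crucial input is the leading behaviour of $\partial_l(y)$: from Proposition~\ref{prop:hh1-high} and the asymptotics $\Phi_{j+1}\equiv\frac{1}{j+1}y^{j+1}\bmod F_j$ of Lemma~\ref{lemma:phi}, and noting that the first sum in the formula for $\partial_l(y)$ only contributes in $y$-degree $j$, one gets $\partial_l(y)\equiv\frac{N-i}{j+1}x^{i-1}y^{j+1}\bmod F_j$. Feeding this into the Leibniz expansion $\partial_l(x^uy^v)=\partial_l(x^u)y^v+x^u\partial_l(y^v)$ and retaining only the top $y$-degree (using $y^px^{i-1}\equiv x^{i-1}y^p\bmod F_{p-1}$ to collect powers of $x$) yields
\[
\partial_l(x^uy^v)\equiv\Bigl(u+\frac{N-i}{j+1}v\Bigr)x^{u+i-1}y^{j+v}\bmod F_{j+v-1}.
\]
Subtracting the symmetric expression for $\partial_m(x^iy^j)$ gives $[\partial_l,\partial_m](x)\equiv C\,x^{i+u-1}y^{j+v}\bmod F_{j+v-1}$ with $C=u-i+\frac{N-i}{j+1}v-\frac{N-u}{v+1}j$. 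When $i+u\le N$ the exponent $i+u-1$ lies in $\{1,\dots,N-1\}$, so $x^{i+u-1}y^{j+v}$ is exactly the normal-form monomial for degree $l+m$ and Lemma~\ref{lemma:normalization} gives $[\partial_l,\partial_m]\sim C\,\partial_{l+m}$, which is part~(ii). When instead $i+u>N$ the normal form has $y$-degree $j''=j+v+1$, so the whole of $[\partial_l,\partial_m](x)$ lies in $F_{j+v}=F_{j''-1}$ and Lemma~\ref{lemma:cut} forces $[\partial_l,\partial_m]\sim0$, which is part~(i) in this regime.

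It remains to dispose of the degenerate and mixed-sign cases, where I argue by direct computation rather than through $C$ (which is ill-defined once $j=-1$ or $v=-1$). If $lm=0$, say $m=0$, then $\partial_0(x)=0$ and $[\partial_0,\partial_l](x)=x^i\partial_0(y^j)\equiv jx^{i+N-1}y^{j-1}\in F_{j-1}$, which is inner by Lemma~\ref{lemma:cut} (and the bracket is literally $0$ when $l\le0$). If $l\le-1$ and $m\le-1$ both derivations kill $x$ and send $y$ into $\kk[x]$, so $[\partial_l,\partial_m]$ vanishes identically. For $lm<0$, so $l<0<m$, I use $\partial_l(x)=0$, $\partial_l(y)=x^{i-1}$ with $i=l+N$: when $l+m\ge1$ (equivalently, with $i+u\le N$, when $v\ge1$) one finds $[\partial_l,\partial_m](x)=x^u\partial_l(y^v)\equiv v\,x^{u+i-1}y^{v-1}\bmod F_{v-2}$, exactly $v$ times the normal-form monomial, so $\sim v\partial_{l+m}$; while $l+m\le-1$ forces $v=0$, whence $[\partial_l,\partial_m](x)=0$ and a direct evaluation gives $[\partial_l,\partial_m](y)=(N+1-i-u)x^{i+u-2}=-(l+m)\,\partial_{l+m}(y)$, so $[\partial_l,\partial_m]=-(l+m)\partial_{l+m}$. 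The boundary case $l=-N+1$, $m=N-1$ — the unique $lm<0$, $l+m=0$ instance with $i+u\le N$, which is what the parenthetical remark records — I compute outright using $\Phi_2=\frac{1}{2}y^2-\frac{N}{2}x^{N-1}y$, finding $[\partial_{-N+1},\partial_{N-1}](x)=x$ and $[\partial_{-N+1},\partial_{N-1}](y)=(N-1)y$, i.e. the bracket equals $E$ on the nose. Finally part~(iii) is immediate, since $E$ is the Euler derivation for the grading and $[E,D]=(\deg D)D$ for any homogeneous $D$. The main obstacle is the positive–positive case: getting $C$ right demands careful bookkeeping of the top $y$-degree term, in particular extracting the coefficient $\frac{N-i}{j+1}$ from the $\Phi_{j+1}$-asymptotics, and then matching $x^{i+u-1}y^{j+v}$ against the normal form in the two subregimes $i+u\le N$ and $i+u>N$.
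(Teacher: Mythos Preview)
Your approach is essentially the paper's: compute $[\partial_l,\partial_m](x)$ modulo the relevant filtration layer and invoke Lemma~\ref{lemma:cut}. The positive--positive case, the diagonal case $l,m\le-1$, the $lm=0$ case, the boundary case $l=-N+1$, $m=N-1$, and part~\thmitem{3} are all handled correctly and match the paper's argument.

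There is, however, a genuine gap: in the mixed-sign regime $lm<0$ you only treat $i+u\le N$. Part~\thmitem{1} of the lemma also requires the case $lm<0$, $i+u>N$, and your proof sketch simply skips it. Concretely, with $l<0<m$ and $i+u>N$ one has $l+m+1=(i+u-N)+v(N-1)$ with $1\le i+u-N\le N-1$, so the normal-form $y$-exponent for degree $l+m$ is $v$, not $v-1$; your computation $[\partial_l,\partial_m](x)=x^u\partial_l(y^v)\in F_{v-1}$ is still valid and is exactly what is needed to apply Lemma~\ref{lemma:cut} when $l+m\ge1$, but you never say this. The subcase $i+u=N+1$, $v=0$ gives $l+m=0$, where Lemma~\ref{lemma:cut} does not apply and one must check directly that $[\partial_l,\partial_m](y)=(N+1-i-u)x^{N-1}=0$, so the bracket vanishes on the nose; this too is missing. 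The paper treats these subcases explicitly.

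A minor point: your parenthetical ``equivalently, with $i+u\le N$, when $v\ge1$'' is not quite right --- under $i+u\le N$ the condition $v\ge1$ gives $l+m\ge0$, not $l+m\ge1$, and the boundary $v=1$, $i=u=1$ is precisely the exceptional case $l+m=0$ you handle separately.
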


\begin{proof}
The third claim of the lemma can be proved by a very simple direct
calculation that we omit. We will split the calculation that proves the
first two claims in several parts, and the following table describes in
which part we consider each particular combination of indices~$l$ and~$m$.
  \[
  \renewcommand{\arraystretch}{1.2}
  \begin{tabular}{c@{\hskip3em}c@{\hskip2em}c@{\hskip2em}c}
  {} & $l\leq-1$ & $l=0$ & $l\geq1$ \\ \toprule
  $m\leq-1$ & \textsc{Second} & \textsc{Third} & \textsc{Fourth} \\
  $m=0$     & \textsc{Third}  & \textsc{Third} & \textsc{Third} \\
  $m\geq1$  & \textsc{Fourth} & \textsc{Third} & \textsc{First}
  \end{tabular}
  \]

\textsc{First part.} Let us suppose first that $l\geq1$ and~$m\geq1$, so
that $j\geq0$ and $v\geq0$. If~$j\geq1$, then in view of
Proposition~\ref{prop:hh1-high} and Lemma~\ref{lemma:phi} we have that
modulo $F_{j-1}$
  \begin{align}
  \partial_l(x) 
        &\equiv x^iy^j
\shortintertext{and}
  \partial_l(y) 
        &=\sum_{s+2+t=N}(s+1)x^{s+i}y^jx^t + (N-i)x^{i-1}\Phi_{j+1} \\
        &\equiv \frac{N(N-1)}{2}x^{i+N-2}y^j
                +(N-i)x^{i-1}
                 \left(
                 \frac{1}{j+1}y^{j+1}-\frac{N}{2}x^{N-1}y^{j}
                 \right) \\
        &= \frac{N-i}{j+1}x^{i-1}y^{j+1} + \frac{N(i-1)}{2}x^{i+N-2}y^j,
  \end{align}
and using this we can see that
  \[ \label{eq:dldm}
  \partial_l(\partial_m(x))
        = \partial_l(x^uy^v)
        \equiv \left(u+\frac{N-i}{j+1}v\right) x^{i+u-1}y^{j+v}
        \mod F_{j+v-1}.
  \]
If instead $j=0$, then Lemma~\ref{lemma:phi} gives us a slightly different
formula for~$\Phi_{j+1}$ and what we have is that $\partial_l(x)=x^i$ and
  \[
  \partial_l(y) 
         =\sum_{s+2+t=N}(s+1)x^{s+i+t} + (N-i)x^{i-1}\Phi_{1} 
         = \frac{N(N-1)}{2}x^{i+N-2}
                +(N-i)x^{i-1}
                 y
  \]
and using this we can see that the congruence~\eqref{eq:dldm} also holds
when $j=0$.

By symmetry we get from~\eqref{eq:dldm} a formula for
$\partial_m(\partial_l(x))$, and finally conclude that
  \[ \label{eq:plpm:x}
  [\partial_l,\partial_m](x) 
        \equiv
                \left(
                u-i+\frac{N-i}{j+1}v-\frac{N-u}{v+1}j
                \right) x^{i+u-1}y^{j+v}
                \mod F_{j+v-1}.
  \]
There are now two cases.
\begin{itemize}

\item If $i+u>N$, then we have 
  \[
  l+m+1=(i+u-N)+(j+v+1)(N-1), 
  \qquad
  1\leq i+u-N\leq N-1,
  \]
and the above formula~\eqref{eq:plpm:x} tells us that the
derivation~$[\partial_l,\partial_m]$, which is homogeneous of degree~$l+m$,
maps~$x$ into~$F_{j+v}$: it follows from Lemma~\ref{lemma:cut} that
$[\partial_l,\partial_m]$ is inner in this situation.

\item Suppose now that $i+u\leq N$, so that
  \[
  l+m+1=(i+u-1)+(j+v)(N-1),
  \qquad
  1\leq i+u-1\leq N-1,
  \]
and let $\alpha$ be the scalar that appears between parentheses
in the right hand side of the congruence~\eqref{eq:plpm:x}. We then have that
the derivation $[\partial_l,\partial_m]-\alpha\partial_{l+m}$, which is
homogeneous of degree~$l+m$, maps~$x$ into~$F_{j+v-1}$: again using
Lemma~\ref{lemma:cut} we see that that difference is inner. 
We thus have that $[\partial_l,\partial_m]\sim\alpha\partial_{l+m}$ in this
situation.

\end{itemize}

\textsc{Second part.} If $l\leq-1$ and $m\leq-1$, then it follows immediately
from the description of~$\partial_l$ and~$\partial_m$ given in
Proposition~\ref{prop:hh1-low} that $[\partial_l,\partial_m]=0$,
independently of whether the inequality $i+u\geq N$ holds or not.

\medskip

\textsc{Third part.} We now want to prove that the
derivation~$[\partial_l,\partial_m]$ is inner if one of~$l$ or~$m$ is zero
and, of course, we can suppose that it is $l$ that is zero. Let us notice
that in this situation we have that 
$i=1$, $j=0$ and $i+u\leq N$.

If $m\geq1$, then $\partial_m(\partial_0(x))=0$ and
  \[
  \partial_0(\partial_m(x))
        = \partial_0(x^uy^v)
        = \sum_{s+1+t=v}x^uy^sx^{N-1}y^t
        \in F_{v-1},
  \]
so that $[\partial_0,\partial_m]$ is a homogeneous derivation of degree~$m$
that maps~$x$ into~$F_{v-1}$: as $m+1=u+v(N-1)$ and $1\leq u\leq N-1$, we know
from Lemma~\ref{lemma:cut} that that commutator is inner. If instead
$m\leq0$, then using the description of~$\partial_m$ given in
Proposition~\ref{prop:hh1-low} can compute directly that
$[\partial_0,\partial_m]=0$.

\medskip

\textsc{Fourth part.} Finally, let us consider the case in which $lm<0$
and, without any loss of generality thanks to anti-symmetry, $l\leq-1$ and
$m\geq1$. We have that 
  \begin{gather}
  j = -1, \qquad
  i = l+N,  \qquad
  v \geq 0, \\
  l+m+1=l+u+v(N-1), \qquad
  -N+2\leq l+u\leq N-2, \label{eq:r2}
  \end{gather}
and that, as $\partial_m(\partial_l(x))=0$, 
  \[
  [\partial_l,\partial_m](x)
        = \partial_l(\partial_m(x))
        = \partial_l(x^uy^v)
        = \sum_{s+1+t=v}x^uy^sx^{l+N-1}y^t. \label{eq:vx0}
  \]
Let us suppose first that $i+u>N$.
\begin{itemize}

\item If $v=0$ and $i+u>N+1$, then 
$l+m = i-N+u-1 \geq 1$, so that the degree of~$[\partial_l,\partial_m]$ is
positive, and, since $1\leq i-N+u= l+u\leq N-2$, it follows from the
first equality in~\eqref{eq:r2}, the formula~\eqref{eq:vx0} tells us 
that $[\partial_l,\partial_m](x)=0\in F_{v-1}$, and Lemma~\ref{lemma:cut},
that $[\partial_l,\partial_m]\sim0$.

\item If $v=0$ and $i+u=N+1$, then the formula~\eqref{eq:vx0} tells us
again that $[\partial_l,\partial_m](x)=0$, and using the definitions
of~$\partial_l$ and~$\partial_m$ we see that
  \[
  \partial_l(\partial_m(y))
        = \partial_l
                \left(
                \frac{N(N-1)}{2}x^{u+N-2}+(N-u)x^{u-1}y
                \right)
        = (N-u)x^{N-1}
  \]
and
  \[
  \partial_m(\partial_l(y))
        = \partial_m(x^{l+N-1})
        = (l+N-1)x^{N-1}
        = (i-1)x^{N-1}
  \]
so that $[\partial_l,\partial_m](y)=(N-u-i+1)x^{N-1}=0$ and, therefore,
$[\partial_l,\partial_m]=0$.

\item If $v\geq1$, then $l+m=i-N+u+v(N-1)-1>N-2\geq0$, so that the degree
of the derivation~$[\partial_l,\partial_m]$ is positive, and since
$l+m+1=i-N+u+v(N-1)$, $1\leq i-N+u\leq N-2$, and
$[\partial_l,\partial_m](x)\in F_{v-1}$ because of~\eqref{eq:vx0},
Lemma~\ref{lemma:cut} tells us that $[\partial_l,\partial_m]\sim0$.

\end{itemize}
Let us suppose, finally, that $i+u\leq N$. We then have that 
  \[ \label{eq:vy0}
  l+m+1 = (i+u-1)+(v-1)(N-1), \qquad
  1\leq i+u-1\leq N-1,
  \]
and, as before, we consider several cases.
\begin{itemize}

\item If $l+m\geq1$, then \eqref{eq:vy0} implies that
$\partial_{l+m}(x)=x^{i+u-1}y^{v-1}$ while \eqref{eq:vx0} implies that
$[\partial_l,\partial_m](x)\equiv vx^{i+u-1}y^{v-1}\mod F_{v-1}$: it
follows from this that $[\partial_l,\partial_m]-v\partial_{l+m}$, a
homogeneous derivation of degree~$l+m$, maps~$x$ into~$F_{v-2}$, so that
Lemma~\ref{lemma:cut} and~\eqref{eq:vy0} let us conclude that it is inner
and, therefore, that $[\partial_l,\partial_m]\sim v\partial_{l+m}$.

\item Suppose now that $l+m\leq -1$. We have
that $v=0$, for otherwise $v\geq1$ and 
  \[
  0 \geq l+m+1 = i-N+u+v(N-1) \geq i-N+u+N-1 = i+u-1 \geq1,
  \]
which is absurd. From~\eqref{eq:vx0} we see then that
$[\partial_l,\partial_m](x)=0$ and we can compute directly that
  \begin{gather}
  \partial_l(\partial_m(y))
        = \partial_l
                \left(
                \frac{N(N-1)}{2}x^{u+N-2} + (N-u)x^{u-1}y
                \right)
        = (N-u)x^{l+m+N-1}
\shortintertext{and}
  \partial_m(\partial_l(y))
        = \partial_m(x^{l+N-1})
        = (l+N-1)x^{l+m+N-1},
  \end{gather}
so that $[\partial_l,\partial_m](y)=-(l+m)x^{l+m+N-1}$, and therefore
$[\partial_l,\partial_m]=-(l+m)\partial_{l+m}$.

\item It is easy to check that we cannot have $l+m=0$ and $l>-N+1$.

\item We have one last case to consider: that in which $l+m=0$ and $l=-N+1$.
We then have that $m=N-1$, $u=1$ and $v=1$, and computing directly we see
that $[\partial_l,\partial_m]$ maps~$x$ and~$y$ to~$x$ and~$(N-1)y$, so
that it is equal to~$E$.

\end{itemize}
We have proved all the claims in the lemma.
\end{proof}

It will be convenient to work with a different basis of~$\HH^1(A)$ with
respect to which the structure constants of the Lie bracket are simpler. If
$l$ is an integer such that $l\geq-N+1$, there is a unique way of choosing
integers~$i$ and~$j$ such that $i\in\inter{1,N-1}$, $j\geq-1$ and
$l+1=i+j(N-1)$, and we define
  \[
  L_l \coloneqq
        \begin{r-dcases*}
        -\frac{j+1}{N-1}\partial_l & if $l\geq1$; \\ 
        -\frac{1}{N-1}E & if $l=0$; \\
        \frac{l}{N-1} \partial_l & if $l\leq -1$.
        \end{r-dcases*}
  \]
Clearly the classes of $\partial_0$ and the derivations $L_l$ with
$l\geq-N+1$ freely span~$\HH^1(A)$.

\begin{Corollary}\label{coro:mlmm}
Let $l$ and $m$ be two integers such that $l\geq -N+1$, $m\geq -N+1$, and
let $i$, $j$, $u$ and~$v$ be the unique integers such that $l+1=i+j(N-1)$,
$m+1=u+v(N-1)$, $i$,~$u\in\inter{1,N-1}$ and $j$,~$v\geq-1$. We have that
  \[ \label{eq:mlmm}
  [L_l, L_m] \sim
    \begin{dcases*}
    0 & if $i+u>N$ or $l+m<-N+1$; \\
    \frac{l(v+1)-m(j+1)}{N-1}L_{l+m} & if $i+u\leq N$.
    \end{dcases*}
  \]
\end{Corollary}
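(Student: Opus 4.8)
The plan is to obtain Corollary~\ref{coro:mlmm} by mechanically rewriting Lemma~\ref{lemma:brackets} in the rescaled basis. Both the bracket and the right-hand side of~\eqref{eq:mlmm} are antisymmetric under interchanging the triples $(l,i,j)$ and $(m,u,v)$ — the coefficient $\frac{l(v+1)-m(j+1)}{N-1}$ changes sign while the conditions $i+u>N$, $i+u\le N$ and the value $l+m$ are symmetric — so I may assume $l\le m$ and quote the lemma verbatim. First I would record the bookkeeping that fixes the scalar relating $L_\bullet$ to $\partial_\bullet$: for $l+1=i+j(N-1)$ with $1\le i\le N-1$, a degree $l\le-1$ forces $j=-1$, a degree $l=0$ forces $i=1$ and $j=0$, and $l\ge1$ forces $j\ge0$ (and likewise for $m$, $v$). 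Crucially, $L_0$ is a multiple of $E$, so any bracket involving it must be evaluated through part~\thmitem{3} of the lemma, $[E,\partial_m]=m\partial_m$, and never through a $\partial_0$-bracket.

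Next I would dispose of the vanishing cases. If $i+u>N$, then part~\thmitem{1} of the lemma gives $[\partial_l,\partial_m]\sim0$ unconditionally; since $l=0$ (or $m=0$) would force $i+u\le N$, no factor of $E$ enters, and multiplying by the scalars defining $L_l$ and $L_m$ preserves the relation $\sim0$. If instead $l+m<-N+1$, then, as each of $l,m$ is at least $-N+1$, neither can be nonnegative and they cannot have opposite signs without the sum being $\ge-N+1$; hence $l\le m\le-1$, so $j=v=-1$, and the corresponding clause of part~\thmitem{2} gives $[\partial_l,\partial_m]\sim0$, whence $[L_l,L_m]\sim0$. This settles the first line of~\eqref{eq:mlmm}.

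For the second line I would run through the remaining subcases of part~\thmitem{2}, substituting the lemma's value of the bracket, applying the defining scalars of $L_l$, $L_m$, $L_{l+m}$, and comparing with $\frac{l(v+1)-m(j+1)}{N-1}L_{l+m}$. When both degrees are negative, $j=v=-1$ makes the target coefficient $\frac{l(v+1)-m(j+1)}{N-1}$ vanish, matching $[\partial_l,\partial_m]=0$ even though $L_{l+m}$ need not be defined. When $l=0$, part~\thmitem{3} yields $[L_0,L_m]=-\frac{m}{N-1}L_m$, which is exactly $\frac{l(v+1)-m(j+1)}{N-1}L_{l+m}$ since here $i=1$, $j=0$ and $l+m=m$. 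When $l\le-1$ and $m\ge1$ one has $j=-1$, so the target coefficient collapses to $\frac{l(v+1)}{N-1}$, and inserting the scalars into the lemma's values $v\partial_{l+m}$ (for $l+m\ge1$) and $-(l+m)\partial_{l+m}$ (for $l+m\le-1$) reproduces it. Finally the boundary $l=-N+1$, $m=N-1$ has $[\partial_l,\partial_m]=E$, and the scalars $\mu_l=-1$, $\mu_m=-\frac{2}{N-1}$ turn this into $\frac{2}{N-1}E=-2L_0$, precisely the coefficient $-2$ predicted.

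The one genuinely computational step, and the main obstacle, is the generic case $l,m\ge1$ with $i+u\le N$. Here the lemma gives $[\partial_l,\partial_m]\sim\bigl(u-i+\tfrac{N-i}{j+1}v-\tfrac{N-u}{v+1}j\bigr)\partial_{l+m}$, while $L_l=-\tfrac{j+1}{N-1}\partial_l$, $L_m=-\tfrac{v+1}{N-1}\partial_m$ and $L_{l+m}=-\tfrac{j+v+1}{N-1}\partial_{l+m}$, so matching the two expressions reduces to the polynomial identity
\[
(j+1)(v+1)\left(u-i+\frac{N-i}{j+1}v-\frac{N-u}{v+1}j\right)
= -\bigl(l(v+1)-m(j+1)\bigr)(j+v+1),
\]
to be verified by substituting $l=i+j(N-1)-1$ and $m=u+v(N-1)-1$ and expanding both sides. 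Once this identity is in hand the scaling factors combine to give exactly $\frac{l(v+1)-m(j+1)}{N-1}L_{l+m}$, which together with the cases above completes the proof; the difficulty throughout is not conceptual but lies in keeping the rescaling factors and the many index conventions straight across all the subcases.
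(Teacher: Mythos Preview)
Your proposal is correct and follows essentially the same approach as the paper's proof: a case-by-case translation of Lemma~\ref{lemma:brackets} into the rescaled basis~$(L_l)$, with the same subcases (both negative, $lm=0$, mixed sign with its three subcases, and both positive). Your treatment is in fact slightly more explicit than the paper's in two places --- you write out the polynomial identity in the generic $l,m\geq1$ case where the paper merely says ``a little calculation shows'', and you isolate the condition $l+m<-N+1$ as a separate vanishing case rather than absorbing it into the both-negative subcase --- but the substance and structure are the same.
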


\begin{proof}
Let us consider first the situation in which $i+u>N$. Neither~$i$ nor~$u$
is equal to~$1$: were $i=1$, for example, we would have that $i+u\leq
1+(N-1)=N$. It follows then, in particular, that $l\neq0$ and $m\neq0$, so
there are scalars~$a$ and~$b$ such that $L_l=a\partial_l$ and
$L_m=b\partial_m$, and therefore that
$[L_l,L_m]=ab[\partial_l,\partial_m]\sim0$ because of the first part of
Lemma~\ref{lemma:brackets}. This proves the first line in~\eqref{eq:mlmm}.

In order to prove the second line let us suppose from now on that 
$i+u\leq N$ and, as if bound by a hex cast upon us, handle each of the 
several following special cases separately.
\begin{itemize}

\item Suppose first that $l\leq-1$ and $m\leq-1$. As before, there are
scalars~$a$ and~$b$ such that $L_l=a\partial_l$ and $L_m=b\partial_m$, and
therefore $[L_l,L_m]=ab[\partial_l,\partial_m]\sim0$ according to
Lemma~\ref{lemma:brackets}. On the other hand, we have that $j=-1$ and
$v=-1$, so that the numerator of the fraction appearing in~\eqref{eq:mlmm}
is~$0$ and that cohomology holds in this case.

\item Suppose now that $l\geq1$ and $m\geq1$. Using
Lemma~\ref{lemma:brackets} we see that $[L_l,L_m]$ is 
  \[
  \frac{(j+1)(v+1)}{(N-1)^2} [\partial_l,\partial_m] 
        \sim \frac{(j+1)(v+1)}{(N-1)^2}
          \left(u-i+\frac{N-i}{j+1}v-\frac{N-u}{v+1}j\right)\partial_{l+m},
  \]
and a little calculation shows that this is equal to 
  \[
  \frac{l(v+1)-m(j+1)}{N-1}L_{l+m}.
  \]

\item Suppose next that $lm=0$, so that one of~$l$ or~$m$ is zero --- and
by anti-symmetry we can suppose additionally that $l$ is, so that $j=0$. If
$m\geq1$, then
  \[
    [L_l,L_m]
      = \left[-\frac{1}{N-1}E,-\frac{v+1}{N-1}\partial_m\right] 
      \sim m\frac{v+1}{(N-1)^2}\partial_m
      = -\frac{m}{N-1}L_m,
  \]
if $m=0$, then~$[L_l,L_m]=0$, and if $m\leq-1$, then
  \[
    [L_l,L_m]
      = \left[-\frac{1}{N-1}E,\frac{m}{N-1}\partial_m\right] 
      \sim -\frac{m^2}{(N-1)^2}\partial_m
      = -\frac{m}{N-1}L_m 
  \]
In any case, we see that \eqref{eq:mlmm} holds.

\end{itemize}
At this point we are left with considering the case in which $lm<0$ and,
thanks to anti-symmetry, we can further suppose that in fact $l\leq-1$ and
$m\geq1$.
\begin{itemize}

\item If $l\leq-1$, $m\geq1$ and $l+m\geq1$, then
$j=-1$, $1\leq i+u-1\leq N-1$, and $l+m+1=(i+u-1)+(v-1)(N-1)$, so that
$L_{l+m}=-v\partial_{l+m}/(N-1)$ and
  \[
  [L_l,L_m]
        = \left[\frac{l}{N-1}\partial_l,-\frac{v+1}{N-1}\partial_m\right]
        = -\frac{l(v+1)v}{(N-1)^2}\partial_{l+m}
        = \frac{l(v+1)}{N-1}L_{l+m}.
  \]

\item Suppose now that $l\leq-1$, $m\geq1$ and $l+m=0$. We then have that
$j=-1$, that $0=l+m=(i+u-2)+(v-1)(N-1)$, and that $0\leq i+u-2\leq N-2$, so
that in fact $i=1$, $u=1$ and $v=1$: this tells us that $l=-N+1$ and
$m=N-1$, so that
  \[
  [L_l,L_m]
        = \left[
                \frac{-N+1}{N-1}\partial_{-N+1},
                -\frac{2}{N-1}\partial_{N-1}
          \right]
        = \frac{2}{N-1} E
        = -2L_0
  \]

\item Finally, if $l\leq-1$, $m\geq1$ and $l+m\leq-1$, then
  \[
  [L_l,L_m]
        = \left[
                \frac{l}{N-1}\partial_{l},
                -\frac{v+1}{N-1}\partial_{m}
          \right]
        = \frac{l(v+1)(l+m)}{(N-1)^2}\partial_{l+m}
        = \frac{l(v+1)}{N-1}L_{l+m}.
  \]

\end{itemize}
In all cases what we have found is a specialization of the formula that
appears in~\eqref{eq:mlmm}.
\end{proof}

For each integer~$l$ let us write $\rho(l)$ for the element
of~$\inter{0,N-2}$ that is the remainder of the division of~$l$ by~$N-1$.
Above we have used many times the fact that $l$ determines uniquely
integers~$i$ and~$j$ such that $l+1=i+j(N-1)$: we have $\rho(l)=i-1$.
Clearly, whenever $l$ and $m$ are integers we have that
  \[
  \claim{$\rho(l+m)=\rho(l)+\rho(m)$ if $\rho(l)+\rho(m)\leq
  N-2$.}
  \]
From now on we will, in contexts where this does not lead to confusion,
take the liberty of not making an explicit difference between a derivation
of~$A$ and its class in~$\HH^1(A)$.

Let us recall that the Lie algebra~$\Der(\kk[t])$ of derivations of the
polynomial algebra~$\kk[t]$ is freely spanned as a vector space by the
derivations
  \[
  - t^{j+1}\frac{\d}{\d t}, \qquad j\in\ZZ,\; j\geq-1,
  \]
which are such that
  \[ \label{eq:witt}
  \left[- t^{j+1}\frac{\d}{\d t}, - t^{v+1}\frac{\d}{\d t}\right] 
        = (j-v)L_{j+v}
  \]
whenever $j$ and~$v$ are integers such that $j$,~$v\geq-1$. This Lie
algebra is often called the \newterm{Witt algebra}, although the same name
is more commonly applied to the Lie algebra~$\Der(\kk[t^{\pm1}])$, which is
different --- there is a \emph{third} Lie algebra called the Witt algebra,
but it only occurs in positive characteristic. The Lie algebra
$\Der(\kk[x])$ is simple --- in fact, David Jordan proved in~\cite{Jordan}
that the Lie algebra of derivations on an affine variety is simple exactly
when the variety is smooth, and that applies here (see also Thomas
Siebert's \cite{Siebert}), but one can prove this particular case very
easily «by hand».

\begin{Proposition}\label{prop:nil}
For each $r\in\NN_0$ let $\Nil_r$ be the span of $\{L_l:\rho(l)\geq r\}$
in~$\HH^1(A)$, so that, in particular, $\Nil_r=0$ when $r\geq N-2$ and
there is a descending chain of subspaces
  \[ \label{eq:nils}
        \Nil_0 \supsetneq 
          \underbracket{
              \Nil_1
              \supsetneq \cdots
              \supsetneq \Nil_{N-2}
              \supsetneq \Nil_{N-1} = 0
              }
        .
  \]
\begin{thmlist}

\item For each $r$,~$s\in\inter{0,N-1}$ we have that
$[\Nil_r,\Nil_s]\subseteq\Nil_{r+s}$.

\item For each $r\in\inter{1,N-2}$ we have that
$\Nil_{r+1}\subseteq[L_{-N+2},\Nil_r]\subseteq[\Nil_1,\Nil_r]$.

\item $\Nil_1$ is a nilpotent ideal in~$\HH^1(A)$, the chain underlined
in~\eqref{eq:nils} is its lower central series and, in particular, its
nilpotency index is exactly~$N-2$.

\item The center of~$\HH^1(A)$ is the subspace~$\kk\partial_0$ spanned
by~$\partial_0$, the derived subalgebra of~$\HH^1(A)$ is~$\Nil_0$, which is
a perfect subalgebra, and 
  \[
  \HH^1(A) = Z \oplus \Nil_0.
  \]

\item There is an injective morphism of Lie algebras
$\Phi:\Der(\kk[t])\to\Nil_0$ that for all integers~$j$ such that $j\geq-1$
has
  \[
  \Phi\left(-t^{j+1}\frac{\d}{\d t}\right) = L_{j(N-1)}.
  \]

\item $\Nil_0=\Phi(\Der(\kk[t]))\oplus\Nil_1$ and $\Nil_1$ is the unique
maximal ideal of~$\Nil_0$.

\end{thmlist}
\end{Proposition}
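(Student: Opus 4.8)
The plan is to read everything off the bracket formula of Corollary~\ref{coro:mlmm}, rewritten in terms of the residue function. The first step is to record the dictionary: the condition $i+u\le N$ is exactly $\rho(l)+\rho(m)\le N-2$, and in that range (when $l+m\ge-N+1$) one has $\rho(l+m)=\rho(l)+\rho(m)$, while $i+u>N$ means $\rho(l)+\rho(m)\ge N-1$. I would also set down three facts used throughout: that $\partial_0$ is central (the $lm=0$ case of Lemma~\ref{lemma:brackets}); that $[E,L_l]=lL_l$ for every $l$, so $L_0=-\tfrac{1}{N-1}E$ acts on $L_l$ by the scalar $-l/(N-1)$; and, since $l\neq0$ whenever $\rho(l)\ge1$, that $\ad L_0$ is invertible on every $\Nil_r$ with $r\ge1$, whence $\Nil_r=\ad L_0(\Nil_r)\subseteq[\Nil_0,\Nil_r]\subseteq\Nil_r$ gives $[\Nil_0,\Nil_r]=\Nil_r$.

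Parts~\thmitem{1} and~\thmitem{2} then drop out. For~\thmitem{1} I test on spanning vectors $L_l,L_m$ with $\rho(l)\ge r$ and $\rho(m)\ge s$: either the bracket vanishes, or by Corollary~\ref{coro:mlmm} it is a multiple of $L_{l+m}$ with $\rho(l+m)=\rho(l)+\rho(m)\ge r+s$, so it lies in $\Nil_{r+s}$. For~\thmitem{2}, given $L_p$ with $\rho(p)\ge r+1$ I set $q\coloneqq p+N-2$; because $N-2\equiv-1\pmod{N-1}$ and $\rho(p)\ge r+1\ge2$ forces $p\ge-N+3$, one checks $\rho(q)=\rho(p)-1\ge r$, $q\ge1$, and $\rho(-N+2)+\rho(q)=\rho(p)\le N-2$, so Corollary~\ref{coro:mlmm} gives $[L_{-N+2},L_q]=cL_p$ with $c=\tfrac{(-N+2)(v+1)}{N-1}\neq0$; since $\rho(-N+2)=1$ means $L_{-N+2}\in\Nil_1$, this yields $\Nil_{r+1}\subseteq[L_{-N+2},\Nil_r]\subseteq[\Nil_1,\Nil_r]$. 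The very same computation carried out with $r=0$ (now $\rho(p)\ge1$ forces $p\ge-N+2$, so again $q\ge0$) gives $\Nil_1\subseteq[\Nil_0,\Nil_0]$, which I reuse below.

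I would next dispatch~\thmitem{5} by one substitution: with $l=j(N-1)$ and $m=v(N-1)$ one has $i=u=1$, and the scalar in Corollary~\ref{coro:mlmm} collapses to $\tfrac{l(v+1)-m(j+1)}{N-1}=j(v+1)-v(j+1)=j-v$, so $[L_{j(N-1)},L_{v(N-1)}]=(j-v)L_{(j+v)(N-1)}$; comparing with~\eqref{eq:witt} shows that $-t^{j+1}\frac{\d}{\d t}\mapsto L_{j(N-1)}$ defines a Lie algebra map $\Phi$, injective because the $L_{j(N-1)}$ belong to a basis. Since the Witt algebra is simple, hence perfect, $\Phi(\Der(\kk[t]))\subseteq[\Nil_0,\Nil_0]$; together with $\Nil_1\subseteq[\Nil_0,\Nil_0]$ and the vector-space splitting $\Nil_0=\Phi(\Der(\kk[t]))\oplus\Nil_1$ (the basis split by $\rho=0$ versus $\rho\ge1$) this proves $\Nil_0$ is perfect. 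Part~\thmitem{3} is then formal: $\Nil_1$ is an ideal of $\HH^1(A)$ since $[\partial_0,\place]=0$ and $[\Nil_0,\Nil_1]\subseteq\Nil_1$ by~\thmitem{1}, and combining~\thmitem{1} with~\thmitem{2} gives $[\Nil_1,\Nil_k]=\Nil_{k+1}$ for $1\le k\le N-2$, so an induction identifies the lower central series of $\Nil_1$ with the strictly decreasing chain~\eqref{eq:nils}, of length $N-2$. For~\thmitem{4}: $\partial_0$ is central; conversely, applying $\ad E$ (semisimple, with nonzero eigenvalues $l$ on $L_l$, $l\neq0$) kills everything outside $\kk\partial_0\oplus\kk E$, and $E$ is itself non-central since $[E,\partial_1]=\partial_1\neq0$, so the center is exactly $\kk\partial_0$; finally $[\HH^1(A),\HH^1(A)]=[\Nil_0,\Nil_0]=\Nil_0$ by perfectness and centrality of $\partial_0$, and $\partial_0\notin\Nil_0$ (whose degree-$0$ part is $\kk E$) gives $\HH^1(A)=\kk\partial_0\oplus\Nil_0$.

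The real work is~\thmitem{6}, and I expect it to be the main obstacle. The splitting $\Nil_0=\Phi(\Der(\kk[t]))\oplus\Nil_1$ exhibits $\Nil_0$ as a semidirect product of the simple Witt algebra with the nilpotent ideal $\Nil_1$, so $\Nil_0/\Nil_1$ is simple and $\Nil_1$ is a maximal ideal; uniqueness is what needs care. I would show that every proper ideal $J\trianglelefteq\Nil_0$ lies in $\Nil_1$. Projecting to $\Nil_0/\Nil_1\cong\Der(\kk[t])$, the image of $J$ is an ideal of a simple algebra, so it is $0$ or everything. If it were everything, then $\Nil_0=J+\Nil_1$, and I would collapse this down the filtration: using $[\Nil_0,\Nil_j]=\Nil_j$ (invertibility of $\ad L_0$) together with $J\trianglelefteq\Nil_0$ and part~\thmitem{1},
\[
\Nil_j=[\Nil_0,\Nil_j]=[J+\Nil_1,\Nil_j]\subseteq J+\Nil_{j+1}
\qquad\text{for every }j\ge1;
\]
chaining these inclusions and using $\Nil_{N-1}=0$ forces $\Nil_1\subseteq J$, hence $J=\Nil_0$, a contradiction. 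Therefore the image is $0$, i.e.\ $J\subseteq\Nil_1$, making $\Nil_1$ the unique maximal ideal. The only genuinely delicate points are this filtration-collapse argument and the index bookkeeping in~\thmitem{2}; everything else is a direct transcription of Corollary~\ref{coro:mlmm} and the additivity of $\rho$.
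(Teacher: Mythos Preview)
Your proof is correct. For parts \thmitem{1}--\thmitem{5} you follow essentially the paper's route, reading everything off Corollary~\ref{coro:mlmm} with the same index manipulations (your $q=p+N-2$ is exactly the paper's auxiliary index~$m'$). One small redundancy: having already shown $[\Nil_0,\Nil_r]=\Nil_r$ for $r\ge1$ via the invertibility of $\ad L_0$, you do not need the separate ``\thmitem{2} with $r=0$'' computation to get $\Nil_1\subseteq[\Nil_0,\Nil_0]$; it follows immediately from $\Nil_1=[\Nil_0,\Nil_1]$.

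Your argument for part~\thmitem{6} takes a genuinely different route from the paper's. The paper first observes that any ideal $I\trianglelefteq\Nil_0$ is $\ad(E)$-invariant and hence homogeneous; then $I\cap\Phi(\Der(\kk[t]))$ is an ideal of the simple Witt algebra, which the paper asserts is zero --- leaving implicit the step that if this intersection were all of $\Phi(\Der(\kk[t]))$ then $L_0\in I$ and $\ad L_0$ would force $I=\Nil_0$ --- and finally homogeneity together with the one-dimensionality of each graded piece of~$\Nil_0$ yields $I\subseteq\Nil_1$. Your filtration-collapse argument via $\Nil_j=[\Nil_0,\Nil_j]=[J+\Nil_1,\Nil_j]\subseteq J+\Nil_{j+1}$ is a different mechanism: it avoids both the homogeneity step and the reliance on one-dimensional graded components, and makes the passage from $\Nil_0=J+\Nil_1$ to $J=\Nil_0$ fully explicit. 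Both approaches are valid; yours is more self-contained, while the paper's exploits more directly the graded structure already at hand.
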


\begin{proof}
\thmitem{1} Let $r$ and~$s$ be elements of~$\inter{0,N-1}$, let $l$ and~$m$
be integers such that $l\geq-N+1$, $m\geq-N+1$, $\rho(l)\geq r$
and~$\rho(m)\geq s$, and let $i$,~$j$,~$u$,~$v$ be the integers such that
$i$,~$u\in\inter{1,N-1}$, $j$,~$v\geq-1$, $l+1=i+j(N-1)$ and
$m+1=u+v(N-1)$, so that $i=\rho(l)+1$ and $u=\rho(m)+1$. If
$\rho(l)+\rho(m)>N-2$, then we know from Corollary~\ref{coro:mlmm} that
$[L_l,L_m]=0\in\Nil_{r+s}$. If instead $\rho(l)+\rho(m)\leq N-2$, then
$\rho(l+m)=\rho(l)+\rho(m)\geq r+s$ and that corollary tells us now that
there is a scalar~$\lambda\in\kk$ such that $[L_l,L_m]=\lambda
L_{l+m}\in\Nil_{r+s}$. This shows that
$[\Nil_r,\Nil_s]\subseteq\Nil_{r+s}$.

\thmitem{2} Let $r$ be an element of~$\inter{1,N-2}$, so that in particular
we have $N\geq3$, and let $m$ be an integer such that $m\geq-N+1$ and
$\rho(m)\geq r+1$. There is an integer $v$ such that $v\geq-1$ and
$m+1=\rho(m)+1+v(N-1)$, and we set $m'\coloneqq\rho(m)-1+(v+1)(N-1)$. Since
$2\leq r+1\leq \rho(m)\leq N-2$, we have that $\rho(m')=\rho(m)-1\geq r$ and,
therefore, that $L_{m'}\in\Nil_{r+1}$. Since $\rho(m)\geq2$ and
$v\geq-1$, we have $m'\geq1$. Since $(-N+2)+m'=m\neq0$ because
$\rho(m)\geq r+1>0$, it follows from Lemma~\ref{lemma:brackets} that
  \[
  [L_{-N+2},L_{m'}]
        = -\frac{N-2}{N-1}(v+2)L_{m}
  \]
and, in any case, that $L_m\in[L_{-N+2},\Nil_r]$. We have
proved that $\Nil_{r+1}\subseteq[L_{-N+2},\Nil_r]$.

\bigskip

Lemma~\ref{lemma:brackets} tells us that the class of the
derivation~$\partial_0$ is central in~$\HH^1(A)$. On the other hand, a
central element of~$\HH^1(A)$ has to commute with $E$, so has
degree~$0$: as $E$ is not central, we see that the center of~$\HH^1(A)$
es precisely~$\kk\partial_0$. Clearly $\HH^1(A)=\kk\partial_0\oplus\Nil_0$
and the derived subalgebra of~$\HH^1(A)$ is~$[\Nil_0,\Nil_0]$.

The map~$\Phi$ described in the lemma is immediately seen to be an
injective morphism of Lie algebras --- thanks to Corollary~\ref{coro:mlmm}
and the formulas in~\eqref{eq:witt} --- and clearly
$\Nil_0=\Phi(\Der(\kk[t]))\oplus\Nil_1$.
As the algebra~$\Der(\kk[x])$ is simple, it is perfect, and thus
$\Phi(\Der(\kk[t]))$ is contained in~$[\Nil_0,\Nil_0]$. As also
$\Nil_1=[M_0,\Nil_1]\subseteq[\Nil_0,\Nil_0]$, we see that $\Nil_0$ is
perfect and equal to the derived subalgebra of~$\HH^1(A)$.

From~\thmitem{1} we have that $[\Nil_0,\Nil_1]\subseteq\Nil_1$, so that the
subspace $\Nil_1$ is an ideal of~$\Nil_0$ and in~$\HH^1(A)$.
From~\thmitem{1} and~\thmitem{2} we see that for each $r\in\inter{1,N-2}$
we have $[\Nil_1,\Nil_r]=\Nil_{r+1}$, so that
$\Nil_1\supsetneq\Nil_2\supsetneq\cdots\supsetneq\Nil_{N-2}$ is the
beginning of the lower central series of~$\Nil_1$. As $\Nil_{N-2}=0$ and
$\Nil_{N-1}\neq0$, we see that $\Nil_1$ is nilpotent of index
exactly~$N-2$. As $\Nil_0/\Nil_1\cong\Der(\kk[t]))$, a simple Lie
algebra, $\Nil_1$ is a maximal ideal in~$\Nil_0$. 

Suppose, finally, that
$I$ is an ideal of~$\Nil_0$: as $I$ is $\ad(E)$-invariant and
$\ad(E):\Nil_0\to\Nil_0$ is diagonalizable and its eigenspaces are the
homogeneous components of~$\Nil_0$, we see that $I$ is a \emph{homogeneous}
ideal of~$\Nil_0$. The intersection $I\cap\Phi(\Der(\kk[t])$ is zero,
because it is an ideal in~$\Phi(\Der(\kk[t]))$, and this tells us that $I$
is generated by homogeneous elements of degree not divisible by~$N-1$ and
thus that it is contained in~$\Nil_1$. This proves that~$\Nil_1$ is the
unique maximal ideal of~$\Nil_0$.
\end{proof}

An immediate consequence of this is that the number~$N$ is a derived
invariant of the algebra~$A$:

\begin{Corollary}\label{coro:n-inv}
If $N$ and $N'$ are two non-negative integers such that the algebras~$A_N$
and~$A_{N'}$ are derived equivalent, then $N=N'$.
\end{Corollary}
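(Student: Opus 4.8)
The plan is to recover the integer~$N$ from the isomorphism type of $\HH^1(A_N)$ as a Lie algebra and then to invoke the derived invariance of this structure. Recall that a derived equivalence between two algebras induces an isomorphism of their Hochschild cohomologies respecting the full Gerstenhaber structure, and in particular restricts to an isomorphism of the degree-one parts $\HH^1$ as Lie algebras. Since gradings are \emph{not} preserved by derived equivalences, I must not appeal to the Hilbert series computed in Proposition~\ref{prop:hh1-series}; instead everything has to be extracted from purely Lie-theoretic data, and Proposition~\ref{prop:nil} is exactly what provides this.

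First I would dispose of the degenerate cases using the crudest derived invariant available, the dimension of $\HH^1$ as a $\kk$-vector space. When $N=0$ the algebra is the first Weyl algebra and $\HH^1(A_0)=0$; when $N=1$ the space $\HH^1(A_1)$ is one-dimensional by Proposition~\ref{prop:hh1-n1}; and when $N\geq 2$ it is infinite-dimensional by Proposition~\ref{prop:hh1-series}. Hence the three ranges $N=0$, $N=1$ and $N\geq 2$ are mutually distinguished by whether $\dim_\kk\HH^1$ is $0$, $1$ or infinite, so that it remains only to separate the various values of~$N$ within the range $N\geq 2$.

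For $N\geq 2$ the key point is that the number $N-2$ is an \emph{intrinsic} invariant of the Lie algebra $\HH^1(A_N)$. Indeed, its derived subalgebra $[\HH^1(A_N),\HH^1(A_N)]$ equals $\Nil_0$ by part~\thmitem{4} of Proposition~\ref{prop:nil}; this derived subalgebra has a unique maximal ideal $\Nil_1$ by part~\thmitem{6}; and $\Nil_1$ is nilpotent of index exactly $N-2$ by part~\thmitem{3}, with the convention that the zero ideal (which is the case $N=2$, where $\Nil_0$ is the simple Witt algebra) has index~$0$. Each of the three operations involved here---passing to the derived subalgebra, singling out the unique maximal ideal of that subalgebra, and reading off the nilpotency index of that ideal---is preserved under Lie-algebra isomorphisms. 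Therefore, if $A_N$ and $A_{N'}$ with $N,N'\geq 2$ are derived equivalent, the induced Lie isomorphism $\HH^1(A_N)\cong\HH^1(A_{N'})$ forces $N-2=N'-2$, that is $N=N'$, which completes the argument.

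The only genuine external input beyond Proposition~\ref{prop:nil} is the derived invariance of the Gerstenhaber---in particular the Lie---structure on Hochschild cohomology, which is standard; the rest is bookkeeping across the four ranges of~$N$. The one point that deserves care, and which I expect to be the main obstacle to writing the proof cleanly, is that one must resist using the grading, since it is not a derived invariant: the separation of the cases $N\geq 2$ has to rest entirely on the internal Lie-algebraic characterisation of $\Nil_1$ and of its nilpotency index furnished by parts~\thmitem{3} and~\thmitem{6} of Proposition~\ref{prop:nil}, and not on any dimension count of homogeneous components.
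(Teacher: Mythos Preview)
Your proposal is correct and follows essentially the same approach as the paper's proof: distinguish the cases $N=0$, $N=1$, $N\geq 2$ by the dimension of $\HH^1$, and for $N\geq 2$ recover $N-2$ as the nilpotency index of the unique maximal ideal $\Nil_1$ inside $\Nil_0$, invoking Keller's derived invariance of the Lie structure on $\HH^1$. The only cosmetic difference is that the paper reaches $\Nil_0$ as the quotient of $\HH^1(A_N)$ by its center, whereas you reach it as the derived subalgebra; by part~\thmitem{4} of Proposition~\ref{prop:nil} these coincide, so the two arguments are interchangeable.
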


\begin{proof}
Let $N$ and~$N'$ be two non-negative integers such that $N\leq N'$ and the
algebras $A_N$ and~$A_{N'}$ are derived equivalent. According to a theorem
of Bernhard Keller in~\cite{Keller} we then have that the Lie
algebras~$\HH^1(A_N)$ and~$\HH^1(A_{N'})$ are isomorphic as Lie algebras.
If $N\geq2$, then Proposition~\ref{prop:nil} tells us that we can compute
the number~$N-2$ from the Lie algebra~$\HH^1(A_N)$ as the nilpotency index
of the unique maximal ideal of the quotient of $\HH^1(A_N)$ by its center
is $N-2$, and we therefore have that $N=N'$.

It is easy to deal with the remaining possibilities. If $N=0$, then $A_N$
is the Weyl algebra and $\HH^1(A)=0$: from
Propositions~\ref{prop:hh1-series} and~\ref{prop:hh1-n1} we see that then
$N'=0$. If $N=1$, then we cannot have $N'>1$, for in that case
$\dim\HH^1(A_N)=1<\infty=\dim\HH^1(A_{N'})$, so again $N=N'$.
\end{proof}

The next thing we want to do is to obtain a computational criterion for a
derivation of~$A$ to be the restriction of an inner derivation of the
algebra~$A_x$ obtained by localizing at~$x$. This is possible because $A_x$
can also be obtained as a localization of a Weyl algebra, with which we can
work with ease. The result we need is the following:

\begin{Proposition}\label{prop:xinner:weyl}
Let $\W$ be the algebra freely generated by letters~$p$ and~$q$ subject to
the relation
  \[
  pq-qp = 1,
  \]
and let $\W_q$ be the localization of~$\W$ at~$q$. The first Hochschild
cohomology space~$\HH^1(\W_q)$ is one-dimensional and is spanned by the
cohomology class of the derivation $\partial:\W_q\to\W_q$ such that
  \[
  \partial(p) = q^{-1}, 
  \qquad
  \partial(q) = 0.
  \]
There is a representation of~$\W_q$ on the algebra~$\kk[t^{\pm1}]$ of
Laurent polynomials such that 
  \[
  p\lact f(t) = \frac{\d}{\d t}f(t),
  \qquad
  q\lact f(t) = tf(t)
  \]
for all~$f\in\kk[t^{\pm1}]$. A derivation $\delta:\W_q\to\W_q$ is inner if
and only if
  \[ \label{eq:res:inn}
  \Res_0\left(\delta(pq)\lact\frac{1}{t}\right) = 0.
  \]
\end{Proposition}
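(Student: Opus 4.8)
The plan is to compute $\HH^1(\W_q)$ by hand, working throughout with the concrete model of $\W_q$ as the algebra of differential operators on the punctured line. First I would verify the representation claimed in the statement: since the operators $p\lact f=\frac{\d}{\d t}f$ and $q\lact f=tf$ satisfy $[p\lact,q\lact]=\frac{\d}{\d t}\circ(t\cdot)-(t\cdot)\circ\frac{\d}{\d t}=\id$ on $\kk[t^{\pm1}]$, and $q$ acts invertibly (its inverse being multiplication by $t^{-1}$), the action of $\W$ factors through the localization $\W_q$. I would also record that $\W_q$ has $\{q^ap^b:a\in\ZZ,\ b\geq0\}$ as a $\kk$-basis and that under the representation $q^ap^b\lact t^{-1}=(-1)^bb!\,t^{a-b-1}$, so that $\Res_0(q^ap^b\lact t^{-1})$ equals $(-1)^bb!$ when $a=b$ and vanishes otherwise.

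For the cohomology, a derivation $\delta$ of $\W_q$ is determined by the pair $(\delta(q),\delta(p))$, subject only to the compatibility $[\delta(p),q]+[p,\delta(q)]=0$ coming from the defining relation. The computation then proceeds in three steps. First, the map $u\mapsto[u,q]$ is surjective onto $\W_q$, since $[q^ap^{b+1},q]=(b+1)q^ap^b$ hits every basis monomial (here the characteristic-zero hypothesis guarantees $b+1\neq0$); after subtracting a suitable inner derivation I may therefore assume $\delta(q)=0$. The compatibility now forces $\delta(p)$ to commute with $q$, and a one-line argument identifies the centralizer of $q$ in $\W_q$ as exactly $\kk[q^{\pm1}]$, so $\delta(p)=g(q)$ for a Laurent polynomial $g$. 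Finally, the inner derivations still fixing $q$ are the $\ad(u)$ with $u\in\kk[q^{\pm1}]$, and these satisfy $\ad(u)(p)=-\frac{\d u}{\d q}$; the image of formal differentiation on $\kk[q^{\pm1}]$ is precisely the span of the monomials $q^j$ with $j\neq-1$. Hence the coefficient of $q^{-1}$ in $g$ is the only surviving invariant, $\HH^1(\W_q)$ is one-dimensional, and the derivation $\partial$ with $\partial(q)=0$ and $\partial(p)=q^{-1}$ (of residue $1$) is a non-inner representative spanning it. In particular every derivation can be written as $c\,\partial+\ad u$ for a unique scalar $c$.

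For the residue criterion I would study the linear functional $\mu(\delta)=\Res_0(\delta(pq)\lact t^{-1})$ and show it detects exactly the class of $\delta$ in $\HH^1(\W_q)$. On the one hand $\partial(pq)=\partial(p)q+p\,\partial(q)=q^{-1}q=1$, so $\mu(\partial)=\Res_0(t^{-1})=1$. On the other hand $\mu$ kills every inner derivation: for $u\in\W_q$ we have $\mu(\ad u)=\Res_0\bigl(u\lact(pq\lact t^{-1})\bigr)-\Res_0\bigl(pq\lact(u\lact t^{-1})\bigr)$, and both terms vanish because $pq$ acts on $\kk[t^{\pm1}]$ as the operator $f\mapsto(tf)'$, which annihilates $t^{-1}$ and whose image consists of derivatives of Laurent polynomials, hence has zero residue. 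Writing a general derivation as $c\,\partial+\ad u$ gives $\mu(\delta)=c$, so $\delta$ is inner if and only if $c=0$, that is, if and only if \eqref{eq:res:inn} holds.

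The hard part will be the cohomology computation of the second paragraph, not the residue criterion, which is a short verification once the structure of $\HH^1(\W_q)$ is available. Within that computation the two delicate points are the surjectivity of $u\mapsto[u,q]$ — this is what lets me normalize $\delta(q)=0$ — and the identification of $\kk[q^{\pm1}]$ as the centralizer of $q$; both are elementary, but they are exactly the places where the precise shape of $\W_q$ and the characteristic-zero hypothesis are used.
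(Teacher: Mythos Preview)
Your argument is correct and complete, but it follows a genuinely different route from the paper's. The paper computes $\HH^*(\W_q)$ by writing down the small bimodule resolution of~$\W_q$, forming the Hochschild complex, and then constructing an explicit null-homotopic endomorphism~$r_*$ built from the grading operator~$\ad(pq)$; since $r_*$ is diagonalizable, the complex splits as the kernel of~$r_*$ plus an exact complement, and the kernel is small enough that its cohomology can be read off by inspection. You instead work directly with derivations: normalize $\delta(q)=0$ using the surjectivity of~$\ad(\,\cdot\,)(q)$, identify the centralizer of~$q$ as~$\kk[q^{\pm1}]$, and then mod out by the inner derivations that still kill~$q$. Your approach is more elementary and self-contained --- it needs no resolutions and no homotopy machinery --- while the paper's is more structural and would scale better to higher cohomology or to other twisted coefficients. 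For the residue criterion the paper again passes through the grading, reducing to homogeneous derivations and using that a degree-zero inner derivation kills~$pq$; your direct verification that $pq$ annihilates~$t^{-1}$ and that the image of~$pq$ consists of derivatives (hence has zero residue) is a cleaner way to see the same vanishing, and avoids the grading altogether.
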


Here $\Res_0:\kk[t^{\pm1}]\to\kk$ is the usual residue map that picks the
coefficient of $t^{-1}$ in its argument. Just as the Weyl algebra~$\W$ is
the algebra of regular differential operators on the affine line~$\AA^1$,
the localization~$\W_q$ is the algebra of regular differential operators on
the punctured affine line~$\AA^1\setminus\{0\}$ --- and this is the
representation~$\lact$ that appears in the proposition. As this is a smooth
affine scheme and our ground field has characteristic zero, we know that
the Hochschild cohomology~$\HH^*(\W_q)$ is isomorphic to the algebraic De
Rham cohomology~$\H^*(\AA^1\setminus\{0\})$. In particular, cohomology
classes of derivations of~$\W_q$ can be viewed as cohomology classes of
$1$-forms on~$\AA^1\setminus\{0\}$: the condition~\eqref{eq:res:inn} above
for a derivation to be inner corresponds to the condition that a $1$-form
«integrate to zero along a curve around the puncture» for it to be a
exact, so it is a version of Poincaré duality in our context. We will prove
the proposition in a purely algebraic way, but geometry and
various comparison maps could be used instead.

\begin{proof}
Let $W$ be the subspace of~$\W$ spanned by~$p$ and~$q$, let $\hat W$ be its
dual space, and let $(\hat p,\hat q)$ be the ordered basis of~$\hat W$ dual
to~$(p,q)$. There is a projective resolution of~$\W$ as a $\W$-bimodule of
the form
  \[
  \begin{tikzcd}
  \W\otimes\Lambda^2W\otimes\W \arrow[r, "d"]
    & \W\otimes W\otimes\W \arrow[r, "d"]
    & \W\otimes\W \arrow[r, dashed, "\epsilon"]
    & \W
  \end{tikzcd}
  \]
with differentials such that
  \begin{align}
  &d(1\otimes 1) = 1, \\
  &d(1\otimes w\otimes 1) = w\otimes 1-1\otimes w, \qquad\forall w\in W, \\
  &d(1\otimes p\wedge w\otimes 1) =
        p\otimes q\otimes1 + 1\otimes p\otimes q
        - q\otimes p\otimes 1 - 1\otimes q\otimes p,
  \end{align}
and augmentation $\epsilon:\W\otimes\W\to\W$ given by the multiplication of
the algebra~$\W$. The localization $\W_q$ of~$\W$ at~$q$ is a $\W$-bimodule
that is flat on both sides, so applying the functor
$\W_q\otimes_\W(\mathord-)\otimes_\W\W_q$ to the resolution above gives a
projective resolution 
  \[
  \begin{tikzcd}
  \W_q\otimes\Lambda^2W\otimes\W_q \arrow[r, "d"]
    & \W_q\otimes W\otimes\W_q \arrow[r, "d"]
    & \W_q\otimes\W_q \arrow[r, dashed, "\epsilon"]
    & \W_q\otimes_\W\W_q
  \end{tikzcd}
  \]
of the $\W_q$-bimodule $\W_q\otimes_\W\W_q$. As the map 
$\W_q\otimes_\W\W_q\to\W_q$ induced by the multiplication of~$\W_q$ is an
isomorphism of~$\W_q$-bimodules, what we have is a projective resolution
of~$\W_q$ as a bimodule over itself.
Applying to it the functor $\hom_{\W_q^e}(\place,\W_q)$ and doing
standard identifications we obtain the complex
  \[ \label{eq:w:comp}
  \begin{tikzcd}
  \W_q \arrow[r, "\delta"]
    & \W_q\otimes W \arrow[r, "\delta"]
        \arrow[l, dashed, shift left=1ex, "s_1"]
    & \W_q\otimes\Lambda^2\hat W 
        \arrow[l, dashed, shift left=1ex, "s_2"]
  \end{tikzcd}
  \]
with differentials such that for all $a$ and~$b$ in~$\W_q$ have
  \[
    \delta(a) 
        = [p,a]\otimes\hat p + [q,a]\otimes\hat q, 
    \qquad
    \delta(a\otimes\hat p+b\otimes\hat q)    
        = \bigl([p,b]-[q,a]\bigr)\otimes\hat p\wedge\hat q.
  \]
Let us write $\X$ for this complex of vector spaces, whose cohomology
is~$\HH^*(\W_q)$, the Hochschild cohomology we are trying to compute, up
to canonical isomorphisms.

There is a $\ZZ$-grading on the algebra~$\W_q$ that assigns to~$p$ and~$q$
degrees~$-1$ and~$1$, respectively. On the other hand, the inner derivation
$\ad(pq):\W_q\to\W_q$ is diagonalizable with spectrum~$\ZZ$, and for each
$n\in\ZZ$ the eigenspace of~$\ad(pq)$ corresponding to the eigenvalue~$n$
is precisely the homogeneous component of~$\W_q$ of degree~$n$ --- this
follows at once from the facts that $\ad(pq)(p)=-p$ and $\ad(pq)=q$, on one
hand, and, on the other, that the set $\{p^iq^j:i\in\NN_0,j\in\ZZ\}$ is a
basis of~$\W_q$.

We consider on the complex~$\X$ the maps~$s_1$ and~$s_2$ indicated with
dashed arrows in the diagram~\eqref{eq:w:comp} above that are given by
  \[
  s_1(a\otimes\hat p+b\otimes\hat q) = aq+pb, 
  \qquad
  s_2(a\otimes\hat p\wedge\hat q) = -pa\otimes\hat p+aq\otimes\hat q
  \]
for all $a$ and~$b$ in~$A$, and define maps
  \begin{align}
  & r_0 \coloneqq s_1\circ\delta 
        : \W_q\to\W_q, \\
  & r_1 \coloneqq \delta\circ s_1 + s_2\circ\delta
        : \W_q\otimes W\to\W_q\otimes W, \\
  & r_2 \coloneqq \delta\circ s_1
        : \W_q\otimes\Lambda^2W \to \W_q\otimes\Lambda^2W.
  \end{align}
Of course, in this way we obtain an endomorphism $r_*:\X\to\X$ of the
complex~$\X$ that is homotopic to zero. A simple calculation shows that
  \begin{align}
  & r_0(a) 
        = \ad(pq)(a),  \\
  & r_1(a\otimes\hat p+b\otimes\hat q)
        = (\ad(pq)(a)-a)\otimes\hat p + (\ad(pq)(b)+b)\otimes\hat q, \\
  & r_2(a\otimes\hat p\wedge\otimes\hat q)
        = \ad(pq)(q)\otimes\hat p\otimes\hat q
  \end{align}
for all choices of~$a$ and~$b$ in~$\W_q$, and it follows from this that the
endomorphism~$r_*$ is diagonalizable, since $\ad(pq)$ is. As a consequence
of all this, the subcomplex~$\X'$ of~$\X$ spanned by all eigenvectors of
the map~$r_*$ corresponding to non-zero eigenvalues is exact and a
complement to~$\ker r_*$, so that 
  \[ \label{eq:xx:desc}
  \X=\X'\oplus\ker r_*.
  \]
In particular, the inclusion $\ker r_*\hookrightarrow\X$ is a quasi-isomorphism.

The kernel~$\ker r_*$ is easily computed to be the complex
  \[ \label{eq:w:comp:red}
  \begin{tikzcd}
  \kk[pq] \arrow[r, "\delta"]
    & \bigl(\kk[pq]q^{-1}\otimes\kk\hat p\bigr)
        \oplus 
      \bigl(\kk[pq]q\otimes\kk\hat q\bigr)
      \arrow[r, "\delta"]
    & \kk[pq]\otimes\kk\hat p\wedge\hat q
  \end{tikzcd}
  \]
with differentials such that
  \begin{align}
  & \delta(p^iq^i) 
        = ip^iq^{i-1}\otimes\hat p - ip^{i-1}q^i\otimes\hat q, \\
  & \delta(p^iq^{i-1}\otimes\hat p)
        = ip^{i-1}q^{i-1}\otimes\hat p\wedge\hat q, \\
  & \delta(p^iq^{i+1}\otimes\hat q)
        = (i+1)p^iq^i\otimes\hat p\wedge\hat q
  \end{align}
for all $i\geq0$. In writing this we have used the fact that the set
$\{p^iq^i:i\in\NN_0\}$ is a basis for the subalgebra~$\kk[pq]$ of~$\W_q$.
A standard calculation now shows that the cohomology of the
complex~\eqref{eq:w:comp:red} is of total dimension~$2$: in degree~$0$ it
is spanned by the cohomology class of~$1\in\kk[pq]$, and in degree~$1$ by
the cohomology class of~$q^{-1}\otimes\hat p$. The class of this last
cocycle thus freely spans $\HH^1(\W_q)$, and translating back we see that
it corresponds to the derivation $\partial:\W_q\to\W_q$ such that
$\partial(p)=q^{-1}$ and $\partial(q)=0$.

If $i\in\NN_0$ and $j\in\ZZ$, then one can easily check that
  \[ \label{eq:res:delta}
  \Res_0\left(p^iq^j\lact\frac{1}{t}\right) 
        = \begin{cases*}
          1 & if $i=j=0$; \\
          0 & in any other case.
          \end{cases*}
  \]
Let $\delta:\W_q\to\W_q$ be a derivation. There is a sequence of
derivations $(\delta_l)_{l\in\ZZ}$, all of which apart from a finite number
are zero, such that $\delta=\sum_{l\in\ZZ}\delta_l$ and for each $l\in\ZZ$
the derivation~$\delta_l:\W_q\to\W_q$ is homogeneous of degree~$l$. Since
$pq$ has degree~$0$, for each $l\in\ZZ\setminus0$ the element
$\delta_l(pq)$ is a linear combination of monomials of the form $p^iq^j$
with $i\in\NN_0$, $j\in\ZZ$ and $j-i\neq0$: in view
of~\eqref{eq:res:delta}, we then have that 
  \[
  \Res_0\left(\delta(pq)\lact\frac{1}{t}\right) 
        = \Res_0\left(\delta_0(pq)\lact\frac{1}{t}\right).
  \]
According to the decomposition~\eqref{eq:xx:desc}, the derivation~$\delta$
is inner if and only if the homogeneous derivation~$\delta_0$ is inner.
Now, if $\delta_0$ is inner, then there is a $u\in\W_q$ of degree~$0$ such
that $\delta_0=\ad(u)$ and, in particular,
$\delta_0(pq)=\ad(u)(pq)=[u,pq]=0$. Putting everything together, we see the
map
  \[ \label{eq:map:res}
  \delta\in\Der(\W_q)\mapsto\Res_0\left(\delta(pq)\lact\frac{1}{t}\right)\in\kk
  \]
vanishes on the subspace~$\InnDer(\W_q)$ of all inner derivations. Our
calculation of~$\HH^1(\W_q)$ implies that
$\Der(A)=\kk\partial\oplus\InnDer(\W_q)$:  as the map~\eqref{eq:map:res}
takes the value~$1$ on the derivation~$\partial$, we conclude that its
kernel is exactly~$\InnDer(\W_q)$.
\end{proof}

Going back to our algebra~$A$ is just a matter of changing coordinates.

\begin{Corollary}
There is a representation of~$A$ on the algebra~$\kk[t^{\pm1}]$ of Laurent
polynomials such that
  \[
  y \lact f(t) = t^N\frac{\d}{\d t}f(t),
  \qquad
  x \lact f(t) = tf(t)
  \]
for all~$f\in\kk[t^{\pm1}]$. If $\delta:A\to A$ is a derivation and
$\tilde\delta:A_x\to A_x$ is its extension to~$A_x$, then the
derivation~$\delta$ is $A_x$-inner if and only if
  \[
  \Res_0\left(\tilde\delta(x^{-N+1}y)\lact \frac{1}{t} \right) = 0.
  \]
\end{Corollary}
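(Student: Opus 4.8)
The plan is to reduce everything to Proposition~\ref{prop:xinner:weyl} by exhibiting an explicit isomorphism between the localization~$A_x$ and the algebra~$\W_q$ of that proposition. First I would set up the mutually inverse algebra maps
\[
\phi:A_x\to\W_q,\quad \phi(x)=q,\ \phi(y)=q^Np,
\qquad
\psi:\W_q\to A_x,\quad \psi(q)=x,\ \psi(p)=x^{-N}y.
\]
That $\phi$ is a well-defined homomorphism on~$A$ comes down to the one-line computation $[q^Np,q]=q^N(pq-qp)=q^N$, which shows that $\phi$ respects the relation $yx-xy=x^N$; since $\phi(x)=q$ is invertible in~$\W_q$, it extends to~$A_x$. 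Likewise $\psi$ is well defined because $[x^{-N}y,x]=x^{-N}[y,x]=x^{-N}x^N=1$, and $\psi(q)=x$ is invertible in~$A_x$. Checking on generators that $\phi\psi=\id$ and $\psi\phi=\id$ then identifies $A_x\cong\W_q$. This change of variables is really the one already implicit in Remark~\ref{rem:laguerre}.

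For the representation, I would simply observe that the stated formulas $x\lact f=tf$ and $y\lact f=t^N\frac{\d}{\d t}f$ are exactly the pullback along~$\phi$ of the representation~$\lact$ of~$\W_q$ from Proposition~\ref{prop:xinner:weyl}: indeed $\phi(x)=q$ acts as multiplication by~$t$ and $\phi(y)=q^Np$ acts as $t^N\frac{\d}{\d t}$, so that $a\lact f=\phi(a)\lact f$ for every $a\in A$ and, since $x$ acts invertibly, for every $a\in A_x$. In particular this verifies at once that the formulas define a representation, with no need to re-examine the relation of~$A$ directly.

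Finally, I would transport the innerness criterion. A derivation~$\delta$ of~$A$ extends uniquely to $\tilde\delta:A_x\to A_x$, and $\delta$ is $A_x$-inner precisely when $\tilde\delta$ is inner; conjugating by the isomorphism, $\tilde\delta$ is inner in~$A_x$ if and only if $\delta'\coloneqq\phi\circ\tilde\delta\circ\phi^{-1}$ is inner in~$\W_q$, because an algebra isomorphism carries $\ad(u)$ to $\ad(\phi(u))$. The key computation is that $\psi(pq)=x^{-N}yx=x^{-N}(xy+x^N)=x^{-N+1}y+1$, so that $\phi^{-1}(pq)=x^{-N+1}y+1$. Since $\tilde\delta(1)=0$, this gives $\delta'(pq)=\phi\bigl(\tilde\delta(x^{-N+1}y)\bigr)$, and applying the factorization of the representation through~$\phi$,
\[
\delta'(pq)\lact\frac{1}{t}=\phi\bigl(\tilde\delta(x^{-N+1}y)\bigr)\lact\frac{1}{t}=\tilde\delta(x^{-N+1}y)\lact\frac{1}{t}.
\]
Proposition~\ref{prop:xinner:weyl} says $\delta'$ is inner if and only if $\Res_0(\delta'(pq)\lact\frac{1}{t})=0$, and the displayed equality turns this into $\Res_0(\tilde\delta(x^{-N+1}y)\lact\frac{1}{t})=0$, which is exactly the asserted criterion. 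The only genuinely delicate points are getting the direction of the change of variables right and confirming that the constant~$1$ in $\psi(pq)$ is annihilated by the derivation; everything else is bookkeeping.
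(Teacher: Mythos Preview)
Your proof is correct and follows essentially the same approach as the paper: both construct the isomorphism $A_x\cong\W_q$ via $x\mapsto q$, $y\mapsto q^Np$ and then transport the criterion of Proposition~\ref{prop:xinner:weyl}. You are simply more explicit than the paper, which leaves the computation of $\phi^{-1}(pq)=x^{-N+1}y+1$ and the vanishing of $\tilde\delta(1)$ to the reader.
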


\begin{proof}
There is an injective algebra map $\iota:A\to\W$ such that $\iota(x)=q$ and
$\iota(y)=q^Np$ --- this is easy to check and is done in detail in
\cite{BLO:1}*{Section 3} --- and clearly this map extends to one on
localizations $\tilde\iota:A_x\to\W_q$ that is an isomorphism. The
representation of~$A$ on~$\kk[t^{\pm1}]$ that is mentioned in the corollary
is the restriction to~$A$ along~$\tilde\iota$ of the representation
of~$\W_q$ given in Proposition~\ref{prop:xinner:weyl}. Let $\delta:A\to A$
be a derivation of~$A$. It extends uniquely to a derivation
$\tilde\delta:A_x\to A_x$ and, conjugating by~$\tilde\iota$, gives a
derivation on~$\W_q$: the claim of the corollary thus follows immediately 
from the last part of Proposition~\ref{prop:xinner:weyl}.
\end{proof}

\section{Twisted cohomology}
\label{sect:twist}

In Proposition~\ref{prop:finite-subgroups} we described the finite
subgroups of~$\Aut(A)$ or, equivalently, the finite groups that act
faithfully on our algebra~$A$ by algebra automorphisms, and showed that
they are all cyclic. A natural next step is to describe the finite
dimensional Hopf algebras that act on~$A$ faithfully in the appropriate
sense. In that generality this is a problem that we do not know how to
approach, so we will settle for the much smaller problem of describing the
faithful actions of Taft Hopf algebras, motivated by the fact that these
Hopf algebras can be viewed as «quantum thickenings» of cyclic groups and
are therefore very close to them. We will do this in the next section.

We will recall the details about Taft algebras later. For now, let us
mention simply that an action of a Taft Hopf algebra on our algebra~$A$ is
determined by an algebra automorphism~$\phi:A\to A$ of finite order and a
$\phi$-twisted derivation of~$A$. We know the automorphisms very well
already, and our task in the present section will be to obtain a
description of the twisted derivations.

\bigskip

Let $\omega\in\kk$ be different from~$0$ and~$1$, and let
$\phi=\phi_{0,\omega}:A\to A$ be the algebra automorphism such that
$\phi(x)=\omega x$ and $\phi(y)=\omega^{N-1}y$; clearly $\phi$ has finite
order exactly when $\omega$ has finite order, but we do not need to impose
that restriction yet. We write ${}_\phi A$ for the $A$-bimodule which
coincides with $A$ as a right $A$-module and whose left action is such that
$a\lact x=\phi(a)x$ for all $a\in A$ and all $x\in{}_\phi A$. As the
automorphism~$\phi$ is homogeneous, the $A$-bimodule~${}_\phi A$ is a
graded one. 

We are interested in computing the space of \newterm{$\phi$-twisted
derivations}: such a map is a linear function $\partial:A\to A$ such that 
  \[ 
  \partial(ab) = \partial(a)\cdot b+\phi(a)\cdot\partial(b) 
  \] 
for all $a$ and~$b$ in~$A$. It is immediate that a $\phi$-twisted
derivation $A\to A$ is exactly the same thing as a derivation $A\to{}_\phi A$
into the bimodule~${}_\phi A$, and because of this we will write
$\Der(A,{}_\phi A)$ for the space of all $\phi$-twisted derivations and
conflate the two notions.

Now, if $m$ is an element of~$A$, then the map $\ad_\phi(m):a\in A\mapsto
ma-\phi(a)m\in A$ is an element of~$\Der(A,{}_\phi A)$: we say that such a
$\phi$-twisted derivation is an \newterm{inner} one. We can thus split the
calculation of~$\Der(A,{}_\Phi A)$ in two steps: compute the $\phi$-twisted
derivations modulo inner ones, and then glue back the inner ones into the
result. The first step amounts, precisely, to the calculation of the first
Hochschild cohomology space $\H^1(A,{}_\phi A)$, and this is what we will
do --- and just as in Section~\ref{sect:cohomology} we will do this by
first computing the spaces $\H^0(A,{}_\phi A)$ and~$\H^2(A,{}_\phi A)$.

\bigskip

The Hochschild cohomology~$\H^*(A,{}_\phi A)$ can be identified with the
cohomology of the complex~$\hom_{A^e}(P_*,{}_\phi A)$, with $P_*$ the
projective resolution described in Section~\ref{sect:cohomology}, and this
complex, in turn --- just as we built the complex~\eqref{eq:comp} in that
section --- is naturally isomorphic to the complex
  \[ \label{eq:comp:phi}
  \begin{tikzcd}
    & A \arrow[r, "\delta_0"]
    & A\otimes V^* \arrow[r, "\delta_1"]
    & A\otimes \Lambda^2V^* \arrow[r]
    & 0
  \end{tikzcd}
  \]
with differentials such that
  \begin{align}
  \delta_0(a) &= (\omega xa-ax)\otimes\hat x+(\omega^{N-1}ya-ay)\otimes\hat y, 
        \label{eq:d:phi:0} \\
  \delta_1(b\otimes\hat x+c\otimes\hat y) &=
    \left((\omega^{N-1}yb-by) + (cx-\omega xc) - \sum_{s+1+t=N}\omega^sx^sbx^t\right)
    \otimes\hat x\wedge\hat y
        \label{eq:d:phi:1} 
  \end{align}
for all choices of~$a$,~$b$ and~$c$ in~$A$. We remark that we are writing
«$A$» here and in~\eqref{eq:comp:phi} and not~«${}_\phi A$»: we will write
the twisted left action explicitly, as in the formulas above for the
differentials --- we hope this is less confusing than the alternative.

Just as when we dealt with the «untwisted» Hochschild cohomology, we start
our calculation with a lemma that tells us how some specific commutators
---~twisted commutators now\footnote{So we have not broken our promise not
to do any more commutation formulas!}~--- behave. In its statement and in
the rest of this section we will use the convention that if
$\mathit{snark}$ is an object related to~$A$ then we will write
$\widetilde{\smash{\mathit{snark}}}$ the `corresponding' object related to
the localization~$A_x$.

\begin{Lemma}\label{lemma:comm:phi}
The map $\alpha:a\in A\mapsto \omega xa-ax\in A$ is injective and has image
equal to~$xA$, while the map $\tilde\alpha:a\in A_x\mapsto \omega xa-ax\in
A_x$ is bijective.
\end{Lemma}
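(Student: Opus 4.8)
The plan is to factor both maps through left multiplication by~$x$, using the normality of~$x$, and thereby reduce the statement to the bijectivity of a single «locally unipotent» linear operator.

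First I would recall from Lemma~\ref{lemma:sigma-1} that $ax=x\sigma_1(a)$ for all $a\in A$, so that
\[
\alpha(a)=\omega xa-ax=\omega xa-x\sigma_1(a)=x\bigl(\omega a-\sigma_1(a)\bigr).
\]
Writing $\beta\coloneqq\omega\,\id-\sigma_1:A\to A$, this says $\alpha=\mu_x\circ\beta$, where $\mu_x$ denotes left multiplication by~$x$. Since $A$ is a domain and $x\neq0$, the map~$\mu_x$ is injective with image exactly~$xA$. Consequently $\alpha$ is injective if and only if $\beta$ is injective, and $\img\alpha=x\cdot\img\beta$; in particular $\img\alpha=xA$ as soon as $\beta$ is surjective. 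The whole first assertion thus follows once I show that $\beta$ is bijective.

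To prove that, I would observe that $\sigma_1$ preserves the filtration~$(F_k)_{k\geq-1}$ --- indeed $\sigma_1(y)=y+x^{N-1}\in F_1$ and $\sigma_1(x)=x\in F_0$ --- and that the algebra endomorphism it induces on the associated graded~$\gr A$ is the identity, because $x^{N-1}\in F_0$ contributes nothing to the symbol~$\overline y$. Hence the operator $n\coloneqq\sigma_1-\id$ satisfies $n(F_k)\subseteq F_{k-1}$ for all~$k$ and is therefore locally nilpotent. Writing $\beta=(\omega-1)\,\id-n$ and using the standing hypothesis $\omega\neq1$, the operator
\[
(\omega-1)^{-1}\sum_{i\geq0}(\omega-1)^{-i}n^i
\]
is a well-defined two-sided inverse of~$\beta$ --- the sum is finite when evaluated on any fixed element of~$A$ --- so $\beta$ is bijective.

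Finally, for $\tilde\alpha$ the same factorization gives $\tilde\alpha=\mu_x\circ\tilde\beta$ on~$A_x$, with $\tilde\beta\coloneqq\omega\,\id-\tilde\sigma_1$; but now $x$ is invertible in~$A_x$, so $\mu_x$ is bijective and $\tilde\alpha$ is bijective if and only if $\tilde\beta$ is. The filtration of~$A_x$ by $y$-degree is preserved by~$\tilde\sigma_1$ and induces the identity on the associated graded, so $\tilde\sigma_1-\id$ is again locally nilpotent and the identical Neumann-series argument shows $\tilde\beta$ is bijective. The only point that needs a little care is that we are claiming the image of~$\alpha$ is \emph{exactly}~$xA$, which requires the surjectivity of~$\beta$ and not merely its injectivity; this is precisely what the local nilpotence of~$\sigma_1-\id$ delivers, and it is the crux of the argument.
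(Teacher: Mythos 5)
Your proof is correct, and it takes a different route from the one in the paper. The paper argues directly on the basis $\{x^iy^j\}$: injectivity of $\alpha$ comes from inspecting the leading term $(\omega-1)a_dxy^d$ modulo $F_{d-1}$, and the equality $\img\alpha=xA$ is proved by an induction on the $y$-degree that repeatedly subtracts off that leading term; the statements about $\tilde\alpha$ are then deduced by clearing denominators. You instead use the normality of~$x$ to factor $\alpha=\mu_x\circ\bigl((\omega-1)\,\id-n\bigr)$ with $n=\sigma_1-\id$, observe that $n$ lowers the order filtration and is therefore locally nilpotent, and invert the second factor by a Neumann series. The underlying mechanism is the same --- your geometric series is exactly the paper's induction packaged as a closed formula, and in both cases the hypothesis $\omega\neq1$ is what makes the leading coefficient invertible --- but your factorization is more structural: it isolates where normality of~$x$ enters, it yields injectivity and the image computation simultaneously, and it makes the bijectivity of $\tilde\alpha$ an immediate corollary (since $\mu_x$ becomes invertible on~$A_x$) rather than a separate denominator-clearing argument. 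The only points worth being fussy about, and you handle both, are that $\tilde\sigma_1$ really is conjugation by~$x$ on~$A_x$ and that the filtration of~$A_x$ still starts at $\tilde F_{-1}=0$ so that local nilpotence persists there.
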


\begin{proof}
Let $a$ be a non-zero element of~$A$. There are $d\in\NN_0$ and
$a_0$,\dots,~$a_d\in\kk[x]$ such that $a_d\neq0$ and
$a=\sum_{i=0}^da_iy^i$, and we have that
  \[ \label{eq:alpha-a}
  \alpha(a) = \sum_{i=0}^da_i(\omega xy^i-y^ix)
        \equiv (\omega-1)a_dxy^d \mod F_{d-1}.
  \]
Since $\omega\neq1$, it follows immediately from this that
$\alpha(a)\neq0$, so that $\ker\alpha=0$.

Let us now show that $xa$ is in the image of~$\alpha$: we will proceed by
induction with respect to the integer~$d$. If $d=0$, then $a=a_0\in\kk[x]$
and we have that $xa=\alpha(a/(\omega-1))\in\img\alpha$. If instead
$d\geq1$, then we have, according to~\eqref{eq:alpha-a}, that
  \[ \label{eq:alpha-a:2}
  xa - \alpha\left(\frac{a_dy^d}{\omega-1}\right) \in F_{d-1}.
  \]
As $xA=Ax$, the image of~$\alpha$ is certainly contained in~$xA$, and
therefore the difference in~\eqref{eq:alpha-a:2} is both in~$F_{d-1}$ and
in~$xA$: the inductive hypothesis therefore implies that it is in the image
of~$\alpha$, and so is $xa$, as we wanted. With this we have proved the
claims of the lemma about the map~$\alpha$.

Let now $a$ be an element of~$A_x$. There is a non-negative integer
$l\in\NN_0$ such that $x^la\in A$. If $\tilde\alpha(a)=0$, then
$\alpha(x^la)=x^l\tilde\alpha(a)=0$ and what we have already proved implies
that $x^la=0$, so that $a=0$: this shows that the map~$\tilde\alpha$ is
injective. On the other hand, as $x^{l+1}a\in xA$, we already know there is
a $b\in A$ such that $\alpha(b)=x^{l+1}a$ and then
$\tilde\alpha(x^{-l-1}b)=a$: the map~$\tilde\alpha$ is also surjective.
\end{proof}

With this lemma we can now easily determine the twisted cohomology
$\HH^*(A,{}_\phi A)$ in degrees~$0$ and~$2$. Recall that when $N\geq2$ the
homogeneous components of~$A$ are finite-dimensional, so we can work with
Hilbert series in that case.

\begin{Proposition}
We have that $\HH^0(A,{}_\phi A) = 0$. If $N\geq2$, then the Hilbert series of
$\HH^2(A,{}_\phi A)$ is
  \[
  h_{\HH^2(A,{}_\phi A)} = 
        \begin{dcases*}
        \hfil t^{-N} & if $\omega^{N-1}\neq1$; \\[5pt]
        \frac{t^{-N}}{1-t^{N-1}} & if $\omega^{N-1}=1$.
        \end{dcases*}
  \]
If $N=1$, then $\HH^2(A,{}_\phi A)=0$.
\end{Proposition}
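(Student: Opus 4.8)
The plan is to read both spaces straight off the complex~\eqref{eq:comp:phi}, using Lemma~\ref{lemma:comm:phi} to control the twisted commutators with~$x$. For $\HH^0(A,{}_\phi A)=\ker\delta_0$ the argument is immediate: the $\hat x$-component of $\delta_0(a)$ in~\eqref{eq:d:phi:0} is exactly $\omega xa-ax=\alpha(a)$, and since Lemma~\ref{lemma:comm:phi} tells us that $\alpha$ is injective, any $a$ with $\delta_0(a)=0$ already satisfies $\alpha(a)=0$ and hence $a=0$. So $\HH^0(A,{}_\phi A)=0$ for every value of~$N$, with no case distinction.

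For $\HH^2(A,{}_\phi A)$ I would identify the top term $A\otimes\Lambda^2V^*$ of~\eqref{eq:comp:phi} with~$A$ via $\hat x\wedge\hat y$ and compute the cokernel $A/\img\delta_1$. The key reduction is that $\delta_1(c\otimes\hat y)=cx-\omega xc=-\alpha(c)$, so by Lemma~\ref{lemma:comm:phi} we have $\delta_1(A\otimes\hat y)=xA$; in particular $xA\subseteq\img\delta_1$, and since $xA$ is a two-sided ideal we may compute everything in the quotient $A/xA\cong\kk[y]$. It then remains to reduce $\delta_1(b\otimes\hat x)$ modulo~$xA$. In the sum $\sum_{s+1+t=N}\omega^sx^sbx^t$ appearing in~\eqref{eq:d:phi:1}, every term with $s\geq1$ lies in $xA$, and the surviving term $bx^{N-1}$ lies in $xA$ precisely when $N\geq2$ (whereas for $N=1$ it equals~$b$). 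Writing $\bar b$ for the class of~$b$ and using that $\kk[y]$ is commutative, the term $\omega^{N-1}yb-by$ reduces to $(\omega^{N-1}-1)y\bar b$. Hence $\delta_1(b\otimes\hat x)\equiv(\omega^{N-1}-1)y\bar b\pmod{xA}$ when $N\geq2$, and $\delta_1(b\otimes\hat x)\equiv-\bar b\pmod{xA}$ when $N=1$.

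To conclude, note that as $b$ ranges over~$A$ its class $\bar b$ ranges over all of $\kk[y]$, so for $N\geq2$ the image of $\img\delta_1$ in $A/xA$ is $(\omega^{N-1}-1)y\kk[y]$, and combining this with $xA\subseteq\img\delta_1$ yields
\[
  A/\img\delta_1 \cong \kk[y]\big/\bigl((\omega^{N-1}-1)\,y\,\kk[y]\bigr).
\]
If $\omega^{N-1}\neq1$ this quotient is $\kk[y]/y\kk[y]=\kk$, spanned by the class of~$1$ in degree~$0$; if $\omega^{N-1}=1$ it is all of $\kk[y]$, whose monomials $y^j$ have degree $j(N-1)$. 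Multiplying the respective Hilbert series $1$ and $1/(1-t^{N-1})$ by the degree $t^{-N}$ of the factor $\hat x\wedge\hat y$ gives the two formulas in the statement. For $N=1$ the reduction above shows the image of $\delta_1$ already surjects onto $\kk[y]=A/xA$, so $\img\delta_1=A$ and $\HH^2(A,{}_\phi A)=0$.

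The computation is essentially bookkeeping once Lemma~\ref{lemma:comm:phi} supplies $\img\alpha=xA$; the only genuinely delicate points are keeping the degree accounting for the Hilbert series correct (the shift by $t^{-N}$ coming from $\hat x\wedge\hat y$) and, more importantly, isolating the $N=1$ case, where the term $bx^{N-1}=b$ does \emph{not} vanish modulo~$xA$. This is exactly the mechanism that forces the qualitative difference recorded in the statement and that I would flag as the main thing to treat separately.
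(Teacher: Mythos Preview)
Your proof is correct and follows essentially the same approach as the paper: both arguments hinge on Lemma~\ref{lemma:comm:phi} to identify $\delta_1(A\otimes\hat y)$ with~$xA$, and then determine what $\delta_1(A\otimes\hat x)$ contributes on top of that. Your organization is slightly cleaner --- you reduce modulo~$xA$ once and for all and read off the remaining contribution $(\omega^{N-1}-1)y\bar b$ uniformly, whereas the paper treats the cases $\omega^{N-1}\neq1$ and $\omega^{N-1}=1$ with separate explicit constructions --- but the substance is the same.
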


In the proof of the corollary we will exhibit concrete
representatives for all cohomology classes in~$\HH^2(A,{}_\phi A)$.

\begin{proof}
Since the map~$\alpha$ of the lemma is injective, it follows 
from the formula~\eqref{eq:d:phi:0} for the differential~$\delta_0$ that
the latter is itself injective and, therefore, that $\HH_0(A,{}_\phi A)=0$.

Let us suppose that $N\geq2$ and that $\omega^{N-1}\neq1$. If
$f\in\kk[y]$ and $v\in A$, then there is $c\in A$ such that
  \[
  \omega xc-cx 
        = \alpha(c) 
        = - (\omega^{N-1}-1)xv - \sum_{s+1+t=N}\omega^sx^sfx^t,
  \]
because the right hand side of this equality is in~$xA$ --- recall that
$xA=Ax$ --- and therefore
  \begin{align}
  \delta_1(f\otimes\hat x+c\otimes y) 
    &= \left(
        (\omega^{N-1}yf-fy) 
        + (cx-\omega xc) 
        - \sum_{s+1+t=N}\omega^sx^sfx^t
        \right)
       \otimes\hat x\wedge\hat y \\
    &= (\omega^{N-1}-1)(fy + xv) \otimes\hat x\wedge\hat y.
  \end{align}
This tells us that 
  \[ \label{eq:d1:phi}
  (\kk[y]y+xA)\otimes\hat x\wedge\hat y\subseteq\img\delta_1.
  \]
The subspace~$\kk[y]y+xA$ that appears here is the bilateral ideal~$I$ of~$A$
generated by~$x$ and~$y$. It is clear from the formula~\eqref{eq:d:phi:1}
and our hypothesis that $N\geq2$ that the image of~$\delta_1$ is contained
in~$I\otimes\hat x\wedge\hat y$, so in~\eqref{eq:d1:phi} we actually have
an equality. It follows from this that $\HH^2(A,{}_\phi A)$ can be
identified with $A/I\otimes\hat x\wedge\hat y$, so it has dimension~$1$,
and is generated by the class of the element $1\otimes\hat x\wedge\hat y$,
which is homogeneous of degree~$-N$. We thus have that
$h_{\HH^2(A,{}_\phi A)} = t^{-N}$.

Next, we suppose that $N\geq2$ and that $\omega^{N-1}=1$. According
to~\eqref{eq:d:phi:1}, if $b$ and~$c$ are in~$A$ we have now
  \[
  \delta_1(b\otimes\hat x+c\otimes\hat y)  =
    \left([y,b] + (cx-\omega xc) - \sum_{s+1+t=N}\omega^sx^sbx^t\right)
    \otimes\hat x\wedge\hat y.
  \]
The three terms in the sum wrapped with big parentheses are clearly in~$xA$,
so 
  \[ \label{eq:d1:phi:2}
  \img\delta_1 \subseteq xA\otimes\hat x\wedge\otimes\hat y.
  \]
As $\delta_1(c\otimes 1)=-\alpha(c)\otimes\hat x\wedge\hat y$ for each $c\in
A$, Lemma~\ref{lemma:comm:phi} implies us that in fact we have an equality
in~\eqref{eq:d1:phi:2} and therefore that $\HH^2(A,{}_\phi A)$ is in this
case isomorphic to $A/xA\otimes\hat x\wedge\hat y$. This is isomorphic as a
graded vector space to~$\kk[y]\otimes\hat x\wedge\hat y$, and therefore in
this case the Hilbert series we are after is
  \[
  h_{\HH^2(A,{}_\phi A)} = \frac{t^{-N}}{1-t^{N-1}}.
  \]

Finally, let us suppose that $N=1$. For all $b$ and $c$ in~$A$ we
have now that
  \[
  \delta_1(b\otimes\hat x+c\otimes\hat y) =
    \Bigl([y,b] + (cx-\omega xc) - b\Bigr)
    \otimes\hat x\wedge\hat y.
  \]
If $u$ is an element of~$A$, then there are $f\in\kk[y]$ and $v\in A$ such
that $u=f+xv$, and there is a $g\in A$ such that $gx-\omega
xg=-\alpha(g)=xv$, so that
  \(
  \delta_1(-f\otimes\hat x+g\otimes\hat y) = u\otimes\hat x\wedge\hat y
  \).
This tells us that the map~$\delta_1$ is in this case surjective, so that
we have $\HH^2(A,{}_\phi A)=0$.
\end{proof}

We can now compute $\HH^1(A,{}_\phi A)$ using the same strategy that
we used for the regular cohomology in Section~\ref{sect:cohomology}.

\begin{Proposition}\label{prop:h1:phi}
If $N\geq2$, then the Hilbert series of $\HH^1(A,{}_\phi A)$ is
  \[
  h_{\HH^1(A,{}_\phi A)}(t)
        = \begin{dcases*}
          0 & if $\omega^{N-1}\neq1$; \\ 
          \frac{1}{t(1-t^{N-1})} & if $\omega^{N-1}=1$.
          \end{dcases*}
  \]
If $N=1$, the $\HH^1(A,{}_\phi A)=0$.
\end{Proposition}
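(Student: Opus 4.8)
The plan is to follow the strategy of Section~\ref{sect:cohomology} verbatim: having already computed $\HH^0(A,{}_\phi A)$ and $\HH^2(A,{}_\phi A)$ in the preceding proposition, we read off $\HH^1(A,{}_\phi A)$ from the invariance of the Euler characteristic of the complex~\eqref{eq:comp:phi} under passage to cohomology. The crucial observation is that, since the twisting automorphism $\phi=\phi_{0,\omega}$ is homogeneous of degree~$0$, the bimodule~${}_\phi A$ has exactly the same Hilbert series as~$A$; consequently the three terms $A$, $A\otimes V^*$ and $A\otimes\Lambda^2V^*$ of~\eqref{eq:comp:phi} have precisely the graded dimensions they had in the untwisted complex~\eqref{eq:comp}, and the Euler characteristic of~\eqref{eq:comp:phi} is again
  \[
  h_A(t)-(t^{-1}+t^{-(N-1)})h_A(t)+t^{-N}h_A(t)=t^{-N},
  \]
computed exactly as in the proof of Proposition~\ref{prop:hh1-series}.

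For $N\geq2$ this settles everything. The invariance of the Euler characteristic gives
  \[
  t^{-N}=h_{\HH^0(A,{}_\phi A)}(t)-h_{\HH^1(A,{}_\phi A)}(t)+h_{\HH^2(A,{}_\phi A)}(t),
  \]
and since $\HH^0(A,{}_\phi A)=0$ we obtain $h_{\HH^1(A,{}_\phi A)}(t)=h_{\HH^2(A,{}_\phi A)}(t)-t^{-N}$. Substituting the two values of $h_{\HH^2(A,{}_\phi A)}(t)$ from the preceding proposition, the case $\omega^{N-1}\neq1$ yields $t^{-N}-t^{-N}=0$, while the case $\omega^{N-1}=1$ yields
  \[
  \frac{t^{-N}}{1-t^{N-1}}-t^{-N}
  =t^{-N}\frac{t^{N-1}}{1-t^{N-1}}
  =\frac{1}{t(1-t^{N-1})},
  \]
which are exactly the two asserted formulas.

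The case $N=1$ must be handled separately, because then $y$ has degree~$0$, the homogeneous components of~$A$ are infinite-dimensional, and no Hilbert series is available; instead I would argue directly that every cocycle of~\eqref{eq:comp:phi} in the middle term is a coboundary. Here $\omega^{N-1}=1$ automatically and the sum in~\eqref{eq:d:phi:1} collapses to the single term~$b$, so a cocycle $b\otimes\hat x+c\otimes\hat y$ satisfies $[y,b]-b=\omega xc-cx=\alpha(c)$, with $\alpha$ the map of Lemma~\ref{lemma:comm:phi}. Since $\img\alpha=xA$, the left side lies in the two-sided ideal~$xA$; reducing modulo~$xA$ and using that $A/xA\cong\kk[y]$ is commutative, the bracket $[y,b]$ dies and we are left with $\overline b=0$, that is, $b\in xA=\img\alpha$. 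Writing $b=\alpha(a_0)$ and subtracting the coboundary $\delta_0(a_0)=\alpha(a_0)\otimes\hat x+[y,a_0]\otimes\hat y$, we reduce to a cocycle of the form $0\otimes\hat x+c'\otimes\hat y$, whose cocycle condition is simply $\alpha(c')=0$; the injectivity of~$\alpha$ then forces $c'=0$, so the original cocycle was a coboundary and $\HH^1(A,{}_\phi A)=0$.

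The only genuinely new work is in the case $N=1$, and even there the single delicate point is the passage to~$A/xA$: everything hinges on recognizing that this quotient is the \emph{commutative} algebra~$\kk[y]$, so that $[y,b]$ vanishes modulo~$xA$ and forces $b\in xA$; the rest is the injectivity of~$\alpha$ applied twice. For $N\geq2$ there is no obstacle at all, since the whole content has been packed into the computation of $\HH^0(A,{}_\phi A)$ and $\HH^2(A,{}_\phi A)$ and into the Euler-characteristic bookkeeping inherited unchanged from Proposition~\ref{prop:hh1-series}.
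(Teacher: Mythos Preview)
Your proof is correct and, for $N\geq2$, identical to the paper's. For $N=1$ you take a slight variant: the paper first uses a coboundary to reduce $b$ into the complement $\kk[y]$ of~$xA$ and then invokes the cocycle condition, whereas you first use the cocycle condition and the quotient $A/xA\cong\kk[y]$ to force $b\in xA$ and then kill it with a coboundary; these are the same idea with the two steps interchanged, and both rely on the same ingredients (injectivity and image of~$\alpha$, commutativity of~$A/xA$).
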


\begin{proof}
Let us suppose first that~$N\geq2$.
The Hilbert series of our algebra~$A$, as we saw before, is
  \(
  h_A(t) = (1-t)^{-1}(1-t^{N-1})^{-1}
  \),
and the Euler characteristic of the complex~\eqref{eq:comp:phi}~is
  \[
  h_A(t) - (t^{-1}+t^{-(N-1)})h_A(t)+t^{-N}h_A(t) = t^{-N},
  \]
so the invariance of the Euler characteristic when passing to cohomology
implies that
  \[
  h_{\HH^0(A,{}_\phi A)}(t)
  -h_{\HH^1(A,{}_\phi A)}(t)
  +h_{\HH^2(A,{}_\phi A)}(t)
  = t^{-N}.
  \]
The Hilbert series~$h_{\HH^1(A,{}_\phi A)}(t)$ can be computed immediately
from this and the preceding proposition, and turns out to be what is
described in the statement.

Let us now suppose that $N=1$ and let $\eta=b\otimes\hat x+c\otimes\hat y$,
with $b$ and~$c$ in~$A$, be a $1$-cocycle in the
complex~\eqref{eq:comp:phi}. If $f\in A$, then 
  \[
  \eta+\delta_0(f) = (b+\omega xf-xf)\otimes\hat x +
  (\text{something})\otimes\hat y,
  \]
and Lemma~\ref{lemma:comm:phi} implies that we can choose~$f$ so that
$b+\omega xf-xf$ is in~$\kk[y]$. This means that, up to replacing~$\eta$ by
a cohomologous $1$-cocycle, we can suppose that $b$ itself is in~$\kk[y]$.
In that case we have, according to~\eqref{eq:d:phi:1}, that
  \[
  0 = \delta_1(\eta) = 
    (cx-\omega xc - b) \otimes\hat x\wedge\hat y.
  \]
Since $cx-\omega xc\in xA$ and $b\in\kk[y]$, this implies that $cx-\omega
xc=b=0$ and the injectivity of the map~$\alpha$ that, in fact, also $c=0$.
This shows that every $1$-cocycle in our complex~\eqref{eq:comp:phi} is
cohomologous to zero and therefore that $\HH^1(A,{}_\phi A)=0$.
\end{proof}

The proposition we have just proved shows that all $\phi$-twisted derivations
$\Der(A,{}_\phi A)$ are in fact inner except in the special case in which
$N\geq2$ and $\omega^{N-1}=1$. In this special case we can make the
following observations. We will work with the localization~$A_x$ of~$A$ at
the powers of the normal element~$x$, in which we have the following
commutation rule:
  \[
  yx^{-1}-x^{-1}y = -x^{N-2}.
  \]
The canonical map $A\to A_x$ is injective, so we view $A$ as a subalgebra
of~$A_x$.
The automorphism $\phi:A\to A$ such that $\phi(x)=\omega x$ and
$\phi(y)=\omega^{N-1}y$ clearly extends to an automorphism
$\tilde\phi:A_x\to A_x$, and every $\phi$-twisted derivation
$\partial:A\to A$ extends uniquely to a $\tilde\phi$-twisted derivation
$\tilde\partial:A_x\to A_x$ that has $\tilde\partial(x^{-1}) =
-\omega^{-1}x^{-1}\partial(x)x^{-1}$.

The following technical result will allow us to construct twisted
derivations.

\begin{Proposition}\label{prop:nin}
Suppose that $N\geq2$ and $\omega^{N-1}=1$.
\begin{thmlist}

\item There is a $\phi$-twisted derivation $\partial_0^\phi:A\to A$ such
that
  \[
  \partial_0^\phi(x) = 1-\omega, 
  \qquad
  \partial_0^\phi(y) = x^{N-2},
  \]
and it is homogeneous of degree~$-1$ and not inner. The inner
$\tilde\phi$-twisted derivation $\ad_{\tilde\phi}(x^{-1}):A_x\to A_x$
of~$A_x$ corresponding to~$x^{-1}$ preserves the subalgebra~$A$ and, in
fact, the map $\partial_0^\phi$ is the restriction
of~$\ad_{\tilde\phi}(x^{-1})$ to~$A$.

\item There is a derivation $\partial_{N-1}:A\to A$ such that
  \[
  \partial_{N-1}(x) = xy, 
  \qquad
  \partial_{N-1}(y) 
    = \hskip-1em\sum_{s+2+t=N}(s+1)x^{s+1}yx^t 
         +  \frac{N-1}{2}
            \left(
              y^{2}
              -N
              x^{N-1}y
            \right).
  \]
This derivation commutes with~$\phi:A\to A$.

\item If $\partial:A\to{}_\phi A$ is a twisted derivation that is
homogeneous of some degree~$l$, then the map
  \[
  \partial_{N-1}\circ\partial-\partial\circ\partial_{N-1}:A\to A
  \]
is also a $\phi$-twisted derivation that is homogeneous, now of
degree~$l+N-1$. There is therefore a map  
  \[
  \ad(\partial_{N-1}):
        \partial
        \in\Der(A,{}_\phi A)
        \longmapsto
        \partial_{N-1}\circ\partial-\partial\circ\partial_{N-1}
        \in\Der(A,{}_\phi A)
  \]
that is homogeneous of degree~$N-1$.

\item If $u$ is an element of~$A_x$ such that the inner
$\tilde\phi$-derivation $\ad_{\tilde\phi}(u):A_x\to A_x$ preserves the
subalgebra~$A$, so that the restriction
$\ad_{\tilde\phi}(u)|_A:A\to A$ is an element of~$\Der(A,{}_\phi A)$, then
the map $\ad_{\tilde\phi}(\tilde\partial_{N-1}(u)):A_x\to A_x$ also
preserves~$A$ and
  \[ \label{eq:adad}
  \ad(\partial_{N-1})\Bigl(\ad_{\tilde\phi}(u)|_A\Bigr)
        = \ad_{\tilde\phi}(\tilde\partial_{N-1}(u))|_A.
  \]
Here $\tilde\partial_{N-1}(u)$ denotes the value at~$u$ of the
extension~$\tilde\partial_{N-1}$ of the derivation~$\partial_{N-1}$
of~\thmitem{2} to~$A_x$.

\end{thmlist}
\end{Proposition}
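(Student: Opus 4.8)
The plan is to handle the four parts in order, with part~\thmitem{3} carrying the algebraic content and parts~\thmitem{1} and~\thmitem{2} supplying the two concrete maps on which it operates. Throughout I will use that $\phi=\phi_{0,\omega}$ scales a homogeneous element of degree~$d$ by~$\omega^d$, since $\phi(x)=\omega x$ and $\phi(y)=\omega^{N-1}y$ and $x$,~$y$ have degrees~$1$ and~$N-1$.

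For~\thmitem{1}, rather than checking the twisted Leibniz relation on generators directly, I would construct $\partial_0^\phi$ by restriction and thereby prove existence and the identification with $\ad_{\tilde\phi}(x^{-1})$ simultaneously. The map $\ad_{\tilde\phi}(x^{-1})$ is automatically a $\tilde\phi$-twisted derivation of~$A_x$, so I only compute it on generators: $\ad_{\tilde\phi}(x^{-1})(x)=x^{-1}x-\tilde\phi(x)x^{-1}=1-\omega$, and, using $\omega^{N-1}=1$ together with the rule $yx^{-1}-x^{-1}y=-x^{N-2}$ quoted just before the proposition, $\ad_{\tilde\phi}(x^{-1})(y)=x^{-1}y-\omega^{N-1}yx^{-1}=x^{-1}y-yx^{-1}=x^{N-2}$. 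Both values lie in~$A$; since $A$ is generated by~$x$ and~$y$ and $\tilde\phi$ preserves~$A$, the twisted Leibniz rule forces $\ad_{\tilde\phi}(x^{-1})(A)\subseteq A$, and its restriction is the desired $\partial_0^\phi$, manifestly homogeneous of degree~$-1$. For non-innerness I argue by degree: because $A=\bigoplus_{j\geq0}A_j$ is non-negatively graded and $\phi$ is homogeneous, every inner twisted derivation $\ad_\phi(m)$ has trivial component in degree~$-1$, so a non-zero homogeneous twisted derivation of degree~$-1$ cannot be inner; in fact its class then spans the one-dimensional space $\HH^1(A,{}_\phi A)_{-1}$ predicted by Proposition~\ref{prop:h1:phi}.

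For~\thmitem{2}, the key observation is that the derivation displayed is exactly the degree-$(N-1)$ derivation $\partial_{N-1}$ of Proposition~\ref{prop:hh1-high}: with $l=N-1$ one is forced to take $i=1$ and $j=1$ in $l+1=i+j(N-1)$, and Proposition~\ref{prop:phi} gives $\Phi_2=\tfrac12 y^2-\tfrac N2 x^{N-1}y$, so the formulas there reproduce $\partial_{N-1}(x)=xy$ and the stated value of $\partial_{N-1}(y)$. Existence is thus inherited and no relation need be rechecked. To see that $\partial_{N-1}$ commutes with~$\phi$, I would note that $\phi^{-1}\circ\partial_{N-1}\circ\phi$ is again a derivation (conjugation of a derivation by an automorphism) and verify it agrees with $\partial_{N-1}$ on~$x$ and~$y$: since $\partial_{N-1}(x)=xy$ is homogeneous of degree~$N$ and $\partial_{N-1}(y)$ of degree~$2(N-1)$, $\phi$ scales them by $\omega^N=\omega$ and $\omega^{2(N-1)}=1$ respectively, and a short computation using $\phi(y)=y$ gives $\phi^{-1}\partial_{N-1}\phi(x)=xy$ and $\phi^{-1}\partial_{N-1}\phi(y)=\partial_{N-1}(y)$. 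Agreement on generators of two derivations forces equality, whence $\partial_{N-1}\circ\phi=\phi\circ\partial_{N-1}$.

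Parts~\thmitem{3} and~\thmitem{4} are then bracket bookkeeping. For~\thmitem{3} I expand $D\coloneqq\partial_{N-1}\circ\partial-\partial\circ\partial_{N-1}$ on a product $ab$, using the ordinary Leibniz rule for $\partial_{N-1}$, the $\phi$-twisted Leibniz rule for $\partial$, and crucially $\partial_{N-1}\circ\phi=\phi\circ\partial_{N-1}$ from~\thmitem{2}; the cross terms cancel in pairs and what survives is precisely $D(a)b+\phi(a)D(b)$, so $D$ is a $\phi$-twisted derivation, homogeneous of degree $l+(N-1)$ by additivity of degrees (note this computation never uses homogeneity of~$\partial$, only the commuting relation). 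Running the identical computation on~$A_x$ with the extension $\tilde\partial_{N-1}$—which commutes with $\tilde\phi$ by uniqueness of extensions—and with $\partial=\ad_{\tilde\phi}(u)$ yields $[\tilde\partial_{N-1},\ad_{\tilde\phi}(u)]=\ad_{\tilde\phi}(\tilde\partial_{N-1}(u))$ on all of~$A_x$. Restricting to~$A$ and invoking~\thmitem{3}, which guarantees the left-hand side has image in~$A$, gives both that $\ad_{\tilde\phi}(\tilde\partial_{N-1}(u))$ preserves~$A$ and the equality~\eqref{eq:adad}. The one genuinely delicate point I would write out is the legitimacy of this restriction in~\thmitem{4}: it requires that $\tilde\partial_{N-1}$ and $\partial_{N-1}$ agree on the intermediate elements $\ad_{\tilde\phi}(u)(a)\in A$ occurring in the bracket, which holds exactly because $\ad_{\tilde\phi}(u)$ is assumed to preserve~$A$.
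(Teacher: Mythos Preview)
Your proof is correct and follows essentially the same path as the paper's, including the identification of~$\partial_{N-1}$ with the $l=N-1$ case of Proposition~\ref{prop:hh1-high} and the bracket computations in~\thmitem{3} and~\thmitem{4}. The only notable difference is in~\thmitem{1}: the paper first builds $\partial_0^\phi$ by lifting to the free algebra~$\kk\langle X,Y\rangle$ and checking that the relation $YX-XY-X^N$ is killed, and only afterwards identifies it with the restriction of~$\ad_{\tilde\phi}(x^{-1})$, whereas you obtain existence directly from that restriction---a mild streamlining, since the free-algebra step is redundant once one knows the inner twisted derivation on~$A_x$ preserves~$A$.
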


\begin{proof}
\thmitem{1} Let $\kk\gen{X,Y}$ be the free algebra on two generators~$X$
and~$Y$. There is an automorphism $\hat\phi:\kk\gen{X,Y}\to\kk\gen{X,Y}$
such that $\hat\phi(X)=\omega X$ and
$\hat\phi(Y)=Y$, and it preserves the bilateral ideal
$I=(YX-XY-X^N)$, since $\omega^{N-1}=1$. The quotient $\kk\gen{X,Y}/I$ is
of course isomorphic to~$A$, and we may thus view $A$ as a
$\kk\gen{X,Y}$-bimodule. Since the algebra $\kk\gen{X,Y}$ is free, there is
a unique derivation $\hat\partial:\kk\gen{X,Y}\to{}_{\hat\phi} A$ such that
$\hat\partial(X)=1-\omega$ and $\hat\partial(Y)=x^{N-2}$. This derivation
vanishes on the ideal~$I$: to check this, it is sufficient to show that it
vanishes on the generator $YX-XY-X^N$ of that ideal, and that is easy. One
need only keep in mind that $\hat\partial$ is a \emph{$\hat\phi$-twisted}
derivation, so that
  \[
  \hat\partial(X^N)
        = (1+\omega+\cdots+\omega^{N-1})(1-\omega)x^{N-1}
        = (1-\omega)x^{N-1}.
  \]
It follows from this that $\hat\partial$ induces a derivation
$\partial_0^\phi:A\to{}_\phi A$ such that
$\partial_0^\phi(x)=1-\omega$ and $\partial_0^\phi(y)=x^{N-2}$. It
is manifestly homogeneous of degree~$-1$, and it is not inner because there
are no elements of negative degree in~$A$.

Now, we certainly have an inner $\tilde\phi$-twisted derivation
$\ad_{\tilde\phi}(x^{-1}):A_x\to A_x$, and we can compute that
  \begin{align}
  \ad_{\tilde\phi}(x^{-1})(x)
       &= x^{-1}x-\tilde\phi(x)x^{-1}
        = 1-\omega, \\
  \ad_{\tilde\phi}(x^{-1})(y)
       &= x^{-1}y-\tilde\phi(y)x^{-1}
        = x^{N-2}.
  \end{align}
Since $\tilde\phi(A)=A$, the facts that $x$ and~$y$ generate~$A$ and
that $\ad_{\tilde\phi}(x^{-1})$ is a $\tilde\phi$-twisted derivation imply
together that $\ad_{\tilde\phi}(x^{-1})$ maps the subalgebra~$A$ into
itself. Moreover, its restriction to~$A$ is a $\phi$-twisted derivation
that coincides with $\partial^\phi_0$ at~$x$ and at~$y$, so
we have that $\ad_{\tilde\phi}(x^{-1})|_A=\partial^\phi_0$.

\thmitem{2} The derivation~$\partial_{N-1}$ described here is simply the
one given by Proposition~\ref{prop:hh1-high} when $l=N-1$, which we know to be
homogeneous of degree~$N-1$ and not inner. To make this explicit we have
used the fact that $\Phi_2 = \frac{1}{2}(y^{2} -N x^{N-1}y)$. A simple
direct calculation shows that $\partial_{N-1}$ commutes with~$\phi$.

\thmitem{3} This follows from an easy direct calculation. They key fact
that makes things work is that the derivation $\partial_{N-1}:A\to A$
commutes with the automorphism $\phi:A\to A$.

\thmitem{4} A direct calculation shows that we have an equality of
maps~$A_x\to A_x$
  \[
  \ad(\tilde\partial_{N-1})\Bigl(\ad_{\tilde\phi}(u)\Bigr)
        = \ad_{\tilde\phi}\Bigl(\tilde\partial_{N-1}(u)\Bigr)
  \]
The left hand side preserves~$A$ because $\ad_{\tilde\phi}(u)$
and~$\tilde\partial_{N-1}$ do, so the right hand side also preserves it.
The equality~\eqref{eq:adad} that appears in the statement of the
proposition is now immediate.
\end{proof}

Combining the different parts of the proposition we have just proved we can
construct $\phi$-twisted derivations $A\to{}_\phi A$ of all degrees of the
form $-1+l(N-1)$ with $l\in\NN_0$.

\begin{Corollary}\label{coro:partial:phi}
Suppose that $N\geq2$ and $\omega^{N-1}=1$, and let $\phi:A\to A$ be the
automorphism such that $\phi(x)=\omega x$ and $\phi(y)=y$.
Let $\tilde\phi:A_x\to A_x$ and $\tilde\partial_{N-1}:A_x\to A_x$ be the
extensions of~$\phi$ and of the $\phi$-twisted
derivation~$\partial_{N-1}:A\to A$ of Proposition~\ref{prop:nin}. 
For each $l\in\NN_0$ the $\tilde\phi$-twisted derivation
  \[
  \ad_{\tilde\phi}\bigl(\tilde\partial_{N-1}^l(x^{-1})\bigr):A_x\to A_x
  \]
preserves~$A$ and its restriction to~$A$ is a map 
  \[
  \partial^\phi_l:A\to A
  \]
that is a homogeneous $\phi$-twisted derivation of degree~$-1+l(N-1)$ and
that coincides with the map
  \[
  \ad(\partial_{N-1})^l(\partial_0^\phi) \in \Der(A,{}_\phi A).
  \]
\end{Corollary}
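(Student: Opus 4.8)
The plan is to proceed by induction on $l$, since the genuine computational content of the statement has already been isolated in Proposition~\ref{prop:nin}. For each $l \in \NN_0$ I abbreviate $u_l \coloneqq \tilde\partial_{N-1}^l(x^{-1}) \in A_x$, so that $u_0 = x^{-1}$ and $u_{l+1} = \tilde\partial_{N-1}(u_l)$. The assertion to prove by induction is that for every $l$ the inner $\tilde\phi$-twisted derivation $\ad_{\tilde\phi}(u_l)\colon A_x \to A_x$ preserves the subalgebra~$A$ and that its restriction $\partial^\phi_l$ to~$A$ equals $\ad(\partial_{N-1})^l(\partial^\phi_0)$. The base case $l = 0$ is exactly the content of the last part of Proposition~\ref{prop:nin}\thmitem{1}: the map $\ad_{\tilde\phi}(x^{-1})$ preserves~$A$ and restricts there to $\partial^\phi_0 = \ad(\partial_{N-1})^0(\partial^\phi_0)$.

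For the inductive step I would assume the claim for some~$l$ and apply Proposition~\ref{prop:nin}\thmitem{4} to the element $u = u_l$; this is legitimate precisely because the inductive hypothesis guarantees that $\ad_{\tilde\phi}(u_l)$ preserves~$A$. That part then tells me at once that $\ad_{\tilde\phi}(\tilde\partial_{N-1}(u_l)) = \ad_{\tilde\phi}(u_{l+1})$ preserves~$A$ as well, and moreover that
\[
\ad(\partial_{N-1})\bigl(\ad_{\tilde\phi}(u_l)|_A\bigr)
  = \ad_{\tilde\phi}(u_{l+1})|_A.
\]
Substituting the inductive hypothesis $\ad_{\tilde\phi}(u_l)|_A = \ad(\partial_{N-1})^l(\partial^\phi_0)$ into the left-hand side yields $\ad_{\tilde\phi}(u_{l+1})|_A = \ad(\partial_{N-1})^{l+1}(\partial^\phi_0)$, which closes the induction and simultaneously identifies the restriction $\partial^\phi_l$ with $\ad(\partial_{N-1})^l(\partial^\phi_0)$ for every~$l$.

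It then remains only to record the two structural properties of~$\partial^\phi_l$. That each $\partial^\phi_l$ is a genuine $\phi$-twisted derivation $A \to {}_\phi A$ follows from the identification just obtained together with Proposition~\ref{prop:nin}\thmitem{3}, which says that $\ad(\partial_{N-1})$ maps $\Der(A,{}_\phi A)$ into itself; since $\partial^\phi_0$ lies in $\Der(A,{}_\phi A)$ by part~\thmitem{1}, so does every iterate. The degree is then pure bookkeeping: $\partial^\phi_0$ is homogeneous of degree~$-1$ by part~\thmitem{1} and $\ad(\partial_{N-1})$ is homogeneous of degree~$N-1$ by part~\thmitem{3}, so applying it $l$ times produces a homogeneous map of degree~$-1 + l(N-1)$. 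I do not expect a genuine obstacle here, as all the delicate points — that the relevant inner twisted derivations of~$A_x$ restrict to~$A$, and that $\ad(\partial_{N-1})$ intertwines with $\tilde\partial_{N-1}$ under $\ad_{\tilde\phi}$ — are exactly what Proposition~\ref{prop:nin} was engineered to supply; the only thing to be careful about is threading the hypothesis that $\ad_{\tilde\phi}(u_l)$ preserves~$A$ through the induction so that part~\thmitem{4} remains applicable at each step.
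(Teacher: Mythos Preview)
Your proposal is correct and follows exactly the approach the paper takes: the paper's proof is the single sentence ``This can be proved immediately by induction starting with~\thmitem{1} of the proposition and using part~\thmitem{4} for the inductive step,'' and you have simply written out that induction in full, including the routine verification of homogeneity via part~\thmitem{3}.
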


\begin{proof}
This can be proved immediately by induction starting with~\thmitem{1} of
the proposition and using part~\thmitem{4} for the inductive step.
\end{proof}

\begin{Remark}\label{rem:general-nonsense}
There is a general-nonsense way of looking at our last few results. We will
explain it, as it helps to make sense of them.

Suppose that $N\geq2$ and that $\omega^{N-1}=1$, and let $\nu$ be the order
of~$\omega$ in~$\kk^\times$, which is a divisor of~$N-1$. General
considerations imply that the automorphism~$\phi$ acts in a canonical way
on the Hochschild cohomology~$\HH^*(A)$ of~$A$, the invariant
subspace~$\HH^*(A)^\phi$ is a sub-Gerstenhaber algebra of~$\HH^*(A)$, and
there is a «Gerstenhaber module structure»
  \[
  \HH^*(A)^\phi \otimes \HH^*(A,{}_\phi A)
  \to
  \HH^*(A,{}_\phi A)
  \]
that in particular restricts to a Lie module structure
  \[ \label{eq:hhhh}
  \HH^1(A)^\phi \otimes \HH^1(A,{}_\phi A)
  \to
  \HH^1(A,{}_\phi A),
  \]
which itself is induced by the map
  \[ \label{eq:derder}
  f\otimes g\in\Der(A)\otimes\Der(A,{}_\phi A)
  \mapsto
  \ad(f)(g) \in\Der(A,{}_\phi A),
  \]
using the notation of part~\thmitem{3} of Proposition~\ref{prop:nin}. 

If we view~$\HH^1(A)$ as $\Der(A)/\InnDer(A)$ then the action of~$\phi$
on~$\HH^1(A)$ is induced by the map  
  \[
  \phi^\sharp:d\in\Der(A)\mapsto\phi\circ d\circ\phi^{-1}\in\Der(A),
  \]
and a direct calculation proves that if $d:A\to A$ is a homogeneous
derivation of degree~$l$, then $\phi^\sharp(d)=\omega^l d$. It follows from
this that the invariant subspace~$\HH^1(A)^\phi$ is spanned by the classes
of the derivations of the form~$L_{j\nu}$, with~$j\in\ZZ$ such that
$j\geq-1$, and $\partial_0$, using the notation of
Section~\ref{sect:explicit}. On the other hand,
Proposition~\ref{prop:h1:phi} tells us that the only homogeneous components
of~$\HH^1(A,{}_\phi A)$ that are non-zero are those of degrees~$l$ such
that $l\geq-1$ and $l\equiv-1\mod N-1$: as the action~\eqref{eq:hhhh} is
homogeneous, this implies that the only $L_l$ in~$\HH^1(A)^\phi$ that can
act non-trivially on~$\HH^1(A,{}_\phi A)$ are those with those with
$l\equiv0\mod N-1$. What Corollary~\ref{coro:partial:phi} does is,
essentially, to describe the action of~$L_{N-1}$, the simplest of those:
in part~\thmitem{1} of Proposition~\ref{prop:nin} we exhibited an
element~$\partial_0^\phi$ in~$\Der(A,{}_\phi A)$, and in
Corollary~\ref{coro:partial:phi} we produced many others by using the
action~\eqref{eq:derder}. We will see below that in this way we 
obtain a basis for~$\HH^1(A,{}_\phi A)$.
\end{Remark}

We are just a hop, skip and jump away from the description of
$\H^1(A,{}_\phi A)$ in the exceptional case in which it is non-zero ---
apart, alas, for one exception.

\begin{Proposition}\label{prop:h1:phi:base}
Suppose that $N\geq4$ and that $\omega^{N-1}=1$, and let $\phi:A\to A$ be
the automorphism of~$A$ such that $\phi(x)=\omega x$ and $\phi(y)=y$. The
cohomology classes of the $\phi$-twisted derivations $\partial^\phi_l$ with
$l\in\NN_0$ that we described in Corollary~\ref{coro:partial:phi} freely
span the vector space~$\H^1(A,{}_\phi A)$.
\end{Proposition}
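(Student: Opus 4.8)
The plan is to exploit the Hilbert series of $\HH^1(A,{}_\phi A)$ established in Proposition~\ref{prop:h1:phi}. Since $\omega^{N-1}=1$, that series is $\tfrac{1}{t(1-t^{N-1})}$, so every homogeneous component of $\HH^1(A,{}_\phi A)$ is at most one-dimensional and the non-zero ones are exactly those of degree $-1+l(N-1)$ with $l\in\NN_0$. Each twisted derivation $\partial^\phi_l$ from Corollary~\ref{coro:partial:phi} is homogeneous of precisely that degree $-1+l(N-1)$, and distinct values of~$l$ give distinct degrees that together exhaust the support of the cohomology. Consequently the classes $[\partial^\phi_l]$ are automatically linearly independent, and they freely span $\HH^1(A,{}_\phi A)$ as soon as each one is non-zero. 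The whole proposition therefore reduces to showing that $\partial^\phi_l$ is \emph{not} an inner $\phi$-twisted derivation, for every $l\in\NN_0$.

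First I would record a clean criterion for innerness. Writing $\alpha(a)=\omega xa-ax$ as in Lemma~\ref{lemma:comm:phi}, an inner twisted derivation $\ad_\phi(m)$ satisfies $\ad_\phi(m)(x)=mx-\omega xm=-\alpha(m)\in xA$, since $\alpha$ has image $xA$. Conversely, if a $\phi$-twisted derivation $\partial$ has $\partial(x)\in xA=\img\alpha$, then the injectivity of~$\alpha$ gives a unique $m$ with $\ad_\phi(m)(x)=\partial(x)$, and $\partial'\coloneqq\partial-\ad_\phi(m)$ vanishes on~$x$. Applying $\partial'$ to the relation $yx-xy-x^N=0$ and using $\phi(x)=\omega x$, $\phi(y)=y$ and $\partial'(x)=0$ collapses everything to $\alpha(\partial'(y))=0$, whence $\partial'(y)=0$ and $\partial'=0$. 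Thus a $\phi$-twisted derivation is inner exactly when $\partial(x)\in xA$ or, equivalently, since $A=\kk[y]\oplus xA$, exactly when the $\kk[y]$-component of $\partial(x)$ vanishes.

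It remains to track that component for $\partial^\phi_l$. The element $\partial^\phi_l(x)$ is homogeneous of degree $l(N-1)$, and the only monomial of that degree lying outside $xA$ is $y^l$; so let $c_l$ be the coefficient of $y^l$ in $\partial^\phi_l(x)$, that is, the scalar with $\partial^\phi_l(x)\equiv c_ly^l\mod F_{l-1}$. Using the recursion $\partial^\phi_{l+1}=\ad(\partial_{N-1})(\partial^\phi_l)$ from Corollary~\ref{coro:partial:phi} together with part~\thmitem{3} of Proposition~\ref{prop:nin}, I would expand
  \[
  \partial^\phi_{l+1}(x)
    = \partial_{N-1}\bigl(\partial^\phi_l(x)\bigr) - \partial^\phi_l(x)\,y
        - \omega x\,\partial^\phi_l(y).
  \]
The last summand lies in $xA$ and, by degree, in $F_l$, so it contributes nothing to the $y^{l+1}$-term; the middle term contributes $-c_l$; and since the only part of $\partial_{N-1}(y)$ raising the $y$-degree is its leading term $\tfrac{N-1}{2}y^2$, one has $\partial_{N-1}(y^l)\equiv\tfrac{l(N-1)}{2}y^{l+1}\mod F_l$, so the first summand contributes $c_l\tfrac{l(N-1)}{2}$. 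This yields the recursion $c_{l+1}=\tfrac{l(N-1)-2}{2}\,c_l$, with $c_0=1-\omega\neq0$.

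Finally I would observe that when $N\geq4$ the factor $l(N-1)-2$ is never zero for $l\in\NN_0$ — it equals $-2$ at $l=0$ and is at least $N-3\geq1$ for $l\geq1$ — so $c_l\neq0$ for every~$l$, every $\partial^\phi_l$ has a non-zero $\kk[y]$-component and is therefore non-inner, and the proof is complete. I expect the main obstacle to be the bookkeeping in this leading-term recursion, and it is precisely that recursion which explains the hypothesis $N\geq4$: the factor $l(N-1)-2$ vanishes exactly when $l(N-1)=2$, which occurs for $N=3$ at $l=1$ (forcing $c_2=0$) and for $N=2$ at $l=2$ (forcing $c_3=0$). In those two low cases one of the $\partial^\phi_l$ becomes inner and a different generator is needed in that single degree — this is the ``one exception'' flagged before the statement.
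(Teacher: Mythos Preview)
Your proof is correct and follows essentially the same strategy as the paper: both track a leading coefficient through the recursion $\partial^\phi_{l+1}=\ad(\partial_{N-1})(\partial^\phi_l)$ and observe that the factor $\tfrac{l(N-1)}{2}-1$ never vanishes when $N\geq4$. The only difference is packaging: the paper works in the localization~$A_x$, computing $\tilde\partial_{N-1}^{\,l}(x^{-1})\equiv\gamma_l\,x^{-1}y^l\bmod\tilde F_{l-1}$ and deriving a contradiction from $\gamma_l\,x^{-1}y^d\notin A$, whereas you stay inside~$A$, first isolating the clean criterion ``$\partial$ inner $\iff$ $\partial(x)\in xA$'' and then following the $y^l$-coefficient of $\partial^\phi_l(x)$ directly. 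Your coefficients and the paper's are related by $c_l=(1-\omega)\gamma_l$, so the recursions coincide; your route is slightly more elementary in that it never leaves~$A$, while the paper's use of~$A_x$ ties in with the description of the $\partial^\phi_l$ as restrictions of inner twisted derivations of~$A_x$ given in Corollary~\ref{coro:partial:phi}.
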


This excludes the cases in which $2\leq N\leq 3$. Now if $N=2$ then the
hypothesis on~$\omega$ is that $\omega=1$ and, since we are assuming
throughout that $\omega\neq1$, this case cannot actually occur. The case in
which $N=3$, on the other hand, is a real possibility that is excluded in
this proposition --- and for good reason, as we will see below.

\begin{proof}
It is enough that we show that the cohomology classes of those
$\phi$-twisted derivations are linearly independent in~$\H^1(A,{}_\phi A)$,
because if that is the case then they span the whole space: this follows
immediately from our calculation of the Hilbert series of~$\H^1(A,{}_\phi
A)$ in Proposition~\ref{prop:h1:phi} and the fact that for all $l\in\NN_0$
the twisted derivation $\partial^\phi_l$ has degree~$-1+l(N-1)$.

Let us suppose that the cohomology classes of the twisted derivations in
the statement are linearly dependent. There are then
an integer $d\geq0$ and scalars $a_0$,~\dots,~$a_d$ in~$\kk$
such that $a_d\neq0$ and the $\phi$-twisted derivation 
  \[
  \delta\coloneqq\sum_{i=0}^da_i\partial_i^\phi:A\to A
  \]
is inner, so that there is an element~$u$ in~$A$ such that
$\delta=\ad_\phi(u)$. We let $P$ be the polynomial
$\sum_{i=0}^da_iT\in\kk[T]$ and consider the $\tilde\phi$-twisted
derivation
  \[ \label{eq:urgh}
  \ad_{\tilde\phi}\bigl(P(\tilde\partial_{N-1})(x^{-1})\bigr):A_x\to A_x
  \]
It follows from Corollary~\ref{coro:partial:phi} that this map
preserves~$A$ and its restriction to~$A$ is precisely~$\delta$. In
particular, the map in~\eqref{eq:urgh} and~$\ad_{\tilde\phi}(u):A_x\to A_x$
take the same value on~$x$ and~$y$ and therefore, since they are
$\tilde\phi$-twisted derivations, they are in fact equal --- in other
words, we have that
  \[ \label{eq:urgh:1}
  \ad_{\tilde\phi}\Bigl(P(\tilde\partial_{N-1})(x^{-1})-u\Bigr) = 0.
  \]
We know form Lemma~\ref{lemma:comm:phi} that the map $\tilde\alpha:a\in
A_x\mapsto\omega xa-xa\in A_x$ is injective, and as a consequence of that
  \[ \label{eq:urgh:2}
  \claim{if $v\in A_x$ and the map $\ad_{\tilde\phi}(v):A_x\to A_x$
  vanishes at~$x$, then $v=0$.}
  \]
From this and the equality~\eqref{eq:urgh:1} we can conclude that
  \[
  P(\tilde\partial_{N-1})(x^{-1}) = u \in A.
  \]

Let us next check that for all $l\in\NN_0$ we have that
  \[ \label{eq:urgh:3}
  \tilde\partial_{N-1}^l(x^{-1})
        \equiv \prod_{i=0}^{l-1}\left(\frac{t(N-1)}{2}-1\right)\cdot x^{-1}y^{l} 
        \mod \tilde F_{l-1}.
  \]
This is clear when $l=0$, and when $l=1$ we have that
  \[
  \tilde\partial_{N-1}(x^{-1})
        = -x^{-1}\tilde\partial_{N-1}(x)x^{-1}
        = -yx^{-1}
        \equiv -x^{-1}y\mod \tilde F_0,
  \]
in accordance with the formula we want to prove. The general case follows
by induction and the observation that for all $l\in\NN_0$ we have
  \begin{align}
  \MoveEqLeft
  \tilde\partial_{N-1}(x^{-1}y^l)
        = \tilde\partial_{N-1}(x^{-1})y^l + x^{-1}\tilde\partial_{N-1}(y^l) 
        = -yx^{-1}y^l + x^{-1}\sum_{i=0}^{l-1}y^i\tilde\partial_{N-1}(y)y^{l-1-i} \\
       &= -yx^{-1}y^l + x^{-1}\sum_{i=0}^{l-1}y^i
                \left[
                \sum_{s+2+t=N}(s+1)x^{s+1}yx^t 
                         +  \frac{N-1}{2}
                            \left(
                              y^{2}
                              -N
                              x^{N-1}y
                            \right)
               \right]
               y^{l-1-i}
          \\
       &\equiv \left(\frac{l(N-1)}{2}-1\right)
                x^{-1}
               y^{l+1} \mod \tilde F_l.
  \end{align}

Now, using the congruence~\eqref{eq:urgh:3} we have just proved we see that
  \[
  P(\tilde\partial_{N-1})(x^{-1})
    \equiv \underbrace{a_d \prod_{i=0}^{d-1}\left(\frac{t(N-1)}{2}-1\right)}
        {}\cdot x^{-1}y^{d} 
    \mod \tilde F_{d-1}
  \]
As $N\geq4$, the scalar marked with a brace is non-zero. If we call it
$\gamma$ for brevity, then what we have is that 
  \[
  A \ni P(\tilde\partial_{N-1})(x^{-1}) = \gamma\cdot x^{-1}y^d + w
  \]
for some $w\in\tilde F_d$, and this is absurd. This contradiction proves
the proposition.
\end{proof}

When $N=3$, in the situation of Proposition~\ref{prop:h1:phi:base} we can
compute that 
  \[
  \tilde\partial_2(x)=xy, 
  \qquad
  \tilde\partial_2(x^{-1})=-x^{-1}y+x,
  \qquad
  \tilde\partial_2(y)=x^4+y^2
  \]
and using that that
  \[
  \tilde\partial_2^2(x^{-1}) = -x^3 \in A.
  \]
This tells us that the $\phi$-twisted derivation $\partial^\phi_2:A\to A$
is inner, and implies that in fact the $\phi$-twisted derivation
$\partial^\phi_l:A\to A$ is inner whenever $l$ is an integer such that
$l\geq2$. The conclusion of Proposition~\ref{prop:h1:phi:base} is therefore
very false when $N=3$. We can fix it as follows:

\begin{Proposition}
Suppose that $N=3$ and that $\omega^{N-1}=1$, so that $\omega=-1$, and let
$\phi:A\to A$ be the automorphism of~$A$ such that $\phi(x)=-x$ and
$\phi(y)=y$. The $\tilde\phi$-twisted
derivations of~$A_x$  
  \[
  \ad_{\tilde\phi}(x^{-1}),
  \qquad\quad
  \ad_{\tilde\phi}(\tilde\partial_2(x^{-1})),
  \qquad\quad
  \ad_{\tilde\phi}(\tilde\partial_2^l(x^{-1}y^2)), \quad l\geq0
  \]
preserve the subalgebra~$A$ and the cohomology classes of their
restrictions to~$A$ freely span the vector space~$\H^1(A,{}_\phi A)$.
\end{Proposition}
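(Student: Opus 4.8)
The plan is to reduce everything to the Hilbert series already computed. By Proposition~\ref{prop:h1:phi}, when $N=3$ and $\omega^{N-1}=1$ (so $\omega=-1$) the series of $\HH^1(A,{}_\phi A)$ is $1/(t(1-t^2)) = t^{-1}+t+t^3+t^5+\cdots$, so this space is one-dimensional in each degree of $\{-1,1,3,5,\dots\}$ and zero in every other degree. Since $\tilde\phi$ and $\tilde\partial_2$ are homogeneous of degrees $0$ and $N-1=2$, and $\ad_{\tilde\phi}(u)$ is homogeneous of the same degree as $u$, the three families are homogeneous of degrees $-1$ (for $\ad_{\tilde\phi}(x^{-1})$), $1$ (for $\ad_{\tilde\phi}(\tilde\partial_2(x^{-1}))$, because $\tilde\partial_2(x^{-1})=-x^{-1}y+x$ has degree $1$), and $3+2l$ for $l\geq0$ (for $\ad_{\tilde\phi}(\tilde\partial_2^l(x^{-1}y^2))$, because $x^{-1}y^2$ has degree $3$). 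These degrees are pairwise distinct and exhaust the support of $\HH^1(A,{}_\phi A)$. Hence it suffices to prove two things: that each of these $\tilde\phi$-twisted derivations of $A_x$ restricts to a map $A\to A$, and that the class of its restriction is non-zero; the dimension count then forces the restrictions to freely span $\HH^1(A,{}_\phi A)$.

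For the preservation of $A$, I would handle $\ad_{\tilde\phi}(x^{-1})$ by part~\thmitem{1} of Proposition~\ref{prop:nin} and $\ad_{\tilde\phi}(\tilde\partial_2(x^{-1}))$ by part~\thmitem{4} applied to $u=x^{-1}$. For the third family I would first settle the base case directly: using $yx^{-1}=x^{-1}y-x$ in $A_x$ together with Lemma~\ref{lemma:comm}, a short computation gives
\[
\ad_{\tilde\phi}(x^{-1}y^2)(x) = 3x^4+2x^2y+2y^2,
\qquad
\ad_{\tilde\phi}(x^{-1}y^2)(y) = xy^2,
\]
both in $A$; since $\tilde\phi(A)=A$ and $x,y$ generate $A$, the twisted Leibniz rule propagates this to $\ad_{\tilde\phi}(x^{-1}y^2)(A)\subseteq A$. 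Iterating Proposition~\ref{prop:nin}\thmitem{4} then shows $\ad_{\tilde\phi}(\tilde\partial_2^l(x^{-1}y^2))$ preserves $A$ for all $l\geq0$.

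For non-innerness I would reuse verbatim the mechanism of the proof of Proposition~\ref{prop:h1:phi:base}. If the restriction of $\ad_{\tilde\phi}(w)$ were inner, say equal to $\ad_\phi(u)$ with $u\in A$, then $\ad_{\tilde\phi}(w-u)$ would vanish on $A$, in particular at $x$; as $\ad_{\tilde\phi}(v)(x)=-\tilde\alpha(v)$ and $\tilde\alpha$ is injective by Lemma~\ref{lemma:comm:phi}, this forces $w=u\in A$. So it is enough that no seed lies in $A$. This is clear for $x^{-1}$ and for $\tilde\partial_2(x^{-1})=-x^{-1}y+x$; for the third family I would prove by induction, from the recursion $\tilde\partial_2(x^{-1}y^m)\equiv(m-1)x^{-1}y^{m+1}\pmod{\tilde F_m}$ already established in the proof of Proposition~\ref{prop:h1:phi:base}, that
\[
\tilde\partial_2^l(x^{-1}y^2) \equiv l!\,x^{-1}y^{l+2} \pmod{\tilde F_{l+1}},
\]
whose top-$y$-degree term carries the negative power $x^{-1}$, so that $\tilde\partial_2^l(x^{-1}y^2)\notin A$.

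The one genuinely new point, and the reason this case is split off from Proposition~\ref{prop:h1:phi:base}, is the choice of the seed $x^{-1}y^2$. The chain based at $x^{-1}$ accumulates the coefficient $\prod_m\bigl(\tfrac{m(N-1)}{2}-1\bigr)$, which for $N=3$ contains the factor $(1\cdot2/2-1)=0$ and so falls into $A$ after two steps (indeed $\tilde\partial_2^2(x^{-1})=-x^3$); that is exactly why only degrees $-1$ and $1$ survive from $x^{-1}$. Shifting the seed to $x^{-1}y^2$ replaces that product by $\prod_{m=2}^{l+1}(m-1)=l!\neq0$ in characteristic zero, so the chain never degenerates and supplies non-inner classes in all remaining degrees $3,5,7,\dots$. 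Verifying that this is the right seed, namely that its $\ad_{\tilde\phi}$ preserves $A$ and that its $\tilde\partial_2$-chain has non-vanishing leading coefficients, is the crux; everything else is a transcription of arguments already in place.
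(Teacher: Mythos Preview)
Your proposal is correct and follows exactly the approach the paper intends: the paper's own proof is the single sentence ``This can be proved in essentially the same way as the previous proposition, and we therefore omit the details,'' and you have supplied precisely those details --- the Hilbert-series degree count, preservation of~$A$ via Proposition~\ref{prop:nin} plus the direct verification for the new seed~$x^{-1}y^2$, and non-innerness via the injectivity of~$\tilde\alpha$ together with the leading-term recursion $\tilde\partial_2(x^{-1}y^m)\equiv(m-1)x^{-1}y^{m+1}\bmod\tilde F_m$. Your closing paragraph correctly isolates the one genuinely new ingredient, namely why the seed must be shifted from~$x^{-1}$ to~$x^{-1}y^2$ when $N=3$.
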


\begin{proof}
This can be proved in essentially the same way as the previous proposition,
and we therefore omit the details.
\end{proof}

With this we have concluded the calculation of the twisted Hochschild
cohomology $\H^*(A,{}_\phi A)$. One nice observation we can immediately
make with the result at hand is:

\begin{Proposition}\label{prop:der:xinner}
Suppose that $N\geq1$ and that $\omega\in\kk^\times\setminus\{1\}$, and let
$\phi:A\to A$ be the automorphism of~$A$ such that $\phi(x)=\omega x$ and
$\phi(y)=\omega^{N-1}y$. Every $\phi$-twisted derivation $A\to{}_\phi A$ is
the restriction to~$A$ of an inner $\tilde\phi$-twisted derivation
$A_x\to{}_{\tilde\phi} A_x$ that preserves~$A$. \qed
\end{Proposition}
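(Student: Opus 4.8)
The plan is to show that the linear subspace of $\Der(A,{}_\phi A)$ consisting of those $\phi$-twisted derivations that arise as restrictions to~$A$ of inner $\tilde\phi$-twisted derivations of~$A_x$ preserving~$A$ is in fact the whole space. Write $\mathcal{S}$ for the set of elements $u\in A_x$ such that $\ad_{\tilde\phi}(u)$ maps~$A$ into itself; this is a linear subspace of~$A_x$, and $\rho:u\in\mathcal{S}\mapsto\ad_{\tilde\phi}(u)|_A$ is a \emph{linear} map into $\Der(A,{}_\phi A)$ whose image is exactly the subspace in question. It therefore suffices to exhibit, inside the image of~$\rho$, a spanning set for $\Der(A,{}_\phi A)$.

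First I would dispose of the inner $\phi$-twisted derivations. If $m\in A$ then $m\in\mathcal{S}$, since $\ad_{\tilde\phi}(m)(a)=ma-\phi(a)m\in A$ for all $a\in A$, and $\rho(m)=\ad_\phi(m)$; thus every inner $\phi$-twisted derivation already lies in the image of~$\rho$. Consequently, whenever $\HH^1(A,{}_\phi A)=0$ — which by Proposition~\ref{prop:h1:phi} happens when $N=1$, and also when $N\geq2$ and $\omega^{N-1}\neq1$ (in particular always when $N=2$, since then $\omega^{N-1}=\omega\neq1$) — every $\phi$-twisted derivation is inner, and the conclusion is immediate.

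There remains the genuinely non-trivial regime $N\geq2$ with $\omega^{N-1}=1$, which forces $N\geq3$. Here the earlier results supply the missing generators already \emph{inside} the image of~$\rho$: for $N\geq4$, Corollary~\ref{coro:partial:phi} exhibits each $\partial_l^\phi$ with $l\in\NN_0$ as $\rho\bigl(\tilde\partial_{N-1}^l(x^{-1})\bigr)$, and Proposition~\ref{prop:h1:phi:base} asserts that the classes of these freely span $\HH^1(A,{}_\phi A)$; for $N=3$ (so $\omega=-1$) the preceding proposition does the same with the elements $x^{-1}$, $\tilde\partial_2(x^{-1})$ and $\tilde\partial_2^l(x^{-1}y^2)$, all of which lie in $\mathcal{S}$ by construction. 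In either case these explicit generators, together with the inner $\phi$-twisted derivations treated above, span $\Der(A,{}_\phi A)$: the classes of the former span $\HH^1(A,{}_\phi A)$, while the latter constitute the kernel of the projection $\Der(A,{}_\phi A)\to\HH^1(A,{}_\phi A)$, so given any $\partial$ we may subtract a combination of the explicit generators to land in the inner subspace. Since everything in sight lies in the image of the linear map~$\rho$, that image is all of $\Der(A,{}_\phi A)$, as desired.

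The only point requiring care — and the nearest thing to an obstacle — is bookkeeping rather than computation: verifying that the case division by $(N,\omega)$ is exhaustive (with the $N=2$ case and the excluded value $\omega=1$ correctly accounted for), and that the property \textbf{«}restriction of an inner $\tilde\phi$-twisted derivation preserving~$A$\textbf{»} really is stable under linear combination, which is precisely the linearity of $\rho$ on $\mathcal{S}$. No new calculation is needed beyond what Corollary~\ref{coro:partial:phi} and Propositions~\ref{prop:h1:phi} and~\ref{prop:h1:phi:base} already provide.
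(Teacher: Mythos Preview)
Your proposal is correct and follows exactly the line the paper intends: the proposition is stated with a bare \qed, as an immediate consequence of the preceding computations, and you have spelled out precisely that deduction --- inner $\phi$-twisted derivations are trivially restrictions of inner $\tilde\phi$-twisted derivations of~$A_x$, while the explicit representatives for the classes spanning $\HH^1(A,{}_\phi A)$ produced in Corollary~\ref{coro:partial:phi}, Proposition~\ref{prop:h1:phi:base} and the $N=3$ proposition are all, by construction, such restrictions. Your case analysis is complete and the linearity observation about~$\rho$ is exactly the glue needed.
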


By analogy with the notion of X-inner automorphisms of Har\v{c}enko
\citelist{\cite{Harchenko:1}\cite{Harchenko:2}}, we can rephrase this
proposition saying that all $\phi$-twisted derivations of~$A$ are X-inner.

\bigskip

When $N=0$, so that the algebra $A$ is the first Weyl
algebra, the calculation of the twisted Hochschild cohomology
$\H^*(A,{}_\phi A)$ for an automorphism $\phi:A\to A$ such that
$\phi(x)=\omega x$ and $\phi(y)=\omega^{-1}y$ for some
$\omega\in\kk^\times$ was done in~\cite{AFLS} by Jacques Alev, Thierry
Lambre, Marco Farinati and Andrea Solotar: the end result is that
  \[
  \dim\H^p(A,{}_\phi A) \cong 
    \begin{cases*}
    1 & if $p=0$ and $\omega=1$, or if $p=2$ and $\omega\neq1$; \\
    0 & in any other case.
    \end{cases*}
  \]
This is rather different from what we have found above for $N\geq1$. It
should be noticed that while the automorphism group of the first Weyl
algebra is considerably larger than that of our algebras $A$ with $N\geq1$,
but every one of its elements of finite order is conjugated to one of
the form of the automorphism~$\phi$ described here. On the other hand,
there are non-cyclic finite groups of automorphisms of the first Weyl
algebra.

\section{Actions of Taft algebras}
\label{sect:taft}

Let now $n$,~$m\in\NN$ be such that $1<m$ and $m\mid n$, and let
$\lambda\in\kk^\times$ be a primitive $m$th root of unity. The
\newterm{generalized Taft algebra} $T=T_n(\lambda,m)$ is the algebra
freely generated by two letters $g$ and~$\xi$ subject to the relations
  \[ \label{eq:T:rels}
  g^n = 1,
  \qquad
  \xi^m = 0,
  \qquad
  g\xi = \lambda\xi g.
  \]
This algebra was originally studied by David Radford in~\cite{Radford}.
It is finite dimensional of dimension~$nm$, and the set $\{\xi^ig^j:0\leq
i<m,0\leq j<n\}$ is one of its bases. It is a Hopf algebra with
comultiplication $\Delta:T\to T\otimes T$ and augmentation
$\epsilon:T\to\kk$ such that
  \[
  \Delta(g) = g\otimes g,
  \qquad
  \Delta(\xi) = \xi\otimes1+g\otimes\xi,
  \qquad
  \epsilon(g) = 1,
  \qquad
  \epsilon(\xi) = 0.
  \]
When $n=m$ and $\theta=0$ this is the classical Taft Hopf algebra
constructed by Earl Taft in~\cite{Taft} with the purpose of exhibiting
finite dimensional Hopf algebras with antipode of arbitrarily large order.
We will use freely the Heyneman--Sweedler notation for the coproduct of~$T$
and even omit the sum: we will write $\Delta(a)$ in the form $a_1\otimes
a_2$.

\bigskip

We are interested in left $T$-module-algebra structures on our algebra~$A$,
that is, left $T$\nobreakdash-module structures $\lact:T\otimes A\to A$
such that the multiplication and unit map $A\otimes A\to A$ and $\kk\to A$
are both $T$-linear --- this means that $h\lact 1_A=\epsilon(h)1_A$ and
$h\lact ab =(h_1\lact a)(h_2\lact b)$ for all choices of~$a$ and~$b$
in~$A$ and $h$ in~$T$. We refer the reader to \cite{Montgomery}*{Chapter 4}
for general information about module-algebras over Hopf algebras.

We will further restrict our attention to $T$-module algebra structures
on~$A$ that are \newterm{inner-faithful} --- a notion introduced by
T.\,Banica and J.\,Bichon in~\cite{BB} ---  as these correspond to faithful
group actions on~$A$: the condition is that there be no non-zero Hopf
ideal~$I$ in~$T$ such that $I\lact A=0$. According to
\cite{Cline}*{Corollary 3.7}, in the specific case in which the Hopf
algebra is our generalized Taft algebra $T$ we have a handy criterion:
  \[ \label{eq:faithful}
  \claim[.7]{a $T$-module-algebra structure on~$A$ is inner-faithful if and
  only if the group $G(T)=\gen{g}$ of group-like elements of~$T$ acts
  faithfully on~$A$ and $\xi\lact A\neq0$.}
  \]

As the algebra~$T$ is generated by~$g$ and~$\xi$ subject to the
relations in~\eqref{eq:T:rels}, giving a $T$-module structure on~$A$ is the
same as giving the two maps $\phi:a\in A\mapsto g\lact a\in A$ and
$\partial:a\in A\mapsto\xi\lact a\in A$ such that $\phi^n=\id_A$,
$\partial^m=0$ and $\phi\partial=\lambda\partial\phi$, and
that structure will be a $T$-module-algebra structure exactly when $\phi$ is
an automorphism of~$A$ and $\partial$ a $\phi$-twisted derivation $A\to A$.
The automorphism~$\phi$ will have finite order: according to
Proposition~\ref{prop:finite-subgroups}, up to conjugating the whole
module-algebra structure by an algebra automorphism of~$A$ we can suppose
then that there is a scalar~$\omega\in\kk$ such that $\phi(x)=\omega x$ and
$\phi(y)=\omega^{N-1} y$, and then the group of group-like
elements~$G(T)=\gen{g}$ of~$T$, which is cyclic of order~$n$, will clearly
act faithfully on~$A$ if and only if the scalar~$\omega$ is a primitive
$n$th root of unity.

We are left with the task of understanding the possibilities for the
map~$\partial$. Since it is a $\phi$-twisted derivation $A\to A$, we now
from Proposition~\ref{prop:der:xinner} that the map~$\partial$ will be, in
fact, the restriction of an inner $\tilde\phi$-twisted derivation of the
localization~$A_x$ that preserves~$A$. The following lemma imposes
significant restrictions on what can actually happen.

\begin{Lemma}\label{lemma:partial:inner}
Let $\omega$ be a primitive $n$th root of unity in~$\kk$, let $\phi:A\to A$
be the algebra automorphism such that $\phi(x)=\omega x$ and
$\phi(y)=\omega^{N-1}y$, and let $\tilde\phi:A_x\to A_x$ be the unique
extension of~$\phi$ to the localization~$A_x$. Let $u$ be a non-zero
element of~$A_x$ such that the inner $\tilde\phi$-twisted derivation
$\ad_{\tilde\phi}(u):A_x\to A_x$ preserves the subalgebra~$A$ and let
  \[
  \partial\coloneqq\ad_{\tilde\phi}(u)|_A:A\to A
  \]
be its restriction to~$A$, which is a $\phi$-twisted derivation.
\begin{thmlist}

\item The map~$\partial$ is non-zero.

\item If $\lambda$ is a scalar, then we have that
$\phi\partial=\lambda\partial\phi$ exactly when $\tilde\phi(u)=\lambda u$,
and when that is the case we have that $\lambda^n=1$.

\item If there are a scalar~$\lambda$ and a positive integer~$m$
such that $\tilde\phi(u)=\lambda u$ and $\partial^m=0$, then $n\leq m$.

\end{thmlist}
\end{Lemma}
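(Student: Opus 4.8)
The plan is to work throughout on the localization $A_x$, writing $\tilde\partial\coloneqq\ad_{\tilde\phi}(u)\colon A_x\to A_x$ so that $\partial=\tilde\partial|_A$, and to lean repeatedly on the single identity $\partial(x)=\tilde\partial(x)=ux-\omega xu=-\tilde\alpha(u)$, where $\tilde\alpha$ is the bijective map of Lemma~\ref{lemma:comm:phi}. Part~\thmitem{1} is then immediate: since $u\neq0$ and $\tilde\alpha$ is injective, $\partial(x)=-\tilde\alpha(u)\neq0$, so $\partial\neq0$.

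For part~\thmitem{2} I would expand both composites on a general $a\in A$. Since $\tilde\phi$ is an algebra automorphism, a direct computation gives
\[
\phi\partial(a)-\lambda\partial\phi(a)
 = w\,\tilde\phi(a)-\tilde\phi^{2}(a)\,w
 = \ad_{\tilde\phi}(w)\bigl(\tilde\phi(a)\bigr),
 \qquad w\coloneqq\tilde\phi(u)-\lambda u.
\]
Because $\tilde\phi(A)=A$, the relation $\phi\partial=\lambda\partial\phi$ on $A$ is equivalent to $\ad_{\tilde\phi}(w)$ vanishing on all of $A$; evaluating at $x$ and invoking the injectivity of $\tilde\alpha$ exactly as in part~\thmitem{1} forces $w=0$, i.e.\ $\tilde\phi(u)=\lambda u$, while the converse is trivial. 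Finally $\lambda^{n}=1$, for the hypothesis that $\omega$ is a primitive $n$th root of unity gives $\phi^{n}=\id$, hence $\tilde\phi^{n}=\id$, so $u=\tilde\phi^{n}(u)=\lambda^{n}u$ and $u\neq0$ finishes it.

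The substance of the lemma is part~\thmitem{3}, where the difficulty is to control the $m$th power of the \emph{twisted} derivation $\partial$, which is not itself a derivation. The device that makes this tractable—and which I expect to be the main obstacle to spot—is to introduce the two $\kk$-linear operators $L,M\colon A_x\to A_x$ with $L(a)=ua$ and $M(a)=\tilde\phi(a)u$, so that $\tilde\partial=L-M$. Using $\tilde\phi(u)=\lambda u$ one checks the $q$-commutation $ML=\lambda LM$, and then the noncommutative $q$-binomial theorem together with $M^{i}(a)=\lambda^{\binom{i}{2}}\tilde\phi^{i}(a)u^{i}$ yields the closed form
\[
\partial^{m}(a)=\sum_{i=0}^{m}(-1)^{i}\lambda^{\binom{i}{2}}\binom{m}{i}_{\lambda}\,u^{\,m-i}\,\tilde\phi^{i}(a)\,u^{i}
\qquad(a\in A),
\]
valid because $\tilde\partial^{k}|_A=\partial^{k}$. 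Recognizing the $\lambda$-commuting pair $L,M$ is what linearizes an otherwise unwieldy iteration.

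It then remains to squeeze a numerical consequence out of $\partial^{m}=0$. I would evaluate the formula at $a=x^{s}$ for $s\geq0$, where $\tilde\phi^{i}(x^{s})=\omega^{is}x^{s}$ exactly. Writing $u_{d}\in\kk[x^{\pm1}]\setminus\{0\}$ for the leading $y$-coefficient of $u$ (with $d=\deg_{y}u$) and working modulo $\tilde F_{md-1}$, the top term of $\partial^{m}(x^{s})$ is
\[
\partial^{m}(x^{s})\equiv x^{s}u_{d}^{\,m}\Bigl(\sum_{i=0}^{m}(-1)^{i}\lambda^{\binom{i}{2}}\binom{m}{i}_{\lambda}\omega^{is}\Bigr)y^{md}
 = x^{s}u_{d}^{\,m}\prod_{k=0}^{m-1}\bigl(1-\lambda^{k}\omega^{s}\bigr)\,y^{md}
 \pmod{\tilde F_{md-1}},
\]
the second equality being Gauss's $q$-binomial identity. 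Since $\kk[x^{\pm1}]$ is a domain and $x^{s}u_{d}^{m}\neq0$, the vanishing of $\partial^{m}(x^{s})$ forces $\lambda^{k}\omega^{s}=1$, i.e.\ $\omega^{-s}=\lambda^{k}$, for some $k\in\{0,\dots,m-1\}$. Letting $s$ run through $0,1,\dots,n-1$, the elements $\omega^{-s}$ exhaust all $n$ distinct $n$th roots of unity, and each must coincide with one of the at most $m$ powers $\lambda^{k}$ with $0\leq k<m$; hence $m\geq n$, which is the claim.
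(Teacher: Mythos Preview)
Your proof is correct and follows essentially the same path as the paper's. Parts~\thmitem{1} and~\thmitem{2} match almost verbatim; your argument for $\lambda^{n}=1$ via $\tilde\phi^{n}=\id$ is in fact slightly cleaner than the paper's appeal to diagonalizability. In part~\thmitem{3} the only difference is presentational: the paper establishes the leading-term congruence
\[
\partial^{k}(a)\equiv\prod_{i=0}^{k-1}(1-t\lambda^{i})\cdot au^{k}\pmod{\tilde F_{l+kd-1}}
\]
directly by induction, whereas you first derive the exact $q$-binomial expansion of $\partial^{m}$ via the $\lambda$-commuting operators $L,M$ and only then pass to the associated graded. Both routes land on the same product $\prod_{k=0}^{m-1}(1-\lambda^{k}\omega^{s})$ and the same pigeonhole count, so the arguments are equivalent.
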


\begin{proof}
\thmitem{1} We have $\partial(x)=ux-\omega xu=-\tilde\alpha(u)\neq0$,
according to Lemma~\ref{lemma:comm:phi}.

\thmitem{2} If $\lambda\in\kk$ is such that
$\phi\partial=\lambda\partial\phi$, then
  \begin{align}
  \tilde\phi(u)\omega x - \omega^2 x\tilde\phi(u)
       &= \tilde\phi(ux-\omega xu) 
        = \phi(ux-\omega xu) 
        = \phi(\partial(x))
        = \lambda\partial(\phi(x)) \\
       &= \lambda\partial(\omega x) 
        = \lambda(u\omega x-\omega^2xu),
  \end{align}
and therefore
  \[
  \tilde\alpha\bigl(\tilde\phi(u)-\lambda u\bigr) 
        = \omega x\bigl(\tilde\phi(u)-\lambda u\bigr)
          - \bigl(\tilde\phi(u)-\lambda u\bigr)x 
        = 0,
  \]
so that $\tilde\phi(u)=\lambda u$ because of Lemma~\ref{lemma:comm:phi}.
Conversely, if $\lambda$ is a scalar such that $\tilde\phi(u)=\lambda u$,
then a simple and direct calculation shows that we have that
$\phi\partial=\lambda\partial\phi$. Moreover, since the
automorphism~$\tilde\phi$ is manifestly diagonalizable and all its
eigenvalues are powers of~$\omega$, in that case we have that
$\lambda^n=1$.

\thmitem{3} Let us suppose that there are $\lambda\in\kk$ and $m\in\NN$
such that $\tilde\phi(u)=\lambda u$ and $\partial^m=0$. Since $u\neq0$,
there is an integer~$d\in\NN_0$ such that $u\in\tilde F_d\setminus\tilde
F_{d-1}$. An induction shows that if $l\in\NN_0$, $a\in A$ and $t\in\kk$
are such that $a\in F_l\setminus F_{l-1}$ and $\phi(a)=ta$, then for all
$k\in\NN_0$ we have that
  \[
  \partial^k(a) \equiv \prod_{i=0}^{k-1}(1-t\lambda^i)\cdot au^k
        \mod \tilde F_{l+kd-1}
  \]
and, in particular, since $\partial^m(a)=0$, that
  \[
  \prod_{i=0}^{m-1}(1-t\lambda^i)\cdot au^m \in \tilde F_{l+md-1}.
  \]
As the graded algebra~$\gr A_x$ for the filtration~$(\tilde F_i)_{i\geq-1}$
is an integral domain, this tells us that, in fact,
  \[
  \prod_{i=0}^{m-1}(1-t\lambda^i) = 0
  \]
and, then, that $t\in\{\lambda^{-i}:0\leq i<m\}$. In particular, for each
$j\in\{0,\dots,n-1\}$ we can take $a=x^j$, that has $\phi(a)=\omega^j a$,
and conclude that the $n$ pairwise different scalars
$\omega^0$,~\dots,~$\omega^{n-1}$ all belong to the set
$\{\lambda^{-i}:0\leq i<m\}$. This set has cardinal at most $m$, so
$n\leq m$, as the lemma claims.
\end{proof}

The obvious question after having proved Lemma~\ref{lemma:partial:inner}
is: when is the map~$\partial$ appearing there such that $\partial^n=0$?
The answer is simple: never. With the objective of proving this, let us
recall some standard notations from the theory of $q$-variants. If $q$ is a
variable, then for each $n\in\NN_0$ we let $[n]_q\coloneqq
1+q+\cdots+q^{n-1}$ and $[n]_q!\coloneqq[1]_q[1]_q\cdots[n]_q$, and
consider for each choice of $k$ and~$i$ in~$\NN_0$ such that $0\leq i\leq
k$ the \newterm{$q$-binomial} or \newterm{Gaussian binomial coefficient}
  \[
  \binom{k}{i}_q \coloneqq \frac{[k]_q!}{[i]_q!\cdot[k-i]_q!},
  \]
which is an element of~$\ZZ[q]$.

\begin{Lemma}\label{lemma:non-zero}
Let $\omega$ and~$\lambda$ be primitive $n$th roots of unity in~$\kk$, let
$\phi:A\to A$ be the algebra automorphism such that $\phi(x)=\omega x$ and
$\phi(y)=\omega^{N-1}y$, and let $\tilde\phi:A_x\to A_x$ be the unique
extension of~$\phi$ to the localization~$A_x$. If $u$ is a non-zero
element of~$A_x$ such that $\tilde\phi(u)=\lambda u$ and the inner
$\tilde\phi$-twisted derivation $\ad_{\tilde\phi}(u):A_x\to A_x$ preserves
the subalgebra~$A$, then the restriction
  \(
  \partial\coloneqq\ad_{\tilde\phi}(u)|_A:A\to A
  \),
which is a $\phi$-twisted derivation, has $\partial^n\neq0$.
\end{Lemma}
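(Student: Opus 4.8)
The plan is to obtain a closed form for the iterate $\partial^k$ and then specialise to $k=n$. Writing $\partial=\ad_{\tilde\phi}(u)$ and using $\tilde\phi(u)=\lambda u$, a straightforward induction on~$k$ — resting on the $q$-Pascal identity $\binom{k+1}{i}_q=\binom{k}{i}_q+q^{\,k+1-i}\binom{k}{i-1}_q$ — should give the Gaussian-binomial expansion
\[
\partial^k(a)=\sum_{i=0}^k(-1)^i\lambda^{\binom{i}{2}}\binom{k}{i}_\lambda\,u^{k-i}\tilde\phi^i(a)\,u^i
\]
for every $a\in A$ and every $k\in\NN_0$. The inductive step only uses that $\tilde\phi$ is an algebra map with $\tilde\phi(u)=\lambda u$, so that $\tilde\phi\bigl(u^{k-i}\tilde\phi^i(a)u^i\bigr)=\lambda^k u^{k-i}\tilde\phi^{i+1}(a)u^i$; collecting terms gives the recurrence $c_{k+1,i}=c_{k,i}-\lambda^k c_{k,i-1}$ for the coefficients, which matches the $q$-Pascal rule once the powers of~$\lambda$ are reconciled via $\binom{i-1}{2}-\binom{i}{2}=-(i-1)$.

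I would then set $k=n$. Since $\lambda$ is a primitive $n$th root of unity we have $[n]_\lambda=0$ while $[j]_\lambda\neq0$ for $1\le j\le n-1$, whence $\binom{n}{i}_\lambda=0$ for $0<i<n$ (evaluate the polynomial identity $\binom{n}{i}_q[i]_q!\,[n-i]_q!=[n]_q!$ at $q=\lambda$). Only the extreme terms $i=0$ and $i=n$ survive, and as $\tilde\phi^n=\id$ (because $\omega^n=1$) the $i=n$ term is $(-1)^n\lambda^{\binom{n}{2}}\,a\,u^n$. A short computation of the scalar $(-1)^n\lambda^{\binom{n}{2}}$ — distinguishing $n$ odd, where $\lambda^{\binom{n}{2}}=(\lambda^n)^{(n-1)/2}=1$, from $n$ even, where $\lambda^{n/2}=-1$ gives $\lambda^{\binom{n}{2}}=-1$ — shows it equals $-1$ in both parities. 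Therefore
\[
\partial^n(a)=u^n a-a\,u^n=[u^n,a]\qquad(a\in A),
\]
i.e.\ $\partial^n$ is the restriction to~$A$ of the ordinary inner derivation $\ad(u^n)$ of~$A_x$.

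It then remains to prove that $u^n\notin\kk$: if so, $u^n$ is not central in~$A_x$, whose centre is~$\kk$ by Proposition~\ref{prop:center:x}, and since $x$ and~$y$ generate~$A_x$ this forces $[u^n,a]\neq0$ for some $a\in A$, giving $\partial^n\neq0$. To see $u^n\notin\kk$ I would argue with the filtration~$(\tilde F_i)$. Writing $u=\sum_b p_b(x)y^b$ with $p_b\in\kk[x^{\pm1}]$ and top $y$-degree~$d$, the fact that $\gr A_x$ is a domain shows the leading symbol of $u^n$ is $\bar u^{\,n}\neq0$, of $y$-degree~$nd$. If $d\ge1$ this is positive, so $u^n\notin\kk$ at once. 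If $d=0$ then $u\in\kk[x^{\pm1}]$, and the eigenvalue condition $\tilde\phi(u)=\lambda u$ forces every monomial $x^j$ occurring in~$u$ to satisfy $\omega^j=\lambda$; writing $\lambda=\omega^c$ with $\gcd(c,n)=1$ (possible as both are primitive $n$th roots and $n\ge2$), this means $j\equiv c\not\equiv0\pmod n$, so no exponent vanishes. Hence $u$ is a Laurent polynomial all of whose exponents are non-zero, and $u^n$ — whose lowest and highest exponents are $n$ times those of~$u$ — is a non-constant element of $\kk[x^{\pm1}]$; in particular $u^n\notin\kk$.

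The main obstacle is getting the coefficients in the $\partial^k$ expansion exactly right, in particular the sign-and-power-of-$\lambda$ bookkeeping that makes the surviving coefficient collapse to~$-1$; once the identity $\partial^n=\ad(u^n)|_A$ is established, the non-centrality of~$u^n$ is comparatively routine.
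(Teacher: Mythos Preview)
Your proposal is correct and follows essentially the same route as the paper: derive a Gaussian-binomial expansion for $\partial^k$, specialise to $k=n$ where the interior $q$-binomial coefficients vanish at a primitive $n$th root of unity, identify $\partial^n$ with $\pm\ad(u^n)$, and then check that $u^n$ is not central in~$A_x$.

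The differences are cosmetic. You run the induction for arbitrary $a\in A$, carrying $\tilde\phi^i(a)$ through the formula, whereas the paper restricts to eigenvectors of~$\phi$ and writes $t^i a$ in place of $\tilde\phi^i(a)$; these agree on eigenvectors and the paper then extends by diagonalisability. Your argument for the vanishing of $\binom{n}{i}_\lambda$ via $[n]_\lambda=0$ is equivalent to the paper's appeal to the Cauchy $q$-binomial theorem. Finally, your treatment of $u^n\notin\kk$ via the filtration and the eigenvalue constraint on exponents is more explicit than the paper's, which simply asserts that $u^n\in\kk$ forces $u\in\kk$ and then invokes $\tilde\phi(u)=\lambda u$ with $\lambda\neq1$; both arguments rely on $\gr A_x$ being a domain.
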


\begin{proof}
Let $u$ be a non-zero element of~$A_x$ such that $\tilde\phi(u)=\lambda u$
and $\ad_{\tilde\phi}(u)(A)\subseteq A$, and let us consider the
$\phi$-twisted derivation $\partial\coloneqq\ad_{\tilde\phi}(u)|_A:A\to A$.

Let $a$ in~$A$ and $t\in\kk$ be such that $\phi(a)=ta$. For all $k\in\NN_0$
we have that
  \[ \label{eq:pka}
  \partial^k(a) 
        = \sum_{i=0}^k (-1)^{k-i}
                       \lambda^{\binom{i}{2}}
                       \binom{k}{i}_\lambda 
                       t^i u^{k-i}au^i,
  \]
as can be proved by an obvious induction using the well-known analogue of
Pascal's identity,
  \[
  \binom{k}{i}_q = \binom{k-1}{i}_q + q^{k-i}\binom{k-1}{i-1}_q,
  \]
valid in~$\ZZ[q]$ whenever $0<i<k$. In particular, taking $k=n$
in~\eqref{eq:pka} we find that
  \[ \label{eq:pka:1}
  \partial^n(a) 
        = \sum_{i=0}^n 
                (-1)^{n-i}
                \lambda^{\binom{i}{2}}
                \binom{n}{i}_\lambda 
                t^i u^{n-i}au^i.
  \]
Now, according to the Cauchy $q$-binomial theorem we have that
  \[
  \prod_{i=0}^n (y+zq^i) = \sum_{i=0}^nq^{\binom{i}{2}}\binom{n}{i}_qy^{n-i}z^i
  \]
in the polynomial ring~$\ZZ[q,y,z]$. Extending scalars to~$\kk$ and
specializing $z$ at~$-1$ and $q$ at~$\lambda$, the left hand side of this
equality becomes $y^n-1$, so by looking at the coefficients of the powers
of~$y$ in the right hand side we see that $\binom{n}{i}_\lambda=0$ when
$0<i<n$. The equality~\eqref{eq:pka:1} therefore violently simplifies to
  \[
  \partial^n(a) = \lambda^{\binom{n}{2}}au^i + (-1)^nu^na,
  \]
since $t^n=1$, as $t$ is an eigenvalue of the automorphism~$\phi$ and thus
a power of~$\omega$. As $\lambda$ is a primitive $n$th root of unity,
we have that $\lambda^{\binom{n}{2}}=(-1)^{n+1}$ and, therefore, that
  \[
  \partial^n(a) = (-1)^n[u^n,a].
  \]

This equality holds whenever $a$ is an eigenvector of~$\phi$ in~$A$ and,
since the automorphism~$\phi$ is diagonalizable, it then follows
immediately that the equality holds in fact for all~$a$ in~$A$. In
particular, the map~$\partial^n$ is zero exactly when the element $u^n$
of~$A_x$ commutes with all elements of~$A$: this happens if and only if
$u^n$ is central in~$A_x$, so an element of~$\kk$ according to
Proposition~\ref{prop:center:x}, and clearly this can happen if and only if
$u\in\kk$. Now, as $\lambda\neq1$ and $\phi(u)=\lambda u$, we certainly
have that $u$ is not in~$\kk$, so $\partial^n\neq0$, as the lemma claims.
\end{proof}

\begin{Remark}
The vanishing of the Gaussian binomial coefficients at well-chosen roots of
unity that we used in this proof is a very special case of the $q$-Lucas
theorem, and it was in fact in that way that we originally proceeded. The
simpler recourse to the $q$-binomial theorem that we used above was
suggested by a comment of Richard Stanley on MathOverflow~\cite{A}. Let
us note that the $q$-Lucas theorem was first proved by Gloria Olive
in~\cite{Olive}, where the theorem appears as Equation (1.2.4). This
beautiful result, which generalizes the classical Lucas theorem for
binomial coefficients, has also been proved by Jacques Désarménien
in~\cite{Desarmenien}, by Volker Strehl in~\cite{Strehl}, by Bruce Sagan
in~\cite{Sagan}, and Donald E.\ Knuth and Herbert S.\ Wilf describe
in~\cite{KnW} a factorization of the Gaussian binomial coefficient
$\binom{n}{i}_q$ with $0<i<n$  which certainly includes the $n$th
cyclotomic polynomial among the factors.
\end{Remark}

After all this work we can state and proof the following markedly
disappointing result:

\begin{Proposition}\label{prop:taft-actions}
Let $n$ and~$m$ be integers such that $1<m$ and $m\mid n$, and let
$\lambda\in\kk^\times$ be a primitive $m$th root of unity in~$\kk$. There
is no inner-faithful action of the generalized Taft algebra
$T_n(\lambda,m)$ on~$A$.
\end{Proposition}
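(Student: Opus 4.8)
The plan is to argue by contradiction: I would assume that an inner-faithful action of $T_n(\lambda,m)$ on~$A$ exists, translate it into a pair $(\phi,\partial)$ consisting of an automorphism and a twisted derivation, and then play the nilpotency condition $\partial^m=0$ against the non-vanishing result of Lemma~\ref{lemma:non-zero}. As recalled in the paragraphs preceding the statement, such a module-algebra structure is the same as an automorphism $\phi=g\lact(\place)$ and a $\phi$-twisted derivation $\partial=\xi\lact(\place)$ of~$A$ subject to $\phi^n=\id$, $\partial^m=0$ and $\phi\partial=\lambda\partial\phi$. By Proposition~\ref{prop:finite-subgroups}, after conjugating the whole structure by an algebra automorphism of~$A$—an operation that visibly preserves inner-faithfulness—I may assume that $\phi=\phi_{0,\omega}$, so that $\phi(x)=\omega x$ and $\phi(y)=\omega^{N-1}y$. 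The inner-faithfulness criterion~\eqref{eq:faithful} forces $G(T)=\gen{g}$ to act faithfully, hence $\omega$ to be a primitive $n$th root of unity, and it forces $\xi\lact A\neq0$, that is $\partial\neq0$.

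Since $\omega\neq1$ (because $n\geq m>1$), Proposition~\ref{prop:der:xinner} then lets me write $\partial=\ad_{\tilde\phi}(u)|_A$ for some $u\in A_x$, and the non-vanishing of~$\partial$ together with Lemma~\ref{lemma:partial:inner}\thmitem{1} gives $u\neq0$. The relation $\phi\partial=\lambda\partial\phi$ feeds into Lemma~\ref{lemma:partial:inner}\thmitem{2} to yield $\tilde\phi(u)=\lambda u$. Plugging $\partial^m=0$ into Lemma~\ref{lemma:partial:inner}\thmitem{3} produces the inequality $n\leq m$; combined with the standing hypothesis $m\mid n$, which forces $m\leq n$, this pins down $m=n$. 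In particular $\lambda$, which is a primitive $m$th root of unity by hypothesis, is now a primitive $n$th root of unity.

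With that identification in hand, every hypothesis of Lemma~\ref{lemma:non-zero} is met: $\omega$ and~$\lambda$ are both primitive $n$th roots of unity, $u\neq0$, $\tilde\phi(u)=\lambda u$, and $\ad_{\tilde\phi}(u)$ preserves~$A$. That lemma therefore asserts $\partial^n\neq0$. But $m=n$ and $\partial^m=0$ give $\partial^n=0$, a contradiction, which finishes the argument.

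The proposition itself is thus a short piece of bookkeeping, assembling the three parts of Lemma~\ref{lemma:partial:inner} with Lemma~\ref{lemma:non-zero}; the only point in the assembly that requires a moment's care is the observation that $m\mid n$ and $n\leq m$ can hold simultaneously only when $m=n$, which is exactly what upgrades $\lambda$ to a primitive $n$th root of unity and so makes Lemma~\ref{lemma:non-zero} applicable. The genuine work is concentrated in the earlier results I am invoking—the structural Proposition~\ref{prop:der:xinner}, which realizes an arbitrary twisted derivation as X-inner, and the eigenvalue computation $\partial^n(a)=(-1)^n[u^n,a]$ underlying Lemma~\ref{lemma:non-zero}—so I expect no additional obstacle at the level of the proposition beyond correctly matching the roots-of-unity hypotheses.
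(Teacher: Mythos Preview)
Your proposal is correct and follows essentially the same approach as the paper's own proof: reduce by contradiction to the normalized automorphism $\phi_{0,\omega}$, invoke Proposition~\ref{prop:der:xinner} to realize $\partial$ as the restriction of an inner $\tilde\phi$-twisted derivation $\ad_{\tilde\phi}(u)$, use Lemma~\ref{lemma:partial:inner} to obtain $\tilde\phi(u)=\lambda u$ and $n\leq m$, conclude $m=n$, and then contradict $\partial^m=0$ via Lemma~\ref{lemma:non-zero}. You are slightly more explicit than the paper in verifying $u\neq0$ (needed for the hypotheses of Lemma~\ref{lemma:partial:inner}); note that this follows directly from $\partial\neq0$ rather than from part~\thmitem{1} of that lemma, whose logical direction is the opposite one.
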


\begin{proof}
Let us suppose, in order to reach a contradiction, that there is an
inner-faithful module-algebra action of the Hopf algebra $=T_n(\lambda, m)$
on~$A$, and let us consider the maps $\phi:a\in A\mapsto g\lact a\in A$ and
$\partial:a\in A\mapsto \xi\lact a\in A$. The relations that define the
algebra~$T$ imply that $\phi^n=\id_A$, $\partial^m=0$ and
$\phi\partial=\lambda\partial\phi$, and the fact that what we have is a
module-algebra structure that $\phi$ is an automorphism and $\partial$ a
$\phi$-twisted derivation. 

The criterion~\eqref{eq:faithful} for inner-faithfulness from Cline's paper
\cite{Cline} implies that the order of~$\phi$ is exactly~$n$, since $g$ has
order~$n$ in~$T$, and then, according to
Proposition~\ref{prop:finite-subgroups}, up to conjugating the action
of~$T$ on~$A$ by an algebra automorphism we can suppose that there is a
primitive $n$th root of unity $\omega$ in~$\kk$ such that $\phi(x)=\omega
x$ and $\phi(y)=\omega^{N-1}y$. Proposition~\ref{prop:der:xinner} now tells
us that there is an element~$u$ in the localization~$A_x$ such that the
$\tilde\phi$-twisted derivation $\ad_{\tilde\phi}(u):A_x\to A_x$ preserves
the subalgebra~$A$ of~$A_x$ and the map $\partial$ coincides with its
restriction $\ad_{\tilde\phi}(u)|_A$ to~$A$. The second part of
Lemma~\ref{lemma:partial:inner} tells us tat $\tilde\phi(u)=\lambda u$ and,
since $\partial^m=0$, the third part of that lemma that $n\leq m$. Of
course, since $m$ divides $n$ we have in fact that $n=m$, and we are
therefore in the situation of Lemma~\ref{lemma:non-zero}: this is
absurd, for the lemma tells us that $\partial^n\neq0$.
\end{proof}

In~\cite{Radford} Radford considers a more general class of generalized
Taft algebras: given two positive integers $n$,~$m\in\NN$ such that $1<m$
and $m\mid n$, a primitive $m$th root of unity~$\lambda$ in~$\kk$, and an
arbitrary scalar~$\tau\in\kk$, he lets $T_n(\lambda,m,\tau)$ be the algebra
freely generated by two letters~$g$ and~$\xi$ subject to the relations
  \[ 
  g^n = 1,
  \qquad
  \xi^m = \tau(g^m-1),
  \qquad
  g\xi = \lambda\xi g,
  \]
which is a Hopf algebra with respect to the comultiplication and counit such that
  \[
  \Delta(g) = g\otimes g,
  \qquad
  \Delta(\xi) = \xi\otimes1+g\otimes\xi,
  \qquad
  \epsilon(g) = 1,
  \qquad
  \epsilon(\xi) = 0.
  \]
When $\tau=0$ we obtain the algebras we considered above, since
$T_n(\lambda,m,0)=T_n(\lambda,m)$, and the connection between the two
families is that $T_n(\lambda,m,0)$ is the graded Hopf algebra
corresponding to the coradical filtration of the pointed Hopf
algebra~$T_n(\lambda,m,\tau)$ --- we refer to Section 5.2 in Montgomery's
book \cite{Montgomery} for information of this. It is a natural guess,
after our last proposition, that there is also no inner-faithful
module-algebra action of these more general Hopf algebras on our
algebra~$A$, although a new idea is needed to verify this. For example, the
twisted derivation~$\partial$ that corresponds to the element~$\xi$ in an action
of~$T_n(\lambda,m,\tau)$ is locally finite --- in the sense that every
element of~$A$ is contained in a finite-dimensional subspace that is
invariant under~$\partial$ --- so classifying locally-finite twisted
derivations of~$A$ could well be helpful.

\phantomsection
\addcontentsline{toc}{section}{References}%
\bibliography{simples}

\end{document}